\newtheorem{theorem}{Theorem}[section]
\newtheorem{corollary}[theorem]{Corollary}
\newtheorem{lemma}[theorem]{Lemma}
\newtheorem{proposition}[theorem]{Proposition}
\theoremstyle{definition}
\newtheorem{definition}[theorem]{Definition}
\newtheorem{remark}[theorem]{Remark}
\newtheorem{example}[theorem]{Example}
\newtheorem{construction}[theorem]{Construction}
\newtheorem{observation}[theorem]{Observation}
\DeclareMathOperator{\walk}{walk}
\newcommand{\Db}{\mathcal{D}^{\rm b}}
\newcommand{\Kb}{\mathcal{K}^{\rm b}}
\newcommand{\Z}{{\mathbb{Z}}}
\DeclareMathOperator{\stabadd}{\underline{add}}
\DeclareMathOperator{\costadd}{\overline{add}}
\newcommand{\iso}{\cong}
\renewcommand{\iff}{\ensuremath{\Longleftrightarrow}}
\DeclareMathAlphabet{\mathpzc}{OT1}{pzc}{m}{it}
\DeclareMathOperator{\modules}{mod} \renewcommand{\mod}{\modules}
\DeclareMathOperator{\proj}{proj}
\DeclareMathOperator{\inj}{inj}
\DeclareMathOperator{\add}{add}
\DeclareMathOperator{\ind}{ind}
\DeclareMathOperator{\stabmod}{\underline{mod}}
\DeclareMathOperator{\costmod}{\overline{mod}}
\DeclareMathOperator{\End}{End}
\DeclareMathOperator{\Hom}{Hom}
\DeclareMathOperator{\Ext}{Ext}
\DeclareMathOperator{\Tor}{Tor}
\DeclareMathOperator{\Tr}{Tr}
\DeclareMathOperator{\Ob}{\mathpzc{Ob}}
\DeclareMathOperator{\Rad}{Rad}
\DeclareMathOperator{\Soc}{Soc}
\DeclareMathOperator{\TopOfModule}{top} \renewcommand{\top}{\TopOfModule}
\DeclareMathOperator{\Ker}{Ker}
\DeclareMathOperator{\Cok}{Cok}
\DeclareMathOperator{\Img}{Im} \renewcommand{\Im}{\Img}
\DeclareMathOperator{\gld}{gl.\!dim}
\DeclareMathOperator{\pd}{pd}
\DeclareMathOperator{\id}{id}
\DeclareMathOperator{\op}{op}
\newenvironment{smallpmatrix}
	       {\left( \! \begin{smallmatrix}}
	       {\end{smallmatrix} \! \right)}
\newcommand{\spm}[1]{\begin{smallpmatrix} #1 \end{smallpmatrix}}
\def\mathclap{\mathpalette\mathclapinternal}
\def\mathclapinternal#1#2{\clap{$\mathsurround=0pt#1{#2}$}}
\def\clap#1{\hbox to 0pt{\hss#1\hss}}
\newcommand{\leftsub}[2]{{\vphantom{#2}}_{#1}{#2}}
\tikzset{>=stealth',
         vertex/.style={circle,draw=black,inner sep=1.5pt,outer sep=2pt},
         tvertex/.style={inner sep=1pt,font=\scriptsize},
         gap/.style={fill=white,inner sep=1pt}}
\newcommand{\arrow}[2][20]
 {
  \hspace{-5pt}
  \begin{tikzpicture}
   \node (A) at (0,0) {};
   \node (B) at (#1pt,0) {};
   \draw [#2] (A) -- (B);
  \end{tikzpicture}
  \hspace{-5pt}
 }
\newcommand{\arrowl}[3][20]
 {
  \hspace{-5pt}
  \begin{tikzpicture}
   \node (A) at (0,0) {};
   \node (B) at (#1pt,0) {};
   \draw [#2] (A) -- node [above] {$#3$} (B);
  \end{tikzpicture}
  \hspace{-5pt}
 }
\renewcommand{\to}[1][20]{\arrow[#1]{->}}
\newcommand{\tol}[2][20]{\arrowl[#1]{->}{#2}}
\newcommand{\epi}[1][20]{\arrow[#1]{->>}}
\newcommand{\epil}[2][20]{\arrowl[#1]{->>}{#2}}
\newcommand{\mono}[1][20]{\arrow[#1]{>->}}
\newcommand{\monol}[2][20]{\arrowl[#1]{>->}{#2}}
\renewcommand{\mapsto}[1][20]{\arrow[#1]{|->}}
\renewcommand{\leadsto}[1][20]{\arrow[#1]{->, decorate, decoration={snake, segment length=4pt, amplitude=1pt}}}
\tikzset{admset/.style={ultra thick},
         svertex/.style={circle,draw=black,inner sep=1pt,outer sep=1pt},
         bgarrow/.style={->,ultra thin}}
\newcommand{\iteratedAprLinearAThree}{
\[ \begin{tikzpicture}[xscale=5,yscale=-2.8]
 \node (A) at (0,0)
  {
   \begin{tikzpicture}[scale=1.5,xscale=.29,yscale=-.5]
    \draw (-2.87,-1.5) rectangle (2.87,1.5);
    \node (0) at (0,-1) [vertex] {};
    \node (1) at (-1,0) [vertex] {};
    \node (2) at (1,0) [vertex] {};
    \node (3) at (-2,1) [tvertex] {T};
    \node (4) at (0,1) [vertex] {};
    \node (5) at (2,1) [tvertex] {C};
    \draw [->] (1) -- (0);
    \draw [->] (0) -- (2);
    \draw [loosely dotted, thick] (2) -- (1);
    \draw [->] (3) -- (1);
    \draw [->] (1) -- (4);
    \draw [->] (4) -- (2);
    \draw [->] (2) -- (5);
    \draw [loosely dotted, thick] (4) -- (3);
    \draw [loosely dotted, thick] (5) -- (4);
   \end{tikzpicture}
  };
 \node (B) at (1,0)
  {
   \begin{tikzpicture}[scale=1.5,xscale=.29,yscale=-.5]
    \draw (-2.87,-1.5) rectangle (2.87,1.5);
    \node (0) at (0,-1) [vertex] {};
    \node (1) at (-1,0) [tvertex] {T};
    \node (2) at (1,0) [tvertex] {C};
    \node (3) at (-2,1) [tvertex] {C};
    \node (4) at (0,1) [vertex] {};
    \node (5) at (2,1) [tvertex] {T};
    \draw [->] (1) -- (0);
    \draw [->] (0) -- (2);
    \draw [loosely dotted, thick] (2) -- (1);
    \draw [loosely dotted, thick] (3) -- (1);
    \draw [->] (1) -- (4);
    \draw [->] (4) -- (2);
    \draw [loosely dotted, thick] (2) -- (5);
    \draw [->] (4) -- (3);
    \draw [->] (5) -- (4);
   \end{tikzpicture}
  };
 \node (C) at (0,1)
  {
   \begin{tikzpicture}[scale=1.5,xscale=.29,yscale=-.5]
    \draw (-2.87,-1.5) rectangle (2.87,1.5);
    \node (0) at (0,-1) [vertex] {};
    \node (1) at (-1,0) [tvertex] {T};
    \node (2) at (1,0) [vertex] {};
    \node (3) at (-2,1) [tvertex] {C};
    \node (4) at (0,1) [vertex] {};
    \node (5) at (2,1) [tvertex] {C};
    \draw [->] (1) -- (0);
    \draw [->] (0) -- (2);
    \draw [loosely dotted, thick] (2) -- (1);
    \draw [loosely dotted, thick] (3) -- (1);
    \draw [->] (1) -- (4);
    \draw [->] (4) -- (2);
    \draw [->] (2) -- (5);
    \draw [->] (4) -- (3);
    \draw [loosely dotted, thick] (5) -- (4);
   \end{tikzpicture}
  };
 \node (D) at (1,1)
  {
   \begin{tikzpicture}[scale=1.5,xscale=.29,yscale=-.5]
    \draw (-2.87,-1.5) rectangle (2.87,1.5);
    \node (0) at (0,-1) [tvertex] {T};
    \node (1) at (-1,0) [tvertex] {C};
    \node (2) at (1,0) [vertex] {};
    \node (3) at (-2,1) [vertex] {};
    \node (4) at (0,1) [vertex] {};
    \node (5) at (2,1) [tvertex] {T};
    \draw [loosely dotted, thick] (1) -- (0);
    \draw [->] (0) -- (2);
    \draw [->] (2) -- (1);
    \draw [->] (3) -- (1);
    \draw [loosely dotted, thick] (1) -- (4);
    \draw [->] (4) -- (2);
    \draw [loosely dotted, thick] (2) -- (5);
    \draw [->] (4) -- (3);
    \draw [->] (5) -- (4);
   \end{tikzpicture}
  };
 \node (E) at (0,2)
  {
   \begin{tikzpicture}[scale=1.5,xscale=.29,yscale=-.5]
    \draw (-2.87,-1.5) rectangle (2.87,1.5);
    \node (0) at (0,-1) [tvertex] {T};
    \node (1) at (-1,0) [tvertex] {C};
    \node (2) at (1,0) [vertex] {};
    \node (3) at (-2,1) [vertex] {};
    \node (4) at (0,1) [tvertex] {T};
    \node (5) at (2,1) [tvertex] {C};
    \draw [loosely dotted, thick] (1) -- (0);
    \draw [->] (0) -- (2);
    \draw [->] (2) -- (1);
    \draw [->] (3) -- (1);
    \draw [loosely dotted, thick] (1) -- (4);
    \draw [->] (4) -- (2);
    \draw [->] (2) -- (5);
    \draw [->] (4) -- (3);
    \draw [loosely dotted, thick] (5) -- (4);
   \end{tikzpicture}
  };
 \node (F) at (1,2)
  {
   \begin{tikzpicture}[scale=1.5,xscale=.29,yscale=-.5]
    \draw (-2.87,-1.5) rectangle (2.87,1.5);
    \node (0) at (0,-1) [tvertex] {C};
    \node (1) at (-1,0) [vertex] {};
    \node (2) at (1,0) [vertex] {};
    \node (3) at (-2,1) [vertex] {};
    \node (4) at (0,1) [vertex] {};
    \node (5) at (2,1) [tvertex] {T};
    \draw [->] (1) -- (0);
    \draw [loosely dotted, thick] (0) -- (2);
    \draw [->] (2) -- (1);
    \draw [->] (3) -- (1);
    \draw [loosely dotted, thick] (1) -- (4);
    \draw [->] (4) -- (2);
    \draw [loosely dotted, thick] (2) -- (5);
    \draw [->] (4) -- (3);
    \draw [->] (5) -- (4);
   \end{tikzpicture}
  };
 \node (Eq1) at (-.5,0) [circle,draw=black,thick] {=};
 \node (Eq2) at (1.5,2) [circle,draw=black,thick,rotate=-60] {=};
 \draw [dashed] (-.7,0) -- (Eq1);
 \draw [dashed] (Eq1) -- (A);
 \draw [dashed] (A) -- (B);
 \draw [dashed] (B) -- (1.7,0);
 \draw [dashed] (-.7,2) -- (E);
 \draw [dashed] (E) -- (F);
 \draw [dashed] (F) -- (Eq2);
 \draw [dashed] (Eq2) -- (1.7,2);
 \draw (A) -- (C);
 \draw (B) -- (C);
 \draw (B) -- (D);
 \draw (C) -- (E);
 \draw (D) -- (E);
 \draw (D) -- (F);
\end{tikzpicture} \]}
\newcommand{\iteratedAprNonlinearAThree}{
\[ \begin{tikzpicture}[xscale=4,yscale=-2.8]
 \node (A) at (0,0)
  {
   \begin{tikzpicture}[scale=1.5,xscale=.29,yscale=-.5]
    \draw (-1.87,-1.5) rectangle (2.87,1.5);
    \node (0) at (0,-1) [vertex] {};
    \node (1) at (2,-1) [tvertex] {C};
    \node (2) at (-1,0) [tvertex] {T};
    \node (3) at (1,0) [vertex] {};
    \node (4) at (0,1) [vertex] {};
    \node (5) at (2,1) [tvertex] {C};
    \draw [loosely dotted, thick] (1) -- (0);
    \draw [->] (2) -- (0);
    \draw [->] (0) -- (3);
    \draw [->] (3) -- (1);
    \draw [loosely dotted, thick] (3) -- (2);
    \draw [->] (2) -- (4);
    \draw [->] (4) -- (3);
    \draw [->] (3) -- (5);
    \draw [loosely dotted, thick] (5) -- (4);
   \end{tikzpicture}
  };
 \node (B) at (0,1)
  {
   \begin{tikzpicture}[scale=1.5,xscale=.29,yscale=-.5]
    \draw (-1.87,-1.5) rectangle (2.87,1.5);
    \node (0) at (0,-1) [tvertex] {T};
    \node (1) at (2,-1) [tvertex] {X};
    \node (2) at (-1,0) [tvertex] {C};
    \node (3) at (1,0) [vertex] {};
    \node (4) at (0,1) [tvertex] {T};
    \node (5) at (2,1) [tvertex] {X};
    \draw [loosely dotted, thick] (1) -- (0);
    \draw [loosely dotted, thick] (2) -- (0);
    \draw [->] (0) -- (3);
    \draw [->] (3) -- (1);
    \draw [->] (3) -- (2);
    \draw [loosely dotted, thick] (2) -- (4);
    \draw [->] (4) -- (3);
    \draw [->] (3) -- (5);
    \draw [loosely dotted, thick] (5) -- (4);
   \end{tikzpicture}
  };
 \node (C) at (-1,1.75)
  {
   \begin{tikzpicture}[scale=1.5,xscale=.29,yscale=-.5]
    \draw (-1.87,-1.5) rectangle (2.87,1.5);
    \node (0) at (0,-1) [tvertex] {C};
    \node (1) at (2,-1) [vertex] {};
    \node (2) at (-1,0) [vertex] {};
    \node (3) at (1,0) [vertex] {};
    \node (4) at (0,1) [tvertex] {T};
    \node (5) at (2,1) [tvertex] {X};
    \draw [->] (1) -- (0);
    \draw [->] (2) -- (0);
    \draw [loosely dotted, thick] (0) -- (3);
    \draw [->] (3) -- (1);
    \draw [->] (3) -- (2);
    \draw [loosely dotted, thick] (2) -- (4);
    \draw [->] (4) -- (3);
    \draw [->] (3) -- (5);
    \draw [loosely dotted, thick] (5) -- (4);
   \end{tikzpicture}
  };
 \node (D) at (1,1.75)
  {
   \begin{tikzpicture}[scale=1.5,xscale=.29,yscale=-.5]
    \draw (-1.87,-1.5) rectangle (2.87,1.5);
    \node (0) at (0,-1) [tvertex] {T};
    \node (1) at (2,-1) [tvertex] {X};
    \node (2) at (-1,0) [vertex] {};
    \node (3) at (1,0) [vertex] {};
    \node (4) at (0,1) [tvertex] {C};
    \node (5) at (2,1) [vertex] {};
    \draw [loosely dotted, thick] (1) -- (0);
    \draw [loosely dotted, thick] (2) -- (0);
    \draw [->] (0) -- (3);
    \draw [->] (3) -- (1);
    \draw [->] (3) -- (2);
    \draw [->] (2) -- (4);
    \draw [loosely dotted, thick] (4) -- (3);
    \draw [->] (3) -- (5);
    \draw [->] (5) -- (4);
   \end{tikzpicture}
  };
 \node (E) at (0,2.5)
  {
   \begin{tikzpicture}[scale=1.5,xscale=.29,yscale=-.5]
    \draw (-1.87,-1.5) rectangle (2.87,1.5);
    \node (0) at (0,-1) [tvertex] {C};
    \node (1) at (2,-1) [vertex] {};
    \node (2) at (-1,0) [vertex] {};
    \node (3) at (1,0) [tvertex] {T};
    \node (4) at (0,1) [tvertex] {C};
    \node (5) at (2,1) [vertex] {};
    \draw [->] (1) -- (0);
    \draw [->] (2) -- (0);
    \draw [loosely dotted, thick] (0) -- (3);
    \draw [->] (3) -- (1);
    \draw [->] (3) -- (2);
    \draw [->] (2) -- (4);
    \draw [loosely dotted, thick] (4) -- (3);
    \draw [->] (3) -- (5);
    \draw [->] (5) -- (4);
   \end{tikzpicture}
  };
 \node (F) at (0,3.5)
  {
   \begin{tikzpicture}[scale=1.5,xscale=.29,yscale=-.5]
    \draw (-1.87,-1.5) rectangle (2.87,1.5);
    \node (0) at (0,-1) [vertex] {};
    \node (1) at (2,-1) [tvertex] {T};
    \node (2) at (-1,0) [tvertex] {X};
    \node (3) at (1,0) [tvertex] {C};
    \node (4) at (0,1) [vertex] {};
    \node (5) at (2,1) [tvertex] {T};
    \draw [->] (1) -- (0);
    \draw [->] (2) -- (0);
    \draw [->] (0) -- (3);
    \draw [loosely dotted, thick] (3) -- (1);
    \draw [loosely dotted, thick] (3) -- (2);
    \draw [->] (2) -- (4);
    \draw [->] (4) -- (3);
    \draw [loosely dotted, thick] (3) -- (5);
    \draw [->] (5) -- (4);
   \end{tikzpicture}
  };
 \node (G) at (-1,4.25)
  {
   \begin{tikzpicture}[scale=1.5,xscale=.29,yscale=-.5]
    \draw (-1.87,-1.5) rectangle (2.87,1.5);
    \node (0) at (0,-1) [vertex] {};
    \node (1) at (2,-1) [tvertex] {C};
    \node (2) at (-1,0) [tvertex] {X};
    \node (3) at (1,0) [vertex] {};
    \node (4) at (0,1) [vertex] {};
    \node (5) at (2,1) [tvertex] {T};
    \draw [loosely dotted, thick] (1) -- (0);
    \draw [->] (2) -- (0);
    \draw [->] (0) -- (3);
    \draw [->] (3) -- (1);
    \draw [loosely dotted, thick] (3) -- (2);
    \draw [->] (2) -- (4);
    \draw [->] (4) -- (3);
    \draw [loosely dotted, thick] (3) -- (5);
    \draw [->] (5) -- (4);
   \end{tikzpicture}
  };
 \node (H) at (1,4.25)
  {
   \begin{tikzpicture}[scale=1.5,xscale=.29,yscale=-.5]
    \draw (-1.87,-1.5) rectangle (2.87,1.5);
    \node (0) at (0,-1) [vertex] {};
    \node (1) at (2,-1) [tvertex] {T};
    \node (2) at (-1,0) [tvertex] {X};
    \node (3) at (1,0) [vertex] {};
    \node (4) at (0,1) [vertex] {};
    \node (5) at (2,1) [tvertex] {C};
    \draw [->] (1) -- (0);
    \draw [->] (2) -- (0);
    \draw [->] (0) -- (3);
    \draw [loosely dotted, thick] (3) -- (1);
    \draw [loosely dotted, thick] (3) -- (2);
    \draw [->] (2) -- (4);
    \draw [->] (4) -- (3);
    \draw [->] (3) -- (5);
    \draw [loosely dotted, thick] (5) -- (4);
   \end{tikzpicture}
  };
 \node (I) at (0,5)
  {
   \begin{tikzpicture}[scale=1.5,xscale=.29,yscale=-.5]
    \draw (-1.87,-1.5) rectangle (2.87,1.5);
    \node (0) at (0,-1) [vertex] {};
    \node (1) at (2,-1) [tvertex] {C};
    \node (2) at (-1,0) [tvertex] {T};
    \node (3) at (1,0) [vertex] {};
    \node (4) at (0,1) [vertex] {};
    \node (5) at (2,1) [tvertex] {C};
    \draw [loosely dotted, thick] (1) -- (0);
    \draw [->] (2) -- (0);
    \draw [->] (0) -- (3);
    \draw [->] (3) -- (1);
    \draw [loosely dotted, thick] (3) -- (2);
    \draw [->] (2) -- (4);
    \draw [->] (4) -- (3);
    \draw [->] (3) -- (5);
    \draw [loosely dotted, thick] (5) -- (4);
   \end{tikzpicture}
  };
 \node (Eq1) at (-1,0) [circle,draw=black,thick] {=};
 \node (Eq2) at (-1,5) [circle,draw=black,thick] {=};
 \draw [dashed] (-1.5,0) -- (Eq1);
 \draw [dashed] (Eq1) -- (A);
 \draw [dashed] (A) -- (1.5,0);
 \draw [dashed] (-1.5,5) -- (Eq2);
 \draw [dashed] (Eq2) -- (I);
 \draw [dashed] (I) -- (1.5,5);
 \draw (A) -- (B);
 \draw (B) -- (C);
 \draw (B) -- (D);
 \draw (C) -- (E);
 \draw (D) -- (E);
 \draw (E) -- (F);
 \draw (F) -- (G);
 \draw (F) -- (H);
 \draw (G) -- (I);
 \draw (H) -- (I);
\end{tikzpicture} \]}
\newcommand{\iteratedAprLinearAFour}{
\[ \begin{tikzpicture}[xscale=3,yscale=-2]
 \node (A1) at (0,0)
  {
   \begin{tikzpicture}[scale=.9,xscale=.29,yscale=-.5]
    \node (030) at (0,-1) [svertex] {};
    \node (120) at (-1,0) [svertex] {};
    \node (021) at (1,0) [svertex] {};
    \node (210) at (-2,1) [svertex] {};
    \node (111) at (0,1) [svertex] {};
    \node (012) at (2,1) [svertex] {};
    \node (300) at (-3,2) [svertex] {};
    \node (201) at (-1,2) [svertex] {};
    \node (102) at (1,2) [svertex] {};
    \node (003) at (3,2) [svertex] {};
    \draw [bgarrow] (120) -- (030);
    \draw [bgarrow] (030) -- (021);
    \draw [admset] (021) -- (120);
    \draw [bgarrow] (210) -- (120);
    \draw [bgarrow] (120) -- (111);
    \draw [bgarrow] (111) -- (021);
    \draw [bgarrow] (021) -- (012);
    \draw [admset] (012) -- (111);
    \draw [admset] (111) -- (210);
    \draw [bgarrow] (300) -- (210);
    \draw [bgarrow] (210) -- (201);
    \draw [bgarrow] (201) -- (111);
    \draw [bgarrow] (111) -- (102);
    \draw [bgarrow] (102) -- (012);
    \draw [bgarrow] (012) -- (003);
    \draw [admset] (003) -- (102);
    \draw [admset] (102) -- (201);
    \draw [admset] (201) -- (300);
   \end{tikzpicture}
  };
 \node (A2) at (1,0)
  {
   \begin{tikzpicture}[scale=.9,xscale=.29,yscale=-.5]
    \node (030) at (0,-1) [svertex] {};
    \node (120) at (-1,0) [svertex] {};
    \node (021) at (1,0) [svertex] {};
    \node (210) at (-2,1) [svertex] {};
    \node (111) at (0,1) [svertex] {};
    \node (012) at (2,1) [svertex] {};
    \node (300) at (-3,2) [svertex] {};
    \node (201) at (-1,2) [svertex] {};
    \node (102) at (1,2) [svertex] {};
    \node (003) at (3,2) [svertex] {};
    \draw [bgarrow] (120) -- (030);
    \draw [bgarrow] (030) -- (021);
    \draw [admset] (021) -- (120);
    \draw [bgarrow] (210) -- (120);
    \draw [bgarrow] (120) -- (111);
    \draw [bgarrow] (111) -- (021);
    \draw [bgarrow] (021) -- (012);
    \draw [admset] (012) -- (111);
    \draw [admset] (111) -- (210);
    \draw [admset] (300) -- (210);
    \draw [bgarrow] (210) -- (201);
    \draw [bgarrow] (201) -- (111);
    \draw [bgarrow] (111) -- (102);
    \draw [bgarrow] (102) -- (012);
    \draw [admset] (012) -- (003);
    \draw [bgarrow] (003) -- (102);
    \draw [admset] (102) -- (201);
    \draw [bgarrow] (201) -- (300);
   \end{tikzpicture}
  };
 \node (A3) at (2,0)
  {
   \begin{tikzpicture}[scale=.9,xscale=.29,yscale=-.5]
    \node (030) at (0,-1) [svertex] {};
    \node (120) at (-1,0) [svertex] {};
    \node (021) at (1,0) [svertex] {};
    \node (210) at (-2,1) [svertex] {};
    \node (111) at (0,1) [svertex] {};
    \node (012) at (2,1) [svertex] {};
    \node (300) at (-3,2) [svertex] {};
    \node (201) at (-1,2) [svertex] {};
    \node (102) at (1,2) [svertex] {};
    \node (003) at (3,2) [svertex] {};
    \draw [bgarrow] (120) -- (030);
    \draw [bgarrow] (030) -- (021);
    \draw [admset] (021) -- (120);
    \draw [admset] (210) -- (120);
    \draw [bgarrow] (120) -- (111);
    \draw [bgarrow] (111) -- (021);
    \draw [admset] (021) -- (012);
    \draw [bgarrow] (012) -- (111);
    \draw [bgarrow] (111) -- (210);
    \draw [bgarrow] (300) -- (210);
    \draw [admset] (210) -- (201);
    \draw [bgarrow] (201) -- (111);
    \draw [bgarrow] (111) -- (102);
    \draw [admset] (102) -- (012);
    \draw [bgarrow] (012) -- (003);
    \draw [bgarrow] (003) -- (102);
    \draw [admset] (102) -- (201);
    \draw [bgarrow] (201) -- (300);
   \end{tikzpicture}
  };
 \node (B1) at (.5,1)
  {
   \begin{tikzpicture}[scale=.9,xscale=.29,yscale=-.5]
    \node (030) at (0,-1) [svertex] {};
    \node (120) at (-1,0) [svertex] {};
    \node (021) at (1,0) [svertex] {};
    \node (210) at (-2,1) [svertex] {};
    \node (111) at (0,1) [svertex] {};
    \node (012) at (2,1) [svertex] {};
    \node (300) at (-3,2) [svertex] {};
    \node (201) at (-1,2) [svertex] {};
    \node (102) at (1,2) [svertex] {};
    \node (003) at (3,2) [svertex] {};
    \draw [bgarrow] (120) -- (030);
    \draw [bgarrow] (030) -- (021);
    \draw [admset] (021) -- (120);
    \draw [bgarrow] (210) -- (120);
    \draw [bgarrow] (120) -- (111);
    \draw [bgarrow] (111) -- (021);
    \draw [bgarrow] (021) -- (012);
    \draw [admset] (012) -- (111);
    \draw [admset] (111) -- (210);
    \draw [admset] (300) -- (210);
    \draw [bgarrow] (210) -- (201);
    \draw [bgarrow] (201) -- (111);
    \draw [bgarrow] (111) -- (102);
    \draw [bgarrow] (102) -- (012);
    \draw [bgarrow] (012) -- (003);
    \draw [admset] (003) -- (102);
    \draw [admset] (102) -- (201);
    \draw [bgarrow] (201) -- (300);
   \end{tikzpicture}
  };
 \node (B2) at (1.5,1)
  {
   \begin{tikzpicture}[scale=.9,xscale=.29,yscale=-.5]
    \node (030) at (0,-1) [svertex] {};
    \node (120) at (-1,0) [svertex] {};
    \node (021) at (1,0) [svertex] {};
    \node (210) at (-2,1) [svertex] {};
    \node (111) at (0,1) [svertex] {};
    \node (012) at (2,1) [svertex] {};
    \node (300) at (-3,2) [svertex] {};
    \node (201) at (-1,2) [svertex] {};
    \node (102) at (1,2) [svertex] {};
    \node (003) at (3,2) [svertex] {};
    \draw [bgarrow] (120) -- (030);
    \draw [bgarrow] (030) -- (021);
    \draw [admset] (021) -- (120);
    \draw [admset] (210) -- (120);
    \draw [bgarrow] (120) -- (111);
    \draw [bgarrow] (111) -- (021);
    \draw [bgarrow] (021) -- (012);
    \draw [admset] (012) -- (111);
    \draw [bgarrow] (111) -- (210);
    \draw [bgarrow] (300) -- (210);
    \draw [admset] (210) -- (201);
    \draw [bgarrow] (201) -- (111);
    \draw [bgarrow] (111) -- (102);
    \draw [bgarrow] (102) -- (012);
    \draw [admset] (012) -- (003);
    \draw [bgarrow] (003) -- (102);
    \draw [admset] (102) -- (201);
    \draw [bgarrow] (201) -- (300);
   \end{tikzpicture}
  };
 \node (C1) at (.5,2)
  {
   \begin{tikzpicture}[scale=.9,xscale=.29,yscale=-.5]
    \node (030) at (0,-1) [svertex] {};
    \node (120) at (-1,0) [svertex] {};
    \node (021) at (1,0) [svertex] {};
    \node (210) at (-2,1) [svertex] {};
    \node (111) at (0,1) [svertex] {};
    \node (012) at (2,1) [svertex] {};
    \node (300) at (-3,2) [svertex] {};
    \node (201) at (-1,2) [svertex] {};
    \node (102) at (1,2) [svertex] {};
    \node (003) at (3,2) [svertex] {};
    \draw [bgarrow] (120) -- (030);
    \draw [bgarrow] (030) -- (021);
    \draw [admset] (021) -- (120);
    \draw [admset] (210) -- (120);
    \draw [bgarrow] (120) -- (111);
    \draw [bgarrow] (111) -- (021);
    \draw [bgarrow] (021) -- (012);
    \draw [admset] (012) -- (111);
    \draw [bgarrow] (111) -- (210);
    \draw [bgarrow] (300) -- (210);
    \draw [admset] (210) -- (201);
    \draw [bgarrow] (201) -- (111);
    \draw [bgarrow] (111) -- (102);
    \draw [bgarrow] (102) -- (012);
    \draw [bgarrow] (012) -- (003);
    \draw [admset] (003) -- (102);
    \draw [admset] (102) -- (201);
    \draw [bgarrow] (201) -- (300);
   \end{tikzpicture}
  };
 \node (C2) at (1.5,2)
  {
   \begin{tikzpicture}[scale=.9,xscale=.29,yscale=-.5]
    \node (030) at (0,-1) [svertex] {};
    \node (120) at (-1,0) [svertex] {};
    \node (021) at (1,0) [svertex] {};
    \node (210) at (-2,1) [svertex] {};
    \node (111) at (0,1) [svertex] {};
    \node (012) at (2,1) [svertex] {};
    \node (300) at (-3,2) [svertex] {};
    \node (201) at (-1,2) [svertex] {};
    \node (102) at (1,2) [svertex] {};
    \node (003) at (3,2) [svertex] {};
    \draw [bgarrow] (120) -- (030);
    \draw [bgarrow] (030) -- (021);
    \draw [admset] (021) -- (120);
    \draw [admset] (210) -- (120);
    \draw [bgarrow] (120) -- (111);
    \draw [bgarrow] (111) -- (021);
    \draw [bgarrow] (021) -- (012);
    \draw [admset] (012) -- (111);
    \draw [bgarrow] (111) -- (210);
    \draw [bgarrow] (300) -- (210);
    \draw [bgarrow] (210) -- (201);
    \draw [admset] (201) -- (111);
    \draw [bgarrow] (111) -- (102);
    \draw [bgarrow] (102) -- (012);
    \draw [admset] (012) -- (003);
    \draw [bgarrow] (003) -- (102);
    \draw [bgarrow] (102) -- (201);
    \draw [admset] (201) -- (300);
   \end{tikzpicture}
  };
 \node (D1) at (0,3)
  {
   \begin{tikzpicture}[scale=.9,xscale=.29,yscale=-.5]
    \node (030) at (0,-1) [svertex] {};
    \node (120) at (-1,0) [svertex] {};
    \node (021) at (1,0) [svertex] {};
    \node (210) at (-2,1) [svertex] {};
    \node (111) at (0,1) [svertex] {};
    \node (012) at (2,1) [svertex] {};
    \node (300) at (-3,2) [svertex] {};
    \node (201) at (-1,2) [svertex] {};
    \node (102) at (1,2) [svertex] {};
    \node (003) at (3,2) [svertex] {};
    \draw [admset] (120) -- (030);
    \draw [bgarrow] (030) -- (021);
    \draw [bgarrow] (021) -- (120);
    \draw [bgarrow] (210) -- (120);
    \draw [admset] (120) -- (111);
    \draw [bgarrow] (111) -- (021);
    \draw [bgarrow] (021) -- (012);
    \draw [admset] (012) -- (111);
    \draw [bgarrow] (111) -- (210);
    \draw [bgarrow] (300) -- (210);
    \draw [admset] (210) -- (201);
    \draw [bgarrow] (201) -- (111);
    \draw [bgarrow] (111) -- (102);
    \draw [bgarrow] (102) -- (012);
    \draw [bgarrow] (012) -- (003);
    \draw [admset] (003) -- (102);
    \draw [admset] (102) -- (201);
    \draw [bgarrow] (201) -- (300);
   \end{tikzpicture}
  };
 \node (D2) at (1,3)
  {
   \begin{tikzpicture}[scale=.9,xscale=.29,yscale=-.5]
    \node (030) at (0,-1) [svertex] {};
    \node (120) at (-1,0) [svertex] {};
    \node (021) at (1,0) [svertex] {};
    \node (210) at (-2,1) [svertex] {};
    \node (111) at (0,1) [svertex] {};
    \node (012) at (2,1) [svertex] {};
    \node (300) at (-3,2) [svertex] {};
    \node (201) at (-1,2) [svertex] {};
    \node (102) at (1,2) [svertex] {};
    \node (003) at (3,2) [svertex] {};
    \draw [bgarrow] (120) -- (030);
    \draw [bgarrow] (030) -- (021);
    \draw [admset] (021) -- (120);
    \draw [admset] (210) -- (120);
    \draw [bgarrow] (120) -- (111);
    \draw [bgarrow] (111) -- (021);
    \draw [bgarrow] (021) -- (012);
    \draw [admset] (012) -- (111);
    \draw [bgarrow] (111) -- (210);
    \draw [bgarrow] (300) -- (210);
    \draw [bgarrow] (210) -- (201);
    \draw [admset] (201) -- (111);
    \draw [bgarrow] (111) -- (102);
    \draw [bgarrow] (102) -- (012);
    \draw [bgarrow] (012) -- (003);
    \draw [admset] (003) -- (102);
    \draw [bgarrow] (102) -- (201);
    \draw [admset] (201) -- (300);
   \end{tikzpicture}
  };
 \node (D3) at (2,3)
  {
   \begin{tikzpicture}[scale=.9,xscale=.29,yscale=-.5]
    \node (030) at (0,-1) [svertex] {};
    \node (120) at (-1,0) [svertex] {};
    \node (021) at (1,0) [svertex] {};
    \node (210) at (-2,1) [svertex] {};
    \node (111) at (0,1) [svertex] {};
    \node (012) at (2,1) [svertex] {};
    \node (300) at (-3,2) [svertex] {};
    \node (201) at (-1,2) [svertex] {};
    \node (102) at (1,2) [svertex] {};
    \node (003) at (3,2) [svertex] {};
    \draw [admset] (120) -- (030);
    \draw [bgarrow] (030) -- (021);
    \draw [bgarrow] (021) -- (120);
    \draw [bgarrow] (210) -- (120);
    \draw [admset] (120) -- (111);
    \draw [bgarrow] (111) -- (021);
    \draw [bgarrow] (021) -- (012);
    \draw [admset] (012) -- (111);
    \draw [bgarrow] (111) -- (210);
    \draw [bgarrow] (300) -- (210);
    \draw [bgarrow] (210) -- (201);
    \draw [admset] (201) -- (111);
    \draw [bgarrow] (111) -- (102);
    \draw [bgarrow] (102) -- (012);
    \draw [admset] (012) -- (003);
    \draw [bgarrow] (003) -- (102);
    \draw [bgarrow] (102) -- (201);
    \draw [admset] (201) -- (300);
   \end{tikzpicture}
  };
 \node (E1) at (0,4)
  {
   \begin{tikzpicture}[scale=.9,xscale=.29,yscale=-.5]
    \node (030) at (0,-1) [svertex] {};
    \node (120) at (-1,0) [svertex] {};
    \node (021) at (1,0) [svertex] {};
    \node (210) at (-2,1) [svertex] {};
    \node (111) at (0,1) [svertex] {};
    \node (012) at (2,1) [svertex] {};
    \node (300) at (-3,2) [svertex] {};
    \node (201) at (-1,2) [svertex] {};
    \node (102) at (1,2) [svertex] {};
    \node (003) at (3,2) [svertex] {};
    \draw [bgarrow] (120) -- (030);
    \draw [bgarrow] (030) -- (021);
    \draw [admset] (021) -- (120);
    \draw [admset] (210) -- (120);
    \draw [bgarrow] (120) -- (111);
    \draw [bgarrow] (111) -- (021);
    \draw [bgarrow] (021) -- (012);
    \draw [admset] (012) -- (111);
    \draw [bgarrow] (111) -- (210);
    \draw [admset] (300) -- (210);
    \draw [bgarrow] (210) -- (201);
    \draw [admset] (201) -- (111);
    \draw [bgarrow] (111) -- (102);
    \draw [bgarrow] (102) -- (012);
    \draw [bgarrow] (012) -- (003);
    \draw [admset] (003) -- (102);
    \draw [bgarrow] (102) -- (201);
    \draw [bgarrow] (201) -- (300);
   \end{tikzpicture}
  };
 \node (E2) at (1,4)
  {
   \begin{tikzpicture}[scale=.9,xscale=.29,yscale=-.5]
    \node (030) at (0,-1) [svertex] {};
    \node (120) at (-1,0) [svertex] {};
    \node (021) at (1,0) [svertex] {};
    \node (210) at (-2,1) [svertex] {};
    \node (111) at (0,1) [svertex] {};
    \node (012) at (2,1) [svertex] {};
    \node (300) at (-3,2) [svertex] {};
    \node (201) at (-1,2) [svertex] {};
    \node (102) at (1,2) [svertex] {};
    \node (003) at (3,2) [svertex] {};
    \draw [admset] (120) -- (030);
    \draw [bgarrow] (030) -- (021);
    \draw [bgarrow] (021) -- (120);
    \draw [bgarrow] (210) -- (120);
    \draw [admset] (120) -- (111);
    \draw [bgarrow] (111) -- (021);
    \draw [bgarrow] (021) -- (012);
    \draw [admset] (012) -- (111);
    \draw [bgarrow] (111) -- (210);
    \draw [bgarrow] (300) -- (210);
    \draw [bgarrow] (210) -- (201);
    \draw [admset] (201) -- (111);
    \draw [bgarrow] (111) -- (102);
    \draw [bgarrow] (102) -- (012);
    \draw [bgarrow] (012) -- (003);
    \draw [admset] (003) -- (102);
    \draw [bgarrow] (102) -- (201);
    \draw [admset] (201) -- (300);
   \end{tikzpicture}
  };
 \node (E3) at (2,4)
  {
   \begin{tikzpicture}[scale=.9,xscale=.29,yscale=-.5]
    \node (030) at (0,-1) [svertex] {};
    \node (120) at (-1,0) [svertex] {};
    \node (021) at (1,0) [svertex] {};
    \node (210) at (-2,1) [svertex] {};
    \node (111) at (0,1) [svertex] {};
    \node (012) at (2,1) [svertex] {};
    \node (300) at (-3,2) [svertex] {};
    \node (201) at (-1,2) [svertex] {};
    \node (102) at (1,2) [svertex] {};
    \node (003) at (3,2) [svertex] {};
    \draw [admset] (120) -- (030);
    \draw [bgarrow] (030) -- (021);
    \draw [bgarrow] (021) -- (120);
    \draw [bgarrow] (210) -- (120);
    \draw [bgarrow] (120) -- (111);
    \draw [admset] (111) -- (021);
    \draw [bgarrow] (021) -- (012);
    \draw [bgarrow] (012) -- (111);
    \draw [admset] (111) -- (210);
    \draw [bgarrow] (300) -- (210);
    \draw [bgarrow] (210) -- (201);
    \draw [bgarrow] (201) -- (111);
    \draw [admset] (111) -- (102);
    \draw [bgarrow] (102) -- (012);
    \draw [admset] (012) -- (003);
    \draw [bgarrow] (003) -- (102);
    \draw [bgarrow] (102) -- (201);
    \draw [admset] (201) -- (300);
   \end{tikzpicture}
  };
 \node (F1) at (.5,5)
  {
   \begin{tikzpicture}[scale=.9,xscale=.29,yscale=-.5]
    \node (030) at (0,-1) [svertex] {};
    \node (120) at (-1,0) [svertex] {};
    \node (021) at (1,0) [svertex] {};
    \node (210) at (-2,1) [svertex] {};
    \node (111) at (0,1) [svertex] {};
    \node (012) at (2,1) [svertex] {};
    \node (300) at (-3,2) [svertex] {};
    \node (201) at (-1,2) [svertex] {};
    \node (102) at (1,2) [svertex] {};
    \node (003) at (3,2) [svertex] {};
    \draw [bgarrow] (120) -- (030);
    \draw [admset] (030) -- (021);
    \draw [bgarrow] (021) -- (120);
    \draw [bgarrow] (210) -- (120);
    \draw [admset] (120) -- (111);
    \draw [bgarrow] (111) -- (021);
    \draw [bgarrow] (021) -- (012);
    \draw [admset] (012) -- (111);
    \draw [bgarrow] (111) -- (210);
    \draw [bgarrow] (300) -- (210);
    \draw [bgarrow] (210) -- (201);
    \draw [admset] (201) -- (111);
    \draw [bgarrow] (111) -- (102);
    \draw [bgarrow] (102) -- (012);
    \draw [bgarrow] (012) -- (003);
    \draw [admset] (003) -- (102);
    \draw [bgarrow] (102) -- (201);
    \draw [admset] (201) -- (300);
   \end{tikzpicture}
  };
 \node (F2) at (1.5,5)
  {
   \begin{tikzpicture}[scale=.9,xscale=.29,yscale=-.5]
    \node (030) at (0,-1) [svertex] {};
    \node (120) at (-1,0) [svertex] {};
    \node (021) at (1,0) [svertex] {};
    \node (210) at (-2,1) [svertex] {};
    \node (111) at (0,1) [svertex] {};
    \node (012) at (2,1) [svertex] {};
    \node (300) at (-3,2) [svertex] {};
    \node (201) at (-1,2) [svertex] {};
    \node (102) at (1,2) [svertex] {};
    \node (003) at (3,2) [svertex] {};
    \draw [admset] (120) -- (030);
    \draw [bgarrow] (030) -- (021);
    \draw [bgarrow] (021) -- (120);
    \draw [bgarrow] (210) -- (120);
    \draw [bgarrow] (120) -- (111);
    \draw [admset] (111) -- (021);
    \draw [bgarrow] (021) -- (012);
    \draw [bgarrow] (012) -- (111);
    \draw [admset] (111) -- (210);
    \draw [bgarrow] (300) -- (210);
    \draw [bgarrow] (210) -- (201);
    \draw [bgarrow] (201) -- (111);
    \draw [admset] (111) -- (102);
    \draw [bgarrow] (102) -- (012);
    \draw [bgarrow] (012) -- (003);
    \draw [admset] (003) -- (102);
    \draw [bgarrow] (102) -- (201);
    \draw [admset] (201) -- (300);
   \end{tikzpicture}
  };
 \node (G1) at (0,6)
  {
   \begin{tikzpicture}[scale=.9,xscale=.29,yscale=-.5]
    \node (030) at (0,-1) [svertex] {};
    \node (120) at (-1,0) [svertex] {};
    \node (021) at (1,0) [svertex] {};
    \node (210) at (-2,1) [svertex] {};
    \node (111) at (0,1) [svertex] {};
    \node (012) at (2,1) [svertex] {};
    \node (300) at (-3,2) [svertex] {};
    \node (201) at (-1,2) [svertex] {};
    \node (102) at (1,2) [svertex] {};
    \node (003) at (3,2) [svertex] {};
    \draw [bgarrow] (120) -- (030);
    \draw [admset] (030) -- (021);
    \draw [bgarrow] (021) -- (120);
    \draw [bgarrow] (210) -- (120);
    \draw [admset] (120) -- (111);
    \draw [bgarrow] (111) -- (021);
    \draw [bgarrow] (021) -- (012);
    \draw [admset] (012) -- (111);
    \draw [bgarrow] (111) -- (210);
    \draw [admset] (300) -- (210);
    \draw [bgarrow] (210) -- (201);
    \draw [admset] (201) -- (111);
    \draw [bgarrow] (111) -- (102);
    \draw [bgarrow] (102) -- (012);
    \draw [bgarrow] (012) -- (003);
    \draw [admset] (003) -- (102);
    \draw [bgarrow] (102) -- (201);
    \draw [bgarrow] (201) -- (300);
   \end{tikzpicture}
  };
 \node (G2) at (1,6)
  {
   \begin{tikzpicture}[scale=.9,xscale=.29,yscale=-.5]
    \node (030) at (0,-1) [svertex] {};
    \node (120) at (-1,0) [svertex] {};
    \node (021) at (1,0) [svertex] {};
    \node (210) at (-2,1) [svertex] {};
    \node (111) at (0,1) [svertex] {};
    \node (012) at (2,1) [svertex] {};
    \node (300) at (-3,2) [svertex] {};
    \node (201) at (-1,2) [svertex] {};
    \node (102) at (1,2) [svertex] {};
    \node (003) at (3,2) [svertex] {};
    \draw [bgarrow] (120) -- (030);
    \draw [admset] (030) -- (021);
    \draw [bgarrow] (021) -- (120);
    \draw [bgarrow] (210) -- (120);
    \draw [bgarrow] (120) -- (111);
    \draw [admset] (111) -- (021);
    \draw [bgarrow] (021) -- (012);
    \draw [bgarrow] (012) -- (111);
    \draw [admset] (111) -- (210);
    \draw [bgarrow] (300) -- (210);
    \draw [bgarrow] (210) -- (201);
    \draw [bgarrow] (201) -- (111);
    \draw [admset] (111) -- (102);
    \draw [bgarrow] (102) -- (012);
    \draw [bgarrow] (012) -- (003);
    \draw [admset] (003) -- (102);
    \draw [bgarrow] (102) -- (201);
    \draw [admset] (201) -- (300);
   \end{tikzpicture}
  };
 \node (G3) at (2,6)
  {
   \begin{tikzpicture}[scale=.9,xscale=.29,yscale=-.5]
    \node (030) at (0,-1) [svertex] {};
    \node (120) at (-1,0) [svertex] {};
    \node (021) at (1,0) [svertex] {};
    \node (210) at (-2,1) [svertex] {};
    \node (111) at (0,1) [svertex] {};
    \node (012) at (2,1) [svertex] {};
    \node (300) at (-3,2) [svertex] {};
    \node (201) at (-1,2) [svertex] {};
    \node (102) at (1,2) [svertex] {};
    \node (003) at (3,2) [svertex] {};
    \draw [admset] (120) -- (030);
    \draw [bgarrow] (030) -- (021);
    \draw [bgarrow] (021) -- (120);
    \draw [bgarrow] (210) -- (120);
    \draw [bgarrow] (120) -- (111);
    \draw [admset] (111) -- (021);
    \draw [bgarrow] (021) -- (012);
    \draw [bgarrow] (012) -- (111);
    \draw [admset] (111) -- (210);
    \draw [bgarrow] (300) -- (210);
    \draw [bgarrow] (210) -- (201);
    \draw [bgarrow] (201) -- (111);
    \draw [bgarrow] (111) -- (102);
    \draw [admset] (102) -- (012);
    \draw [bgarrow] (012) -- (003);
    \draw [bgarrow] (003) -- (102);
    \draw [admset] (102) -- (201);
    \draw [admset] (201) -- (300);
   \end{tikzpicture}
  };
 \node (H1) at (0,7)
  {
   \begin{tikzpicture}[scale=.9,xscale=.29,yscale=-.5]
    \node (030) at (0,-1) [svertex] {};
    \node (120) at (-1,0) [svertex] {};
    \node (021) at (1,0) [svertex] {};
    \node (210) at (-2,1) [svertex] {};
    \node (111) at (0,1) [svertex] {};
    \node (012) at (2,1) [svertex] {};
    \node (300) at (-3,2) [svertex] {};
    \node (201) at (-1,2) [svertex] {};
    \node (102) at (1,2) [svertex] {};
    \node (003) at (3,2) [svertex] {};
    \draw [bgarrow] (120) -- (030);
    \draw [admset] (030) -- (021);
    \draw [bgarrow] (021) -- (120);
    \draw [bgarrow] (210) -- (120);
    \draw [bgarrow] (120) -- (111);
    \draw [admset] (111) -- (021);
    \draw [bgarrow] (021) -- (012);
    \draw [bgarrow] (012) -- (111);
    \draw [admset] (111) -- (210);
    \draw [admset] (300) -- (210);
    \draw [bgarrow] (210) -- (201);
    \draw [bgarrow] (201) -- (111);
    \draw [admset] (111) -- (102);
    \draw [bgarrow] (102) -- (012);
    \draw [bgarrow] (012) -- (003);
    \draw [admset] (003) -- (102);
    \draw [bgarrow] (102) -- (201);
    \draw [bgarrow] (201) -- (300);
   \end{tikzpicture}
  };
 \node (H2) at (1,7)
  {
   \begin{tikzpicture}[scale=.9,xscale=.29,yscale=-.5]
    \node (030) at (0,-1) [svertex] {};
    \node (120) at (-1,0) [svertex] {};
    \node (021) at (1,0) [svertex] {};
    \node (210) at (-2,1) [svertex] {};
    \node (111) at (0,1) [svertex] {};
    \node (012) at (2,1) [svertex] {};
    \node (300) at (-3,2) [svertex] {};
    \node (201) at (-1,2) [svertex] {};
    \node (102) at (1,2) [svertex] {};
    \node (003) at (3,2) [svertex] {};
    \draw [bgarrow] (120) -- (030);
    \draw [bgarrow] (030) -- (021);
    \draw [admset] (021) -- (120);
    \draw [bgarrow] (210) -- (120);
    \draw [bgarrow] (120) -- (111);
    \draw [bgarrow] (111) -- (021);
    \draw [admset] (021) -- (012);
    \draw [bgarrow] (012) -- (111);
    \draw [admset] (111) -- (210);
    \draw [bgarrow] (300) -- (210);
    \draw [bgarrow] (210) -- (201);
    \draw [bgarrow] (201) -- (111);
    \draw [admset] (111) -- (102);
    \draw [bgarrow] (102) -- (012);
    \draw [bgarrow] (012) -- (003);
    \draw [admset] (003) -- (102);
    \draw [bgarrow] (102) -- (201);
    \draw [admset] (201) -- (300);
   \end{tikzpicture}
  };
 \node (H3) at (2,7)
  {
   \begin{tikzpicture}[scale=.9,xscale=.29,yscale=-.5]
    \node (030) at (0,-1) [svertex] {};
    \node (120) at (-1,0) [svertex] {};
    \node (021) at (1,0) [svertex] {};
    \node (210) at (-2,1) [svertex] {};
    \node (111) at (0,1) [svertex] {};
    \node (012) at (2,1) [svertex] {};
    \node (300) at (-3,2) [svertex] {};
    \node (201) at (-1,2) [svertex] {};
    \node (102) at (1,2) [svertex] {};
    \node (003) at (3,2) [svertex] {};
    \draw [bgarrow] (120) -- (030);
    \draw [admset] (030) -- (021);
    \draw [bgarrow] (021) -- (120);
    \draw [bgarrow] (210) -- (120);
    \draw [bgarrow] (120) -- (111);
    \draw [admset] (111) -- (021);
    \draw [bgarrow] (021) -- (012);
    \draw [bgarrow] (012) -- (111);
    \draw [admset] (111) -- (210);
    \draw [bgarrow] (300) -- (210);
    \draw [bgarrow] (210) -- (201);
    \draw [bgarrow] (201) -- (111);
    \draw [bgarrow] (111) -- (102);
    \draw [admset] (102) -- (012);
    \draw [bgarrow] (012) -- (003);
    \draw [bgarrow] (003) -- (102);
    \draw [admset] (102) -- (201);
    \draw [admset] (201) -- (300);
   \end{tikzpicture}
  };
 \node (I1) at (.5,8)
  {
   \begin{tikzpicture}[scale=.9,xscale=.29,yscale=-.5]
    \node (030) at (0,-1) [svertex] {};
    \node (120) at (-1,0) [svertex] {};
    \node (021) at (1,0) [svertex] {};
    \node (210) at (-2,1) [svertex] {};
    \node (111) at (0,1) [svertex] {};
    \node (012) at (2,1) [svertex] {};
    \node (300) at (-3,2) [svertex] {};
    \node (201) at (-1,2) [svertex] {};
    \node (102) at (1,2) [svertex] {};
    \node (003) at (3,2) [svertex] {};
    \draw [bgarrow] (120) -- (030);
    \draw [bgarrow] (030) -- (021);
    \draw [admset] (021) -- (120);
    \draw [bgarrow] (210) -- (120);
    \draw [bgarrow] (120) -- (111);
    \draw [bgarrow] (111) -- (021);
    \draw [admset] (021) -- (012);
    \draw [bgarrow] (012) -- (111);
    \draw [admset] (111) -- (210);
    \draw [admset] (300) -- (210);
    \draw [bgarrow] (210) -- (201);
    \draw [bgarrow] (201) -- (111);
    \draw [admset] (111) -- (102);
    \draw [bgarrow] (102) -- (012);
    \draw [bgarrow] (012) -- (003);
    \draw [admset] (003) -- (102);
    \draw [bgarrow] (102) -- (201);
    \draw [bgarrow] (201) -- (300);
   \end{tikzpicture}
  };
 \node (I2) at (1.5,8)
  {
   \begin{tikzpicture}[scale=.9,xscale=.29,yscale=-.5]
    \node (030) at (0,-1) [svertex] {};
    \node (120) at (-1,0) [svertex] {};
    \node (021) at (1,0) [svertex] {};
    \node (210) at (-2,1) [svertex] {};
    \node (111) at (0,1) [svertex] {};
    \node (012) at (2,1) [svertex] {};
    \node (300) at (-3,2) [svertex] {};
    \node (201) at (-1,2) [svertex] {};
    \node (102) at (1,2) [svertex] {};
    \node (003) at (3,2) [svertex] {};
    \draw [bgarrow] (120) -- (030);
    \draw [bgarrow] (030) -- (021);
    \draw [admset] (021) -- (120);
    \draw [bgarrow] (210) -- (120);
    \draw [bgarrow] (120) -- (111);
    \draw [bgarrow] (111) -- (021);
    \draw [admset] (021) -- (012);
    \draw [bgarrow] (012) -- (111);
    \draw [admset] (111) -- (210);
    \draw [bgarrow] (300) -- (210);
    \draw [bgarrow] (210) -- (201);
    \draw [bgarrow] (201) -- (111);
    \draw [bgarrow] (111) -- (102);
    \draw [admset] (102) -- (012);
    \draw [bgarrow] (012) -- (003);
    \draw [bgarrow] (003) -- (102);
    \draw [admset] (102) -- (201);
    \draw [admset] (201) -- (300);
   \end{tikzpicture}
  };
 \node (J1) at (.5,9)
  {
   \begin{tikzpicture}[scale=.9,xscale=.29,yscale=-.5]
    \node (030) at (0,-1) [svertex] {};
    \node (120) at (-1,0) [svertex] {};
    \node (021) at (1,0) [svertex] {};
    \node (210) at (-2,1) [svertex] {};
    \node (111) at (0,1) [svertex] {};
    \node (012) at (2,1) [svertex] {};
    \node (300) at (-3,2) [svertex] {};
    \node (201) at (-1,2) [svertex] {};
    \node (102) at (1,2) [svertex] {};
    \node (003) at (3,2) [svertex] {};
    \draw [bgarrow] (120) -- (030);
    \draw [bgarrow] (030) -- (021);
    \draw [admset] (021) -- (120);
    \draw [bgarrow] (210) -- (120);
    \draw [bgarrow] (120) -- (111);
    \draw [bgarrow] (111) -- (021);
    \draw [admset] (021) -- (012);
    \draw [bgarrow] (012) -- (111);
    \draw [admset] (111) -- (210);
    \draw [admset] (300) -- (210);
    \draw [bgarrow] (210) -- (201);
    \draw [bgarrow] (201) -- (111);
    \draw [bgarrow] (111) -- (102);
    \draw [admset] (102) -- (012);
    \draw [bgarrow] (012) -- (003);
    \draw [bgarrow] (003) -- (102);
    \draw [admset] (102) -- (201);
    \draw [bgarrow] (201) -- (300);
   \end{tikzpicture}
  };
 \node (J2) at (1.5,9)
  {
   \begin{tikzpicture}[scale=.9,xscale=.29,yscale=-.5]
    \node (030) at (0,-1) [svertex] {};
    \node (120) at (-1,0) [svertex] {};
    \node (021) at (1,0) [svertex] {};
    \node (210) at (-2,1) [svertex] {};
    \node (111) at (0,1) [svertex] {};
    \node (012) at (2,1) [svertex] {};
    \node (300) at (-3,2) [svertex] {};
    \node (201) at (-1,2) [svertex] {};
    \node (102) at (1,2) [svertex] {};
    \node (003) at (3,2) [svertex] {};
    \draw [bgarrow] (120) -- (030);
    \draw [bgarrow] (030) -- (021);
    \draw [admset] (021) -- (120);
    \draw [bgarrow] (210) -- (120);
    \draw [bgarrow] (120) -- (111);
    \draw [bgarrow] (111) -- (021);
    \draw [bgarrow] (021) -- (012);
    \draw [admset] (012) -- (111);
    \draw [admset] (111) -- (210);
    \draw [bgarrow] (300) -- (210);
    \draw [bgarrow] (210) -- (201);
    \draw [bgarrow] (201) -- (111);
    \draw [bgarrow] (111) -- (102);
    \draw [bgarrow] (102) -- (012);
    \draw [admset] (012) -- (003);
    \draw [bgarrow] (003) -- (102);
    \draw [admset] (102) -- (201);
    \draw [admset] (201) -- (300);
   \end{tikzpicture}
  };
 \node (K1) at (0,10)
  {
   \begin{tikzpicture}[scale=.9,xscale=.29,yscale=-.5]
    \node (030) at (0,-1) [svertex] {};
    \node (120) at (-1,0) [svertex] {};
    \node (021) at (1,0) [svertex] {};
    \node (210) at (-2,1) [svertex] {};
    \node (111) at (0,1) [svertex] {};
    \node (012) at (2,1) [svertex] {};
    \node (300) at (-3,2) [svertex] {};
    \node (201) at (-1,2) [svertex] {};
    \node (102) at (1,2) [svertex] {};
    \node (003) at (3,2) [svertex] {};
    \draw [bgarrow] (120) -- (030);
    \draw [bgarrow] (030) -- (021);
    \draw [admset] (021) -- (120);
    \draw [admset] (210) -- (120);
    \draw [bgarrow] (120) -- (111);
    \draw [bgarrow] (111) -- (021);
    \draw [admset] (021) -- (012);
    \draw [bgarrow] (012) -- (111);
    \draw [bgarrow] (111) -- (210);
    \draw [bgarrow] (300) -- (210);
    \draw [admset] (210) -- (201);
    \draw [bgarrow] (201) -- (111);
    \draw [bgarrow] (111) -- (102);
    \draw [admset] (102) -- (012);
    \draw [bgarrow] (012) -- (003);
    \draw [bgarrow] (003) -- (102);
    \draw [admset] (102) -- (201);
    \draw [bgarrow] (201) -- (300);
   \end{tikzpicture}
  };
 \node (K2) at (1,10)
  {
   \begin{tikzpicture}[scale=.9,xscale=.29,yscale=-.5]
    \node (030) at (0,-1) [svertex] {};
    \node (120) at (-1,0) [svertex] {};
    \node (021) at (1,0) [svertex] {};
    \node (210) at (-2,1) [svertex] {};
    \node (111) at (0,1) [svertex] {};
    \node (012) at (2,1) [svertex] {};
    \node (300) at (-3,2) [svertex] {};
    \node (201) at (-1,2) [svertex] {};
    \node (102) at (1,2) [svertex] {};
    \node (003) at (3,2) [svertex] {};
    \draw [bgarrow] (120) -- (030);
    \draw [bgarrow] (030) -- (021);
    \draw [admset] (021) -- (120);
    \draw [bgarrow] (210) -- (120);
    \draw [bgarrow] (120) -- (111);
    \draw [bgarrow] (111) -- (021);
    \draw [bgarrow] (021) -- (012);
    \draw [admset] (012) -- (111);
    \draw [admset] (111) -- (210);
    \draw [admset] (300) -- (210);
    \draw [bgarrow] (210) -- (201);
    \draw [bgarrow] (201) -- (111);
    \draw [bgarrow] (111) -- (102);
    \draw [bgarrow] (102) -- (012);
    \draw [admset] (012) -- (003);
    \draw [bgarrow] (003) -- (102);
    \draw [admset] (102) -- (201);
    \draw [bgarrow] (201) -- (300);
   \end{tikzpicture}
  };
 \node (K3) at (2,10)
  {
   \begin{tikzpicture}[scale=.9,xscale=.29,yscale=-.5]
    \node (030) at (0,-1) [svertex] {};
    \node (120) at (-1,0) [svertex] {};
    \node (021) at (1,0) [svertex] {};
    \node (210) at (-2,1) [svertex] {};
    \node (111) at (0,1) [svertex] {};
    \node (012) at (2,1) [svertex] {};
    \node (300) at (-3,2) [svertex] {};
    \node (201) at (-1,2) [svertex] {};
    \node (102) at (1,2) [svertex] {};
    \node (003) at (3,2) [svertex] {};
    \draw [bgarrow] (120) -- (030);
    \draw [bgarrow] (030) -- (021);
    \draw [admset] (021) -- (120);
    \draw [bgarrow] (210) -- (120);
    \draw [bgarrow] (120) -- (111);
    \draw [bgarrow] (111) -- (021);
    \draw [bgarrow] (021) -- (012);
    \draw [admset] (012) -- (111);
    \draw [admset] (111) -- (210);
    \draw [bgarrow] (300) -- (210);
    \draw [bgarrow] (210) -- (201);
    \draw [bgarrow] (201) -- (111);
    \draw [bgarrow] (111) -- (102);
    \draw [bgarrow] (102) -- (012);
    \draw [bgarrow] (012) -- (003);
    \draw [admset] (003) -- (102);
    \draw [admset] (102) -- (201);
    \draw [admset] (201) -- (300);
   \end{tikzpicture}
  };
 \node (Eq1) at (-.75,0) [circle,draw=black,thick] {=};
 \node (Eq2) at (2.75,10) [circle,draw=black,thick] {=};
 \draw [dashed] (-1,0) -- (Eq1);
 \draw [dashed] (Eq1) -- (A1);
 \draw [dashed] (A1) -- (A2);
 \draw [dashed] (A2) -- (A3);
 \draw [dashed] (A3) -- (3,0);
 \draw [dashed] (-1,10) -- (K1);
 \draw [dashed] (K1) -- (K2);
 \draw [dashed] (K2) -- (K3);
 \draw [dashed] (K3) -- (Eq2);
 \draw [dashed] (Eq2) -- (3,10);
 \draw (A1) -- (B1);
 \draw (A2) -- (B1);
 \draw (A2) -- (B2);
 \draw (A3) -- (B2);
 \draw (B1) -- (C1);
 \draw (B2) -- (C1);
 \draw (B2) -- (C2);
 \draw (C1) -- (D1);
 \draw (C1) -- (D2);
 \draw (C2) -- (D1);
 \draw (C2) -- (D2);
 \draw (C2) -- (D3);
 \draw (D1) -- (E1);
 \draw (D1) -- (E2);
 \draw (D2) -- (E1);
 \draw (D2) -- (E2);
 \draw (D3) -- (E2);
 \draw (D3) -- (E3);
 \draw (E1) -- (F1);
 \draw (E2) -- (F1);
 \draw (E2) -- (F2);
 \draw (E3) -- (F2);
 \draw (F1) -- (G1);
 \draw (F1) -- (G2);
 \draw (F2) -- (G2);
 \draw (F2) -- (G3);
 \draw (G1) -- (H1);
 \draw (G2) -- (H1);
 \draw (G2) -- (H2);
 \draw (G2) -- (H3);
 \draw (G3) -- (H2);
 \draw (G3) -- (H3);
 \draw (H1) -- (I1);
 \draw (H2) -- (I1);
 \draw (H2) -- (I2);
 \draw (H3) -- (I1);
 \draw (H3) -- (I2);
 \draw (I1) -- (J1);
 \draw (I2) -- (J1);
 \draw (I2) -- (J2);
 \draw (J1) -- (K1);
 \draw (J1) -- (K2);
 \draw (J2) -- (K2);
 \draw (J2) -- (K3);
\end{tikzpicture} \]}
\newcommand{\iteratedAprLinearAThreeTwo}{
\[ \begin{tikzpicture}[xscale=4,yscale=-2]
 \node (A) at (0,0)
  {
   \begin{tikzpicture}[xscale=.5,yscale=-.3]
    \node (0) at (0,-1) [svertex] {};
    \node (1) at (-1,0) [svertex] {};
    \node (2) at (1,0) [svertex] {};
    \node (3) at (-2,1) [svertex] {};
    \node (4) at (0,1) [svertex] {};
    \node (5) at (2,1) [svertex] {};
    \node (6) at (0,3) [svertex] {};
    \node (7) at (-1,4) [svertex] {};
    \node (8) at (1,4) [svertex] {};
    \node (9) at (0,6) [svertex] {};
    \draw [bgarrow] (1) -- (0);
    \draw [bgarrow] (0) -- (2);
    \draw [bgarrow] (3) -- (1);
    \draw [bgarrow] (1) -- (4);
    \draw [bgarrow] (4) -- (2);
    \draw [bgarrow] (2) -- (5);
    \draw [bgarrow] (6) -- (1);
    \draw [admset] (2) -- (6);
    \draw [bgarrow] (7) -- (3);
    \draw [admset] (4) -- (7);
    \draw [bgarrow] (8) -- (4);
    \draw [admset] (5) -- (8);
    \draw [bgarrow] (7) -- (6);
    \draw [bgarrow] (6) -- (8);
    \draw [bgarrow] (9) -- (7);
    \draw [admset] (8) -- (9);
   \end{tikzpicture}
  };
 \node (B) at (1,0)
  {
   \begin{tikzpicture}[xscale=.5,yscale=-.3]
    \node (0) at (0,-1) [svertex] {};
    \node (1) at (-1,0) [svertex] {};
    \node (2) at (1,0) [svertex] {};
    \node (3) at (-2,1) [svertex] {};
    \node (4) at (0,1) [svertex] {};
    \node (5) at (2,1) [svertex] {};
    \node (6) at (0,3) [svertex] {};
    \node (7) at (-1,4) [svertex] {};
    \node (8) at (1,4) [svertex] {};
    \node (9) at (0,6) [svertex] {};
    \draw [bgarrow] (1) -- (0);
    \draw [bgarrow] (0) -- (2);
    \draw [bgarrow] (3) -- (1);
    \draw [bgarrow] (1) -- (4);
    \draw [bgarrow] (4) -- (2);
    \draw [admset] (2) -- (5);
    \draw [bgarrow] (6) -- (1);
    \draw [admset] (2) -- (6);
    \draw [bgarrow] (7) -- (3);
    \draw [admset] (4) -- (7);
    \draw [bgarrow] (8) -- (4);
    \draw [bgarrow] (5) -- (8);
    \draw [bgarrow] (7) -- (6);
    \draw [bgarrow] (6) -- (8);
    \draw [admset] (9) -- (7);
    \draw [bgarrow] (8) -- (9);
   \end{tikzpicture}
  };
 \node (C) at (.5,1)
  {
   \begin{tikzpicture}[xscale=.5,yscale=-.3]
    \node (0) at (0,-1) [svertex] {};
    \node (1) at (-1,0) [svertex] {};
    \node (2) at (1,0) [svertex] {};
    \node (3) at (-2,1) [svertex] {};
    \node (4) at (0,1) [svertex] {};
    \node (5) at (2,1) [svertex] {};
    \node (6) at (0,3) [svertex] {};
    \node (7) at (-1,4) [svertex] {};
    \node (8) at (1,4) [svertex] {};
    \node (9) at (0,6) [svertex] {};
    \draw [bgarrow] (1) -- (0);
    \draw [bgarrow] (0) -- (2);
    \draw [bgarrow] (3) -- (1);
    \draw [bgarrow] (1) -- (4);
    \draw [bgarrow] (4) -- (2);
    \draw [bgarrow] (2) -- (5);
    \draw [bgarrow] (6) -- (1);
    \draw [admset] (2) -- (6);
    \draw [bgarrow] (7) -- (3);
    \draw [admset] (4) -- (7);
    \draw [bgarrow] (8) -- (4);
    \draw [admset] (5) -- (8);
    \draw [bgarrow] (7) -- (6);
    \draw [bgarrow] (6) -- (8);
    \draw [admset] (9) -- (7);
    \draw [bgarrow] (8) -- (9);
   \end{tikzpicture}
  };
 \node (D) at (1.5,1)
  {
   \begin{tikzpicture}[xscale=.5,yscale=-.3]
    \node (0) at (0,-1) [svertex] {};
    \node (1) at (-1,0) [svertex] {};
    \node (2) at (1,0) [svertex] {};
    \node (3) at (-2,1) [svertex] {};
    \node (4) at (0,1) [svertex] {};
    \node (5) at (2,1) [svertex] {};
    \node (6) at (0,3) [svertex] {};
    \node (7) at (-1,4) [svertex] {};
    \node (8) at (1,4) [svertex] {};
    \node (9) at (0,6) [svertex] {};
    \draw [bgarrow] (1) -- (0);
    \draw [bgarrow] (0) -- (2);
    \draw [bgarrow] (3) -- (1);
    \draw [bgarrow] (1) -- (4);
    \draw [bgarrow] (4) -- (2);
    \draw [admset] (2) -- (5);
    \draw [bgarrow] (6) -- (1);
    \draw [admset] (2) -- (6);
    \draw [admset] (7) -- (3);
    \draw [bgarrow] (4) -- (7);
    \draw [bgarrow] (8) -- (4);
    \draw [bgarrow] (5) -- (8);
    \draw [admset] (7) -- (6);
    \draw [bgarrow] (6) -- (8);
    \draw [bgarrow] (9) -- (7);
    \draw [bgarrow] (8) -- (9);
   \end{tikzpicture}
  };
 \node (E) at (1,2)
  {
   \begin{tikzpicture}[xscale=.5,yscale=-.3]
    \node (0) at (0,-1) [svertex] {};
    \node (1) at (-1,0) [svertex] {};
    \node (2) at (1,0) [svertex] {};
    \node (3) at (-2,1) [svertex] {};
    \node (4) at (0,1) [svertex] {};
    \node (5) at (2,1) [svertex] {};
    \node (6) at (0,3) [svertex] {};
    \node (7) at (-1,4) [svertex] {};
    \node (8) at (1,4) [svertex] {};
    \node (9) at (0,6) [svertex] {};
    \draw [bgarrow] (1) -- (0);
    \draw [bgarrow] (0) -- (2);
    \draw [bgarrow] (3) -- (1);
    \draw [bgarrow] (1) -- (4);
    \draw [bgarrow] (4) -- (2);
    \draw [bgarrow] (2) -- (5);
    \draw [bgarrow] (6) -- (1);
    \draw [admset] (2) -- (6);
    \draw [admset] (7) -- (3);
    \draw [bgarrow] (4) -- (7);
    \draw [bgarrow] (8) -- (4);
    \draw [admset] (5) -- (8);
    \draw [admset] (7) -- (6);
    \draw [bgarrow] (6) -- (8);
    \draw [bgarrow] (9) -- (7);
    \draw [bgarrow] (8) -- (9);
   \end{tikzpicture}
  };
 \node (F) at (2,2)
  {
   \begin{tikzpicture}[xscale=.5,yscale=-.3]
    \node (0) at (0,-1) [svertex] {};
    \node (1) at (-1,0) [svertex] {};
    \node (2) at (1,0) [svertex] {};
    \node (3) at (-2,1) [svertex] {};
    \node (4) at (0,1) [svertex] {};
    \node (5) at (2,1) [svertex] {};
    \node (6) at (0,3) [svertex] {};
    \node (7) at (-1,4) [svertex] {};
    \node (8) at (1,4) [svertex] {};
    \node (9) at (0,6) [svertex] {};
    \draw [bgarrow] (1) -- (0);
    \draw [bgarrow] (0) -- (2);
    \draw [bgarrow] (3) -- (1);
    \draw [bgarrow] (1) -- (4);
    \draw [bgarrow] (4) -- (2);
    \draw [admset] (2) -- (5);
    \draw [admset] (6) -- (1);
    \draw [bgarrow] (2) -- (6);
    \draw [admset] (7) -- (3);
    \draw [bgarrow] (4) -- (7);
    \draw [bgarrow] (8) -- (4);
    \draw [bgarrow] (5) -- (8);
    \draw [bgarrow] (7) -- (6);
    \draw [admset] (6) -- (8);
    \draw [bgarrow] (9) -- (7);
    \draw [bgarrow] (8) -- (9);
   \end{tikzpicture}
  };
 \node (G) at (.5,3)
  {
   \begin{tikzpicture}[xscale=.5,yscale=-.3]
    \node (0) at (0,-1) [svertex] {};
    \node (1) at (-1,0) [svertex] {};
    \node (2) at (1,0) [svertex] {};
    \node (3) at (-2,1) [svertex] {};
    \node (4) at (0,1) [svertex] {};
    \node (5) at (2,1) [svertex] {};
    \node (6) at (0,3) [svertex] {};
    \node (7) at (-1,4) [svertex] {};
    \node (8) at (1,4) [svertex] {};
    \node (9) at (0,6) [svertex] {};
    \draw [bgarrow] (1) -- (0);
    \draw [bgarrow] (0) -- (2);
    \draw [admset] (3) -- (1);
    \draw [bgarrow] (1) -- (4);
    \draw [bgarrow] (4) -- (2);
    \draw [bgarrow] (2) -- (5);
    \draw [bgarrow] (6) -- (1);
    \draw [admset] (2) -- (6);
    \draw [bgarrow] (7) -- (3);
    \draw [bgarrow] (4) -- (7);
    \draw [bgarrow] (8) -- (4);
    \draw [admset] (5) -- (8);
    \draw [admset] (7) -- (6);
    \draw [bgarrow] (6) -- (8);
    \draw [bgarrow] (9) -- (7);
    \draw [bgarrow] (8) -- (9);
   \end{tikzpicture}
  };
 \node (H) at (1.5,3)
  {
   \begin{tikzpicture}[xscale=.5,yscale=-.3]
    \node (0) at (0,-1) [svertex] {};
    \node (1) at (-1,0) [svertex] {};
    \node (2) at (1,0) [svertex] {};
    \node (3) at (-2,1) [svertex] {};
    \node (4) at (0,1) [svertex] {};
    \node (5) at (2,1) [svertex] {};
    \node (6) at (0,3) [svertex] {};
    \node (7) at (-1,4) [svertex] {};
    \node (8) at (1,4) [svertex] {};
    \node (9) at (0,6) [svertex] {};
    \draw [bgarrow] (1) -- (0);
    \draw [bgarrow] (0) -- (2);
    \draw [bgarrow] (3) -- (1);
    \draw [bgarrow] (1) -- (4);
    \draw [bgarrow] (4) -- (2);
    \draw [bgarrow] (2) -- (5);
    \draw [admset] (6) -- (1);
    \draw [bgarrow] (2) -- (6);
    \draw [admset] (7) -- (3);
    \draw [bgarrow] (4) -- (7);
    \draw [bgarrow] (8) -- (4);
    \draw [admset] (5) -- (8);
    \draw [bgarrow] (7) -- (6);
    \draw [admset] (6) -- (8);
    \draw [bgarrow] (9) -- (7);
    \draw [bgarrow] (8) -- (9);
   \end{tikzpicture}
  };
 \node (I) at (1,4)
  {
   \begin{tikzpicture}[xscale=.5,yscale=-.3]
    \node (0) at (0,-1) [svertex] {};
    \node (1) at (-1,0) [svertex] {};
    \node (2) at (1,0) [svertex] {};
    \node (3) at (-2,1) [svertex] {};
    \node (4) at (0,1) [svertex] {};
    \node (5) at (2,1) [svertex] {};
    \node (6) at (0,3) [svertex] {};
    \node (7) at (-1,4) [svertex] {};
    \node (8) at (1,4) [svertex] {};
    \node (9) at (0,6) [svertex] {};
    \draw [bgarrow] (1) -- (0);
    \draw [bgarrow] (0) -- (2);
    \draw [admset] (3) -- (1);
    \draw [bgarrow] (1) -- (4);
    \draw [bgarrow] (4) -- (2);
    \draw [bgarrow] (2) -- (5);
    \draw [admset] (6) -- (1);
    \draw [bgarrow] (2) -- (6);
    \draw [bgarrow] (7) -- (3);
    \draw [bgarrow] (4) -- (7);
    \draw [bgarrow] (8) -- (4);
    \draw [admset] (5) -- (8);
    \draw [bgarrow] (7) -- (6);
    \draw [admset] (6) -- (8);
    \draw [bgarrow] (9) -- (7);
    \draw [bgarrow] (8) -- (9);
   \end{tikzpicture}
  };
 \node (J) at (2,4)
  {
   \begin{tikzpicture}[xscale=.5,yscale=-.3]
    \node (0) at (0,-1) [svertex] {};
    \node (1) at (-1,0) [svertex] {};
    \node (2) at (1,0) [svertex] {};
    \node (3) at (-2,1) [svertex] {};
    \node (4) at (0,1) [svertex] {};
    \node (5) at (2,1) [svertex] {};
    \node (6) at (0,3) [svertex] {};
    \node (7) at (-1,4) [svertex] {};
    \node (8) at (1,4) [svertex] {};
    \node (9) at (0,6) [svertex] {};
    \draw [bgarrow] (1) -- (0);
    \draw [bgarrow] (0) -- (2);
    \draw [bgarrow] (3) -- (1);
    \draw [bgarrow] (1) -- (4);
    \draw [bgarrow] (4) -- (2);
    \draw [bgarrow] (2) -- (5);
    \draw [admset] (6) -- (1);
    \draw [bgarrow] (2) -- (6);
    \draw [admset] (7) -- (3);
    \draw [bgarrow] (4) -- (7);
    \draw [admset] (8) -- (4);
    \draw [bgarrow] (5) -- (8);
    \draw [bgarrow] (7) -- (6);
    \draw [bgarrow] (6) -- (8);
    \draw [bgarrow] (9) -- (7);
    \draw [admset] (8) -- (9);
   \end{tikzpicture}
  };
 \node (K) at (1.5,5)
  {
   \begin{tikzpicture}[xscale=.5,yscale=-.3]
    \node (0) at (0,-1) [svertex] {};
    \node (1) at (-1,0) [svertex] {};
    \node (2) at (1,0) [svertex] {};
    \node (3) at (-2,1) [svertex] {};
    \node (4) at (0,1) [svertex] {};
    \node (5) at (2,1) [svertex] {};
    \node (6) at (0,3) [svertex] {};
    \node (7) at (-1,4) [svertex] {};
    \node (8) at (1,4) [svertex] {};
    \node (9) at (0,6) [svertex] {};
    \draw [bgarrow] (1) -- (0);
    \draw [bgarrow] (0) -- (2);
    \draw [admset] (3) -- (1);
    \draw [bgarrow] (1) -- (4);
    \draw [bgarrow] (4) -- (2);
    \draw [bgarrow] (2) -- (5);
    \draw [admset] (6) -- (1);
    \draw [bgarrow] (2) -- (6);
    \draw [bgarrow] (7) -- (3);
    \draw [bgarrow] (4) -- (7);
    \draw [admset] (8) -- (4);
    \draw [bgarrow] (5) -- (8);
    \draw [bgarrow] (7) -- (6);
    \draw [bgarrow] (6) -- (8);
    \draw [bgarrow] (9) -- (7);
    \draw [admset] (8) -- (9);
   \end{tikzpicture}
  };
 \node (L) at (2.5,5)
  {
   \begin{tikzpicture}[xscale=.5,yscale=-.3]
    \node (0) at (0,-1) [svertex] {};
    \node (1) at (-1,0) [svertex] {};
    \node (2) at (1,0) [svertex] {};
    \node (3) at (-2,1) [svertex] {};
    \node (4) at (0,1) [svertex] {};
    \node (5) at (2,1) [svertex] {};
    \node (6) at (0,3) [svertex] {};
    \node (7) at (-1,4) [svertex] {};
    \node (8) at (1,4) [svertex] {};
    \node (9) at (0,6) [svertex] {};
    \draw [bgarrow] (1) -- (0);
    \draw [bgarrow] (0) -- (2);
    \draw [bgarrow] (3) -- (1);
    \draw [bgarrow] (1) -- (4);
    \draw [bgarrow] (4) -- (2);
    \draw [bgarrow] (2) -- (5);
    \draw [admset] (6) -- (1);
    \draw [bgarrow] (2) -- (6);
    \draw [admset] (7) -- (3);
    \draw [bgarrow] (4) -- (7);
    \draw [admset] (8) -- (4);
    \draw [bgarrow] (5) -- (8);
    \draw [bgarrow] (7) -- (6);
    \draw [bgarrow] (6) -- (8);
    \draw [admset] (9) -- (7);
    \draw [bgarrow] (8) -- (9);
   \end{tikzpicture}
  };
 \node (Eq1) at (2,0) [circle,draw=black,thick] {=};
 \node (Eq2) at (.5,5) [circle,draw=black,thick] {=};
 \draw [dashed] (-.5,0) -- (A);
 \draw [dashed] (A) -- (B);
 \draw [dashed] (B) -- (Eq1);
 \draw [dashed] (Eq1) -- (2.5,0);
 \draw [dashed] (0,5) -- (Eq2);
 \draw [dashed] (Eq2) -- (K);
 \draw [dashed] (K) -- (L);
 \draw [dashed] (L) -- (3,5);
 \draw (A) -- (C);
 \draw (B) -- (C);
 \draw (B) -- (D);
 \draw (C) -- (E);
 \draw (D) -- (E);
 \draw (D) -- (F);
 \draw (E) -- (G);
 \draw (E) -- (H);
 \draw (F) -- (H);
 \draw (G) -- (I);
 \draw (H) -- (I);
 \draw (H) -- (J);
 \draw (I) -- (K);
 \draw (J) -- (K);
 \draw (J) -- (L);
\end{tikzpicture} \]}
\title{$n$-representation-finite algebras and $n$-APR tilting}
\author{Osamu Iyama \and Steffen Oppermann}
\address{Osamu Iyama: Graduate School of Mathematics, Nagoya University, Chikusa-ku, Nagoya,
464-8602 Japan}
\email{iyama@math.nagoya-u.ac.jp}
\address{Steffen Oppermann: Institutt for matematiske fag, NTNU, 7491 Trondheim, Norway}
\email{steffen.oppermann@math.ntnu.no}
\thanks{The first author was supported by JSPS Grant-in-Aid for Scientific Research 21740010}
\thanks{The second author was supported by NFR Storforsk grant no.\ 167130.}
\begin{document}

\begin{abstract}
We introduce the notion of $n$-representation-finiteness, generalizing representation-finite hereditary algebras. We establish the procedure of $n$-APR tilting, and show that it preserves $n$-representation-finiteness. We give some combinatorial description of this procedure, and use this to completely describe a class of $n$-representation-finite algebras called ``type A''.
\end{abstract}

\maketitle
\tableofcontents

\section{Introduction}

One of the highlights in representation theory of algebras is given by representation-finite algebras, which provide a prototype of the use of functorial methods in representation theory.
In 1971, Auslander gave a one-to-one correspondence between representation-finite algebras and Auslander algebras, which was a milestone in modern representation theory leading to later Auslander-Reiten theory. Many categorical properties of module categories can be understood as analogues of homological properties of
Auslander algebras, and vice versa.

To study higher Auslander algebras, the notion of $n$-cluster tilting subcategories (=maximal $(n-1)$-orthogonal subcategories) was introduced in \cite{I2}, and a higher analogue of Auslander-Reiten theory was developed in a series of papers \cite{Iy_n-Auslander,Iy_Auslander_corr,IO2}, see also the survey paper \cite{Iy_ICRA}. Recent results (in particular \cite{Iy_n-Auslander}, but also this paper and \cite{HerI,HuangZhang1,HuangZhang2,MR2512629,IO2}) suggest, that $n$-cluster tilting modules behave very nicely if the algebra has global dimension $n$. In this paper, we call such algebras \emph{$n$-representation-finite} and study them from the viewpoint of APR (=Auslander-Platzeck-Reiten) tilting theory (see \cite{APR}).

For the case $n=1$, $1$-representation-finite algebras are representation-finite hereditary algebras. In the representation theory of path algebras, the notion of Bernstein-Gelfand-Ponomarev reflection functors play an important role. Nowadays they are formulated in terms of APR tilting modules from a functorial viewpoint (see \cite{APR}). A main property is that the class of representation-finite hereditary algebras is closed under taking endomorphism algebras of APR tilting modules. By iterating the APR tilting process, we get a family of path algebras with the same underlying graph with different orientations

We follow this idea to construct from one given $n$-representation-finite algebra a family of $n$-representation-finite algebras. We introduce the general notion of $n$-APR tilting modules, which are explicitly constructed tilting modules associated with simple projective modules. The difference from the case $n=1$ is that we need a certain vanishing condition of extension groups, but this is always satisfied if $\Lambda$ is $n$-representation-finite.

In Section~\ref{section.APR} we introduce $n$-APR tilting. We first introduce $n$-APR tilting modules. We give descriptions of the $n$-APR tilted algebra in terms of one-point (co)extensions (see Subsection~\ref{subsec.asonepointext}, in particular Theorem~\ref{theorem.APR_on_onepointext}), and for $n=2$ also in terms of quivers with relations (see Subsection~\ref{subsec.quivers}, in particular Theorem~\ref{theorem.quiver_2APR}). Finally we introduce $n$-APR tilting in derived categories.

In Section~\ref{section.APR_repfin} we apply $n$-APR tilts to $n$-representation-finite algebras. The first main result is that $n$-APR tilting preserves $n$-representation-finiteness (Theorems~\ref{main} and \ref{theorem.APRpres_repfin}). In Subsections~\ref{subsect.slices} and \ref{subsect.admsets} we introduce the notions of slices and admissible sets in order to gain a better understanding of what algebras are iterated $n$-APR tilts of a given $n$-representation-finite algebra. More precisely we show that the iterated $n$-APR tilts are precisely the quotients of an explicitly constructed algebra by admissible sets (Theorem~\ref{theorem.adm=iteratedAPR}).

As an application of our general $n$-APR tilting theory, in Section~\ref{section.typeA}, we give a family of $n$-representation-finite algebras by an explicit quivers with relations, which are iterated $n$-APR tilts of higher Auslander algebras given in \cite{Iy_n-Auslander}. We call them $n$-representation-finite algebras of \emph{type $A$}, since, for the case $n=1$, they are path algebras of type $A_s$ with arbitrary orientation. As shown in Section~\ref{section.APR_repfin} in general, they form a family indexed by admissible sets. In contrast to the general setup, for type $A$ we have a very simple combinatorial description of admissible sets (we call sets satisfying this description cuts until we can show that they coincide with admissible sets -- see Definition~\ref{def.ca-set} and Remark~\ref{rem.cset=set}). Then the $n$-APR tiling process can be written purely combinatorially in terms of `mutation' of admissible sets, and we can give a purely combinatorial proof of the fact that all admissible sets are transitive under successive mutation.

Summing up with results in \cite{IO2}, we obtain self-injective weakly $(n+1)$-representation-finite algebras as $(n+1)$-preprojective algebras of $n$-representation-finite algebras of type $A$. This is a generalization of a result of Geiss, Leclerc, and Schr{\"o}er \cite{GLS1}, saying that preprojective algebras of type $A$ are weakly $2$-representation-finite.

\subsection*{Acknowledgments} This work started when the first author visited K{\"o}ln in 2007 and continued when he visited Trondheim in spring 2008 and 2009. He would like to thank the people in K{\"o}ln and Trondheim for their hospitality.

\section{Background and notation}

Throughout this paper we assume $\Lambda$ to be a finite dimensional algebra over some field $k$. We denote by $\mod \Lambda$ the category of finite dimensional $\Lambda$-modules (all modules are left modules).

\subsection{$n$-representation-finiteness}

\begin{definition}[see \cite{Iy_n-Auslander}]
A module $M \in \mod \Lambda$ is called \emph{$n$-cluster tilting object} if
\begin{align*}
\add M & = \{ X \in \mod \Lambda \mid \Ext^i_{\Lambda}(M,X) = 0 \, \forall i \in \{1, \ldots, n-1\} \} \\
& = \{ X \in \mod \Lambda \mid \Ext^i_{\Lambda}(X,M) = 0 \, \forall i \in \{1, \ldots, n-1\} \}.
\end{align*}
\end{definition}

Clearly such an $M$ is a generator-cogenerator, and \emph{$n$-rigid} in the sense that $\Ext^i_{\Lambda}(M,M) = 0$ $\forall i \in \{1, \ldots, n-1\}$.

Note that a $1$-cluster tilting object is just an additive generator of the module category.

\begin{definition}
Let $\Lambda$ be a finite dimensional algebra. We say $\Lambda$ is \emph{weakly $n$-representation-finite} if there exists an $n$-cluster tilting object in $\mod\Lambda$. If moreover $\gld \Lambda \leq n$ we say that $\Lambda$ is \emph{$n$-representation-finite}.
\end{definition}

The main aim of this paper is to understand better $n$-representation-finite algebras, and to construct larger families of examples.

For $n \geq 1$ we define the following functors:
\begin{align*}
\Tr_n & := \Tr \Omega^{n-1} \colon \stabmod \Lambda \arrow[30]{<->} \stabmod \Lambda^{\op},\\
\tau_n &:= D\Tr_n \colon \stabmod \Lambda \to[30] \costmod \Lambda,\\
\tau_n^- & := \Tr_n D \colon \costmod \Lambda \to[30] \stabmod \Lambda.
\end{align*}
(See \cite{ARS} for definitions and properties of the functors $\Tr$, $D$, and $\tau_1$.)

\begin{proposition}[\cite{I2}] \label{prop.cluster}
Let $M$ be an $n$-cluster tilting object in $\mod\Lambda$.
\begin{itemize}
\item We have an equivalence $\tau_n \colon \stabadd M \to \costadd M$ with a quasi-inverse $\tau_n^- \colon \costadd M \to \stabadd M$.
\item We have functorial isomorphisms $\underline{\Hom}_\Lambda(\tau_n^-Y,X)\iso D\Ext^n_\Lambda(X,Y)\iso\overline{\Hom}_\Lambda(Y,\tau_nX)$
for any $X,Y\in\add M$.
\item If $\gld \Lambda \leq n$ then $\add M = \add \{ \tau_n^{-i} \Lambda \mid i \in \mathbb{N}\} = \add \{ \tau_n^i D\Lambda \mid i \in \mathbb{N}\}$.
\end{itemize}
\end{proposition}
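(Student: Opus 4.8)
The three assertions are facets of higher Auslander--Reiten duality, and the natural isomorphisms of the second bullet are the engine driving the other two, so the plan is to prove them first and as generally as possible. \emph{Second bullet.} I would establish $\underline{\Hom}_\Lambda(\tau_n^-Y,X)\cong D\Ext^n_\Lambda(X,Y)\cong\overline{\Hom}_\Lambda(Y,\tau_nX)$ for all $X,Y\in\mod\Lambda$ (a fortiori for $X,Y\in\add M$), by combining the classical Auslander--Reiten formulas $D\Ext^1_\Lambda(A,B)\cong\overline{\Hom}_\Lambda(B,D\Tr A)\cong\underline{\Hom}_\Lambda(\Tr DB,A)$ (see \cite{ARS}) with dimension shifting: the short exact sequences $0\to\Omega^{j+1}X\to P_j\to\Omega^jX\to0$ coming from a minimal projective resolution, together with $\Ext^{\geq1}_\Lambda(P_j,-)=0$, give a functorial isomorphism $\Ext^n_\Lambda(X,Y)\cong\Ext^1_\Lambda(\Omega^{n-1}X,Y)$, and dually minimal injective coresolutions give $\Ext^n_\Lambda(X,Y)\cong\Ext^1_\Lambda(X,\Omega^{-(n-1)}Y)$. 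Substituting into the classical formulas and unwinding $\tau_n=D\Tr\Omega^{n-1}$ and $\tau_n^-=\Tr\Omega^{n-1}_{\Lambda^{\op}}D$ --- here using that $D$ carries a minimal injective coresolution of $Y$ to a minimal projective resolution of $DY$, so that $D(\Omega^{-(n-1)}_\Lambda Y)\cong\Omega^{n-1}_{\Lambda^{\op}}(DY)$ --- produces the two isomorphisms. The only points needing care are that (co)syzygies are defined only up to projective resp.\ injective summands (harmless, since $\stabmod\Lambda$ and $\costmod\Lambda$ annihilate these) and that the dimension-shift isomorphisms, being assembled from connecting homomorphisms, are natural in both variables.

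\emph{First bullet.} Granting the second bullet, the equivalence follows by developing the theory of $n$-almost split sequences, and this is where the $n$-cluster-tilting hypothesis is genuinely used, through the functorial finiteness of $\add M$ (automatic, as $M$ is a single module) and its $n$-rigidity. For an indecomposable non-projective $X\in\add M$ I would construct, by iterating minimal right $\add M$-approximations (which are epimorphisms since $\Lambda\in\add M$), an exact sequence $0\to Z\to M_{n-1}\to\cdots\to M_0\to X\to0$ with all $M_i\in\add M$; chasing $\Ext^i_\Lambda(M,-)$ along it and using $n$-rigidity shows $Z\in\add M$, and the Ext-duality of the second bullet identifies $Z\cong\tau_nX$ (projective $X$ give $\tau_nX=0$). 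Such a sequence is also "$n$-almost split starting at $\tau_nX$", hence unique up to isomorphism, so reading it from the other end yields $\tau_n^-\tau_nX\cong X$ in $\stabmod\Lambda$, and dually $\tau_n\tau_n^-Y\cong Y$ in $\costmod\Lambda$. Therefore $\tau_n$ and $\tau_n^-$ restrict to mutually quasi-inverse equivalences $\stabadd M\to\costadd M$ and $\costadd M\to\stabadd M$. The construction of the $n$-almost split sequences --- existence via approximations, exactness, computation of the end terms, uniqueness --- is the technical heart of the proposition and the step I expect to be the main obstacle.

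\emph{Third bullet.} Assume $\gld\Lambda\leq n$. Since $\Lambda$ is projective, $\Ext^i_\Lambda(\Lambda,M)=0$ for all $i\geq1$, so $\Lambda\in\add M$ by the definition of an $n$-cluster tilting object, and dually $D\Lambda\in\add M$; the first bullet then gives $\tau_n^{-i}\Lambda\in\add M$ for every $i$, hence $\add\{\tau_n^{-i}\Lambda\mid i\in\mathbb N\}\subseteq\add M$. For the reverse inclusion, let $X\in\add M$ be indecomposable and consider its orbit $X,\tau_nX,\tau_n^2X,\dots$. All of its terms lie in the finite set of indecomposable summands of $M$, so either $\tau_n^iX=0$ for some $i$ --- then $\tau_n^{i-1}X$ is projective, and applying the equivalence of the first bullet, $X\cong\tau_n^{-(i-1)}(\tau_n^{i-1}X)$ lies in $\add\{\tau_n^{-i}\Lambda\mid i\in\mathbb N\}$ --- or the orbit is eventually $\tau_n$-periodic. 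The latter is where $\gld\Lambda\leq n$ is indispensable: interpreting $\tau_n$ on $\mod\Lambda$ via the Nakayama functor of $\Db(\mod\Lambda)$ (available since $\gld\Lambda<\infty$), a $\tau_n$-periodic non-projective module is incompatible with finiteness of global dimension. Hence every orbit reaches a projective, giving $\add M=\add\{\tau_n^{-i}\Lambda\mid i\in\mathbb N\}$, and the description by $\tau_n^iD\Lambda$ follows by the dual argument (equivalently, apply the former to $\Lambda^{\op}$ and dualize).
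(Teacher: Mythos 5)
The paper states this proposition with only a citation to \cite{I2} and supplies no proof of its own, so strictly speaking there is nothing in the paper to compare against; what follows is an assessment of your proposal on its merits and against the standard arguments from \cite{I2}.

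Your treatment of the second bullet is correct and, as you observe, establishes the duality formula
$\underline{\Hom}_\Lambda(\tau_n^-Y,X)\iso D\Ext^n_\Lambda(X,Y)\iso\overline{\Hom}_\Lambda(Y,\tau_nX)$
for all $X,Y\in\mod\Lambda$ without any cluster-tilting hypothesis; this is genuinely the right order of generality and the dimension-shift plus classical AR-formula argument (together with the identity $D(\Omega^{-(n-1)}_{\Lambda}Y)\iso\Omega^{n-1}_{\Lambda^{\op}}(DY)$) is exactly how one should proceed. Your sketch of the first bullet is also the right idea: construct a complex of length $n$ of minimal right $\add M$-approximations, use the surjectivity (from $\Lambda\in\add M$) and $n$-rigidity to place the final kernel $Z$ in $\add M$, and then develop enough of the theory of $n$-almost split sequences to identify $Z$ with $\tau_nX$ and to obtain the mutual quasi-inverse. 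You correctly flag that the last step (uniqueness of the $n$-almost split sequence, and the matching identification reading it from the left end) is the technical heart, and indeed this is where most of the work in \cite{I2} is concentrated.

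The third bullet, however, contains a genuine gap. The step "\emph{a $\tau_n$-periodic non-projective module is incompatible with finiteness of global dimension}" is asserted but not justified, and as a general statement it is false: a tame hereditary algebra $\Lambda$ has $\gld\Lambda=1<\infty$ and plenty of $\tau$-periodic (regular) indecomposable modules. Of course in that example there is no $1$-cluster tilting object, so the proposition is vacuous there; but this shows that the mechanism you invoke (the Nakayama functor on $\mathcal{D}^{\rm b}(\mod\Lambda)$) cannot by itself rule out periodicity --- the $n$-cluster-tilting hypothesis must intervene. Concretely, to replace $\tau_n$ by the autoequivalence $\nu_n$ along the orbit you would need $\nu_nY\in\mod\Lambda$ for $Y$ in the orbit, i.e. $H^j(\nu_nY)=0$ for $j\neq 0$. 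The cases $0<j<n$ are handled by $n$-rigidity together with $\Lambda\in\add M$, and $j<0$ by $\gld\Lambda\le n$, but $H^n(\nu_nY)=D\Hom_\Lambda(Y,\Lambda)$ (and dually $H^{-n}(\nu_n^{-}Y)=\Hom_\Lambda(D\Lambda,Y)$) are \emph{not} automatically zero: the vanishing $\Hom_\Lambda(Y,\Lambda)=0$ for indecomposable non-projective $Y\in\add M$ is itself a nontrivial consequence of the $n$-cluster-tilting hypothesis and $\gld\Lambda\le n$, and it needs to be established before the derived-category shift argument can be run. So either you must prove that auxiliary vanishing first, or you must replace the periodicity argument by the one actually used in \cite{I2}, which runs rather through the structure of the $n$-almost split sequences and the Auslander-type algebra $\End_\Lambda(M)$ than through $\nu_n$ directly. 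The remainder of the third bullet (the pigeonhole reduction to periodicity-or-termination, and the passage from "$\tau_n^{i-1}X$ projective" to "$X\in\add\{\tau_n^{-j}\Lambda\}$" via the first bullet) is fine.
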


We have the following criterion for $n$-representation-finiteness:

\begin{proposition}[{\cite[Theorem~5.1(3)]{I2}}] \label{cluster criterion}
Let $\Lambda$ be a finite dimensional algebra and $n\ge1$. Let $M$ be an $n$-rigid generator-cogenerator. The following conditions are equivalent.
\begin{enumerate}
\item $M$ is an $n$-cluster tilting object in $\mod\Lambda$.
\item $\gld \End_{\Lambda}(M) \leq n+1$.
\item For any indecomposable object $X\in\add M$, there exists an exact sequence
\[0 \to[30] M_n \to[30] \cdots \to[30] M_0 \tol[30]{f} X\]
with $M_i\in\add M$ and a right almost split map $f$ in $\add M$.
\end{enumerate}
\end{proposition}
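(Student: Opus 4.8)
\emph{Setup.} I would put $\Gamma=\End_\Lambda(M)$ and work with $F=\Hom_\Lambda(M,-)\colon\mod\Lambda\to\mod\Gamma$. Since $M$ is a generator, $\proj\Lambda\subseteq\add M$, the functor $F$ is fully faithful, and it restricts to an equivalence $\add M\simeq\proj\Gamma$; dually $\Hom_\Lambda(-,M)$ restricts to $\add M\simeq\proj(\Gamma^{\op})$ because $M$ is a cogenerator. Consequently $X\in\add M$ if and only if $FX$ is projective, and the simple $\Gamma$-modules are exactly the $S_X:=\top(FX)$ with $X$ indecomposable in $\add M$. The computational backbone will be that $F$ carries a minimal right $\add M$-approximation resolution $\cdots\to M_1\to M_0\to Y\to 0$ of any $Y\in\mod\Lambda$ to a minimal projective resolution of $FY$ over $\Gamma$ (left exactness of $F$ gives exactness on the left, the approximation property gives the required surjectivity in the middle), together with the dual statement for $\Hom_\Lambda(-,M)$ and $\add M$-coresolutions.

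\emph{$(2)\Leftrightarrow(3)$ and $(1)\Rightarrow(3)$.} For the first equivalence I would fix an indecomposable $X\in\add M$: a minimal right almost split map $f\colon M_0\to X$ in $\add M$ satisfies $\Im(Ff)=\Rad(FX)$, so $\cok(Ff)=S_X$, and iterating with minimal right $\add M$-approximations of the successive kernels and applying $F$ yields the minimal projective resolution of $S_X$. Hence, for this $X$, a sequence $0\to M_n\to\cdots\to M_0\to X$ (with $f$ as last map) as in (3) exists precisely when $\pd_\Gamma S_X\le n+1$; running over all indecomposable $X\in\add M$, whose $S_X$ exhaust the simple $\Gamma$-modules, this is equivalent to $\gld\Gamma\le n+1$, i.e.\ to (2) (no use of $n$-rigidity here). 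For $(1)\Rightarrow(3)$ I would assume $M$ is $n$-cluster tilting, fix indecomposable $X$, pick $f\colon M_0\to X$ as above with $K_0=\ker f$, and inductively form minimal right $\add M$-approximations $M_{j+1}\to K_j$ with $K_{j+1}=\ker$; chasing $\Ext^\bullet_\Lambda(M,-)$ along $0\to K_{j+1}\to M_j\to K_j\to0$ and using $\Ext^i_\Lambda(M,M_j)=0$ for $1\le i\le n-1$ ($n$-rigidity) shows the vanishing range of $\Ext^\bullet_\Lambda(M,-)$ rises by one at each step, so $\Ext^i_\Lambda(M,K_{n-1})=0$ for $1\le i\le n-1$; the defining property of an $n$-cluster tilting object then forces $K_{n-1}\in\add M$, and $M_n:=K_{n-1}$ completes the sequence. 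This also yields $(1)\Rightarrow(2)$.

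\emph{$(2)\Rightarrow(1)$.} This is the substantive implication. Replacing $M/\Lambda$ by $DM/\Lambda^{\op}$ when convenient (note $\End_{\Lambda^{\op}}(DM)\cong\Gamma^{\op}$ has the same global dimension and $DM$ is again an $n$-rigid generator--cogenerator), it is enough to prove $\{X\in\mod\Lambda\mid\Ext^i_\Lambda(M,X)=0,\ 1\le i\le n-1\}\subseteq\add M$, from which both defining equalities for an $n$-cluster tilting object follow. So let $X$ lie in that set; the goal becomes that $FX$ is projective. First I would show $\domdim\Gamma\ge n+1$: a minimal injective coresolution of $M$ in $\mod\Lambda$ has all terms in $\add M$ (as $M$ is a cogenerator), is kept exact by $F$ through the range permitted by $n$-rigidity, and $F$ sends a $\Lambda$-injective module in $\add M$ to a projective--injective $\Gamma$-module, so the minimal injective coresolution of $\Gamma$ begins with $n+1$ projective--injective terms. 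The same argument applied to $X$, using the $\Ext$-vanishing, identifies the initial cosyzygies $\Omega^{-j}_\Gamma FX$ with $F(\Omega^{-j}X)$ and shows $\domdim_\Gamma FX\ge n+1$, while $\gld\Gamma\le n+1$ bounds $\pd_\Gamma FX$ and $\id_\Gamma FX$ by $n+1$. The plan is then to combine these dominant- and global-dimension estimates — feeding them through the double-centralizer identities for $M$ (such as $M\otimes_\Gamma FX\cong X$ with $\Tor^\Gamma_{>0}(M,FX)=0$, and $\Hom_{\Gamma^{\op}}(\Hom_\Lambda(X,M),M)\cong X$), and using that a module with a finite injective coresolution all of whose terms are projective--injective is itself projective--injective — to force $FX$ to be projective, i.e.\ $X\in\add M$.

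\emph{Main obstacle.} The whole weight of the argument sits in that final deduction of $(2)\Rightarrow(1)$: extracting ``$FX$ projective'' from ``$\gld\Gamma\le n+1$, $\domdim\Gamma\ge n+1$, and $\Ext^i_\Lambda(M,X)=0$ for $1\le i\le n-1$''. Because $F$ is only left exact and ``$X\in\add M$'' is a strong conclusion, this cannot come from formal manipulation; it will require exploiting the rigid interaction between dominant and global dimension of the $n$-Auslander-type algebra $\Gamma$ and keeping very careful track of the interval $[1,n-1]$ through all the $\Ext$-, $\Tor$- and dominant-dimension bookkeeping (in particular the delicate point is the borderline case $\id_\Gamma FX=n+1=\gld\Gamma$). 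Everything else — $(2)\Leftrightarrow(3)$, $(1)\Rightarrow(3)$, and the reductions — should be routine homological algebra once the equivalence $\add M\simeq\proj\Gamma$ and its dual are in hand.
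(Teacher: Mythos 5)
The paper does not prove this proposition; it is quoted verbatim from \cite[Theorem~5.1(3)]{I2}. So there is no in-paper argument to compare with, and I assess your sketch on its own merits.

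Your reductions and the parts $(2)\Leftrightarrow(3)$, $(1)\Rightarrow(3)$ are set up correctly: the equivalence $F\colon\add M\to\proj\Gamma$, the identification of simples of $\Gamma$ with $\top FX$ for $X$ indecomposable in $\add M$, the fact that $F$ turns a chain of minimal right $\add M$-approximations into the minimal projective resolution of $S_X$, and the $\Ext$-chase showing $K_{n-1}\in\add M$ using $n$-rigidity and the cluster-tilting property of $M$. The reduction of $(2)\Rightarrow(1)$ to the single inclusion $\{X\mid\Ext^i_\Lambda(M,X)=0,\ 1\le i\le n-1\}\subseteq\add M$ via $k$-duality is also correct, as are the computations $\domdim\Gamma\ge n+1$ and $\domdim_\Gamma FX\ge n+1$.

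The genuine gap is exactly where you flag it, and the tool you name does not close it. Your lemma ``a module with a finite injective coresolution by projective--injectives is projective--injective'' is true, but from what you have you only know $\domdim_\Gamma FX\ge n+1$ and $\id_\Gamma FX\le\gld\Gamma\le n+1$; when $\id_\Gamma FX=n+1$ the last term $J_{n+1}$ of the injective coresolution need not be projective--injective, so the hypothesis of your lemma is not verified and the argument stops short. The double-centralizer / Morita--Tachikawa considerations you invoke would essentially re-develop the higher Auslander correspondence, which is far more than is needed. The correct closing move is an elementary dimension shift, with no case distinction: from $\domdim_\Gamma FX\ge n+1$ take $0\to FX\to J_0\to\cdots\to J_n\to C\to 0$ exact with each $J_i$ \emph{projective}; set $C_{-1}=FX$ and $C_i=\cok(J_{i-1}\to J_i)$, so that $0\to C_{i-1}\to J_i\to C_i\to 0$ is exact with $J_i$ projective for $i=0,\dots,n$ and $C_n=C$. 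Now $\gld\Gamma\le n+1$ gives $\pd_\Gamma C_n\le n+1$, and since for a short exact sequence $0\to A\to P\to B\to 0$ with $P$ projective one has $\pd A\le\max(0,\pd B-1)$, the $n+1$ sequences give $\pd C_{n-1}\le n$, $\dots$, $\pd C_0\le 1$, $\pd FX\le 0$. Hence $FX$ is projective and $X\in\add M$. With this replacement your outline becomes a complete proof; as written, the implication $(2)\Rightarrow(1)$ is not established.
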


\subsection{Derived categories and $n$-cluster tilting} \label{sect.backgr.derived}

Let $\Lambda$ be a finite dimensional algebra of finite global dimension. We denote by
\[ \mathcal{D}_{\Lambda} := \Db(\mod \Lambda) \]
the bounded derived category of $\mod \Lambda$. We denote by
\[ \nu := D\Lambda \otimes_{\Lambda}^{\mathbf{L}} - \iso D \mathbf{R} \Hom_{\Lambda}(-, \Lambda)\colon \mathcal{D}_{\Lambda} \to[30] \mathcal{D}_{\Lambda} \]
the Nakayama-functor in $\mathcal{D}_{\Lambda}$. Clearly $\nu$ restricts to the usual Nakayama functor
\[ \nu \colon \add \Lambda \to[30] \add D \Lambda.\]
We denote by $\nu_n$ the $n$-th desuspension of $\nu$, that is $\nu_n = \nu[-n]$.

Note that if $\gld \Lambda \leq n$ then $\tau_n^{\pm} = H^0(\nu_n^{\pm} -)$.

We set
\[ \mathcal{U} = \mathcal{U}_{\Lambda}^n := \add \{ \nu_n^i \Lambda \mid i \in \Z \} \subseteq \mathcal{D}_{\Lambda}. \]

\begin{theorem}[{\cite[Theorem~1.22]{Iy_n-Auslander}}] \label{theorem.U_is_ct}
Let $\Lambda$ be an algebra of $\gld \Lambda \leq n$, such that $\tau_n^{-i} \Lambda = 0$ for sufficiently large $i$. Then the category $\mathcal{U}$ is an $n$-cluster tilting subcategory of $\mathcal{D}_{\Lambda}$.

In particular, if $\Lambda$ is $n$-representation-finite, then $\mathcal{U}$ is $n$-cluster tilting.
\end{theorem}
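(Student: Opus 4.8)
The plan is to verify directly that $\mathcal{U}$ satisfies the defining conditions of an $n$-cluster tilting subcategory of $\mathcal{D}_\Lambda$, namely functorial finiteness together with
\[ \mathcal{U} = \{ X \mid \Hom_{\mathcal{D}_\Lambda}(\mathcal{U}, X[i]) = 0 \text{ for } 0<i<n \} = \{ X \mid \Hom_{\mathcal{D}_\Lambda}(X, \mathcal{U}[i]) = 0 \text{ for } 0<i<n \}. \]
First I would observe that it is enough to prove one of these two set-equalities: since $\gld\Lambda<\infty$, the functor $\nu$ is a Serre functor on $\mathcal{D}_\Lambda$, so $\Hom_{\mathcal{D}_\Lambda}(X,Y[i])\iso D\Hom_{\mathcal{D}_\Lambda}(Y,\nu_n X[n-i])$; as $\nu_n\mathcal{U}=\mathcal{U}$ by construction, this identity interchanges the two ``perp'' subcategories above via the autoequivalence $\nu_n$, so one equals $\mathcal{U}$ if and only if the other does. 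Thus one set-equality and functorial finiteness remain.

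The self-orthogonality $\Hom_{\mathcal{D}_\Lambda}(\mathcal{U},\mathcal{U}[i])=0$ for $0<i<n$ is the easy half, and I would dispatch it first. Since $\nu_n$ is an autoequivalence, it reduces to showing $\Hom_{\mathcal{D}_\Lambda}(\Lambda,\nu_n^m\Lambda[i])=0$ for all $m\in\Z$ and $0<i<n$; and since $\Lambda$ is projective one has $\Hom_{\mathcal{D}_\Lambda}(\Lambda,C)=H^0(C)$ for every complex $C$, so this is just the vanishing of $H^{i+nm}(\nu^m\Lambda)$. Now $\gld\Lambda\le n$ bounds the projective dimension of $D\Lambda$ (over $\Lambda$ and over $\Lambda^{\op}$) by $n$, so $\nu=D\Lambda\otimes_\Lambda^{\mathbf{L}}-$ has cohomological amplitude $[-n,0]$ and $\nu^{-1}=\mathbf{R}\Hom_\Lambda(D\Lambda,-)$ amplitude $[0,n]$; iterating, $\nu^m\Lambda$ has cohomology in degrees $[-(m-1)n,0]$ for $m\ge1$ and in $[0,|m|n]$ for $m\le0$. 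For $0<i<n$ the integer $i+nm$ lies strictly outside the relevant range in every case, giving the required vanishing.

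The remaining content — the perp equality and functorial finiteness — is where I expect the real work, and the main obstacle. The plan is to pass through $\mod\Lambda$. One first shows that $M:=\bigoplus_{j\ge0}\tau_n^{-j}\Lambda$, a finite direct sum by the hypothesis, is an $n$-cluster tilting object in $\mod\Lambda$: for this one verifies Proposition~\ref{cluster criterion}(3), assembling for each indecomposable summand $X$ of $M$ the required exact sequence $0\to M_n\to\cdots\to M_0\to X$ in $\add M$ ending in a right almost split map out of a projective cover and the almost split sequences furnished by Proposition~\ref{prop.cluster}, with $\gld\Lambda\le n$ guaranteeing termination; by Proposition~\ref{prop.cluster} one then has $\add M=\add\{\tau_n^{-j}\Lambda\mid j\ge0\}=\add\{\tau_n^j D\Lambda\mid j\ge0\}$. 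Next one lifts this to $\mathcal{D}_\Lambda$: given $X$ with $\Hom_{\mathcal{D}_\Lambda}(\mathcal{U},X[i])=0$ for $0<i<n$, one uses the standard $t$-structure to express $X$ through triangles built from its cohomology modules, resolves each such module by $\add M\subseteq\mathcal{U}$ — the bounded $\add M$-resolutions existing precisely because $\gld\Lambda\le n$ and $M$ is $n$-cluster tilting — and reassembles via the octahedral axiom, using the orthogonality hypothesis and the $\nu_n$-stability of $\mathcal{U}$ to force $X\in\mathcal{U}$; functorial finiteness is obtained in the same spirit, approximating $X$ first by $\add\Lambda$ and propagating along the $\nu_n$-orbit. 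The delicate point throughout is the cohomological bookkeeping — checking that these $\add M$-resolutions and the $\nu_n$-translates of $\Lambda$ occupy exactly the prescribed degrees — and this is where the hypothesis $\gld\Lambda\le n$ together with finiteness of the $\tau_n$-orbit of $\Lambda$ is genuinely used. Finally, the ``in particular'' assertion follows since an $n$-representation-finite algebra has an $n$-cluster tilting module $M'$ by definition, the modules $\tau_n^{-j}\Lambda$ all lie in $\add M'$, and none of their nonzero indecomposable summands is projective, so the $\tau_n$-orbit of $\Lambda$ is automatically finite.
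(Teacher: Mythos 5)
The paper does not prove this theorem; it cites it as \cite[Theorem~1.22]{Iy_n-Auslander}, so there is no internal argument to compare against. Judged on its own merits, your sketch contains a genuine circularity at its load-bearing step.

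Your reductions at the outset are sound: the Serre-duality argument that makes the two perp-conditions equivalent via $\nu_n\mathcal{U}=\mathcal{U}$ is correct, and the self-orthogonality computation via the cohomological amplitude of $\nu^{\pm 1}$ is correct in substance (you have an internal sign slip --- $\nu_n^m\Lambda[i]=\nu^m\Lambda[i-nm]$, so the relevant cohomology group is $H^{i-nm}(\nu^m\Lambda)$ rather than $H^{i+nm}$, but the same amplitude estimates give the vanishing either way). The problem is the central claim, that $M:=\bigoplus_{j\ge 0}\tau_n^{-j}\Lambda$ is an $n$-cluster tilting object in $\mod\Lambda$. You propose to verify condition (3) of Proposition~\ref{cluster criterion} by assembling the required exact sequences from ``the almost split sequences furnished by Proposition~\ref{prop.cluster}.'' But Proposition~\ref{prop.cluster} takes as its hypothesis that $M$ is already $n$-cluster tilting: the equivalence $\tau_n\colon\stabadd M\to\costadd M$, the functorial isomorphism computing $\Ext^n_\Lambda$, and the identity $\add M=\add\{\tau_n^{-i}\Lambda\}=\add\{\tau_n^iD\Lambda\}$ are all consequences, not inputs. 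Invoking them to establish $n$-cluster tiltingness of $M$ is circular. The same circularity infects the preparatory hypotheses of Proposition~\ref{cluster criterion}: that $M$ is a cogenerator (i.e.\ $D\Lambda\in\add M$) is usually deduced from part~(3) of Proposition~\ref{prop.cluster}; and that $M$ is $n$-rigid does not follow formally from the derived-category self-orthogonality you established, since $\tau_n^{-j}\Lambda=H^0(\nu_n^{-j}\Lambda)$ need not equal the complex $\nu_n^{-j}\Lambda$ until one knows $\nu_n^{-j}\Lambda$ has cohomology concentrated in degree zero, which is part of what has to be proved.

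In other words, the hard content of Iyama's theorem is precisely the step you defer to a sketch: showing, from $\gld\Lambda\le n$ and the eventual vanishing of $\tau_n^{-i}\Lambda$, that the $\tau_n^{-}$-orbit of $\Lambda$ really does assemble into an $n$-cluster tilting module, and then transporting this to $\mathcal{D}_\Lambda$ via $\nu_n$. A correct argument has to build the higher almost split sequences from scratch (or argue by a careful double induction on the $\tau_n$-orbit) rather than reading them off from a proposition whose hypotheses it is trying to establish. Your ``in particular'' paragraph is fine once the main statement is in place, and the final claim that the $\tau_n$-orbit of $\Lambda$ is eventually zero (not merely bounded) does need the observation you make about $\tau_n^{-}$ killing projectives, so keep that remark.
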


We have the following criterion for $n$-representation-finiteness in terms of the derived category:

\begin{theorem}[{\cite[Theorem~3.1]{IO2}}] \label{theorem.repfin_U}
Let $\Lambda$ be an algebra with $\gld \Lambda \leq n$. Then the following are equivalent.
\begin{enumerate}
\item $\Lambda$ is $n$-representation-finite,
\item $D \Lambda \in \mathcal{U}$,
\item $\nu \mathcal{U} = \mathcal{U}$.
\end{enumerate}
\end{theorem}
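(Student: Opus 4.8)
The plan is to establish $(1)\Leftrightarrow(2)$ as the main equivalence and to derive $(2)\Leftrightarrow(3)$ separately. For the latter I would use that $\nu$ commutes with $\nu_n=\nu[-n]$, so it permutes the set of $\nu_n$-orbits of indecomposables of $\mathcal{D}_\Lambda$; since $\Lambda$ has only finitely many indecomposable summands, the subset $O_\Lambda$ of those orbits that meet $\add\Lambda$ is finite, and $\mathcal{U}=\add\{\nu_n^iX\mid X\text{ an indecomposable summand of }\Lambda,\ i\in\Z\}$ is the additive closure of the orbits in $O_\Lambda$. If $\nu\mathcal{U}=\mathcal{U}$ then $D\Lambda=\nu\Lambda\in\mathcal{U}$, so $(3)\Rightarrow(2)$ is immediate. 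Conversely, if $D\Lambda\in\mathcal{U}$, then every indecomposable summand of $\nu\Lambda$ lies in an orbit from $O_\Lambda$, i.e.\ $\nu(O_\Lambda)\subseteq O_\Lambda$; as $\nu$ acts injectively on orbits and $O_\Lambda$ is finite, this forces $\nu(O_\Lambda)=O_\Lambda$, hence $\nu\mathcal{U}=\mathcal{U}$.

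For $(1)\Rightarrow(2)$ I would start from an $n$-cluster tilting object $M$. By Proposition~\ref{prop.cluster} we have $\add M=\add\{\tau_n^{-i}\Lambda\mid i\in\mathbb{N}\}=\add\{\tau_n^iD\Lambda\mid i\in\mathbb{N}\}$, so $D\Lambda\in\add M$ and $\add M$ is finite. Since $\gld\Lambda\le n$ and $\Lambda$ is $n$-representation-finite, the complexes $\nu_n^{-i}\Lambda$ are concentrated in degree $0$ and hence coincide with $\tau_n^{-i}\Lambda$, whence $M\in\mathcal{U}$. Therefore $D\Lambda\in\add M\subseteq\mathcal{U}$, which is $(2)$.

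The substantive implication is $(2)\Rightarrow(1)$. Here I would first translate $D\Lambda\in\mathcal{U}$ into module language: using $\gld\Lambda\le n$ one shows that $\nu_n^i\Lambda$ has cohomology concentrated in degrees $[n,in]$ for $i\ge 1$ and in degrees $[in,0]$ for $i\le 0$; it follows that the modules lying in $\mathcal{U}$ form exactly $\add\{\tau_n^{-i}\Lambda\mid i\ge 0\}$, so $D\Lambda\in\add\{\tau_n^{-i}\Lambda\mid i\ge 0\}$. Because $D\Lambda$ is injective, $\tau_n^-(D\Lambda)=0$; iterating $\tau_n^-$ and using that $\tau_n^-$ is left inverse to $\tau_n$ on the pertinent additive subcategories (Proposition~\ref{prop.cluster}), I would propagate this vanishing to conclude that $\tau_n^{-i}\Lambda=0$ for $i\gg 0$. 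Theorem~\ref{theorem.U_is_ct} then shows that $\mathcal{U}$ is $n$-cluster tilting in $\mathcal{D}_\Lambda$, and $M:=\bigoplus_{i\ge 0}\tau_n^{-i}\Lambda$ is a finite-dimensional, $n$-rigid module which is a generator (since $\Lambda$ is a summand of $M$) and a cogenerator (since $D\Lambda$ is a summand of $M$). I would conclude that $M$ is $n$-cluster tilting by verifying the criterion of Proposition~\ref{cluster criterion}, e.g.\ by producing for each indecomposable $X\in\add M$ an exact sequence $0\rightarrow M_n\rightarrow\cdots\rightarrow M_0\rightarrow X$ with $M_j\in\add M$ and $M_0\rightarrow X$ right almost split, which one reads off from the inductive construction of $M$ via iterated $\tau_n^-$ together with Proposition~\ref{prop.cluster}. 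Hence $\Lambda$ is $n$-representation-finite.

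The hard part will be $(2)\Rightarrow(1)$: converting the single derived-category membership $D\Lambda\in\mathcal{U}$ into the module-theoretic finiteness $\tau_n^{-i}\Lambda=0$ for $i\gg 0$ requires both the cohomological amplitude estimates for the $\nu_n^i\Lambda$ (to identify the module part of $\mathcal{U}$) and a careful propagation of the vanishing of $\tau_n^-$ across all $\tau_n$-orbits, and the final step of certifying that the resulting finite-dimensional $n$-rigid generator-cogenerator is genuinely an $n$-cluster tilting object is where the bulk of the work lies.
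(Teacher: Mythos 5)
The paper does not itself prove this statement; it cites \cite[Theorem~3.1]{IO2}. So I can only assess your proposal on its own merits. Your argument for $(2)\Leftrightarrow(3)$ via $\nu$ permuting the finite set of $\nu_n$-orbits meeting $\add\Lambda$ is correct and clean.

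Your $(1)\Rightarrow(2)$, however, contains a concrete false claim. You assert that for an $n$-representation-finite algebra ``the complexes $\nu_n^{-i}\Lambda$ are concentrated in degree $0$ and hence coincide with $\tau_n^{-i}\Lambda$.'' This fails already for $n=1$ and $\Lambda=kA_2$ (a $1$-representation-finite algebra): writing $\Lambda=P_1\oplus P_2$ with $P_2=I_1$ projective-injective, one has $\nu_1^-P_2=\nu^-P_2[1]=P_1[1]$, concentrated in cohomological degree $-1$, while $\tau_1^-P_2=0$. So $\nu_n^{-i}\Lambda$ is in general \emph{not} concentrated in degree $0$; what is true (and is the content of the relevant results in \cite{Iy_n-Auslander}) is a dichotomy on indecomposable summands: each $\nu_n^{-i}P$ ($P$ indecomposable projective) is either a module concentrated in degree $0$ (and then equals $\tau_n^{-i}P$) or has $H^0=0$. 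This finer statement still yields that $\tau_n^{-i}\Lambda=H^0(\nu_n^{-i}\Lambda)$ is a direct summand of $\nu_n^{-i}\Lambda$ and hence lies in $\mathcal{U}$, which is what you need, but it cannot be deduced from the blanket concentration claim. Alternatively, $(1)\Rightarrow(2)$ can simply be extracted from Theorem~\ref{theorem.U_is_ct}, whose proof in \cite{Iy_n-Auslander} identifies $\mathcal{U}\cap\mod\Lambda$ with $\add M$.

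Your $(2)\Rightarrow(1)$ has a circularity. You invoke Proposition~\ref{prop.cluster} (``$\tau_n^-$ is quasi-inverse to $\tau_n$ on $\stabadd M$'') to propagate the vanishing $\tau_n^-(D\Lambda)=0$; but that proposition has as its hypothesis that $M$ is an $n$-cluster tilting object, which is precisely what you are trying to establish. A non-circular route is to use $(3)$, which at this stage you have already shown to be equivalent to $(2)$: since $\nu$ permutes the finitely many $\nu_n$-orbits in $O_\Lambda$, for every indecomposable projective $Q$ there is some indecomposable projective $P$ with $\nu P\cong\nu_n^{-i}Q$ for some $i\ge0$; combining this with the cohomological-amplitude estimates (applying $\nu_n^{j}$ to $\nu P$ and $\nu_n^{-j}$ to $Q$ simultaneously pins the intermediate objects to degree $0$) forces $\nu_n^{-i}Q=\tau_n^{-i}Q$ to be injective, whence $\tau_n^{-(i+1)}Q=0$. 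This gives $\tau_n^{-i}\Lambda=0$ for $i\gg0$ across \emph{all} projectives, which your formulation (using only $D\Lambda\in\add\{\tau_n^{-i}\Lambda\}$ without $(3)$) does not obviously achieve, since a priori some projective's $\tau_n^-$-orbit might never reach an injective. You correctly flag the final step --- upgrading the resulting $n$-rigid generator-cogenerator to an $n$-cluster tilting module via Proposition~\ref{cluster criterion}(3) --- as the bulk of the work; this part remains an outline, and the $n$-almost-split sequences needed there cannot be ``read off'' from Proposition~\ref{prop.cluster} for the same circularity reason.
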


\subsection{$n$-Amiot-cluster categories and $(n+1)$-preprojective algebras}

\begin{definition}[see \cite{CC, C_PhD}] \label{def.amiot_CC}
We denote by $\mathcal{D}_{\Lambda} / \nu_n$ the \emph{orbit category}, that is $\Ob \mathcal{D}_{\Lambda} / \nu_n = \Ob \mathcal{D}_{\Lambda}$, and
\[ \Hom_{\mathcal{D}_{\Lambda} / \nu_n}(X, Y) = \bigoplus_{i \in \Z} \Hom_{\mathcal{D}_{\Lambda}}(X, \nu_n^i Y). \]
We denote by $\mathcal{C}_{\Lambda}^n$ the \emph{$n$-Amiot-cluster category}, that is the triangulated hull (see \cite{CC, C_PhD} -- we do not give a definition because for the purposes in this paper it does not matter if we think of the orbit category or the $n$-Amiot-cluster category). We denote by $\pi \colon \mathcal{D}_{\Lambda} \to \mathcal{C}_{\Lambda}^n$ the functor induced by projection onto the orbit category.
\end{definition}

\begin{lemma}[Amiot \cite{CC, C_PhD}]
Let $\Lambda$ be an algebra with $\gld \Lambda \leq n$. The $n$-Amiot-cluster category $\mathcal{C}_{\Lambda}^n$ is $\Hom$-finite if and only if $\tau_n^{-i} \Lambda = 0$ for sufficiently large $i$.

In particular it is $\Hom$-finite for any $n$-representation-finite algebra.
\end{lemma}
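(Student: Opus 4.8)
The plan is to unwind the definition of the orbit category and reduce the question to a cohomological estimate on the complexes $\nu_n^i\Lambda$, using the standard $t$-structure on $\mathcal{D}_\Lambda$. The ``only if'' direction is then immediate: if $\mathcal{C}_\Lambda^n$ is $\Hom$-finite then
\[ \End_{\mathcal{C}_\Lambda^n}(\Lambda) \iso \bigoplus_{i\in\Z} \Hom_{\mathcal{D}_\Lambda}(\Lambda,\nu_n^i\Lambda) \iso \bigoplus_{i\in\Z} H^0(\nu_n^i\Lambda) \]
is finite dimensional, so $H^0(\nu_n^i\Lambda)=0$ for all but finitely many $i$; iterating the identity $\tau_n^{\pm} = H^0(\nu_n^{\pm} -)$ (valid since $\gld\Lambda\leq n$, once one notes $\nu_n^{-1}$ preserves $\mathcal{D}_\Lambda^{\leq 0}$) gives $\tau_n^{-i}\Lambda=H^0(\nu_n^{-i}\Lambda)$, whence $\tau_n^{-i}\Lambda=0$ for $i\gg0$.

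For the converse I would first reduce to a single test object. Since $\gld\Lambda<\infty$, the category $\mathcal{D}_\Lambda$ is $\Hom$-finite and is generated as a triangulated category by $\Lambda$, and $\nu_n$ is a triangle autoequivalence; as $\Hom_{\mathcal{C}_\Lambda^n}(X,Y)=\bigoplus_i\Hom_{\mathcal{D}_\Lambda}(X,\nu_n^iY)$ is finite dimensional exactly when all but finitely many summands vanish, a dévissage on $X$ and $Y$ reduces everything to the following statement: for every $m\in\Z$, $H^m(\nu_n^i\Lambda)=\Hom_{\mathcal{D}_\Lambda}(\Lambda,\nu_n^i\Lambda[m])=0$ for all but finitely many $i\in\Z$. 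The crucial claim is that, under the hypothesis, the top cohomological degree of $\nu_n^{-i}\Lambda$ tends to $-\infty$ as $i\to+\infty$; i.e.\ for each $p\in\Z$ there is $i_p$ with $\nu_n^{-i}\Lambda\in\mathcal{D}_\Lambda^{\leq p}$ for all $i\geq i_p$. Granting this, for each fixed $m$ both tails $i\to\pm\infty$ of $\bigoplus_iH^m(\nu_n^i\Lambda)$ vanish: when $i<0$ has large absolute value, $\nu_n^i\Lambda\in\mathcal{D}_\Lambda^{\leq m-1}$ forces $H^m(\nu_n^i\Lambda)=0$; when $i>0$ is large, $H^m(\nu_n^i\Lambda)\iso\Hom_{\mathcal{D}_\Lambda}(\nu_n^{-i}\Lambda,\Lambda[m])=0$ because $\nu_n^{-i}\Lambda\in\mathcal{D}_\Lambda^{\leq-m-1}$ while $\Lambda[m]\in\mathcal{D}_\Lambda^{\geq-m}$ and the standard $t$-structure admits no nonzero maps from $\mathcal{D}_\Lambda^{\leq-m-1}$ to $\mathcal{D}_\Lambda^{\geq-m}$.

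It remains to prove the claim, and this is the step I expect to be the real obstacle. Since $\gld\Lambda\leq n$ and $D\Lambda$ has projective dimension $\leq n$, the functor $\nu_n^{-1}=\nu^{-1}[n]$ maps $\mathcal{D}_\Lambda^{\leq k}$ into itself for every $k$; hence $\nu_n^{-i}\Lambda\in\mathcal{D}_\Lambda^{\leq 0}$ for all $i\geq0$, with $H^0(\nu_n^{-i}\Lambda)=\tau_n^{-i}\Lambda=0$ for $i\geq i_0$. One then propagates this vanishing upward by induction on $m$: applying $\nu_n^{-1}$ to the truncation triangle $\tau^{\leq-m-1}\nu_n^{-i}\Lambda\to\nu_n^{-i}\Lambda\to H^{-m}(\nu_n^{-i}\Lambda)[m]\to$ and comparing cohomology in degree $-m$ (using again that $\nu_n^{-1}$ preserves $\mathcal{D}_\Lambda^{\leq-m-1}$, and that $H^0(\nu_n^{-1}-)=\tau_n^-$) yields an embedding $H^{-m}(\nu_n^{-i-1}\Lambda)\hookrightarrow\tau_n^-\!\bigl(H^{-m}(\nu_n^{-i}\Lambda)\bigr)$. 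To turn this into genuine vanishing one invokes Theorem~\ref{theorem.U_is_ct}: the hypothesis forces $\mathcal{U}$ to be an $n$-cluster tilting subcategory of $\mathcal{D}_\Lambda$, so each $\nu_n^{-i}\Lambda$ lies in $\mathcal{U}$ and, by the analysis of such objects in \cite{Iy_n-Auslander}, all of its cohomology modules lie in $\add\{\tau_n^{-j}\Lambda\mid j\geq0\}$ --- a category with only finitely many indecomposables on which $(\tau_n^-)^{i_0}=0$; iterating the embedding then gives $H^{-m}(\nu_n^{-i}\Lambda)=0$ for $i$ large, proving the claim. Finally, an $n$-representation-finite algebra satisfies the hypothesis by Proposition~\ref{prop.cluster}, which yields the last assertion of the lemma.
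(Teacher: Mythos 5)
The paper does not give a proof of this lemma: it is cited directly from Amiot's work \cite{CC, C_PhD}. So I will assess your argument on its own merits rather than against a reference proof in the text.

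Your high-level strategy is sound: the ``only if'' direction is routine once one knows $\nu_n^{-1}$ preserves $\mathcal{D}_\Lambda^{\le 0}$ (so that $H^0(\nu_n^{-i}\Lambda)=\tau_n^{-i}\Lambda$), and the ``if'' direction does reduce, via d\'evissage and Serre duality, to showing that the top cohomological degree of $\nu_n^{-i}\Lambda$ tends to $-\infty$. There is, however, a genuine gap in the inductive step, together with a clean fix. You derive the embedding
$H^{-m}(\nu_n^{-i-1}\Lambda)\hookrightarrow\tau_n^-\bigl(H^{-m}(\nu_n^{-i}\Lambda)\bigr)$
and then say you will ``iterate the embedding.'' As written this does not close: $\tau_n^-$ is not left exact, so from $A\hookrightarrow B$ you cannot conclude $\tau_n^-A\hookrightarrow\tau_n^-B$, and the chain of inclusions does not compose into a single $(\tau_n^-)^k$-bound. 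The good news is that the map is in fact an \emph{isomorphism}, not just an embedding. In the long exact cohomology sequence of the triangle obtained by applying $\nu_n^{-1}$ to
$\tau^{\le-m-1}\nu_n^{-i}\Lambda\to\nu_n^{-i}\Lambda\to H^{-m}(\nu_n^{-i}\Lambda)[m]\to$,
the first term lands in $\mathcal{D}_\Lambda^{\le-m-1}$, so both its $H^{-m}$ and its $H^{-m+1}$ vanish; this gives $H^{-m}(\nu_n^{-(i+1)}\Lambda)\cong\tau_n^-\bigl(H^{-m}(\nu_n^{-i}\Lambda)\bigr)$, hence $H^{-m}(\nu_n^{-(i+k)}\Lambda)\cong(\tau_n^-)^k H^{-m}(\nu_n^{-i}\Lambda)$, which is exactly the iterability you need.

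Even with that repair, the conclusion still rests on the input you cite only loosely, namely that the cohomology modules of objects of $\mathcal{U}$ lie in $\add\{\tau_n^{-j}\Lambda\mid j\ge0\}$; this is the real crux and must be attributed precisely to the corresponding theorem of \cite{Iy_n-Auslander}, since for an arbitrary module $N$ there is no reason for $(\tau_n^-)^k N$ to vanish for large $k$. Granting it, the argument closes: $(\tau_n^-)^{i_0}$ annihilates every object of $\add\{\tau_n^{-j}\Lambda\mid 0\le j<i_0\}$. One further point you pass over silently is that $\Hom$-finiteness of the triangulated hull $\mathcal{C}_\Lambda^n$ is being identified with finiteness of the orbit-category Hom-spaces; this is justified by Amiot's construction (and is consistent with Proposition~\ref{proposition.tensoralg} of the paper), but it is worth flagging as a place where the argument uses a property of the hull rather than the orbit category itself.
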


\begin{theorem}[Amiot \cite{CC, C_PhD}]
Let $\Lambda$ be an algebra with $\gld \Lambda \leq n$ such that $\mathcal{C}_{\Lambda}^n$ is $\Hom$-finite. Then $\pi \Lambda$ is an $n$-cluster tilting object in $\mathcal{C}_{\Lambda}^n$.
\end{theorem}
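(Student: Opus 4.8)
The plan is to deduce the statement from its derived-category counterpart, Theorem~\ref{theorem.U_is_ct}, by transporting it along $\pi$. First I would note that, by the preceding Lemma of Amiot, the hypothesis that $\mathcal{C}_\Lambda^n$ is $\Hom$-finite is equivalent to $\tau_n^{-i}\Lambda = 0$ for $i\gg0$. Hence Theorem~\ref{theorem.U_is_ct} applies and $\mathcal{U} = \mathcal{U}_\Lambda^n = \add\{\nu_n^i\Lambda\mid i\in\Z\}$ is an $n$-cluster tilting subcategory of $\mathcal{D}_\Lambda$. In particular $\Hom_{\mathcal{D}_\Lambda}(\mathcal{U},\mathcal{U}[i]) = 0$ for $1\le i\le n-1$, and an object $Z\in\mathcal{D}_\Lambda$ belongs to $\mathcal{U}$ as soon as $\Hom_{\mathcal{D}_\Lambda}(\mathcal{U},Z[i]) = 0$ for $1\le i\le n-1$ (equivalently, as soon as $\Hom_{\mathcal{D}_\Lambda}(Z,\mathcal{U}[i]) = 0$ for such $i$).

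Next I would use two features of the orbit construction. (a) The functor $\nu_n$ becomes isomorphic to the identity after applying $\pi$, so $\pi\nu_n^i\Lambda\iso\pi\Lambda$ for every $i$, and the additive closure of $\pi\mathcal{U}$ inside $\mathcal{C}_\Lambda^n$ is exactly $\add\pi\Lambda$. (b) Because $\mathcal{C}_\Lambda^n$ is $\Hom$-finite, it may be treated as the orbit category of Definition~\ref{def.amiot_CC} -- this is the content of Amiot's construction, and it yields both that $\Hom_{\mathcal{C}_\Lambda^n}(\pi X,\pi Y)\iso\bigoplus_{j\in\Z}\Hom_{\mathcal{D}_\Lambda}(X,\nu_n^jY)$ for all $X,Y\in\mathcal{D}_\Lambda$ and that every object of $\mathcal{C}_\Lambda^n$ is of the form $\pi Z$; moreover $\mathcal{C}_\Lambda^n$ is then $\Hom$-finite and Krull--Schmidt, so $\add\pi\Lambda$ is functorially finite. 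Since $\pi$ commutes with the shift and each $\nu_n^{-j}$ is an autoequivalence of $\mathcal{D}_\Lambda$, (b) rewrites, for every $Z\in\mathcal{D}_\Lambda$ and every $i$, as
\[ \Hom_{\mathcal{C}_\Lambda^n}(\pi\Lambda,(\pi Z)[i])\iso\bigoplus_{j\in\Z}\Hom_{\mathcal{D}_\Lambda}(\nu_n^j\Lambda,Z[i]). \]

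From here the $n$-cluster tilting property of $\pi\Lambda$ is formal. Putting $Z = \nu_n^k\Lambda$ and $1\le i\le n-1$, every summand on the right vanishes because $\mathcal{U}$ is $n$-rigid, so $\pi\Lambda$ is $n$-rigid and $\add\pi\Lambda\subseteq\{X\in\mathcal{C}_\Lambda^n\mid\Hom_{\mathcal{C}_\Lambda^n}(\pi\Lambda,X[i]) = 0,\ 1\le i\le n-1\}$. Conversely, given $X = \pi Z\in\mathcal{C}_\Lambda^n$ with $\Hom_{\mathcal{C}_\Lambda^n}(\pi\Lambda,X[i]) = 0$ for $1\le i\le n-1$, the displayed isomorphism forces $\Hom_{\mathcal{D}_\Lambda}(\nu_n^j\Lambda,Z[i]) = 0$ for all $j$ and all such $i$, i.e.\ $\Hom_{\mathcal{D}_\Lambda}(\mathcal{U},Z[i]) = 0$; hence $Z\in\mathcal{U}$ and $X\in\add\pi\Lambda$ by (a). The second orthogonality condition in the definition of $n$-cluster tilting subcategory is obtained symmetrically -- or, more cheaply, one observes that Serre duality in $\mathcal{D}_\Lambda$ realized by $\nu$ descends to a Serre functor $[n]$ on $\mathcal{C}_\Lambda^n$, which makes the two orthogonality conditions equivalent. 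Together with the functorial finiteness of $\add\pi\Lambda$ from (b), this shows $\pi\Lambda$ is an $n$-cluster tilting object in $\mathcal{C}_\Lambda^n$.

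The only genuinely substantive point is (b): that $\mathcal{C}_\Lambda^n$ can, under the $\Hom$-finiteness hypothesis, be handled as the naive orbit category, and that it is $\Hom$-finite and Krull--Schmidt. This is the technical heart of Amiot's construction, and is exactly where $\Hom$-finiteness is used essentially; granting it, the theorem is nothing more than a transport of Theorem~\ref{theorem.U_is_ct} along $\pi$.
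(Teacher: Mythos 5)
The paper attributes this result to Amiot \cite{CC, C_PhD} and gives no proof, so there is nothing internal to compare against; I assess your proposal on its own merits.

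Granted your ingredients in~(b), your reduction to Theorem~\ref{theorem.U_is_ct} is correct: the Hom-formula transports $n$-rigidity, the $n$-cluster tilting characterization of $\mathcal{U}$ in $\mathcal{D}_\Lambda$ together with Observation~\ref{obs.U_cover} identifies the relevant orthogonal subcategory with $\add\pi\Lambda$, and the Serre-duality shortcut ($\nu\cong[n]$ on $\mathcal{C}_\Lambda^n$) correctly shows the two orthogonality conditions coincide. Functorial finiteness follows as you say from Hom-finiteness and Krull--Schmidt.

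The entire weight of the argument is therefore on~(b), and the one ingredient there you should treat with care is the essential surjectivity of $\pi\colon\mathcal{D}_\Lambda\to\mathcal{C}_\Lambda^n$. For $\gld\Lambda\geq 2$ the naive orbit category is generally not triangulated; $\mathcal{C}_\Lambda^n$ is constructed as a Verdier quotient of the perfect derived category of a DG algebra, so its objects are not a priori hit by $\pi$, and the Hom-formula for the triangulated hull is a theorem (Keller, Amiot), not a definitional restatement. Amiot does establish, under the Hom-finiteness hypothesis, that every object of $\mathcal{C}_\Lambda^n$ lifts along $\pi$ via a fundamental-domain argument inside $\perf$ of the DG algebra --- but in \cite{CC, C_PhD} this is proved alongside, and using parts of, the same machinery that yields the cluster tilting statement. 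So your ``transport'' is a legitimate reorganization of the dependency chain rather than a genuine shortcut, and if you wanted to make it self-contained you would need to supply the essential surjectivity and the Hom-formula independently of the conclusion. You correctly flag~(b) as the technical heart, which is the right instinct; the point is simply that it is not a single black box but several nontrivial statements that should be cited precisely.
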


\begin{observation} \label{obs.U_cover}
Note that $\add \pi \Lambda$ is the image of $\mathcal{U}$ under the functor of the derived category to the $n$-Amiot-cluster category as indicated in the following diagram.
\[ \begin{tikzpicture}[xscale=3,yscale=-1.5]
 \node (A) at (0,0) {$\mathcal{U}$};
 \node (B) at (1,0) {$\add \pi(\Lambda)$};
 \node (C) at (0,1) {$\mathcal{D}_{\Lambda}$};
 \node (D) at (1,1) {$\mathcal{C}_{\Lambda}^n$};
 \draw [->>] (A) -- (B);
 \draw [->] (C) -- node [above] {$\pi$} (D);
 \draw [right hook->] (A) -- (C);
 \draw [right hook->] (B) -- (D);
\end{tikzpicture} \]
\end{observation}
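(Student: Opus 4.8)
The plan is to reduce the statement to the single elementary fact that the projection to the orbit category collapses the functor $\nu_n$. First I would establish a natural isomorphism $\pi \circ \nu_n \iso \pi$. By the very definition of the orbit category,
\[ \Hom_{\mathcal{D}_{\Lambda}/\nu_n}(\pi X, \pi \nu_n X) = \bigoplus_{i \in \Z} \Hom_{\mathcal{D}_{\Lambda}}(X, \nu_n^{i+1} X), \]
and the image of $\mathrm{id}_X \in \Hom_{\mathcal{D}_{\Lambda}}(X, X)$ in the summand indexed by $i = -1$ defines a morphism $\pi X \to \pi \nu_n X$; its inverse is, symmetrically, the class of $\mathrm{id}_{\nu_n X}$ inside $\Hom_{\mathcal{D}_{\Lambda}/\nu_n}(\pi \nu_n X, \pi X) = \bigoplus_{i} \Hom_{\mathcal{D}_{\Lambda}}(\nu_n X, \nu_n^{i} X)$, and a direct computation with the composition law of the orbit category shows both composites are identities, naturality in $X$ being clear. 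Since $\pi \colon \mathcal{D}_{\Lambda} \to \mathcal{C}_{\Lambda}^n$ factors through $\mathcal{D}_{\Lambda}/\nu_n$ (and, as noted in Definition~\ref{def.amiot_CC}, for our purposes we may as well take $\mathcal{C}_{\Lambda}^n = \mathcal{D}_{\Lambda}/\nu_n$ here), we conclude $\pi(\nu_n^i \Lambda) \iso \pi(\Lambda)$ for every $i \in \Z$.

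Second, I would combine this with the definition $\mathcal{U} = \add\{\nu_n^i \Lambda \mid i \in \Z\}$ together with the fact that $\pi$ is additive ($k$-linear). Every object of $\mathcal{U}$ is a direct summand of a finite direct sum of the $\nu_n^i \Lambda$, and $\pi$ preserves finite direct sums and direct summands, so the essential image $\pi(\mathcal{U})$ consists of summands of finite direct sums of copies of $\pi(\Lambda)$; that is, $\pi(\mathcal{U}) \subseteq \add \pi(\Lambda)$. Conversely $\Lambda \in \mathcal{U}$ gives $\pi(\Lambda) \in \pi(\mathcal{U})$, so $\pi(\mathcal{U})$ contains a set of additive generators of $\add \pi(\Lambda)$. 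Hence $\pi(\mathcal{U})$ and $\add \pi(\Lambda)$ have the same closure under finite direct sums and summands, which is exactly the content of the diagram.

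The proof is a formality and I do not anticipate a genuine obstacle; the only point deserving a word of care is the difference between ``$\pi(\mathcal{U})$ generates $\add \pi(\Lambda)$'' and ``$\pi(\mathcal{U}) = \add \pi(\Lambda)$ on the nose''. For the literal equality one invokes that the ambient category is idempotent complete, so that $\pi(\mathcal{U})$ is itself closed under summands — and in the situations where this observation is applied ($\mathcal{C}_{\Lambda}^n$ being $\Hom$-finite, hence Krull--Schmidt) this holds — but since all later uses of the diagram only ever matter up to additive closure, it is harmless to read it that way, in which case the first two paragraphs already give everything.
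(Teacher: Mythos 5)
Your argument is correct and is the natural unwinding of the definitions; the paper gives no proof for this observation, treating it as immediate, and your first two paragraphs supply exactly the expansion one would expect ($\pi\circ\nu_n\iso\pi$ plus additivity of $\pi$). The only wrinkle is your third paragraph: idempotent completeness of $\mathcal{C}_\Lambda^n$ alone does not make $\pi(\mathcal{U})$ closed under summands — what one actually needs is that the $\pi(P_j)$ for $P_j$ indecomposable projective remain indecomposable, which follows since $\End_{\mathcal{C}_\Lambda^n}(\pi P_j)=\bigoplus_i\Hom_{\mathcal{D}_\Lambda}(P_j,\nu_n^i P_j)$ is local ($\Hom_{\mathcal{D}_\Lambda}(P_j,\nu_n^i P_j)=0$ for $i<0$ and lies in the radical for $i>0$); you do gesture at this via Krull--Schmidt, and in any case your closing remark that only the additive closure matters is a perfectly acceptable reading of the observation.
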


\begin{definition} \label{def.preproj}
Let $\Lambda$ be an algebra with $\gld \Lambda \leq n$. The \emph{$(n+1)$-preprojective algebra} $\widehat{\Lambda}$ of $\Lambda$ is the tensor algebra of the bimodule $\Ext_{\Lambda}^n(D\Lambda, \Lambda)$ over $\Lambda$
\[ \widehat{\Lambda} := {\rm T}_{\Lambda} \Ext_{\Lambda}^n(D\Lambda, \Lambda). \]
(See \cite{Kel_DefCY} or \cite{Kel_ICRA} for a motivation for this name.)
\end{definition}

\begin{proposition} \label{proposition.tensoralg}
The $(n+1)$-preprojective algebra $\widehat{\Lambda}$ is isomorphic to the endomorphism ring
\[ \End_{\mathcal{D}_{\Lambda} / \nu_n}(\Lambda)  \iso \End_{\mathcal{C}_{\Lambda}^n}(\pi \Lambda) . \]
\end{proposition}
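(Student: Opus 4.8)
The plan is to compute the orbit-category endomorphism ring $\End_{\mathcal{D}_{\Lambda}/\nu_n}(\Lambda)=\bigoplus_{i\in\Z}\Hom_{\mathcal{D}_{\Lambda}}(\Lambda,\nu_n^i\Lambda)$ explicitly and to match it, as a graded algebra, with the tensor algebra $\widehat{\Lambda}={\rm T}_{\Lambda}E$, where $E:=\Ext^n_{\Lambda}(D\Lambda,\Lambda)$. The remaining isomorphism $\End_{\mathcal{D}_{\Lambda}/\nu_n}(\Lambda)\iso\End_{\mathcal{C}_{\Lambda}^n}(\pi\Lambda)$ is then immediate, since $\pi$ factors through the orbit category and, by the construction of the triangulated hull recalled in Definition~\ref{def.amiot_CC}, induces an isomorphism on the $\Hom$-spaces in question.

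Since $\gld\Lambda\le n$, the Nakayama functor $\nu\iso D\RHom_{\Lambda}(-,\Lambda)$ is an autoequivalence of $\mathcal{D}_{\Lambda}$ with quasi-inverse $\nu^{-1}\iso\RHom_{\Lambda}(D\Lambda,-)$, and $\nu_n=\nu[-n]$. Using $\pd_{\Lambda}D\Lambda\le n$ and $\id_{\Lambda}\Lambda\le n$, an induction on $i$ estimating cohomological degrees shows that for $i\ge1$ the complex $\nu_n^i\Lambda$ has cohomology concentrated in degrees $[n,in]$, while $\nu_n^{-i}\Lambda$ has cohomology concentrated in degrees $[-in,0]$. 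In particular $\Hom_{\mathcal{D}_{\Lambda}}(\Lambda,\nu_n^i\Lambda)=H^0(\nu_n^i\Lambda)=0$ for every $i>0$, so the sum defining $\End_{\mathcal{D}_{\Lambda}/\nu_n}(\Lambda)$ is supported in degrees $i\le0$, matching the grading of ${\rm T}_{\Lambda}E$, and its degree-zero part is $\End_{\mathcal{D}_{\Lambda}}(\Lambda)\iso\Lambda$.

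For the negative terms, write $\nu_n^{-i}\Lambda=\RHom_{\Lambda}(D\Lambda,\nu_n^{-(i-1)}\Lambda)[n]$. As $\nu_n^{-(i-1)}\Lambda$ has cohomology in degrees $[-(i-1)n,0]$, the hyper-$\Ext$ spectral sequence with $E_2$-term $\Ext^p_{\Lambda}(D\Lambda,H^q(\nu_n^{-(i-1)}\Lambda))$ contributes to $H^0(\nu_n^{-i}\Lambda)$ only through the corner $(p,q)=(n,0)$, which is a permanent cycle; hence $H^0(\nu_n^{-i}\Lambda)\iso\Ext^n_{\Lambda}\bigl(D\Lambda,H^0(\nu_n^{-(i-1)}\Lambda)\bigr)$. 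Now comes the key observation: because $\gld\Lambda\le n$, the functor $\Ext^n_{\Lambda}(D\Lambda,-)\colon\mod\Lambda\rightarrow\mod\Lambda$ is right exact and additive and sends $\Lambda$ to $E$, hence is naturally isomorphic to $E\otimes_{\Lambda}-$. Feeding this into the recursion, with base case $H^0(\nu_n^{0}\Lambda)=\Lambda$, gives $\Hom_{\mathcal{D}_{\Lambda}}(\Lambda,\nu_n^{-i}\Lambda)=H^0(\nu_n^{-i}\Lambda)\iso E^{\otimes_{\Lambda}i}$ for all $i\ge0$, and therefore an isomorphism of graded vector spaces $\End_{\mathcal{D}_{\Lambda}/\nu_n}(\Lambda)\iso\bigoplus_{i\ge0}E^{\otimes_{\Lambda}i}={\rm T}_{\Lambda}E=\widehat{\Lambda}$.

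Finally one must check this is an isomorphism of algebras. For $i,j\ge0$ the composition in the orbit category $\Hom_{\mathcal{D}_{\Lambda}}(\Lambda,\nu_n^{-i}\Lambda)\times\Hom_{\mathcal{D}_{\Lambda}}(\nu_n^{-i}\Lambda,\nu_n^{-i-j}\Lambda)\rightarrow\Hom_{\mathcal{D}_{\Lambda}}(\Lambda,\nu_n^{-i-j}\Lambda)$, after using the equivalence $\nu_n^{i}$ to rewrite the middle factor as $\Hom_{\mathcal{D}_{\Lambda}}(\Lambda,\nu_n^{-j}\Lambda)$, must be identified with the canonical multiplication $E^{\otimes_{\Lambda}i}\otimes_{\Lambda}E^{\otimes_{\Lambda}j}\rightarrow E^{\otimes_{\Lambda}(i+j)}$ of ${\rm T}_{\Lambda}E$. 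This is a diagram chase using the naturality of the isomorphism $\Ext^n_{\Lambda}(D\Lambda,-)\iso E\otimes_{\Lambda}-$ — which is exactly what makes the identifications of the $H^0$'s functorial, hence compatible with applying $\nu_n^{i}$ — together with associativity of composition. I expect this bookkeeping to be the main obstacle: no individual step is deep, but keeping the identifications coherent is where the real work lies; the one genuinely new input is the right-exactness argument collapsing iterated $\Ext^n_{\Lambda}(D\Lambda,-)$ into iterated tensor powers of $E$.
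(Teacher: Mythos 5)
Your proof is correct in substance, and it is essentially the argument of Amiot that the paper itself only cites: the paper's proof is the single line ``the proof of \cite[Proposition~5.2.1]{C_PhD} or \cite[Proposition~4.7]{CC} carries over,'' so you are supplying the content the authors chose to outsource. Each of your steps checks out: the degree bounds on the cohomology of $\nu_n^{i}\Lambda$ and $\nu_n^{-i}\Lambda$ follow from $\pd_\Lambda D\Lambda\le n$ and $\id_\Lambda\Lambda\le n$; the vanishing $H^0(\nu_n^i\Lambda)=0$ for $i>0$ correctly concentrates the orbit-category endomorphism ring in the expected degrees; the hyper-$\Ext$ spectral sequence degenerates at the corner $(p,q)=(n,0)$ because $H^q(\nu_n^{-(i-1)}\Lambda)=0$ for $q>0$ kills incoming differentials and $\Ext^p_\Lambda(D\Lambda,-)=0$ for $p>n$ kills outgoing ones; and your identification $\Ext^n_\Lambda(D\Lambda,-)\iso E\otimes_\Lambda-$ via right exactness (which holds because $\gld\Lambda\le n$) together with an Eilenberg--Watts-type argument is exactly the pivot that turns the recursion into tensor powers. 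You are also right to single out the multiplicative compatibility — that composition in $\mathcal{D}_\Lambda/\nu_n$ matches the tensor-algebra multiplication under all of these identifications — as the place requiring real care; your sketch leaves it as bookkeeping, which is a gap relative to a fully self-contained proof, though it is precisely the part the paper delegates to Amiot. As you say, it rests on the naturality of the isomorphism $\Ext^n_\Lambda(D\Lambda,-)\iso E\otimes_\Lambda-$ and of the $H^0$-identifications under applying $\nu_n^{-i}$; writing this out carefully is worthwhile if you want the argument to stand alone.
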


\begin{proof}
The proof of \cite[Proposition~5.2.1]{C_PhD} or \cite[Proposition~4.7]{CC} carries over.
\end{proof}

\section{$n$-APR tilting} \label{section.APR}

In this section we introduce $n$-APR tilting, and prove some general properties.

In Subsection~\ref{subsec.firstprops} we introduce the notion of (weak) $n$-APR tilting modules and study their basic properties.

In Subsection~\ref{subsec.asonepointext} we will give a concrete description of the $n$-APR tilted algebra in terms of one-point (co)extensions. Namely, if $\Lambda$ is a one-point coextension of $\End_{\Lambda}(Q)^{\op}$ by a module $M$, then the $n$-APR tilt is the one-point extension of $\End_{\Lambda}(Q)^{\op}$ by $\Tr_{n-1} M$. This result will allow us to give an explicit description of the quivers and relations in case $n=2$ in Subsection~\ref{subsec.quivers}. 

Finally, in Subsection~\ref{subsect.APR_der}, we introduce a version version of APR tilting in the language of derived categories.

\subsection{$n$-APR tilting modules} \label{subsec.firstprops}

\begin{definition} \label{def.nAPR}
Let $\Lambda$ be a basic finite dimensional algebra and $n\ge1$. Let $P$ be a simple projective $\Lambda$-module satisfying $\Ext^i_\Lambda(D\Lambda,P)=0$ for any $0 \leq i < n$. We decompose $\Lambda=P\oplus Q$.
We call
\[T:=(\tau_n^-P)\oplus Q\]
the \emph{weak $n$-APR tilting module} associated with $P$.
If moreover $\id P=n$, then we call $T$ an \emph{$n$-APR tilting module}, and we call $\End_{\Lambda}(T)^{\op}$ an \emph{$n$-APR tilt} of $\Lambda$.

Dually we define \emph{(weak) $n$-APR cotilting modules}.
\end{definition}

The more general notion of $n$-BB tilting modules has been introduced in \cite{HuXi}. \medskip

The following result shows that weak $n$-APR tilting modules are in fact tilting $\Lambda$-modules.

\begin{theorem}\label{APR}
Let $\Lambda$ be a basic finite dimensional algebra, and let $T$ be a weak $n$-APR tilting $\Lambda$-module (as in Definition~\ref{def.nAPR}). Then $T$ is a tilting $\Lambda$-module with $\pd_{\Lambda} T=n$.
\end{theorem}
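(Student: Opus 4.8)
The plan is to verify the three defining properties of a (classical) tilting module for $T = (\tau_n^- P) \oplus Q$: finite projective dimension (in fact exactly $n$), vanishing of self-extensions $\Ext^i_\Lambda(T,T) = 0$ for all $i > 0$, and the existence of an exact sequence $0 \to \Lambda \to T^0 \to \cdots \to T^n \to 0$ with $T^j \in \add T$. The key is to produce a good projective-type resolution of $\tau_n^- P$. Since $P$ is simple projective with $\Ext^i_\Lambda(D\Lambda, P) = 0$ for $0 \le i < n$, applying $\Hom_\Lambda(-,\Lambda)$ to a minimal injective copresentation will produce, after dualizing, a minimal projective presentation whose first $n$ terms we can control; concretely, $\tau_n^- P = \Tr_n DP = \Tr \Omega^{n-1} DP$, and one gets an exact sequence
\[ 0 \to[30] P \to[30] P^{n-1} \to[30] \cdots \to[30] P^1 \to[30] P^0 \to[30] \tau_n^- P \to[30] 0 \]
with all $P^j$ projective (this uses that $DP$ has projective dimension $n$ as a right module, equivalently the hypothesis on $\Ext^i_\Lambda(D\Lambda,P)$, which says $\Lambda$-injectives have no low Ext into $P$). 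This immediately gives $\pd_\Lambda \tau_n^- P \le n$, hence $\pd_\Lambda T \le n$; that it equals $n$ follows because $P$ itself appears as the $n$-th syzygy and $P$ is not projective-summand-equivalent to anything of smaller pd unless the sequence splits, which it does not since $P$ is simple and not a summand of $P^{n-1}$ (here one uses that $P$ occurs with multiplicity one in $\Lambda$ and $\tau_n^- P \ne 0$).

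Next I would establish rigidity, i.e. $\Ext^{>0}_\Lambda(T,T) = 0$. Split this into $\Ext^i_\Lambda(Q,Q)$, $\Ext^i_\Lambda(Q, \tau_n^- P)$, $\Ext^i_\Lambda(\tau_n^- P, Q)$, and $\Ext^i_\Lambda(\tau_n^- P, \tau_n^- P)$. The first vanishes since $Q$ is projective. For the terms involving $\tau_n^- P$ I would use the resolution above: $\Ext^i_\Lambda(\tau_n^- P, Q)$ for $i > 0$ can be computed from the resolution and reduces to checking maps out of $P$ into $Q$ lift — but $\Hom_\Lambda(P, Q) $ versus the presentation of $\tau_n^- P$ is controlled by the fact that $P$ is simple projective, so no nonsplit extensions arise; more cleanly, $\Ext^n_\Lambda(\tau_n^- P, Q) \cong D\Hom_\Lambda(\nu^{-1} Q\text{-stuff})$ via the AR-type formula, and lower $\Ext^i$ vanish because the resolution has the special shape ending in the simple projective $P$. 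For $\Ext^i_\Lambda(Q, \tau_n^- P)$, since $Q$ is projective these vanish for $i > 0$ trivially. The self-extension group $\Ext^i_\Lambda(\tau_n^- P, \tau_n^- P)$ is the subtle one: here I expect to invoke the functorial isomorphism of Proposition~\ref{prop.cluster}-type (or rather a direct derived-category computation) relating $\Ext^n_\Lambda(\tau_n^- P, \tau_n^- P)$ to $\underline{\Hom}$ groups, together with the fact that $P$ is simple projective so $\underline{\End}$ is controlled, and lower extensions vanish by the shape of the resolution combined with the hypothesis $\Ext^i_\Lambda(D\Lambda, P) = 0$.

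Finally, for the coresolution of $\Lambda$: I would decompose $\Lambda = P \oplus Q$ and note that $Q \in \add T$ already, so it suffices to find $0 \to P \to T^0 \to \cdots \to T^n \to 0$ with $T^j \in \add T$. This is exactly the exact sequence displayed above read from the other end: $0 \to P \to P^{n-1} \to \cdots \to P^0 \to \tau_n^- P \to 0$ has all middle terms in $\add \Lambda$; but we need them in $\add T = \add(\tau_n^- P \oplus Q)$, not $\add \Lambda$. The fix is that $P$ does not occur as a summand of any $P^j$ (again since the presentation of $\tau_n^- P$ is minimal and $P$ is simple projective, $P$ cannot be a summand of the projective cover terms), so each $P^j \in \add Q \subseteq \add T$, and the last term $\tau_n^- P \in \add T$. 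Thus the sequence $0 \to P \to P^{n-1} \to \cdots \to P^0 \to \tau_n^- P \to 0$ is the required $\add T$-coresolution of $P$, of length $n$.

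\textbf{Main obstacle.} The delicate point is the vanishing of the self-extensions $\Ext^i_\Lambda(\tau_n^- P, \tau_n^- P)$ for $1 \le i \le n$, and more generally handling all $\Ext$-groups uniformly across degrees $1, \dots, n$ rather than just top degree $n$. I expect the cleanest route is to pass to the derived category and work with the triangle defining $\nu_n^- P = \Tr_n DP$ up to shift, using that $P$ simple projective forces $\RHom_\Lambda(\nu^{-1}(-), -)$ computations to collapse, together with the hypothesis $\Ext^i_\Lambda(D\Lambda, P) = 0$ which is precisely what makes $\nu_n^- P$ concentrated in degree $0$ (i.e. equal to the module $\tau_n^- P$, with no higher cohomology); this concentration is what kills the intermediate $\Ext^i$ and is the technical heart of the argument.
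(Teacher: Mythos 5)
Your overall strategy — construct the exact sequence
\[0 \to[30] P \to[30] (DI_1)^* \to[30] \cdots \to[30] (DI_n)^* \to[30] \tau_n^-P \to[30] 0\]
from the minimal injective resolution of $P$ and the vanishing of $\Ext^i_\Lambda(D\Lambda,P)$, then use it both to bound $\pd_\Lambda T$ and to build the $\add T$-coresolution of $\Lambda$ — coincides with the paper's construction of its sequence \eqref{exchange}, and your observation that the middle terms lie in $\add Q$ (because $P$ is simple projective, so $\Soc I_i$ contains no copy of $P$ for $i>0$) is exactly what the paper uses too.

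Where you diverge is in how the tilting property is closed off: you propose to verify rigidity $\Ext^{>0}_\Lambda(T,T)=0$ directly, while the paper avoids that computation entirely by showing \eqref{exchange} is a left $\add Q$-approximation sequence of $P$ (via the natural isomorphism $\Hom_\Lambda((DI_i)^*,-)\iso DI_i\otimes_\Lambda -$ and exactness of $-\otimes_\Lambda Q$ for projective $Q$) and then iterating the Riedtmann--Schofield mutation lemma. Your route can be made to work, but as written the rigidity step is a genuine gap, for two reasons. First, invoking a "Proposition~\ref{prop.cluster}-type" functorial isomorphism is not available here: that result presupposes an $n$-cluster tilting object, whereas Theorem~\ref{APR} is stated for an arbitrary basic finite dimensional algebra, so leaning on it would be circular or simply unwarranted. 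Second, "lower extensions vanish because the resolution has the special shape" is not an argument. What is actually needed is to compute along the resolution using the same isomorphism $\Hom_\Lambda((DI_i)^*,-)\iso DI_i\otimes_\Lambda -$ (equivalently, the duality $\Ext^j_\Lambda(\tau_n^-P,X)\iso D\Tor_j^\Lambda(DP,X)$ in the relevant range, or $\iso D\Ext^{n-j}_\Lambda(\tau_n^-P,X)$ after reindexing) and then use $\Hom_\Lambda(Q,P)=0$: since every term $(DI_i)^*$ with $0<i\le n$ is in $\add Q$, one gets $\Hom_\Lambda((DI_i)^*,P)=0$ and hence $\Ext^j_\Lambda(\tau_n^-P,P)=0$ for $0\le j<n$, which kills $\Ext^i_\Lambda(\tau_n^-P,\tau_n^-P)$ for $1\le i\le n$; the case of $\Ext^i_\Lambda(\tau_n^-P,Q)$ is then easier because $Q$ is flat. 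These are precisely the ingredients the paper packages into the approximation-sequence claim, so if you spell them out your direct verification goes through; without them, the heart of the rigidity is asserted rather than proved.
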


We also have the following useful properties.

\begin{proposition}\label{APR2}
Let $T = (\tau_n^- P) \oplus Q$ be a weak $n$-APR tilting $\Lambda$-module.
\begin{enumerate}
\item $\Ext^i_\Lambda(T,\Lambda)=0$ for any $0<i<n$.
\item If moreover $T$ is $n$-APR tilting, then $\Hom_\Lambda(\tau_n^-P,\Lambda)=0$.
\end{enumerate}
\end{proposition}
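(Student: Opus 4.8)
The plan is to prove both parts by relating the vanishing of $\Ext$ and $\Hom$ groups to properties of the Nakayama functor $\nu_n$ acting on the relevant summands, exploiting the hypothesis $\Ext^i_\Lambda(D\Lambda, P) = 0$ for $0 \leq i < n$ built into the definition of a weak $n$-APR tilting module. First I would recall that for $\gld\Lambda$ finite (which holds here since $\pd_\Lambda T = n$ by Theorem~\ref{APR} forces $\gld\Lambda \geq n$, and indeed one checks $\gld\Lambda$ is finite because $P$ has finite injective dimension by the $\Ext$-vanishing and $\Lambda/P$-part is unchanged) we have $\tau_n^- P = H^0(\nu_n^{-1} P)$, and more precisely that the hypothesis $\Ext^i_\Lambda(D\Lambda, P) = 0$ for $0 \leq i < n$ is exactly the statement that $\nu_n^{-1} P = \mathbf{R}\Hom_\Lambda(D\Lambda \otimes^{\mathbf L}_\Lambda P, \Lambda)[n]$ — equivalently $\nu^{-1}P$ — is concentrated in degree $0$; so $\nu_n^{-1} P \iso (\tau_n^- P)[0]$ as a complex, i.e. $\tau_n^- P = \nu^{-1} P[n] \iso \mathbf{R}\Hom_\Lambda(\nu P, \Lambda)[\text{shift}]$ placed in a single degree. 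The key computational tool throughout is the adjunction-type isomorphism $\mathbf{R}\Hom_{\mathcal{D}_\Lambda}(\nu_n^{-1} P, X) \iso \mathbf{R}\Hom_{\mathcal{D}_\Lambda}(P, \nu_n X)$ for $X \in \mathcal{D}_\Lambda$, together with $\nu_n \Lambda = \nu\Lambda[-n] = D\Lambda[-n]$.

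For part (1), I would compute $\Ext^i_\Lambda(\tau_n^- P, \Lambda)$ and $\Ext^i_\Lambda(Q, \Lambda)$ separately for $0 < i < n$. The $Q$-summand is projective, so $\Ext^i_\Lambda(Q,\Lambda) = 0$ for $i > 0$ trivially. For the $\tau_n^- P$-summand, using that $\tau_n^- P \iso \nu_n^{-1} P$ in $\mathcal{D}_\Lambda$ (concentrated in degree $0$), we get
\[
\Ext^i_\Lambda(\tau_n^- P, \Lambda) = \Hom_{\mathcal{D}_\Lambda}(\nu_n^{-1}P, \Lambda[i]) \iso \Hom_{\mathcal{D}_\Lambda}(P, \nu_n \Lambda[i]) = \Hom_{\mathcal{D}_\Lambda}(P, D\Lambda[i-n]),
\]
which is $\Ext^{i-n}_\Lambda(P, D\Lambda)$; for $0 < i < n$ this is a negative $\Ext$ group and hence vanishes since $P$ and $D\Lambda$ are modules. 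So part (1) follows.

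For part (2), under the additional hypothesis $\id_\Lambda P = n$ (the $n$-APR condition), I want $\Hom_\Lambda(\tau_n^- P, \Lambda) = 0$, i.e. the above display at $i = 0$: $\Hom_\Lambda(\tau_n^- P,\Lambda) = \Hom_{\mathcal{D}_\Lambda}(P, D\Lambda[-n]) = \Ext^{-n}_\Lambda(P, D\Lambda) = 0$ for $n \geq 1$ — wait, this is again automatically zero, so the real content must be subtler: one must be careful that $\tau_n^- P \iso \nu_n^{-1}P$ holds only because of the $\Ext$-vanishing hypothesis, and more importantly one must rule out that $\tau_n^- P$ has a nonzero projective summand or maps nontrivially to $\Lambda$ through a genuine module homomorphism not seen by the derived calculation. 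I expect the main obstacle is precisely this: showing that $\tau_n^- P$ has no projective direct summand, which is where $\id_\Lambda P = n$ (as opposed to $\id_\Lambda P < n$, impossible, or the weak case where $P$ could be injective giving $\tau_n^- P$ with $\Lambda$-summands) is genuinely used. Concretely, if $\Hom_\Lambda(\tau_n^- P, \Lambda) \neq 0$ then $\tau_n^- P$ would have a summand mapping to $\Lambda = P \oplus Q$; since $\tau_n^- P \in \stabadd(\text{stuff orthogonal to }P)$ by construction of $\tau_n^-$ on the stable category and $P$ is simple projective, one shows such a map must factor through the simple projective $P$ or land in $Q$, and then the injective-dimension hypothesis together with $T$ being a tilting module (Theorem~\ref{APR}, so $\Ext^{>0}_\Lambda(T,T) = 0$ and in particular the interplay $\Ext^n_\Lambda(\tau_n^- P, P) \neq 0$ via Proposition~\ref{prop.cluster}-type duality) forces the contradiction. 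I would carry this out by combining the derived computation above with a direct analysis of the almost-split-type sequence $0 \to P \to I \to \tau_n^- P$-related complex, using $\id_\Lambda P = n$ to control the injective resolution $0 \to P \to I^0 \to \cdots \to I^n \to 0$ and reading off that no term of $\tau_n^- P = \cok$ (of the appropriate map in the dual complex) can be projective.
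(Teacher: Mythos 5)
There is a genuine gap in part (1). Your whole computation rests on identifying $\tau_n^-P$ with $\nu_n^{-1}P$ in $\mathcal{D}_\Lambda$ and then invoking $\nu_n$ as an autoequivalence, but neither step is justified for a \emph{weak} $n$-APR tilting module. The complex $\mathbf{R}\Hom_\Lambda(D\Lambda,P)[n]$ has $i$-th cohomology $\Ext^{i+n}_\Lambda(D\Lambda,P)$, so it is concentrated in degree $0$ precisely when $\Ext^i_\Lambda(D\Lambda,P)=0$ both for $0\le i<n$ \emph{and} for $i>n$. The definition of a weak $n$-APR tilting module gives only the first half; the second half is equivalent to $\id_\Lambda P\le n$, which is exactly the extra hypothesis of part~(2), not part~(1). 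Your parenthetical claim ``$P$ has finite injective dimension by the $\Ext$-vanishing'' is false --- the given vanishing is \emph{below} degree $n$, not above --- so you have no grounds for $\gld\Lambda<\infty$, and without it $\nu_n^{-1}$ is not even a functor on $\mathcal{D}_\Lambda$. The paper circumvents all of this by arguing on the module level: from the first $n+1$ terms $I_0,\dots,I_n$ of the minimal injective resolution of $P$ it forms $(DI_0)^*\to\cdots\to(DI_n)^*$, which the $\Ext$-vanishing hypothesis makes exact except at the right end, giving the explicit projective resolution $0\to P\to(DI_1)^*\to\cdots\to(DI_n)^*\to\tau_n^-P\to0$; applying $(-)^*$ once more reproduces $DI_n\to\cdots\to DI_0$, a truncated projective resolution of $DP$ over $\Lambda^{\op}$, which is exact in the middle, yielding part~(1) with no global finiteness assumption whatsoever.

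For part (2) your instincts abandoned you at exactly the wrong moment. The formal computation $\Hom_\Lambda(\tau_n^-P,\Lambda)\iso\Hom_{\mathcal{D}_\Lambda}(P,D\Lambda[-n])=0$ \emph{is} valid once $\id_\Lambda P=n$, because that hypothesis is exactly what makes the identification $\tau_n^-P\iso\nu_n^{-1}P$ true; your suspicion that the argument is ``too easy'' should have been resolved by noticing that this identification (not the duality step) is the sensitive point and that it fails in the weak case, which is precisely why $\Hom_\Lambda(\tau_n^-P,\Lambda)$ can be nonzero for a merely weak $n$-APR tilting module. The subsequent speculation about projective direct summands of $\tau_n^-P$ is a red herring: $\Hom_\Lambda(\tau_n^-P,\Lambda)=0$ is far stronger than the absence of projective summands (the latter holds already in the weak case by minimality of the resolution). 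The paper's argument is again entirely concrete: $\id_\Lambda P=n$ forces $g\colon I_{n-1}\to I_n$ to be surjective, hence $Dg$ injective, and left-exactness of $(-)^*$ applied to the explicit resolution gives $(\tau_n^-P)^*=\Ker(Dg)^{**}=\Ker Dg=0$.
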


For the proof of Theorem~\ref{APR} and Proposition~\ref{APR2}, we use the following observation on tilting mutation due to Riedtmann-Schofield \cite{RieScho}.

\begin{lemma}[Riedtmann-Schofield \cite{RieScho}] \label{RS}
Let $T$ be a $\Lambda$-module and $Y \monol[30]{g} T' \epil[30]{f} X$ an exact sequence with $T'\in\add T$. Then the following conditions are equivalent.
\begin{itemize}
\item $T\oplus X$ is a tilting $\Lambda$-module and $f$ is a right $(\add T)$-approximation.
\item $T\oplus Y$ is a tilting $\Lambda$-module and $g$ is a left $(\add T)$-approximation.
\end{itemize}
\end{lemma}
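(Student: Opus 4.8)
The plan is to verify the defining properties of a tilting module directly, reading everything off the long exact sequences obtained by applying $\Hom_\Lambda(T,-)$, $\Hom_\Lambda(Y,-)$, $\Hom_\Lambda(X,-)$, $\Hom_\Lambda(-,T)$, $\Hom_\Lambda(-,X)$ and $\Hom_\Lambda(-,Y)$ to the short exact sequence $0 \rightarrow Y \xrightarrow{g} T' \xrightarrow{f} X \rightarrow 0$, where $T' \in \add T$. The two conditions in the statement are interchanged by swapping $X \leftrightarrow Y$, $f \leftrightarrow g$, ``right'' $\leftrightarrow$ ``left'' and reading the sequence from the other end, so I would prove in detail only the implication from ``$T \oplus X$ a tilting module and $f$ a right $\add T$-approximation'' to ``$T \oplus Y$ a tilting module and $g$ a left $\add T$-approximation'', the converse being the mirror image.

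First I would isolate two reformulations of the approximation hypotheses. Applying $\Hom_\Lambda(-,T'')$ for $T'' \in \add T$ and using $\Ext^1_\Lambda(T',T'') = 0$ (valid since $T', T'' \in \add T$ and $T \oplus X$ is rigid) identifies $\Cok\bigl(\Hom_\Lambda(T',T'') \rightarrow \Hom_\Lambda(Y,T'')\bigr)$ with $\Ext^1_\Lambda(X,T'')$; hence \emph{$g$ is a left $\add T$-approximation if and only if $\Ext^1_\Lambda(X,T) = 0$}. Dually, applying $\Hom_\Lambda(T'',-)$ gives: \emph{$f$ is a right $\add T$-approximation if and only if $\Ext^1_\Lambda(T,Y) = 0$}. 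In particular, since $T \oplus X$ is a tilting module it is rigid, so $\Ext^1_\Lambda(X,T) = 0$ and $g$ is a left $\add T$-approximation automatically; this already yields half of the conclusion.

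It remains to show that $T \oplus Y$ is a tilting module. Finiteness of $\pd_\Lambda Y$ is immediate from the short exact sequence, as $\pd_\Lambda T'$ and $\pd_\Lambda X$ are finite. For $\Ext^{>0}_\Lambda(T \oplus Y, T \oplus Y) = 0$ one chases the long exact sequences: the vanishing of $\Ext^i_\Lambda(Y,T)$ and $\Ext^i_\Lambda(Y,X)$ for $i \ge 1$ and of $\Ext^i_\Lambda(T,Y)$ for $i \ge 2$ is purely formal given $\Ext^{>0}_\Lambda(T \oplus X, T \oplus X) = 0$, while $\Ext^1_\Lambda(T,Y) = 0$ is exactly the hypothesis that $f$ is a right $\add T$-approximation. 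Finally, the sequence identifies $\Ext^1_\Lambda(Y,Y)$ with $\Cok\bigl(\Hom_\Lambda(Y,T') \rightarrow \Hom_\Lambda(Y,X)\bigr)$, and any $h \colon Y \rightarrow X$ factors as $h = \varphi g$ with $\varphi \colon T' \rightarrow X$ (because $\Ext^1_\Lambda(X,X) = 0$ by rigidity of $T\oplus X$), while $\varphi$ factors through $f$ (because $f$ is a right $\add T$-approximation and $T' \in \add T$); hence $h$ factors through $f$, so that cokernel vanishes and $\Ext^1_\Lambda(Y,Y) = 0$. The last condition to check is a finite coresolution $0 \rightarrow \Lambda \rightarrow V^0 \rightarrow \cdots \rightarrow V^m \rightarrow 0$ with each $V^i \in \add(T \oplus Y)$; I would obtain it by modifying the given $\add(T \oplus X)$-coresolution of $\Lambda$, replacing each occurrence of the summand $X$ by means of $0 \rightarrow Y \rightarrow T' \rightarrow X \rightarrow 0$.

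I expect this last step --- reconstructing the coresolution of $\Lambda$, equivalently checking that $\add(T \oplus Y)$ still ``generates enough'' --- to be the main obstacle; the $\Ext$-bookkeeping in the previous paragraph is entirely routine once the approximation hypotheses have been translated into the vanishings $\Ext^1_\Lambda(T,Y) = 0$ and $\Ext^1_\Lambda(X,T) = 0$. One should also dispatch the harmless degenerate case $Y \in \add T$, which forces $f$ to split and $X \in \add T$, so that $T$, $T \oplus X$ and $T \oplus Y$ are then tilting simultaneously. The converse implication is handled by the symmetry noted at the start: $f$ being a right $\add T$-approximation is free from the rigidity of $T \oplus Y$, and the tilting conditions for $T \oplus X$ are verified by the mirror-image diagram chases, now reading $0 \rightarrow Y \rightarrow T' \rightarrow X \rightarrow 0$ from the other end.
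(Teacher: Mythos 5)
The paper cites this lemma from Riedtmann--Schofield and gives no proof, so there is no internal argument to compare with; what follows assesses your proposal on its own terms. Your translation of the approximation hypotheses into $\Ext^1$-vanishing is correct: given $\Ext^1_\Lambda(T,T)=0$, the map $g$ is a left $\add T$-approximation iff $\Ext^1_\Lambda(X,T)=0$, and $f$ is a right $\add T$-approximation iff $\Ext^1_\Lambda(T,Y)=0$. The rigidity bookkeeping for $T\oplus Y$ is then sound (your factoring argument for $\Ext^1_\Lambda(Y,Y)$ works, although the dimension shift $\Ext^1_\Lambda(T',Y)\to\Ext^1_\Lambda(Y,Y)\to\Ext^2_\Lambda(X,Y)$, both outer terms already known to vanish, is shorter), and for these $\Ext$-computations the two directions of the equivalence really are mirror images of one another.

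The genuine gap, which you rightly flag yourself, is the existence of a finite $\add(T\oplus Y)$-coresolution of $\Lambda$, and I do not think the proposed patch survives scrutiny. If one lifts the given $\add(T\oplus X)$-coresolution $0\to\Lambda\to V^0\to\cdots\to V^m\to 0$ along the covers $U^i\oplus(T')^{a_i}\twoheadrightarrow V^i=U^i\oplus X^{a_i}$ (which is possible term by term, since $f$ is a right $\add T$-approximation), the composites $\tilde{d}^{i+1}\tilde{d}^i$ only land in $\Ker(1\oplus f^{a_{i+2}})\cong Y^{a_{i+2}}$ rather than in zero, so one does not even obtain a complex. Replacing copies of $X$ one at a time by pullbacks runs into $\Ext^1_\Lambda(X,Y)$, which is nonzero whenever $f$ is not split (it contains the class of the given extension), so those pullbacks need not split either. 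Nor is this step handled by the symmetry you invoke, since exchanging $X$ for $Y$ in a coresolution is a genuinely different operation from exchanging $Y$ for $X$. The way Riedtmann and Schofield close this is not by manipulating the coresolution at all but via the counting characterization of tilting modules: a self-orthogonal module of finite projective dimension is tilting as soon as its number of pairwise non-isomorphic indecomposable summands equals the number of simple $\Lambda$-modules, so one compares $|T\oplus X|$ and $|T\oplus Y|$ and checks that the exchange sequence swaps summands outside $\add T$ one for one. That counting principle is the missing ingredient; without it the coresolution condition cannot be extracted from the short exact sequence by diagram chasing alone.
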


\begin{proof}[Proof of Theorem~\ref{APR} and Proposition~\ref{APR2}]
Take a minimal injective resolution
\begin{equation}\label{injective}
0 \to[30] P \to[30] I_0 \to[30] \cdots \to[30] I_{n-1} \tol[30]{g} I_n.
\end{equation}
Applying $D$, we have an exact sequence
\begin{equation}\label{projective}
DI_n \tol[30]{Dg} DI_{n-1} \to[30] \cdots \to[30] DI_0 \to[30] DP \to[30] 0.
\end{equation}
Applying the functor $(-)^*=\Hom_{\Lambda^{\rm op}}(-,\Lambda)$, to this projective resolution of $DP$ we obtain a complex
\[ 0 \to[30] (DI_0)^* \to[30] \cdots \to[30] (DI_n)^* \to[30] 0.\]
By definition the homology in its rightmost term is $\tau_n^- P$, and since $\Ext^i_{\Lambda}(D\Lambda,P) = 0$ for $0 \leq i < n$ all other homologies vanish. Since $(DI_0)^*$ is an indecomposable projective $\Lambda$-module with $\top(DI_0)^*=\Soc I_0=P$, we have $(DI_0)^*=P$. Thus we have an exact sequence
\begin{equation}\label{exchange}
0 \to[30] P \to[30] (DI_1)^* \to[30] \cdots \tol[30]{(Dg)^*} (DI_n)^* \tol[30]{f} \tau_n^-P \to[30] 0.
\end{equation}
Thus we have $\pd{}_\Lambda T=n$. Since $P$ is a simple projective $\Lambda$-module, we have $(DI_i)^*\in\add Q$ for $0<i\le n$.

Applying the functor $(-)^*$ to the sequence \eqref{exchange}, we have an exact sequence \eqref{projective}. Thus we have Proposition~\ref{APR2}(1).
If $\id P=n$, then $g$ in \eqref{injective} is surjective and $Dg$ in \eqref{projective} is injective. Since $(Dg)^{**} = Dg$ we have
\[ \Hom_{\Lambda}(\tau_n^- P, \Lambda) = (\tau_n^- P)^* = (\Cok (Dg)^*)^* = \Ker (Dg)^{**} = \Ker Dg = 0.\]
Thus we have Proposition~\ref{APR2}(2).

Note that we have a functorial isomorphism
\[ \Hom_{\Lambda}((DI_i)^*,-) \iso (DI_i) \otimes_{\Lambda}-.\]
Apply the functors $- \otimes_\Lambda Q$ and $\Hom_\Lambda(-,Q)$ to Sequences~\eqref{projective} and \eqref{exchange} respectively, the above isomorphism gives rise to to a commutative diagram
\[ \begin{tikzpicture}[xscale=2.5,yscale=-1]
 \node (A1) at (0,0) {$(DI_n)\otimes_\Lambda Q$};
 \node (A2) at (1,0) {$\cdots$};
 \node (A3) at (2,0) {$(DI_1)\otimes_\Lambda Q$};
 \node (A4) at (3.5,0) {$(DI_0)\otimes_\Lambda Q$};
 \node (A5) at (4.5,0) {$0$};
 \node (B1) at (0,1) {$\Hom_{\Lambda}((DI_n)^*, Q)$};
 \node (B2) at (1,1) {$\cdots$};
 \node (B3) at (2,1) {$\Hom_{\Lambda}((DI_1)^*, Q)$};
 \node (B4) at (3.5,1) {$\Hom_{\Lambda}((DI_0)^*, Q)$};
 \node (B5) at (4.5,1) {$0$};
 \node at (0,.5) [rotate=90] {$\iso$};
 \node at (2,.5) [rotate=90] {$\iso$};
 \node at (3.5,.5) [rotate=90] {$\iso$};
 \draw [->] (A1) -- (A2);
 \draw [->] (A2) -- (A3);
 \draw [->] (A3) -- (A4);
 \draw [->] (A4) -- (A5);
 \draw [->] (B1) -- (B2);
 \draw [->] (B2) -- (B3);
 \draw [->] (B3) -- (B4);
 \draw [->] (B4) -- (B5);
\end{tikzpicture} \]
of exact sequences.
Thus \eqref{exchange} is a left $(\add Q)$-approximation sequence of $P$, and we have that $T$ is a tilting $\Lambda$-module by using Lemma~\ref{RS} repeatedly.
\end{proof}

We recall the following result from tilting theory \cite{Ha}:
For a tilting $\Lambda$-module $T$ with $\Gamma:=\End_\Lambda(T)^{\op}$, we have functors
\begin{align*}
\mathbf{F} &:= \mathbf{R} \Hom_\Lambda(T,-) \colon \mathcal{D}_{\Lambda} \to \mathcal{D}_{\Gamma},\\
\mathbf{F}_i&:=\Ext^i_\Lambda(T,-) \colon \mod\Lambda\to\mod\Gamma\ \ \ (i\ge0).
\end{align*}
Put
\begin{align*}
\mathcal{F}_i & := \{X\in\mod\Lambda\ |\ \Ext^j_\Lambda(T,X)=0\ \mbox{ for any }\ j\neq i\},\\
\mathcal{X}_i & := \{Y\in\mod\Gamma\ |\ \Tor_j^\Gamma(Y,T)=0\ \mbox{ for any }\ j\neq i\}.
\end{align*}

\begin{lemma}[Happel \cite{Ha}]\label{tilting theory}
\begin{itemize}
\item $\mathbf{F}=\mathbf{R}\Hom_\Lambda(T,-) \colon \mathcal{D}_{\Lambda} \to\mathcal{D}_{\Gamma}$ is an equivalence.
\item For any $i\ge0$, we have an equivalence $\mathbf{F}_i:=\Ext^i_\Lambda(T,-) \colon \mathcal{F}_i\to\mathcal{X}_i$ which is isomorphic to the restriction of $[i]\circ\mathbf{F}$.
\item For any $X\in\mathcal{F}_0$, there exists an exact sequence
\[0 \to[30] T_m \to[30] \cdots \to[30] T_0 \to[30] X \to[30] 0\]
with $T_i\in\add T$ and $m\le\gld\Lambda$.
\end{itemize}
\end{lemma}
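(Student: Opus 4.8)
The statement is Happel's basic package for a tilting module of finite projective dimension, so the plan is to reduce the three assertions to the existence of the induced derived equivalence and then read off the module-level statements by tracking cohomological degrees. Recall that the tilting axioms for $T$ amount to: $\pd_\Lambda T<\infty$; $\Ext^{>0}_\Lambda(T,T)=0$; and a finite $\add T$-coresolution $0\to\Lambda\to T^0\to\cdots\to T^r\to0$. First I would check that $T$, viewed as a stalk complex in $\mathcal{D}_\Lambda$, is a tilting complex: $\Ext^{>0}_\Lambda(T,T)=0$ (together with the automatic vanishing of negative $\Ext$) gives $\Hom_{\mathcal{D}_\Lambda}(T,T[j])=0$ for $j\neq0$, with $\End_{\mathcal{D}_\Lambda}(T)=\End_\Lambda(T)$, so that $\mathbf{F}=\mathbf{R}\Hom_\Lambda(T,-)$ takes values in $\mathcal{D}_\Gamma$; $\pd_\Lambda T<\infty$ makes $T$ a perfect object; and the $\add T$-coresolution shows $\Lambda$ lies in the thick subcategory $\mathrm{thick}(T)$, whence $\mathrm{thick}(T)=\perf\Lambda$. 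Rickard's derived Morita theory (or Happel's original cohomological bookkeeping) then yields that $\mathbf{F}$ is a triangle equivalence $\mathcal{D}_\Lambda\to\mathcal{D}_\Gamma$ with quasi-inverse $-\otimes_\Gamma^{\mathbf{L}}T$ (when $\gld\Lambda<\infty$, which is the situation here, one has $\mathcal{D}_\Lambda=\Db(\mod\Lambda)=\perf\Lambda=\mathrm{thick}(T)$ so this is immediate). This is the first bullet.

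For the second bullet, take $X\in\mathcal{F}_i$. Then $\mathbf{F}(X)=\mathbf{R}\Hom_\Lambda(T,X)$ has cohomology $H^j=\Ext^j_\Lambda(T,X)$, which vanishes for $j\neq i$ by definition of $\mathcal{F}_i$; a complex whose cohomology is concentrated in one degree is canonically (and functorially) isomorphic to that cohomology shifted, so $\mathbf{F}(X)\iso\mathbf{F}_i(X)[-i]$, which is precisely the assertion that $\mathbf{F}_i|_{\mathcal{F}_i}$ is the restriction of $[i]\circ\mathbf{F}$, and in particular $\mathbf{F}_i(X)\in\mod\Gamma$. Applying the quasi-inverse $-\otimes_\Gamma^{\mathbf{L}}T$ to $\mathbf{F}_i(X)[-i]\iso\mathbf{F}(X)$ shows that $\mathbf{F}_i(X)\otimes_\Gamma^{\mathbf{L}}T$ is concentrated in the single degree where it equals $X$, i.e. $\Tor^\Gamma_j(\mathbf{F}_i(X),T)=0$ for $j\neq i$, so $\mathbf{F}_i(X)\in\mathcal{X}_i$; the same computation in reverse shows $\Tor^\Gamma_i(-,T)$ maps $\mathcal{X}_i$ into $\mathcal{F}_i$ and is quasi-inverse to $\mathbf{F}_i$. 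Being a restriction of the equivalence $[i]\circ\mathbf{F}$, the functor $\mathbf{F}_i\colon\mathcal{F}_i\to\mathcal{X}_i$ is an equivalence.

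For the third bullet, fix $X\in\mathcal{F}_0$ and put $Y:=\Hom_\Lambda(T,X)=\mathbf{F}_0(X)\in\mathcal{X}_0$; since $Y\in\mathcal{X}_0$ we have $T\otimes_\Gamma^{\mathbf{L}}Y=T\otimes_\Gamma Y$, and this equals $\mathbf{F}^{-1}\mathbf{F}(X)=X$. I would first bound $\pd_\Gamma Y\leq\gld\Lambda$ by transporting $\Ext$ back along the equivalence: for any $\Gamma$-module $N$,
\[ \Ext^i_\Gamma(Y,N)=\Hom_{\mathcal{D}_\Gamma}(\mathbf{F}(X),N[i])=\Hom_{\mathcal{D}_\Lambda}\bigl(X,(N\otimes_\Gamma^{\mathbf{L}}T)[i]\bigr), \]
and since $N\otimes_\Gamma^{\mathbf{L}}T$ is a bounded complex concentrated in cohomological degrees $\leq0$ while $X$, being a module, has a projective resolution of length $\leq\gld\Lambda$, a degreewise comparison of these two complexes forces the last $\Hom$ to vanish for $i>\gld\Lambda$. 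Now take a projective resolution $0\to\Gamma^{b_m}\to\cdots\to\Gamma^{b_0}\to Y\to0$ over $\Gamma$ of length $m=\pd_\Gamma Y\leq\gld\Lambda$ and apply $T\otimes_\Gamma-$; because $\Tor^\Gamma_{>0}(Y,T)=0$ this sequence stays exact, giving the required $0\to T^{b_m}\to\cdots\to T^{b_0}\to X\to0$ with each $T^{b_j}\in\add T$.

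The genuine obstacle is the first bullet: one must recognize that the three tilting axioms package exactly into ``$T$ is a perfect object generating $\perf\Lambda$ with the correct graded endomorphism ring'' and then invoke the full strength of derived Morita theory; everything after that is degree bookkeeping along $\mathbf{F}$. The only point in that bookkeeping requiring a little care is the sharp bound $m\leq\gld\Lambda$ in the third bullet (as opposed to the a priori weaker $m\leq\gld\Gamma$), which is why I would route that estimate through the $\mathcal{D}_\Lambda$-side $\Hom$-computation above rather than through a projective resolution over $\Gamma$.
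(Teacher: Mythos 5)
The paper does not prove this lemma; it is quoted as background from Happel's book \cite{Ha}, so there is no in-paper argument to compare against. Your reconstruction is correct and follows the standard route: the first bullet carries the real content, and the other two are, as you say, cohomological bookkeeping along $\mathbf{F}$ and its quasi-inverse. One small imprecision is the appeal to Rickard: his abstract derived-Morita theorem produces \emph{some} derived equivalence from a tilting complex, but does not by itself identify it with $\mathbf{R}\Hom_\Lambda(T,-)$; for a tilting \emph{module} one either observes that the $(\Lambda,\Gamma)$-bimodule $T$ is a two-sided tilting complex, or uses Happel's original argument (extended to $\pd T>1$ by Miyashita and Cline--Parshall--Scott), which checks directly that $\mathbf{R}\Hom_\Lambda(T,-)$ restricts to a fully faithful functor on $\add T$ and that $\add T$ generates $\mathcal{D}^b(\mod\Lambda)$. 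A minor notational point in the third bullet: the projective resolution of $Y$ should be by arbitrary $P_j\in\add\Gamma$ rather than free modules $\Gamma^{b_j}$ (which need not give a resolution of minimal length over a non-semisimple $\Gamma$); what one actually uses is that $T\otimes_\Gamma P\in\add T$ for $P\in\add\Gamma$. Your bound $\pd_\Gamma Y\le\gld\Lambda$ via the degree comparison of $X$ against $N\otimes_\Gamma^{\mathbf{L}} T$ is correct, and tacitly uses $\gld\Lambda<\infty$ (to make that complex bounded), which is the only case of interest in this paper.
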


We now prove the following result, saying that the class of algebras of global dimension at most $n$ is closed under $n$-APR tilting.

\begin{proposition}\label{global dimension}
If $\gld\Lambda=n$, and $T$ is an $n$-APR tilting $\Lambda$-module, then $\gld\Gamma=n$ holds for $\Gamma:=\End_\Lambda(T)^{\op}$.
\end{proposition}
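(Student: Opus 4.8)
The plan is to show both inequalities $\gld\Gamma\le n$ and $\gld\Gamma\ge n$. For the upper bound, I would use the derived equivalence $\mathbf F=\mathbf R\Hom_\Lambda(T,-)\colon\mathcal D_\Lambda\to\mathcal D_\Gamma$ of Lemma~\ref{tilting theory} together with the stratification of $\mod\Gamma$ it induces. By Lemma~\ref{tilting theory}, every $\Gamma$-module $Y$ lies (up to the two-term filtration coming from the tilting torsion pair) in the image of $\mathcal F_0$ or $\mathcal F_1$; more precisely, the standard tilting theory gives a short exact sequence $0\to Y_1\to Y\to Y_0\to 0$ in $\mod\Gamma$ with $Y_0\in\mathcal X_0=\mathbf F_0(\mathcal F_0)$ and $Y_1\in\mathcal X_1[{-1}]$-type piece, i.e. $Y_1\in\mathbf F_1(\mathcal F_1)$. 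By the last bullet of Lemma~\ref{tilting theory}, any $X\in\mathcal F_0$ has a resolution $0\to T_m\to\cdots\to T_0\to X\to 0$ with $m\le\gld\Lambda=n$; applying the exact equivalence $\mathbf F_0$ on $\mathcal F_0$ and using that $\mathbf F_0(T)=\Gamma$ (as $\add T$ goes to $\add\Gamma$), one gets $\pd_\Gamma \mathbf F_0(X)\le n$. A dual/analogous argument controls the $\mathcal F_1$-part: here one uses $\pd_\Lambda T=n$ from Theorem~\ref{APR} to bound $\pd_\Gamma \mathbf F_1(X)$; concretely, for $X\in\mathcal F_1$ the object $\mathbf F(X)=\mathbf F_1(X)[-1]$, and writing $X$ via a projective resolution of length $\le n$ over $\Lambda$ and pushing through $\mathbf F$ shows $\mathbf F_1(X)$ has projective dimension $\le n$ over $\Gamma$. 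Combining the two via the short exact sequence and the usual estimate $\pd_\Gamma Y\le\max\{\pd_\Gamma Y_0,\pd_\Gamma Y_1+1\}$ — taking care that the shift only costs one and that the $\mathcal F_1$-part already has a slightly better bound, or else directly bounding each simple $\Gamma$-module — yields $\gld\Gamma\le n$.

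For the lower bound $\gld\Gamma\ge n$, I would argue that $\Gamma$ cannot have global dimension $<n$. The cleanest route is via Proposition~\ref{APR2}(2): since $T$ is $n$-APR tilting, $\Hom_\Lambda(\tau_n^-P,\Lambda)=0$. Because $T$ is a tilting module, $\Gamma$ and $\Lambda$ have the same number of simples and derived-equivalent module categories, so $\gld\Gamma<\infty$; if $\gld\Gamma<n$ then $\gld\Gamma\le n-1$, and by the $n=1$ reasoning (or by the fact that derived equivalence preserves the Serre-functor structure / the property "$D\Lambda$ has projective dimension $\le\gld\Lambda$"), one can derive a contradiction with $\pd_\Lambda T=n$: the tilting module realizing a derived equivalence between two algebras of global dimension $\le m$ has projective dimension $\le m$. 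Since $\pd_\Lambda T=n$, we must have $\gld\Gamma\ge n$. Alternatively — and this is probably the intended short argument — one observes that the simple projective $\Lambda$-module $P$ has $\id_\Lambda P=n$, and tracing $P$ through the exact sequence~\eqref{exchange} shows that the corresponding simple $\Gamma$-module has projective dimension exactly $n$; indeed \eqref{exchange} is essentially a projective $\Gamma$-resolution after applying $\Hom_\Lambda(T,-)$ of the simple $\Gamma$-top of $\mathbf F_n$-type, of length $n$, and its non-splitting is guaranteed by $\Ext^n_\Lambda(D\Lambda,P)\ne 0$ which is forced by $\id P=n$ (the map $g$ in \eqref{injective} being nonzero onto a genuine cosyzygy).

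The main obstacle I expect is the bookkeeping in the upper-bound argument: organizing the two-step filtration of an arbitrary $\Gamma$-module into the pieces $\mathbf F_0(\mathcal F_0)$ and $\mathbf F_1(\mathcal F_1)$, and then arguing that the possible "$+1$" from gluing the two pieces does not push the global dimension above $n$. One must check that the $\mathcal F_1$-contribution in fact sits in projective degrees $\le n-1$ so that after the shift-by-one inherent in $\mathbf F_1\simeq[1]\circ\mathbf F|_{\mathcal F_1}$ it still contributes $\le n$; this uses that $T$ has projective dimension exactly $n$ and that $\mathcal F_1$ consists of modules which are "almost generated" by $T$ in a controlled way. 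A convenient way to finesse this is to test global dimension only on simple $\Gamma$-modules and use that $\Gamma$ is derived equivalent to $\Lambda$ so that $\Ext^i_\Gamma(S,S')\iso\Hom_{\mathcal D_\Lambda}(\mathbf F^{-1}S,\mathbf F^{-1}S'[i])$, reducing everything to a $\Hom$-vanishing statement in $\mathcal D_\Lambda$ that follows from $\gld\Lambda=n$ and the explicit form of $T$.
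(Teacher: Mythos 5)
Your upper-bound plan has a genuine gap. You lean on ``the two-term filtration coming from the tilting torsion pair,'' placing every $\Gamma$-module $Y$ in a short exact sequence with one piece in $\mathbf F_0(\mathcal F_0)$ and the other in $\mathbf F_1(\mathcal F_1)$. That filtration exists only for classical tilting modules of projective dimension $1$. Here $\pd_\Lambda T = n$ can be large, and the module category of $\Gamma$ is stratified by all of $\mathcal X_0,\dots,\mathcal X_n$: for a general $Y\in\mod\Gamma$ the complex $\mathbf F^{-1}(Y) = Y\otimes^{\mathbf L}_\Gamma T$ has homology spread across degrees $-n,\dots,0$, not just two. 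So the decomposition you propose does not exist, and the subsequent estimate $\pd_\Gamma Y\le\max\{\pd_\Gamma Y_0,\pd_\Gamma Y_1+1\}$ has nothing to apply to. This is not ``bookkeeping'' to be finessed later; it is the wrong framework for $n>1$.

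That said, the alternative you flag at the very end --- test global dimension only on the simple $\Gamma$-modules --- is exactly what the paper does, and it is the approach that works. Each simple $\Gamma$-module is $\top\mathbf F_0 X$ for an indecomposable $X\in\add T$. For $X\in\add Q$ the paper takes a minimal projective $\Lambda$-resolution of $\top X$; since $\Ext^i_\Lambda(T,\Lambda)=0$ for $0<i<n$ (Proposition~\ref{APR2}(1)), applying $\Hom_\Lambda(T,-)$ keeps it exact, and $\Hom_\Lambda(\tau_n^- P,\Lambda)=0$ (Proposition~\ref{APR2}(2)) is what identifies the cokernel as the simple top rather than something larger. For $X=\tau_n^- P$ the paper applies $\mathbf F_0$ to the sequence~\eqref{exchange} and shows $\mathbf F_0 f$ is right almost split in $\add\Gamma$ (using an isomorphism chain with $\overline{\End}_\Lambda(P)$), so the cokernel is again the simple top and its projective dimension is exactly $n$. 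Your lower-bound sketch essentially captures the second of these cases, but without the right-almost-split verification the claim that the cokernel is the simple top is unjustified. Your first-route lower-bound heuristic (``tilting module between algebras of global dimension $\le m$ has projective dimension $\le m$'') is not a correct statement and should be dropped.

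In short: drop the torsion-filtration route, promote the ``test on simples'' route from a fallback to the main argument, and supply the almost-split verification for the resolution of $\top\mathbf F_0(\tau_n^- P)$ --- then you will have reconstructed the paper's proof.
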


\begin{proof}
We only have to show that $\pd_\Gamma(\top\mathbf{F}_0X)\le n$ for any indecomposable $X\in\add T$.

(i) First we consider the case $X\in\add Q$.
Since $\gld\Lambda=n$, we can take a minimal projective resolution
\[0 \to[30] P_n \to[30] \cdots \to[30] P_1 \tol[30]{f} X \to[30] \top X \to[30] 0.\]
Since $\Hom_\Lambda(\tau_n^-P,\Lambda)=0$ by Proposition~\ref{APR2}(2), we have that any morphism $T\to X$ which is not a split epimorphism factors through $f$.

Applying $\Hom_\Lambda(T,-)$, we have an exact sequence
\[0 \to[30] \mathbf{F}_0P_n \to[30] \cdots \to[30] \mathbf{F}_0P_1 \tol[30]{\mathbf{F}_0f} \mathbf{F}_0X\]
since we have $\Ext^i_\Lambda(T,\Lambda)=0$ for any $0<i<n$ by Proposition~\ref{APR2}(1).
Moreover the above observation implies $\Cok\mathbf{F}_0f=\top\mathbf{F}_0X$. Thus we have $\pd{}_\Gamma(\top\mathbf{F}_0X)\le n$.

(ii) Next we consider the case $X=\tau_n^-P$. We will show that $\pd_{\Gamma} (\top \mathbf{F}_0 \tau_n^- P)$ is precisely $n$.
Applying $\mathbf{F}_0$ to the sequence \eqref{exchange} in the proof of Theorem~\ref{APR}, we have an exact sequence
\[0=\mathbf{F}_0P \to[30] \mathbf{F}_0(DI_1)^* \to[30] \cdots \to[30] \mathbf{F}_0(DI_n)^* \tol[30]{\mathbf{F}_0f} \mathbf{F}_0\tau_n^-P\]
since we have $\Ext^i_\Lambda(T,\Lambda)=0$ for any $0<i<n$ by Proposition~\ref{APR2}(1).

Since $Q$, $(DI_n)^*$, and $\tau_n^-P$ are in $\mathcal{F}_0$, we have a commutative diagram
\[\begin{tikzpicture}[yscale=-1,xscale=2.5]
 \node (A) at (0,0) {$\Hom_\Gamma(\mathbf{F}_0Q,\mathbf{F}_0(DI_n)^*)$};
 \node (B) at (2,0) {$\Hom_\Gamma(\mathbf{F}_0Q,\mathbf{F}_0\tau_n^-P)$};
 \node (C) at (0,1) {$\Hom_\Lambda(Q,(DI_n)^*)$};
 \node (D) at (2,1) {$\Hom_\Lambda(Q,\tau_n^-P)$};
 \node (E) at (3,1) {$0$};
 \node at (0,.5) [rotate=90] {$\iso$};
 \node at (2,.5) [rotate=90] {$\iso$};
 \draw [->] (A) -- node [above] {$\mathbf{F}_0 f$} (B);
 \draw [->] (C) -- node [above] {$f$} (D);
 \draw [->] (D) -- (E);
\end{tikzpicture}\]
where the lower sequence is exact since $Q$ is a projective $\Lambda$-module.
Since
\begin{align*}
\End_{\Gamma}(\mathbf{F}_0\tau_n^-P) & \iso \End_\Lambda(\tau_n^-P) \\
& = \underline{\End}_\Lambda(\tau_n^-P) && \text{(Proposition~\ref{APR2}(2))}\\
& \iso \overline{\End}_\Lambda(\Omega^{-(n-1)}P) && \text{(AR-translation)} \\
& \iso \overline{\End}_\Lambda(P) && \text{(since $\Ext_{\Lambda}^i(D \Lambda, P) = 0 \, \forall i \in \{1, \ldots, n-1\}$,} \\
&&& \qquad \text{see for instance \cite{AusBri})},
\end{align*}
any non-zero endomorphism of $\mathbf{F}_0\tau_n^-P$ is an automorphism.
Thus $\mathbf{F}_0f$ is a right almost split map in $\add\Gamma$, and we have $\Cok\mathbf{F}_0f=\top\mathbf{F}_0\tau_n^-P$ and $\pd_\Gamma(\top\mathbf{F}_0\tau_n^-P) = n$.
\end{proof}

Later we shall use the following observation.

\begin{lemma}\label{F_nP}
Under the circumstances in Theorem~\ref{APR}, we have the following.
\begin{enumerate}
\item $P\in\mathcal{F}_n$.
\item $\mathbf{F}_nP$ is a simple $\Gamma$-module. If $\id P=n$, then $\mathbf{F}_nP$ is an injective $\Gamma$-module.
\end{enumerate}
\end{lemma}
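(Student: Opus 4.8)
The plan is to read off both statements from the exact sequence \eqref{exchange} and the tilting-theoretic dictionary in Lemma~\ref{tilting theory}. First I would establish (1). Since $P$ is a simple projective $\Lambda$-module and $(DI_0)^* = P$, the sequence \eqref{exchange} is a length-$n$ exact sequence
\[ 0 \to[30] P \to[30] (DI_1)^* \to[30] \cdots \to[30] (DI_n)^* \tol[30]{f} \tau_n^- P \to[30] 0 \]
with all middle terms in $\add Q \subseteq \add T$ and with $\tau_n^- P \in \add T$. Splicing this with the resolutions of $Q$-summands coming from $\Lambda$ being projective, one checks $\Ext^j_\Lambda(T, P) = 0$ for $j \neq n$: indeed, $\RHom_\Lambda(T,-)$ applied to \eqref{exchange} (using $\Ext^i_\Lambda(T,\Lambda) = 0$ for $0 < i < n$ from Proposition~\ref{APR2}(1), and $\Hom_\Lambda(\tau_n^-P, \Lambda) = 0$ when $\id P = n$, though for (1) one only needs the weak hypotheses) exhibits $P[n]$ as the image of an object of the thick subcategory generated by $T$, so $\mathbf{F}P \iso \mathbf{F}_n P\,[-n]$ lives in a single degree; equivalently $\Ext^j_\Lambda(T,P) = H^j(\mathbf{F}P) = 0$ for $j \neq n$, which is exactly $P \in \mathcal{F}_n$.

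For (2), apply the functor $\mathbf{F}_0 = \Hom_\Lambda(T,-)$ to \eqref{exchange}. Since every term except $P$ lies in $\mathcal F_0$ (the $(DI_i)^*$ are projective, $\tau_n^-P \in \add T$) and $\Ext^i_\Lambda(T,\Lambda) = 0$ for $0 < i < n$, the long exact sequence collapses to an exact sequence
\[ 0 \to[30] \mathbf{F}_0(DI_1)^* \to[30] \cdots \to[30] \mathbf{F}_0(DI_n)^* \tol[30]{\mathbf{F}_0 f} \mathbf{F}_0 \tau_n^- P \to[30] \mathbf{F}_n P \to[30] 0, \]
using $\mathbf{F}_n \iso [n]\circ\mathbf{F}$ on $\mathcal F_n$ and $\mathbf{F}_0 P = 0$ to identify the final cokernel. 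But by the computation already carried out in the proof of Proposition~\ref{global dimension}, $\mathbf{F}_0 f$ is a right almost split map in $\add\Gamma$ with $\Cok \mathbf{F}_0 f = \top \mathbf{F}_0 \tau_n^- P$; hence $\mathbf{F}_n P = \top \mathbf{F}_0 \tau_n^- P$ is simple. For injectivity when $\id P = n$: here $P \in \mathcal F_n = \mathcal F_n$ and $\mathbf{F}_n = D\,\Ext^n_\Lambda(-, \nu T)$-type reasoning, or more directly one dualizes. Since $T$ is a tilting module with $\pd_\Lambda T = n$, there is the standard identification $\mathbf{F}_n \iso D\Hom_\Lambda(-, \tau_n T[\text{shift}])$ on $\mathcal F_n$; concretely $\Ext^n_\Lambda(T, P) \iso D\Hom_{\mathcal D_\Lambda}(P, \nu T)$, and injective $\Gamma$-modules are exactly the $D\Hom_\Lambda(T,-)$ of... — more cleanly: $\mathbf{F} = \RHom_\Lambda(T,-)$ is a derived equivalence sending $D\Lambda$ to a tilting-cotilting-type object, and it is a general fact (dual to the statement that $\mathbf{F}_0$ sends projective-like objects to projectives) that $\mathbf{F}$ restricted to injectives behaves well when $\id P$ is exactly $n$. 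The concrete route is: $\id_\Lambda P = n$ means $\Ext^n_\Lambda(-,P)$ is right exact, so $\Ext^n_\Lambda(T,-) = \mathbf{F}_n$ on $\mathcal F_n$ sends the simple-socle structure of an injective envelope to an injective $\Gamma$-module; equivalently $D\mathbf{F}_n P = \Tor^\Lambda_{?}(\cdots)$ and one invokes that $DP$ is projective-like over $\Lambda^{\op}$.

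The main obstacle is the injectivity claim in (2). Simplicity follows almost formally from material already in the paper (the right-almost-split computation from Proposition~\ref{global dimension}'s proof). For injectivity I would argue as follows: when $\id P = n$, dualize the minimal injective resolution \eqref{injective}; its truncation shows $\nu^{-1}DP$, i.e. $\Hom_\Lambda(I_\bullet,\Lambda)$-data, is concentrated appropriately, and one gets that $D\Ext^n_\Lambda(T,P) \iso \Hom_\Lambda(P, \nu_n^{-1} T)$-type object is a summand of $\nu_n^{-1}$ applied to something in $\add T$ — but $\nu\Gamma \iso D\Gamma$-compatibility under $\mathbf F$ then forces $\mathbf{F}_n P$ to be a direct summand of $D\Gamma$, hence injective. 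I expect this last identification to require care with the Nakayama functor under the derived equivalence $\mathbf{F}$; the cleanest formulation is that $\mathbf{F}\nu \iso \nu\mathbf{F}$ (since $\mathbf F$ is a standard derived equivalence) together with $\id_\Lambda P = n \Leftrightarrow \nu_n^{-1} P \in \mod\Lambda$, which transports to $\mathbf F_n P$ being in the image of $\add D\Gamma$.
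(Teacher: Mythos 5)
Your route is genuinely different from the paper's. The paper uses higher AR-duality to get $\mathbf{F}_n P \iso D\underline{\Hom}_\Lambda(\tau_n^-P,T)=D\underline{\End}_\Lambda(\tau_n^-P)$, deduces that every composition factor of this $\Gamma$-module is the simple at the idempotent of $\tau_n^-P$, and then uses $\End_\Gamma(\mathbf{F}_nP)\iso\End_\Lambda(P)$ (Lemma~\ref{tilting theory}) being a division ring to conclude simplicity; injectivity then follows because $\Hom_\Lambda(\tau_n^-P,\Lambda)=0$ forces $\underline{\Hom}_\Lambda(\tau_n^-P,T)=\Hom_\Lambda(\tau_n^-P,T)$, a projective right $\Gamma$-module, whose $k$-dual is injective. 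Your identification $\mathbf{F}_nP\iso\Cok\mathbf{F}_0f$ combined with the right-almost-split computation from the proof of Proposition~\ref{global dimension} is an attractive alternative for simplicity, and your sketch of injectivity via $\nu$-compatibility (when $\id P=n$ one has $\nu\tau_n^-P\iso P[n]$, so $\mathbf{F}_nP=\mathbf{F}(P[n])$ is a direct summand of $\mathbf{F}(\nu T)\iso D\Gamma$) is also a correct and arguably slicker route, though as written it is a list of trial attempts and needs to be committed to and carried out.

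There is, however, a real gap in your simplicity argument. Lemma~\ref{F_nP}(2) asserts simplicity under the hypotheses of Theorem~\ref{APR}, i.e.\ for any \emph{weak} $n$-APR tilting module, with no assumption that $\id P=n$. The right-almost-split computation you borrow uses $\End_\Lambda(\tau_n^-P)=\underline{\End}_\Lambda(\tau_n^-P)$, which rests on Proposition~\ref{APR2}(2) ($\Hom_\Lambda(\tau_n^-P,\Lambda)=0$) and hence needs $\id P=n$. In the weak case $\Hom_\Lambda(\tau_n^-P,\Lambda)$ need not vanish, $\End_\Gamma(\mathbf{F}_0\tau_n^-P)\iso\End_\Lambda(\tau_n^-P)$ is only local rather than a division ring, and one cannot conclude that $\Im\mathbf{F}_0f$ is the full radical of $\mathbf{F}_0\tau_n^-P$. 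The paper avoids this by controlling $\End_\Gamma(\mathbf{F}_nP)$ directly instead of $\End_\Gamma(\mathbf{F}_0\tau_n^-P)$. Two smaller points: in (1), the step ``exhibits $P[n]$ as the image of an object of the thick subcategory generated by $T$'' proves nothing, since every object lies in that thick subcategory once $T$ is tilting; the vanishing in degrees $\neq n$ comes from Proposition~\ref{APR2}(1) for $0<j<n$, from $\pd_\Lambda T=n$ for $j>n$, and from $\Hom_\Lambda(T,P)=0$ (because $P$ is simple projective so $\Hom_\Lambda(Q,P)=0$, and $\tau_n^-P$ is a quotient of $(DI_n)^*\in\add Q$). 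Also the $(DI_i)^*$ lie in $\mathcal{F}_0$ because $(DI_i)^*\in\add Q\subseteq\add T$ and $T$ is rigid, not merely because they are projective --- after all $P$ is projective and lies in $\mathcal{F}_n$, not $\mathcal{F}_0$.
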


\begin{proof}
(1) Follows immediately from Proposition~\ref{APR2} and the fact that $P$ is simple.

(2) By AR-duality we have
\[\mathbf{F}_nP=\Ext^n_\Lambda(T,P)\iso\Ext^1_\Lambda(T,\Omega^{-(n-1)}P)\iso D\underline{\Hom}_\Lambda(\tau_n^-P,T).\]

First we show that $\mathbf{F}_nP$ is a simple $\Gamma$-module.
Since $\mathbf{F}_nP\iso D\underline{\Hom}_\Lambda(\tau_n^-P,T)=D\underline{\End}_\Lambda(\tau_n^-P)$, any composition factor of the $\Gamma$-module $\mathbf{F}_nP$ is isomorphic.
Thus we only have to show that $\End_\Gamma(\mathbf{F}_nP)$ is a division ring.
By Lemma \ref{tilting theory}, we have $\End_\Gamma(\mathbf{F}_nP)\iso\End_\Lambda(P)$. Thus the assertion follows.

Next we show the second assertion.
Since we have $\Hom_\Lambda(\tau_n^-P,\Lambda)=0$ by Proposition~\ref{APR2}, we have 
$\mathbf{F}_nP\iso D\underline{\Hom}_\Lambda(\tau_n^-P,T)=D\Hom_\Lambda(\tau_n^-P,T)$.
Thus $\mathbf{F}_nP$ is an injective $\Gamma$-module.
\end{proof}

\subsection{$n$-APR tilting as one-point extension} \label{subsec.asonepointext}

Let $\Lambda$ be a finite dimensional algebra, $M \in \mod \Lambda^{\op}$ and $N \in \mod\Lambda$. Slightly more general than ``classical'' one-point (co)extensions, we consider the algebras $\begin{smallpmatrix} K & M \\ & \Lambda \end{smallpmatrix}$ and $\begin{smallpmatrix} K & \\ N & \Lambda \end{smallpmatrix}$ if $K$ is a finite skew-field extension of our base field $k$, such that $K \subseteq \End_{\Lambda^{\op}}(M)$ and $K \subseteq \End_{\Lambda}(N)^{\op}$, respectively.

Now let $\Lambda$ be a basic algebra which has a simple projective module $P$. We set $K_P = \End_{\Lambda}(P)^{\op}$. Let $Q$ be the direct sum over the other indecomposable projective $\Lambda$-modules, that is $\Lambda = P \oplus Q$. We set $\Lambda_P := \End_{\Lambda}(Q)^{\op}$ and $M_P := \Hom_{\Lambda}(P, Q) \in \mod (K_P \otimes_k \Lambda_P^{\op})$. Then we have an isomorphism $\Lambda \iso \begin{smallpmatrix} K_P & M_P \\ & \Lambda_P \end{smallpmatrix}$ and $P$ is identified with the module $\begin{smallpmatrix} K_P \\ 0 \end{smallpmatrix}$.

\begin{theorem} \label{theorem.APR_on_onepointext}
Assume $\Lambda$ is a basic finite dimensional algebra with simple projective module $P$, and $n > 1$. Then the following are equivalent:
\begin{enumerate} \renewcommand{\labelenumi}{(\roman{enumi})}
\item $P$ gives rise to an $n$-APR tilting module,
\item $M_P$ has the following properties
\begin{itemize}
\item $\pd_{\Lambda_P^{\op}} M_P = n-1$,
\item $\Ext_{\Lambda_P^{\op}}^i(M_P, \Lambda_P) = 0$ for $0 \leq i \leq n-2$,
\item $\Ext_{\Lambda_P^{\op}}^i(M_P, M_P) = 0$ for $1 \leq i \leq n-2$, and
\item $\End_{\Lambda_P^{\op}}(M_P) = K_P$.
\end{itemize}
\end{enumerate}
Moreover, if the above conditions are satisfied and $\Gamma = \End_{\Lambda}((\tau_n^- P) \oplus Q)^{\op}$, then
\[ \Gamma \iso \begin{pmatrix} K_P \\ \Tr_{n-1} M_P & \Lambda_P \end{pmatrix} \]
\end{theorem}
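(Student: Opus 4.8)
The plan is to prove the equivalence of (i) and (ii) by translating the defining conditions of an $n$-APR tilting module (Definition~\ref{def.nAPR}) through the isomorphism $\Lambda \iso \begin{smallpmatrix} K_P & M_P \\ & \Lambda_P \end{smallpmatrix}$, and then to identify $\Gamma$ using Theorem~\ref{APR} together with Proposition~\ref{APR2} and Lemma~\ref{F_nP}. First I would record the standard homological dictionary for one-point extensions: for the algebra $\Lambda = \begin{smallpmatrix} K_P & M_P \\ & \Lambda_P \end{smallpmatrix}$, a $\Lambda$-module is a triple $(V, X, \phi)$ with $V \in \mod K_P$, $X \in \mod \Lambda_P$, $\phi \colon M_P \otimes_{\Lambda_P} X \to V$; the simple projective $P$ corresponds to $(K_P, 0, 0)$, the module $Q$ is $(M_P, \Lambda_P, \id)$, and crucially $D\Lambda$ has $DQ$ as a direct summand together with the injective envelope of $P$, so that $\Ext^i_\Lambda(D\Lambda, P)$ is controlled by $\Ext^i$ of $M_P$ over $\Lambda_P^{\op}$ (since $P$ appears in the socle only of the injective coextending $Q$ by $M_P$). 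Concretely, a minimal injective resolution of $P$ over $\Lambda$ is obtained by coextending a minimal projective resolution $0 \to P_{n-1} \to \cdots \to P_0 \to M_P \to 0$ of $M_P$ over $\Lambda_P^{\op}$; applying $D$ turns this into the minimal projective resolution \eqref{projective} of $DP$, and the conditions ``$\Ext^i_\Lambda(D\Lambda,P)=0$ for $0 \le i < n$'' and ``$\id P = n$'' unwind exactly to ``$\pd_{\Lambda_P^{\op}} M_P = n-1$'' and ``$\Ext^i_{\Lambda_P^{\op}}(M_P, \Lambda_P) = 0$ for $0 \le i \le n-2$''. This already gives (i) $\Rightarrow$ first two bullets of (ii), and conversely these two bullets give that $P$ is simple projective with $\Ext^i_\Lambda(D\Lambda,P)=0$ ($0 \le i < n$) and $\id P = n$, hence $T = (\tau_n^- P)\oplus Q$ is an $n$-APR tilting module by Definition~\ref{def.nAPR} and Theorem~\ref{APR}.

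The remaining two bullets of (ii) should be shown equivalent (given the first two) to the statement that $T$ is an $n$-APR tilting \emph{module}, i.e.\ they are automatically forced — my suspicion is that the cleanest route is: by Theorem~\ref{APR} the module $T$ is always tilting once $P$ is simple projective with the $\Ext$-vanishing against $D\Lambda$, so (ii) should really be \emph{characterizing} when such a $P$ exists, and the last two bullets are consequences of the first two. For $\End_{\Lambda_P^{\op}}(M_P) = K_P$: from the exact sequence \eqref{exchange} (here $(DI_0)^* = P$, $(DI_i)^* \in \add Q$), one reads $\Hom_\Lambda(P,Q) = M_P$ and more generally the structure of the resolution shows $\End_{\Lambda_P^{\op}}(M_P) \iso \End_\Lambda(P)^{\op}\text{-related} = K_P$ by the local structure at the vertex $P$; I would deduce it from the fact that $\top(DI_0)^* = \Soc I_0 = P$ is simple, so $\Hom_\Lambda(P, DI_0) = D(\Soc I_0)$ is one-dimensional over $K_P$, which forces $\End_{\Lambda_P^{\op}}(M_P) = K_P$. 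For $\Ext^i_{\Lambda_P^{\op}}(M_P, M_P) = 0$ ($1 \le i \le n-2$): this is $n$-rigidity of $Q$ relative to the extension data, and follows by applying $\Hom_\Lambda(Q,-)$ to the injective coresolution of $P$ and using that $Q$ is projective, translating into $\Ext^i_{\Lambda_P^{\op}}(M_P, M_P) \iso \Ext^i_\Lambda(Q, I_\bullet)$-type vanishing coming from $Q$ being projective and the $I_j$ being injective — essentially the dual of Proposition~\ref{APR2}(1).

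For the last sentence — the formula $\Gamma \iso \begin{smallpmatrix} K_P & \\ \Tr_{n-1} M_P & \Lambda_P \end{smallpmatrix}$ — the strategy is to compute $\Gamma = \End_\Lambda(T)^{\op}$ blockwise for $T = (\tau_n^- P) \oplus Q$. By Proposition~\ref{APR2}(2) we have $\Hom_\Lambda(\tau_n^- P, \Lambda) = 0$, hence $\Hom_\Lambda(\tau_n^- P, Q) = 0$, which gives the upper-right zero block. We have $\End_\Lambda(Q)^{\op} = \Lambda_P$ (the lower-right block), and $\End_\Lambda(\tau_n^- P)^{\op} \iso \End_\Lambda(P)^{\op} = K_P$ by the computation in the proof of Proposition~\ref{global dimension} (using AR-translation and $\Ext^i_\Lambda(D\Lambda,P)=0$), giving the upper-left block. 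It remains to identify the lower-left bimodule $\Hom_\Lambda(Q, \tau_n^- P)$ as a $\Lambda_P$-$K_P$-bimodule with $\Tr_{n-1} M_P$. For this I would apply $\Hom_\Lambda(Q, -)$ to the sequence \eqref{exchange}; since $Q$ is projective this stays exact, and since $(DI_i)^* \in \add Q$ the terms $\Hom_\Lambda(Q, (DI_i)^*)$ are the projective $\Lambda_P^{\op}$-modules appearing in the projective resolution \eqref{projective} of $DP = M_P$ (via $DI_0$'s cokernel being $M_P$), so $\Hom_\Lambda(Q, \tau_n^- P) = \Cok\bigl( \Hom_\Lambda(Q,(DI_{n-1})^*) \to \Hom_\Lambda(Q,(DI_n)^*)\bigr)$ is by definition the $(n-1)$st transpose $\Tr_{n-1} M_P = \Tr \Omega^{n-2} M_P$ of $M_P$ over $\Lambda_P^{\op}$ — one must be careful that \eqref{exchange} after deleting $P$ and $(DI_0)^* = P$ is precisely a projective resolution whose $(-)^* = \Hom_{\Lambda_P^{\op}}(-, \Lambda_P)$ recovers \eqref{projective} truncated, which is what ``one-point-coextension $\Lambda$ of $\Lambda_P$ by $M_P$'' means. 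The main obstacle I expect is bookkeeping the bimodule structure correctly: one must check that the left $\Lambda_P$-action and right $K_P$-action on $\Hom_\Lambda(Q, \tau_n^- P)$ match those on $\Tr_{n-1} M_P$ — the right $K_P = \End_{\Lambda_P^{\op}}(M_P)$-action is the subtle point, and it comes out right precisely because $\End_{\Lambda_P^{\op}}(M_P) = K_P$ (the fourth bullet of (ii)), which is why that condition is needed not just for the equivalence but for the formula to even typecheck.
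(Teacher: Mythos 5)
Your treatment of the equivalence (i)~$\Leftrightarrow$~(ii) contains a genuine logical gap. You assert that the last two bullets of (ii) ``should really be consequences of the first two'' and are ``automatically forced,'' but this is false: all four bullets together are precisely the unwinding of the defining conditions in Definition~\ref{def.nAPR}, and the last two are not redundant. The point you are missing is how to decompose the condition $\Ext^i_\Lambda(D\Lambda,P)=0$ along the indecomposable injective summands of $D\Lambda$. The injective hull of $P$ over $\Lambda$ is $\begin{smallpmatrix} K_P \\ D M_P \end{smallpmatrix}$, and the remaining indecomposable injectives are $\begin{smallpmatrix} 0 \\ I \end{smallpmatrix}$ with $I\in\inj\Lambda_P$. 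Vanishing of $\Ext^i_\Lambda(\begin{smallpmatrix} 0 \\ I \end{smallpmatrix},P)$ for $1\le i<n$ translates via adjunction to $\Ext^{i-1}_{\Lambda_P^{\op}}(M_P,DI)=0$, giving the second bullet; vanishing of $\Ext^i_\Lambda(\begin{smallpmatrix} K_P \\ D M_P \end{smallpmatrix},P)$ for $2\le i<n$ gives $\Ext^{i-1}_{\Lambda_P^{\op}}(M_P,M_P)=0$, the third bullet; and the delicate case is $i=1$, where one computes $\Ext^1_\Lambda(\begin{smallpmatrix} K_P \\ D M_P \end{smallpmatrix},P)\iso\End_{\Lambda_P^{\op}}(M_P)/K_P$ from the short exact sequence $0\to P\to\begin{smallpmatrix} K_P \\ D M_P \end{smallpmatrix}\to\begin{smallpmatrix} 0 \\ D M_P \end{smallpmatrix}\to 0$, giving the fourth bullet. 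Your heuristic that ``$\top(DI_0)^*=\Soc I_0=P$ simple forces $\End_{\Lambda_P^{\op}}(M_P)=K_P$'' is incorrect: $\End_{\Lambda_P^{\op}}(M_P)=K_P$ is a genuine restriction on $M_P$ (it fails, for instance, already for many sources of quivers), and when it fails $P$ simply does not give rise to an $n$-APR tilting module. Similarly your sketch for $\Ext^i_{\Lambda_P^{\op}}(M_P,M_P)=0$ never pins down which $\Ext$-group over $\Lambda$ it is detecting, which is exactly the content missing.

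The second part of your proposal (the formula for $\Gamma$) is essentially correct and matches the paper's route: use Proposition~\ref{APR2}(2) for the vanishing upper-right block, identify the diagonal blocks, and read off $\Hom_\Lambda(Q,\tau_n^-P)\iso\Tr_{n-1}M_P$ by applying $\Hom_\Lambda(Q,-)$ to the sequence~\eqref{exchange}, i.e.\ by dualizing the injective resolution of $P$. Your closing remark that $\End_{\Lambda_P^{\op}}(M_P)=K_P$ is what makes the bimodule structure in the lower-left block match is a good observation, and in fact is further evidence that this bullet of (ii) cannot be ``automatically forced'' from the others.
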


\begin{remark}
The object $\Tr_{n-1} M_P$ is uniquely determined only up to projective summands. In this section we always understand $\Tr_{n-1} M_P$ to be constructed using a minimal projective resolution, or, equivalently, $\Tr_{n-1} M_P$ to not have any non-zero projective summands.
\end{remark}

\begin{proof}[Proof of Theorem~\ref{theorem.APR_on_onepointext}]
Let
\[0 \to[30] D M_P \to[30] I_0 \to[30] I_1 \to[30] \cdots\]
be an injective resolution of the $\Lambda_P$-module $D M_P$. Then the injective resolution of the $\Lambda$-module $P = \begin{smallpmatrix} K_P \\ 0 \end{smallpmatrix}$ is
\[ 0 \to[30] \begin{pmatrix} K_P \\ 0 \end{pmatrix} \to[30] \begin{pmatrix} K_P \\ D M_P \end{pmatrix} \to[30] \begin{pmatrix} 0 \\ I_0 \end{pmatrix} \to[30] \begin{pmatrix} 0 \\ I_1 \end{pmatrix} \to[30] \cdots. \]
Hence $\pd_{\Lambda_P^{\op}} M_P = \id_{\Lambda_P} D M_P = \id_{\Lambda} P - 1$. In particular we have $\id_{\Lambda} P = n \iff \pd_{\Lambda_P^{\op}} M_P = n-1$.

Moreover, for any $i \geq 1$ and any $I \in \inj \Lambda_P$ we have
\begin{align*}
\Ext^i_{\Lambda}(\begin{smallpmatrix} 0 \\ I \end{smallpmatrix}, P) & = \Ext^{i-1}_{\Lambda_P}(I, D M_P) \\
& = \Ext^{i-1}_{\Lambda_P^{\op}}(M_P, D I).
\end{align*}
(Note that the first equality also holds for $n=1$, since there are no non-zero maps from $\begin{smallpmatrix} 0 \\ I \end{smallpmatrix}$ to the injective $\Lambda$-module $\begin{smallpmatrix} K_P \\ D M_P \end{smallpmatrix}$.)

Finally we look at extensions between $P$ and the corresponding injective module. For $i > 1$ we have
\begin{align*}
\Ext^i_{\Lambda}(\begin{smallpmatrix} K_P \\ D M_P \end{smallpmatrix}, P) & = \Ext^{i-1}_{\Lambda}(\begin{smallpmatrix} K_P \\ D M_P \end{smallpmatrix}, \begin{smallpmatrix} 0 \\ D M_P \end{smallpmatrix}) \\
& = \Ext^{i-1}_{\Lambda_P}(D M_P, D M_P) \\
& = \Ext^{i-1}_{\Lambda_P^{\op}}(M_P, M_P).
\end{align*}
For $i = 1$ we obtain
\begin{align*}
\Ext^1_{\Lambda}(\begin{smallpmatrix} K_P \\ D M_P \end{smallpmatrix}, P) & = \Hom_{\Lambda}( \begin{smallpmatrix} K_P \\ D M_P \end{smallpmatrix}, \begin{smallpmatrix} 0 \\ D M_P \end{smallpmatrix}) /  \left( \End_{\Lambda}(\begin{smallpmatrix} K_P \\ D M_P \end{smallpmatrix}) \cdot [\begin{smallpmatrix} K_P \\ D M_P \end{smallpmatrix} \epi \begin{smallpmatrix} 0 \\ D M_P \end{smallpmatrix}]  \right) \\
& \iso \End_{\Lambda_P}(M_P) / K_P.
\end{align*}
This proves the equivalence of (i) and (ii).

For the second claim note that by Proposition~\ref{APR2}(2) we have $\Hom_{\Lambda}(\tau_n^- P, Q) = 0$. Therefore it only remains to verify $\Hom_{\Lambda}(Q, \tau_n^- P) = \Tr_{n-1} M_P$ and $\End_{\Lambda}(\tau_n^- P) = K_P$. This follows by looking at the injective resolution of $P$ above and applying $D$ to it to obtain (a projective resolution of) $\tau_n^- P$. 
\end{proof}

\subsection{Quivers for $2$-APR tilts} \label{subsec.quivers}

In this subsection we give an explicit desctription of $2$-APR tilts in terms of quivers with relations.

\begin{remark}
For comparison, recall the classical case ($n=1$): Assume $\Lambda = kQ / (R)$, and the set of relations $R$ is minimal ($\forall r \in R \colon r \not\in (R \setminus \{r\})$). Simple projective modules correspond to sources of $Q$. Let $P$ be a simple projective, $i \in Q_0$ the corresponding vertex. Then $\id P = 1 \iff$ no relation in $R$ involves a path starting in $i$. In this situation we have 
\[ \Lambda_P = k[Q \setminus \{i\}] / (R) \text{,} \qquad M_P = \bigoplus_{\substack{a \in Q_1 \\ \mathfrak{s}(a)=i}} P_j^* \text{,} \quad \text{and} \quad \Gamma = kQ'/(R), \]
 where $Q'$ is the quiver obtained from $Q$ by reversing all arrows starting in $i$.
\end{remark}

For $n=2$ we have to take into account the second cosyzygy of $P$, which corresponds to relations involving the corresponding vertex of the quiver. 

Let $\Lambda=kQ/(R)$ be a finite dimensional $k$-algebra presented by a quiver $Q=(Q_0,Q_1)$ with relations $R$ (which is assumed to be a minimal set of relations). Let $P$ be a simple projective $\Lambda$-module associated to a source $i$ of $Q$. We define a quiver $Q'=(Q'_0,Q'_1)$ with relations $R'$ as follows.
\begin{align*}
Q'_0 & = Q_0,\\
Q'_1 & = \{ a \in Q_1 \mid \mathfrak{s}(a) \neq i\} \coprod \{r^* \colon \mathfrak{e}(r) \to i \mid r \in R,\ \mathfrak{s}(r) = i \}
\end{align*}
where $r^*$ is a new arrow associated to each $r \in R$ with $\mathfrak{s}(r)=i$. We write $r \in R$ with $\mathfrak{s}(r)=i$ as
\[r=\sum_{\substack{a\in Q_1 \\ \mathfrak{s}(a)=i}} ar_a,\]
and define $a^* \in kQ'$ for each $a \in Q_1$ with $\mathfrak{s}(a) = i$ by
\[a^* := \sum_{\substack{r\in R \\ \mathfrak{s}(r)=i}} r_a r^* \in kQ'.\]
Now we define a set $R'$ of relations on $Q'$ by
\[R' = \{r\in R \mid \mathfrak{s}(r)\neq i\} \coprod \{a^* \colon \mathfrak{e}(a)\to i \mid a\in Q_1,\ \mathfrak{s}(a)=i\}.\]
 
\begin{theorem} \label{theorem.quiver_2APR}
Let $\Lambda = kQ / (R)$ and $P$ a simple projective $\Lambda$-module. Assume that $P$ gives rise to a 2-APR tilting $\Lambda$-module $T$. Then $\End_{\Lambda}(T)$ is isomorphic to $kQ'/(R')$ (with $Q'$ and $R'$ as explained above).
\end{theorem}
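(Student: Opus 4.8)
The plan is to derive this directly from Theorem~\ref{theorem.APR_on_onepointext} in the case $n = 2$, where $\Tr_{n-1}$ becomes the ordinary transpose $\Tr$, by translating the one-point extension $\begin{pmatrix} K_P \\ \Tr M_P & \Lambda_P \end{pmatrix}$ into the language of quivers with relations. Since $i$ is a source, every relation in $R$ either starts at $i$ or avoids the vertex $i$ entirely, so $\Lambda_P \iso (1-e_i)\Lambda(1-e_i)$ is presented by the full subquiver of $Q$ on $Q_0 \setminus \{i\}$ together with the relations $\{ r \in R \mid \mathfrak{s}(r) \neq i \}$; this is precisely the full subquiver of $Q'$ on $Q_0 \setminus \{i\}$ with the relations of $R'$ not involving $i$. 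Theorem~\ref{theorem.APR_on_onepointext} applies because $P$ gives rise to a $2$-APR tilting module, so condition~(ii) there holds; in particular $\pd_{\Lambda_P^{\op}} M_P = 1$ and $\End_{\Lambda_P^{\op}}(M_P) = K_P$. It therefore remains to compute a minimal projective presentation of $\Tr M_P$ over $\Lambda_P$ and to feed it into the standard description of the quiver with relations of a one-point extension.

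First I would write down a minimal projective presentation of $M_P = \Hom_\Lambda(P, Q) \iso e_i \Lambda (1-e_i)$ as a right $\Lambda_P$-module. Its top is spanned by the arrows of $Q$ out of $i$, so its projective cover is $\bigoplus_{a \in Q_1,\, \mathfrak{s}(a) = i} e_{\mathfrak{e}(a)} \Lambda_P$; using $\pd_{\Lambda_P^{\op}} M_P = 1$ together with minimality of $R$, the kernel is the projective module $\bigoplus_{r \in R,\, \mathfrak{s}(r) = i} e_{\mathfrak{e}(r)} \Lambda_P$, and the connecting map is given by the matrix $(r_a)$ obtained from writing each relation $r \in R$ with $\mathfrak{s}(r) = i$ as $r = \sum_{a} a\, r_a$; minimality of $R$ forces every $r_a$ into the radical of $\Lambda_P$. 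Applying $\Hom_{\Lambda_P^{\op}}(-, \Lambda_P)$ and passing to the cokernel yields a minimal projective presentation
\[ \bigoplus_{a \in Q_1,\, \mathfrak{s}(a) = i} \Lambda_P e_{\mathfrak{e}(a)} \longrightarrow \bigoplus_{r \in R,\, \mathfrak{s}(r) = i} \Lambda_P e_{\mathfrak{e}(r)} \longrightarrow \Tr M_P \longrightarrow 0 \]
of $\Tr M_P$ as a left $\Lambda_P$-module, with first map the transpose of $(r_a)$ (minimality is preserved, and $\Tr M_P$ has no projective summands by the convention of the Remark following Theorem~\ref{theorem.APR_on_onepointext}). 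Thus $\top \Tr M_P = \bigoplus_{r \in R,\, \mathfrak{s}(r) = i} S_{\mathfrak{e}(r)}$ and the first syzygy of $\Tr M_P$ is generated by elements $(r_a)_{r}$ indexed by the arrows $a$ out of $i$.

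Finally I would invoke the standard recipe for one-point extensions: the quiver of $\begin{pmatrix} K_P \\ \Tr M_P & \Lambda_P \end{pmatrix}$ is obtained from that of $\Lambda_P$ by adjoining a vertex $i$ and one new arrow $r^* \colon \mathfrak{e}(r) \to i$ for each summand $S_{\mathfrak{e}(r)}$ of $\top \Tr M_P$ (the matrix form of the algebra ensures there are no further arrows, and no loops, at $i$), while its relations are those of $\Lambda_P$ together with one relation for each generator of the first syzygy of $\Tr M_P$, which upon substituting the arrows $r^*$ for the generators of the projective cover becomes exactly $a^* = \sum_{r} r_a r^*$ for $a \in Q_1$ with $\mathfrak{s}(a) = i$. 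This gives precisely $kQ'/(R')$. The one genuinely delicate step — and the one I expect to be the main obstacle — is the bookkeeping of conventions: keeping straight the passages to opposite rings built into $K_P = \End_\Lambda(P)^{\op}$, $\Lambda_P = \End_\Lambda(Q)^{\op}$, and the transpose functor, as well as the left/right module conventions, so that the new arrows come out pointing \emph{into} $i$ and the composition order in $a^* = \sum_r r_a r^*$ is the correct one. It is exactly here that the hypothesis that $i$ is a source is used repeatedly, since it is what makes $(1-e_i)\Lambda(1-e_i)$ behave as a ``sub-path-algebra'' of $\Lambda$.
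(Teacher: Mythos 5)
Your proposal is correct and follows essentially the same route as the paper: specializing Theorem~\ref{theorem.APR_on_onepointext} to $n=2$, reading off the minimal projective presentation of $M_P$ over $\Lambda_P^{\op}$ from the arrows and relations of $Q$ out of $i$, dualizing to obtain the minimal projective presentation of $\Tr M_P$ over $\Lambda_P$ with transposed matrix, and then feeding this into the standard description of the quiver with relations of a one-point coextension. The ``standard recipe'' you invoke in the final step is exactly what the paper isolates as Observation~\ref{obs.rel_onepoint}.
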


\begin{remark}
Roughly speaking, Theorem~\ref{theorem.quiver_2APR} means that arrows in $Q$ starting in $i$ become relations, and relations become arrows.
\end{remark}

Let us start with the following general observation.

\begin{observation} \label{obs.rel_onepoint}
Let $\Delta=kQ/(R)$ be a finite dimensional $k$-algebra 
presented by a quiver $Q$ with relations $R$.
Let $M$ be a $\Delta$-module with a projective presentation
\[\bigoplus_{1\le n\le N} P_{j_n} \tol[30]{(r_{n\ell})} \bigoplus_{1\le\ell\le L} P_{i_\ell}\to[30] M \to[30] 0\]
for $r_{n\ell}\in kQ$.
Then the one-point coextension algebra $\spm{k & \\ M & \Delta}$ is isomorphic to $k\widetilde{Q}/(\widetilde{R})$ for the quiver
$\widetilde{Q}=(\widetilde{Q}_0,\widetilde{Q}_1)$ with relations $\widetilde{R}$
defined by
\begin{align*}
\widetilde{Q}_0 & = Q_0\coprod\{i\},\\
\widetilde{Q}_1 & = Q_1\coprod\{a_\ell \colon i_\ell\to i \mid 1\le\ell\le L\},\\
\widetilde{R} & = R\coprod\{\sum_{1\le\ell\le L}r_{n\ell}a_\ell \mid 1\le n\le N\}.
\end{align*}
\end{observation}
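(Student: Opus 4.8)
The plan is to identify the one-point coextension $\Delta' := \spm{k & \\ M & \Delta}$ with $k\widetilde Q/(\widetilde R)$ by constructing an explicit surjective algebra homomorphism $\varphi \colon k\widetilde Q \to \Delta'$ and then computing its kernel. Recall that by definition an element of $\Delta'$ is a matrix $\spm{\lambda & \\ m & \delta}$ with $\lambda \in k$, $\delta \in \Delta$, $m \in M$, and multiplication uses the $\Delta$-action on $M$ (and the trivial $k$-action on the left). The idempotents are $e_i = \spm{1 & \\ 0 & 0}$ together with the primitive idempotents $e_v$ ($v \in Q_0$) of $\Delta$ sitting in the lower-right corner; these are in bijection with $\widetilde Q_0 = Q_0 \amalg \{i\}$. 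The arrows of $Q$ map to the corresponding arrows of $\Delta$ in the lower-right corner. For each new arrow $a_\ell \colon i_\ell \to i$ we send $a_\ell$ to the element $\spm{0 & \\ x_\ell & 0}$, where $x_\ell := \overline{e_{i_\ell}} \in M$ is the image of the $\ell$-th basis generator under the projective cover $\bigoplus_\ell P_{i_\ell} \twoheadrightarrow M$ — that is, $x_\ell$ is the image of $e_{i_\ell}\in P_{i_\ell}$. Since these elements $x_\ell$ generate $M$ as a $\Delta$-module, and $\Delta$ itself is generated by the arrows of $Q$, the homomorphism $\varphi$ is surjective.

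Next I would check that the relations in $\widetilde R$ lie in $\ker\varphi$. The old relations $r \in R$ clearly map to zero since $\varphi$ restricted to $kQ$ factors through $\Delta = kQ/(R)$. For the new relations $\sum_\ell r_{n\ell} a_\ell$: applying $\varphi$, we get $\sum_\ell \varphi(r_{n\ell})\cdot\spm{0 & \\ x_\ell & 0} = \spm{0 & \\ \sum_\ell \overline{r_{n\ell}}\, x_\ell & 0}$, where $\overline{r_{n\ell}} \in \Delta$ is the residue class. But $\sum_\ell \overline{r_{n\ell}}\, x_\ell$ is precisely the image in $M$ of $(r_{n\ell})_\ell$ applied along the projective presentation $\bigoplus_n P_{j_n} \to \bigoplus_\ell P_{i_\ell} \to M$, hence zero because that composite is zero. (Here I use that the $x_\ell$ are the images of the canonical generators, so $\sum_\ell \overline{r_{n\ell}} x_\ell$ is the image of the element $\sum_\ell r_{n\ell}e_{i_\ell} \in \bigoplus_\ell P_{i_\ell}$, which lies in the kernel of the projection to $M$ by exactness.) Thus $\varphi$ induces a surjection $\overline\varphi \colon k\widetilde Q/(\widetilde R) \twoheadrightarrow \Delta'$.

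Finally, to see $\overline\varphi$ is an isomorphism I would compare dimensions, vertex by vertex (equivalently, compare the projective $k\widetilde Q/(\widetilde R)$-modules $\widetilde P_v$ with the projective $\Delta'$-modules). For $v \in Q_0$, no new arrow starts at $v$ and no new relation involves a path starting at $v$, so the projective at $v$ is unchanged: $\widetilde P_v = e_v \cdot k\widetilde Q/(\widetilde R)$ has the same underlying space as $P_v$ over $\Delta$, which matches the projective $\Delta'$-module at $v$ (which is $\spm{0\\ P_v}$ with the obvious structure). For the new vertex $i$, the indecomposable projective $\Delta'$-module is $\spm{k\\ M}$, whose dimension is $1 + \dim_k M$. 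On the other side, $\widetilde P_i = e_i\cdot k\widetilde Q/(\widetilde R)$: paths out of $i$ in $\widetilde Q$ are the trivial path $e_i$ together with $a_\ell$ followed by any path in $Q$, modulo the relations coming from the presentation; the relations $\sum_\ell r_{n\ell}a_\ell$ impose exactly the syzygy relations defining $M$, so $e_i a_\ell \cdot k\widetilde Q/(\widetilde R)$ spanned as $\ell$ varies is isomorphic to $M$, giving $\dim_k \widetilde P_i = 1 + \dim_k M$ as well. Hence $\overline\varphi$ is a dimension-preserving surjection, therefore an isomorphism.

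The main obstacle is the dimension count for $\widetilde P_i$: one must argue carefully that the relations $\{\sum_\ell r_{n\ell}a_\ell\}$, together with the old relations $R$ acting downstream, cut the space of paths from $i$ down to exactly $M$ and not something smaller — i.e. that there are no "extra" relations forced. This is where one genuinely uses that $\bigoplus_n P_{j_n} \xrightarrow{(r_{n\ell})} \bigoplus_\ell P_{i_\ell} \to M \to 0$ is a \emph{presentation} (exact at the middle term), so that the kernel of $\bigoplus_\ell P_{i_\ell}\to M$ is generated by the columns $(r_{n\ell})_\ell$; phrasing this as "the left $\Delta$-module $\bigoplus_\ell e_{i_\ell}\Delta$ modulo the right ideal generated by the $\sum_\ell r_{n\ell}e_{i_\ell}$ equals $M$" makes it transparent and the identification with $\widetilde P_i$ immediate.
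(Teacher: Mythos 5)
The paper states this as an Observation without proof, so there is no paper argument to compare against; the expected justification is exactly the routine check you carry out. Your strategy is correct: define $\varphi\colon k\widetilde Q \to \spm{k & \\ M & \Delta}$ on idempotents, on the old arrows, and on the new arrows $a_\ell$ via the distinguished generators $x_\ell \in M$; observe $\varphi$ is surjective; check that $\widetilde R$ lands in $\ker\varphi$ using that the composite in the projective presentation is zero; and then match dimensions at each vertex, with the essential point being that exactness of $\bigoplus_n P_{j_n}\to\bigoplus_\ell P_{i_\ell}\to M\to 0$ forces the projective at the new vertex to have dimension exactly $1+\dim_k M$. You correctly single out this last step as the one that genuinely uses the hypothesis.

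One consistent slip in directions/sidedness should be fixed. You write ``no new arrow starts at $v$'' for $v\in Q_0$ and speak of ``paths out of $i$'' in $e_i\cdot k\widetilde Q/(\widetilde R)$; but each $a_\ell\colon i_\ell\to i$ \emph{does} start at $i_\ell\in Q_0$, and $i$ is a \emph{sink} of $\widetilde Q$, so $e_i(k\widetilde Q)=ke_i$ is one-dimensional. What the dimension count actually needs is the left-module version: no new arrow \emph{ends} at any $v\in Q_0$, so $(k\widetilde Q/(\widetilde R))e_v$ is unchanged, while $(k\widetilde Q/(\widetilde R))e_i$ has radical $\bigl(\bigoplus_\ell P_{i_\ell}\bigr)/\operatorname{Im}(r_{n\ell})\iso M$, giving dimension $1+\dim_k M$, which matches $\spm{k & \\ M & \Delta}e_i=\spm{k\\ M}$. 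With the left/right and source/sink language made consistent, the argument is complete.
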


Now we are ready to prove Theorem~\ref{theorem.quiver_2APR}.

\begin{proof}[Proof of Theorem~\ref{theorem.quiver_2APR}]
We can write $\Lambda = \spm{k & M_P \\ & \Lambda_P}$ as in Subsection~\ref{subsec.asonepointext}.
Let $Q_P$ be the quiver obtained from $Q$ by removing the vertex $i$,
and $R_P:=\{r\in R \mid \mathfrak{s}(r)\neq i\}$.
Then we have
\begin{equation}\label{1}
\Lambda_P \iso kQ_P / (R_P).
\end{equation}
By Theorem~\ref{theorem.APR_on_onepointext} we have
\begin{equation}\label{2}
\End_\Lambda(T) = \begin{pmatrix} \Lambda_P & \\ \Tr M_P & k \end{pmatrix}.
\end{equation}
Since we have a minimal projective resolution
\[ \bigoplus_{\substack{r\in R \\ \mathfrak{s}(r)=i}} P^*_{\mathfrak{e}(r)} \tol[30]{(r_a)} \bigoplus_{\substack{a\in Q_1 \\ \mathfrak{s}(a)=i}} P^*_{\mathfrak{e}(a)} \to[30] M_P \to[30] 0 \]
of the $\Lambda_P^{\op}$-module $M_P$,
we have a projective resolution
\begin{equation}\label{3}
\bigoplus_{\substack{a\in Q_1 \\ \mathfrak{s}(a)=i}} P_{\mathfrak{e}(a)} \tol[30]{(r_a)}
\bigoplus_{\substack{r\in R \\ \mathfrak{s}(r)=i}} P_{\mathfrak{e}(r)} \to[30]\Tr M_P\to[30] 0
\end{equation}
of the $\Lambda_P$-module $\Tr M_P$.
Applying Observation~\ref{obs.rel_onepoint} to the one-point coextension (\ref{2}),
we have the assertion from \eqref{1} and \eqref{3}.
\end{proof}

For example we could take $Q$ to be the Auslander Reiten quiver of $A_3$ and $R$ to be the mesh relations. Then $kQ / (R)$ is the Auslander algebra. See Tables~\ref{table.a3linear} (linear oriented $A_3$) and \ref{table.a3nonlin} (non-linear oriented $A_3$) for the iterated $2$-APR tilts of these Auslander algebras. In the pictures a downward line is a $2$-APR tilt. Vertices labeled $T$ are sources that have an associated $2$-APR tilt, vertices labeled $C$ are sinks having an associated $2$-APR cotilt. Sources and sinks that do not admit a $2$-APR tilt or cotilt are marked $X$.

\begin{table}
\iteratedAprLinearAThree
\caption{Iterated 2-APR tilts of the Auslander algebra of linear oriented $A_3$}  \label{table.a3linear}
\end{table}

\begin{table}
\iteratedAprNonlinearAThree
\caption{Iterated 2-APR tilts of the Auslander algebra of non-linear oriented $A_3$}  \label{table.a3nonlin}
\end{table}

Note that there are no $X$es occurring in Table~\ref{table.a3linear}. In fact, by \cite[Theorem~1.18]{Iy_n-Auslander} (see Theorem~\ref{theorem.auslanderalg}) the Auslander algebras of linear oriented $A_n$ are $2$-representation-finite, and hence every source and sink has an associated $2$-APR tilt and cotilt, respectively. We will more closely investigate $n$-APR tilts on $n$-representation-finite algebras in Section~\ref{section.APR_repfin}, and the particular algebras coming from linear oriented $A_n$ in Section~\ref{section.typeA}.

\subsection{$n$-APR tilting complexes} \label{subsect.APR_der}

As in Section~\ref{sect.backgr.derived}, throughout this section we assume $\Lambda$ to be a basic finite dimensional algebra of finite global dimension. We will constantly use the functors $\nu$ and $\nu_n$ introduced in the first paragraph of Section~\ref{sect.backgr.derived}.

\begin{definition} \label{def.nAPRcomplex}
Let $n\ge1$, and let $\Lambda = P \oplus Q$ be any direct summand decomposition of the $\Lambda$-module $\Lambda$ such that
\begin{enumerate}
\item $\Hom_{\Lambda}(Q,P) = 0$, and
\item $\Ext^i_\Lambda(\nu Q,P)=0$ for any $0 < i \neq n$,
\end{enumerate}
where $Q$ is such that $\Lambda=P\oplus Q$. Clearly (1) implies $\Hom_{\Lambda}(\nu Q, P) = 0$, so (2) also holds for $i = 0$.

We call
\[T:=(\nu_n^- P)\oplus Q\]
the \emph{$n$-APR tilting complex} associated with $P$.

By abuse of notation (see Remark~\ref{remark.in_mod=in_der} below for a justification), we also call $\End_{\mathcal{D}_{\Lambda}}(T)^{\op}$ an \emph{$n$-APR tilt} of $\Lambda$.
\end{definition}

\begin{remark} \label{remark.in_mod=in_der}
\begin{enumerate}
\item Any $n$-APR tilting module $(\tau_n^- P) \oplus Q$ in the sense of Definition~\ref{def.nAPR} is an $n$-APR tilting comples, since in that case $\nu_n^- P = \tau_n^- P$ holds (under the assumption that $\Lambda$ has finite global dimension).
\item Weak $n$-APR tilting modules are in general not $n$-APR tilting complexes.
\end{enumerate}
\end{remark}

\begin{remark}
In the setup of Definition~\ref{def.nAPRcomplex} there is no big difference between tilting and cotilting: the $n$-APR tilting complex $(\nu_n^- P) \oplus Q$ associated to $P$, and the $n$-APR cotilting complex $\nu P \oplus \nu_n \nu Q$ associated to (the injective module) $\nu Q$ are mapped to each other by the autoequivalence $\nu_n \nu$ of the derived category.
\end{remark}

In the rest of this subsection we will show that $n$-APR tilting complexes are indeed tilting complexes, and that they preserve $\gld \leq n$.

\begin{theorem} \label{thm.APR_comp_is_tilting}
Let $\Lambda$ be an algebra of finite global dimension, and $T$ an $n$-APR tilting complex (as in Definition~\ref{def.nAPRcomplex}). Then $T$ is a tilting complex in $\mathcal{D}_{\Lambda}$.
\end{theorem}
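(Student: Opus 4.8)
The plan is the following. Since $\gld\Lambda<\infty$ we have $\mathcal{D}_{\Lambda}=\Kb(\proj\Lambda)=\operatorname{thick}(\Lambda)$, so the bounded complex $T$ is automatically a perfect complex; it therefore only remains to check the two defining properties of a tilting complex, namely (a) $\Hom_{\mathcal{D}_{\Lambda}}(T,T[i])=0$ for all $i\neq0$, and (b) $\operatorname{thick}(T)=\mathcal{D}_{\Lambda}$. For (b) it suffices, since $\operatorname{thick}(\Lambda)=\mathcal{D}_{\Lambda}$ and $Q$ is a direct summand of $T$, to show that $P\in\operatorname{thick}(T)$.

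I would prove (a) by decomposing $T=(\nu_n^-P)\oplus Q$, writing $\nu_n^-=\nu^{-1}[n]$, and using that $\nu$ is an autoequivalence of $\mathcal{D}_{\Lambda}$ commuting with the shift. The blocks $\Hom_{\mathcal{D}_{\Lambda}}(Q,Q[i])$ and $\Hom_{\mathcal{D}_{\Lambda}}(\nu_n^-P,\nu_n^-P[i])\cong\Hom_{\mathcal{D}_{\Lambda}}(P,P[i])$ vanish for $i\neq0$ because $P$ and $Q$ are projective modules. For the mixed blocks one computes $\Hom_{\mathcal{D}_{\Lambda}}(Q,\nu_n^-P[i])\cong\Hom_{\mathcal{D}_{\Lambda}}(\nu Q,P[n+i])=\Ext^{n+i}_{\Lambda}(\nu Q,P)$, which vanishes for $i\neq0$ by condition~(2) of Definition~\ref{def.nAPRcomplex} when $n+i>0$, by $\Hom_{\Lambda}(\nu Q,P)=0$ (the consequence of condition~(1) recorded in Definition~\ref{def.nAPRcomplex}) when $n+i=0$, and trivially when $n+i<0$; and $\Hom_{\mathcal{D}_{\Lambda}}(\nu_n^-P,Q[i])\cong\Hom_{\mathcal{D}_{\Lambda}}(P,\nu Q[i-n])=\Ext^{i-n}_{\Lambda}(P,\nu Q)$, which vanishes for $i\neq n$ because $\nu Q$ is an injective module, while for $i=n$ it equals $\Hom_{\Lambda}(P,\nu Q)\cong D\Hom_{\Lambda}(Q,P)=0$ by condition~(1) (so this last block in fact vanishes for all $i$).

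For (b) I would produce an exchange triangle generalizing the exact sequence~\eqref{exchange}. Choose a minimal injective coresolution $0\to P\to I^0\to\cdots\to I^m\to0$ (finite, as $\id P\leq\gld\Lambda$); it identifies $P$ in $\mathcal{D}_{\Lambda}$ with the complex $(I^0\to\cdots\to I^m)$, and applying the autoequivalence $\nu^{-1}$ identifies $\nu_n^-P=\nu^{-1}P[n]$ with the complex of projective modules $(\nu^{-1}I^0\to\cdots\to\nu^{-1}I^m)[n]$. Two facts from the hypotheses drive the argument: condition~(1) forces the composition factors of $P$, hence those of $\Soc P$, to avoid the tops of the indecomposable summands of $Q$, so that $I^0$ lies in $\add(\nu P)$ and $\nu^{-1}I^0$ lies in $\add P$; and the computation from (a) shows $\Hom_{\Lambda}(Q,H^i(\nu^{-1}P))\cong\Ext^i_{\Lambda}(\nu Q,P)=0$ for $i\neq n$, i.e.\ the cohomology of $\nu^{-1}P$ outside degree $n$ lies in the Serre subcategory $\mathcal{S}=\{M\in\mod\Lambda\mid\Hom_{\Lambda}(Q,M)=0\}$. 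Peeling the complex $\nu^{-1}P$ apart along a truncation adapted to these two facts should yield a triangle $P\to B\to\nu_n^-P\to P[1]$ with $B\in\operatorname{thick}(\add Q)$; since $P$ then fits into the triangle $\nu_n^-P[-1]\to P\to B\to\nu_n^-P$, this gives $P\in\operatorname{thick}(\{B,\nu_n^-P\})\subseteq\operatorname{thick}(T)$ and finishes the proof.

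The main obstacle is the construction of this exchange triangle in the generality of Definition~\ref{def.nAPRcomplex}. In the module situation of Theorem~\ref{APR} it is immediate: there $P$ is simple projective, the higher terms $I^j$ ($j\geq1$) contain no copy of $\nu P$ because $\Ext^{\geq1}_{\Lambda}(P,P)=0$, so $\nu^{-1}I^j\in\add Q$ for $j\geq1$, and (after splicing and shifting) \eqref{exchange} is already the desired triangle. Here $P$ is neither assumed simple nor of injective dimension $n$, the complex $\nu^{-1}P$ genuinely has cohomology in several degrees, and the $I^j$ with $j\geq1$ may involve $\add(\nu P)$; so the extraction of the ``$P$-part'' must be organised as an induction on the truncation level, using at each step that the cohomology entering fresh lies in $\mathcal{S}$. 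Alternatively one can sidestep this last point entirely: once (a) shows that $T$ is a partial tilting complex, and since $T$ has exactly as many non-isomorphic indecomposable direct summands as $\Lambda$ (because $\nu_n^-$ is an autoequivalence and $P\oplus Q=\Lambda$), the Bongartz-type completion of partial tilting complexes forces $T$ to be a tilting complex.
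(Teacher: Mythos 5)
Your treatment of the vanishing of self-extensions is correct and is essentially the paper's argument: the same block decomposition of $\Hom_{\mathcal{D}_{\Lambda}}(T,T[i])$, the same two hypotheses of Definition~\ref{def.nAPRcomplex}, with only a cosmetic difference in how Serre duality is invoked.

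The gap, which you in fact flag yourself, is in the generation step, and both routes you sketch fall short. The exchange triangle $P\to B\to\nu_n^- P\to P[1]$ with $B\in\operatorname{thick}(\add Q)$ need not exist in the generality of Definition~\ref{def.nAPRcomplex}: the definition allows the degenerate decomposition $P=\Lambda$, $Q=0$, where such a triangle would force $\nu_n^-\Lambda\iso\Lambda[1]$, i.e.\ $\nu\Lambda\iso\Lambda[n-1]$, which fails for almost every $\Lambda$ even though $T=\nu_n^-\Lambda$ is plainly a tilting complex. So the proposed ``induction on truncation level'' would have to be an argument of a rather different nature, and you do not supply it. The Bongartz fallback is also unavailable: Bongartz completion fails for partial tilting complexes in general (Rickard--Schofield already gave counterexamples for partial $n$-tilting modules with $n\ge 2$), the silting-theoretic analogue of Aihara--Iyama is a theorem only for two-term complexes, and a presilting object with $|\Lambda|$ pairwise non-isomorphic indecomposable summands is not, as far as is known, automatically silting. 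The paper avoids constructing any exchange triangle and argues by orthogonality instead: given $X\in\mathcal{D}_\Lambda$ with $\Hom_{\mathcal{D}_\Lambda}(T[i],X)=0$ for all $i$, represent $X$ by a minimal bounded complex of projectives with $\Im d^i\subseteq\Rad X^{i+1}$; a $Q$-summand $Q'$ of the highest-degree component $X^{i_M}\notin\add P$ satisfies $Q'\subseteq\Ker d^{i_M}$ (by $\Hom_\Lambda(Q,P)=0$) and $Q'\not\subseteq\Im d^{i_M-1}$ (by minimality), giving $\Hom(Q',X[i_M])\ne 0$, a contradiction, so $X\in\Kb(\add P)$; and then the lowest nonzero component $X^{i_N}$ gives $0\neq\Hom(X[i_N],P)\iso D\Hom(\nu_n^- P,X[n+i_N])$, again a contradiction. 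As stated, this direct orthogonality computation is shorter and, unlike the exchange triangle, fully general.
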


\begin{remark}
More generally, in Theorem~\ref{thm.APR_comp_is_tilting} it is possible to replace the assumption that $\Lambda$ has finite global dimension by the weaker assumption that $P$ has finite injective dimension. (In this case $\nu_n^- P = \mathbf{R} \Hom_{\Lambda}(D \Lambda, P)$ is still in $\mathcal{K}^{\rm b}(\proj \Lambda)$, the homotopy category of complexes of finitely generated projective $\Lambda$-modules.)
\end{remark}

\begin{proof}[Proof of Theorem~\ref{thm.APR_comp_is_tilting}]
We have to check that $T$ has no self-extensions, and that $T$ generates the derived category $\mathcal{D}_{\Lambda}$. We first check that $T$ has no self-extensions. Clearly for all $i \neq 0$ we have $\Hom_{\mathcal{D}_{\Lambda}}(\nu_n^- P, \nu_n^- P[i]) = 0$ and $\Hom_{\mathcal{D}_{\Lambda}}(Q,Q[i]) = 0$. Moreover
\begin{align*}
\Hom_{\mathcal{D}_{\Lambda}}(\nu_n^- P, Q[i]) & = \Hom_{\mathcal{D}_{\Lambda}}(\nu^- P, Q[i-n]) = D \Hom_{\mathcal{D}_{\Lambda}}(Q[i-n], P) \\
& = 0 \quad \forall i \in \mathbb{Z}.
\end{align*}
Finally $\Hom_{\mathcal{D}_{\Lambda}}(Q, \nu_n^- P[i]) = \Ext^{n+i}_{\Lambda}(\nu Q, P)$, which vanishes for $i \neq 0$ by assumption (2) of the definition.

Now we prove that $T$ generates $\mathcal{D}_{\Lambda}$. Let $X \in \mathcal{D}_{\Lambda}$ such that $\Hom_{\mathcal{D}_{\Lambda}}(\nu^- P[i], X) = 0$ and $\Hom_{\mathcal{D}_{\Lambda}}(Q[i], X) = 0$ for all $i$. By the latter property we see that the homology of $X$ does not contain any composition factors in $\add(\top Q)$. We can assume that $X$ is a complex
\[ \cdots \tol[30]{\scriptstyle d^{i-1}} X^i \tol[30]{\scriptstyle d^i} X^{i+1} \tol[30]{\scriptstyle d^{i+1}} \cdots \]
in $\Kb(\proj \Lambda)$, such that $\Im d^i \subseteq \Rad X^{i+1}$ for any $i$. 

Assume there is an $i$ such that $X^i \not\in \add P$. Let $i_M$ be the maximal $i$ with this property. Let $Q' \in \add Q$ be a non-zero summand of $X^{i_M}$. Since $X^{i_M+1} \in \add P$ by our choice of $i_M$, we have $\Hom_{\Lambda}(Q', X^{i_M+1}) \in \add \Hom_{\Lambda}(Q,P) = 0$ (see Definition~\ref{def.nAPRcomplex}(1)). Hence we have $Q' \subseteq \Ker d^{i_M}$. Since $\Im d^{i_M-1} \subseteq \Rad X^{i_M}$, we have $Q' \not\subseteq \Im d^{i_M-1}$, and hence $\Hom_{\mathcal{D}_{\Lambda}}(Q', X[i_M]) \neq 0$, a contradiction to our choice of $X$. Consequently, we have $X \in \Kb(\add P)$.

Now we assume $X \neq 0$. Let $i_N$ be the minimal $i$ such that $X^i \neq 0$. Since $X^{i_N} \in \add P$ we have $\Hom_{\mathcal{D}_{\Lambda}}(X[i_N], P) \neq 0$. This is a contradiction to our choice of $X$, since $\Hom_{\mathcal{D}_{\Lambda}}(X[i_N], P) = D \Hom_{\mathcal{D}_{\Lambda}}(\nu_n^- P, X[n+i_N])$.
\end{proof}

The following result generalizes Proposition~\ref{global dimension} to the setup af $n$-APR tilting complexes.

\begin{proposition} \label{proposition.gld_preserved}
If $\gld \Lambda \leq n$ and $T$ is an $n$-APR tilting complex in $\mathcal{D}_{\Lambda}$ then, for $\Gamma := \End_{\mathcal{D}_{\Lambda}}(T)^{\op}$ we have $\gld \Gamma \leq n$.
\end{proposition}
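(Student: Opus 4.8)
The plan is to mirror the proof of Proposition~\ref{global dimension} inside the derived category, using the derived tilting equivalence. By Theorem~\ref{thm.APR_comp_is_tilting}, $T = (\nu_n^- P) \oplus Q$ is a tilting complex, so $\mathbf F := \RHom_\Lambda(T, -) \colon \mathcal D_\Lambda \to \mathcal D_\Gamma$ is a triangle equivalence with $\mathbf F(T) = \Gamma$; in particular it identifies the indecomposable summands $X$ of $T$ with the indecomposable projective $\Gamma$-modules $P_X := \mathbf F(X)$ (concentrated in degree $0$). Since $\gld \Gamma = \sup \{ \pd_\Gamma(\top P_X) \}$, the supremum being over indecomposable summands $X$ of $T$, it suffices to bound every $\pd_\Gamma(\top P_X)$ by $n$. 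As in Proposition~\ref{global dimension} I would treat the cases $X \in \add Q$ and $X$ a summand of $\nu_n^- P$ separately.

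The computational input is a derived version of Proposition~\ref{APR2} and Lemma~\ref{F_nP}. Since $\gld \Lambda < \infty$, the Nakayama functor $\nu$ is a Serre functor and hence an autoequivalence of $\mathcal D_\Lambda$, so for any projective $\Lambda$-module $L$ one computes
\[ \Hom_{\mathcal D_\Lambda}(\nu_n^- P, L[i]) \;\cong\; \Hom_{\mathcal D_\Lambda}(P, \nu L[i-n]) \;=\; \Ext^{i-n}_\Lambda(P, \nu L), \]
which vanishes for $i \neq n$ (negative $\Ext$ when $i < n$, and $\nu L$ injective when $i > n$) and equals $\Hom_\Lambda(P, \nu L)$ when $i = n$. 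Together with $\RHom_\Lambda(Q, L) = \Hom_\Lambda(Q, L)$ in degree $0$ and $\Hom_\Lambda(Q, P) = 0$ (Definition~\ref{def.nAPRcomplex}(1)) this gives $\mathbf F(Q) \in \proj \Gamma$, $\mathbf F(P) \cong \Hom_\Lambda(P, \nu P)[-n]$, and $\RHom_\Lambda(\nu_n^- P, \Lambda) \cong \Hom_\Lambda(P, D\Lambda)[-n]$; in particular $\Hom_{\mathcal D_\Lambda}(\nu_n^- P, \Lambda[i]) = 0$ for all $i < n$, the exact analogue of Proposition~\ref{APR2}. Moreover, because derived equivalences intertwine Nakayama functors, $\mathbf F(P_\ell)[n] \cong \nu_\Gamma(\mathbf F(\nu_n^- P_\ell))$ is the indecomposable injective $\Gamma$-module attached to the summand $\nu_n^- P_\ell$, for every indecomposable summand $P_\ell$ of $P$ --- this is the analogue of Lemma~\ref{F_nP}(2).

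For $X \in \add Q$: take a minimal projective resolution $0 \to \Lambda_n \to \cdots \to \Lambda_1 \to X \to \top X \to 0$ over $\Lambda$ (available since $\gld \Lambda \leq n$), decompose each $\Lambda_i$ into indecomposable summands of $P$ and of $Q$, and apply $\mathbf F$. By the preceding paragraph the $Q$-summands of $\Lambda_i$ contribute projective $\Gamma$-modules placed in degree $0$ while the $P$-summands contribute injective $\Gamma$-modules placed in degree $n$, and the vanishing $\Hom_{\mathcal D_\Lambda}(\nu_n^- P, \Lambda[i]) = 0$ for $i < n$ lets one argue, by exactly the dimension-shift bookkeeping used in part~(i) of Proposition~\ref{global dimension}, that after discarding the degree-$n$ injective pieces what is left is a genuine projective $\Gamma$-resolution of the simple module $\top P_X$ of length $\leq n$. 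For $X$ a summand of $\nu_n^- P$: the $\nu$-translate of the minimal injective coresolution of $P$ exhibits $\nu_n^- P$ inside a triangle relating it to a projective $\Lambda$-module and to an object of $\Kb(\add Q)$ (this is the counterpart of the exchange sequence~\eqref{exchange}, the projective being $P$ itself when $P$ is simple projective); applying $\mathbf F$ and using that $\End_{\mathcal D_\Lambda}(\nu_n^- P) \cong \End_\Lambda(P)$, so that any non-zero endomorphism of the relevant term is invertible, one identifies the cokernel of the transported map with $\top P_X$ and obtains $\pd_\Gamma(\top P_X) = n$, as in part~(ii).

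The step I expect to be the main obstacle is precisely this last bookkeeping. Unlike in Proposition~\ref{global dimension}, $\nu_n^- P$ is in general a genuine complex (spread over cohomological degrees $-n, \dots, 0$) and correspondingly $\mathbf F(P)$ is no longer $0$ but an injective $\Gamma$-module sitting in degree $n$, so the delicate point is to check that applying $\mathbf F$ to these $\Lambda$-projective resolutions really does produce honest \emph{projective} $\Gamma$-resolutions of the simple $\Gamma$-modules, with no leftover homology, and to handle a $P$ that is not simple projective. A cleaner route that bypasses this bookkeeping is to transport the standard $t$-structure along $\mathbf F$: $\gld \Gamma \leq n$ is equivalent to $\Hom_{\mathcal D_\Lambda}(X, Y[i]) = 0$ for all $i > n$ and all $X, Y$ in the heart $\mathcal A := \mathbf F^{-1}(\mod \Gamma) = \{ Z \in \mathcal D_\Lambda \mid \Hom_{\mathcal D_\Lambda}(T, Z[j]) = 0 \ \forall j \neq 0 \}$; one checks that $\mathcal A$ is carved out by the two conditions $\Hom_\Lambda(Q, H^i Z) = 0$ for $i \neq 0$ and $\Hom_\Lambda(P, H^j(\nu Z)) = 0$ for $j \neq -n$, and the required vanishing then follows from a short cohomological amplitude estimate using $\gld \Lambda \leq n$.
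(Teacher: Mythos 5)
Your first approach (mirroring Proposition~\ref{global dimension} via $\sup_X \pd_\Gamma(\top P_X)$) diverges from the paper's proof, and as you yourself observe it leaves real work unfinished: in the derived setting $\nu_n^- P$ can be a genuine complex, $P$ need not be simple, and turning $\mathbf F$ applied to $\Lambda$-projective resolutions into honest $\Gamma$-projective resolutions requires a careful argument that is not supplied. Your proposed ``cleaner route'' (transporting the $t$-structure and controlling $\Hom_{\mathcal D_\Lambda}(X, Y[i])$ for all $X, Y$ in the heart $\mathcal A$) is mathematically in the right direction, but it still asks for more than is needed: you would have to characterize the heart via the cohomology conditions you describe and then carry out an amplitude estimate over arbitrary heart objects.

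The paper's proof sidesteps all of this by using one further reduction that you are missing. By Rickard's theorem $\gld\Gamma$ is finite, and for a finite-dimensional algebra of finite global dimension one has the identity $\gld\Gamma = \max\{\,i \mid \Ext^i_\Gamma(\nu\Gamma,\Gamma) \neq 0\,\}$ (the global dimension is the projective dimension of $D\Gamma$). Transported along $\mathbf F$, which intertwines the Nakayama functors and sends $T$ to $\Gamma$, this becomes $\gld\Gamma = \max\{\,i \mid \Hom_{\mathcal D_\Lambda}(\nu T, T[i]) \neq 0\,\}$. Thus it suffices to verify $\Hom_{\mathcal D_\Lambda}(\nu T, T[i]) = 0$ for $i > n$, which is a direct four-case computation on the pairs of summand types $(\nu_n^- P, \nu_n^- P)$, $(\nu_n^- P, Q)$, $(Q, \nu_n^- P)$, $(Q, Q)$; the only slightly delicate case is $\Hom_{\mathcal D_\Lambda}(\nu Q, \nu_n^- P[i]) = \Hom_{\mathcal D_\Lambda}(\nu^2 Q, P[n+i])$, where one uses $\gld\Lambda \leq n$ to bound the cohomological amplitude of $\nu^2 Q$. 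This replaces your study of the whole heart $\mathcal A$ by a single $\Hom$-space calculation, and it eliminates the bookkeeping in your case analysis entirely. Your auxiliary computations (such as $\Hom_{\mathcal D_\Lambda}(\nu_n^- P, L[i]) = 0$ for $i < n$, and $\mathbf F(P)$ being an injective $\Gamma$-module placed in degree $n$) are correct and would be useful for other purposes, but they are not what this proof needs.
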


\begin{proof}
By \cite{Ric_Morita} the algebra $\Gamma$ has finite global dimension, and hence
\begin{align*}
\gld \Gamma & = \max \{i \mid \Ext^i_{\Gamma}(\nu \Gamma, \Gamma) \neq 0 \} \\
& = \max \{i \mid \Hom_{\mathcal{D}_{\Gamma}}(\nu \Gamma, \Gamma[i]) \neq 0 \} \\
& = \max \{i \mid \Hom_{\mathcal{D}_{\Lambda}}(\nu T, T[i]) \neq 0 \}.
\end{align*}
Clearly $\gld \Lambda \leq n$ implies that for $i > n$ we have $\Hom_{\mathcal{D}_{\Lambda}}(\nu \nu_n^- P, \nu_n^- P[i]) = \Ext^i_{\Lambda} (\nu P, P) = 0$, and $\Hom_{\mathcal{D}_{\Lambda}}(\nu Q, Q[i]) = \Ext^i_{\Lambda}(\nu Q, Q) = 0$. We have
\[ \Hom_{\mathcal{D}_{\Lambda}}(\nu \nu_n^- P, Q[i]) = \Hom_{\mathcal{D}_{\Lambda}}(P, Q[i-n]), \]
which is non-zero only for $i = n$. Finally
\[ \Hom_{\mathcal{D}_{\Lambda}}(\nu Q, \nu_n^- P[i]) = \Hom_{\mathcal{D}_{\Lambda}}(\nu^2 Q, P[n+i]). \]
Since $\nu Q \in \mod \Lambda$ and $\gld \Lambda \leq n$ it follows that $\nu^2 Q$ has non-zero homology only in degrees $-n, \ldots, 0$. Hence the above $\Hom$-space vanishes for $i > n$, since $\gld \Lambda \leq n$.

Summing up we obtain $\Hom_{\mathcal{D}_{\Lambda}}(\nu T, T[i]) = 0$ for $i > n$, which implies the claim of the theorem by the remark at the beginning of the proof.
\end{proof}

Recall the definition of the subcategory
\[ \mathcal{U}^n_{\Lambda} = \add \{ \nu_n^i \Lambda \mid i \in \Z \} \subseteq \mathcal{D}_{\Lambda} \]
given in Section~\ref{sect.backgr.derived}.

\begin{proposition} \label{prop.preserves_U}
Let $\Lambda$ be $n$-representation-finite, and $T$ an $n$-APR tilting complex in $\mathcal{D}_{\Lambda}$. Let $\Gamma := \End_{\mathcal{D}_{\Lambda}}(T)^{\op}$. Then the derived equivalence $\mathbf{R} \Hom_{\Lambda}(T, -) \colon \mathcal{D}_{\Lambda} \to \mathcal{D}_{\Gamma}$ (see \cite{Keller_handbook_tilting}) induces an equivalence $\mathcal{U}^n_{\Lambda} \to \mathcal{U}^n_{\Gamma}$.
\end{proposition}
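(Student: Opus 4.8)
The plan is to exploit three facts: that $\mathbf{F} := \mathbf{R}\Hom_{\Lambda}(T,-) \colon \mathcal{D}_{\Lambda} \to \mathcal{D}_{\Gamma}$ is a triangle equivalence which commutes with the Nakayama functors, that $\mathbf{F}$ sends the tilting complex $T$ to the free $\Gamma$-module, and that — by the very shape of an $n$-APR tilting complex — the subcategory $\mathcal{U}^n_{\Lambda}$ is nothing but the additive closure of the $\nu_n$-orbit of $T$. Observe first that $\gld \Gamma \le n$ by Proposition~\ref{proposition.gld_preserved}, so that $\nu$, $\nu_n$ and $\mathcal{U}^n_{\Gamma}$ make sense on the $\Gamma$-side.

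First I would record that $\mathbf{F}$ intertwines the Nakayama functors: $\mathbf{F} \circ \nu \cong \nu \circ \mathbf{F}$, hence (composing with $[-n]$) $\mathbf{F} \circ \nu_n \cong \nu_n \circ \mathbf{F}$, and therefore $\mathbf{F} \circ \nu_n^i \cong \nu_n^i \circ \mathbf{F}$ for all $i \in \Z$. The cleanest justification is that, since $\gld \Lambda < \infty$ and $\gld \Gamma < \infty$, the Nakayama functor is on each side the Serre functor of the bounded derived category (as noted in Section~\ref{sect.backgr.derived}, $\nu \cong D\mathbf{R}\Hom_{\Lambda}(-,\Lambda)$), and any triangle equivalence commutes with Serre functors up to natural isomorphism; alternatively one verifies the bimodule isomorphism $D\Gamma \otimes^{\mathbf{L}}_{\Gamma} \mathbf{R}\Hom_{\Lambda}(T,\Lambda) \cong \mathbf{R}\Hom_{\Lambda}(T,\Lambda) \otimes^{\mathbf{L}}_{\Lambda} D\Lambda$ for the dual two-sided tilting complex.

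Next I would identify $\mathcal{U}^n_{\Lambda}$ with $\add\{\nu_n^i T \mid i \in \Z\}$. Write $\Lambda = P \oplus Q$ as in Definition~\ref{def.nAPRcomplex}, so $T = \nu_n^- P \oplus Q$. By construction $\mathcal{U}^n_{\Lambda} = \add\{\nu_n^i \Lambda \mid i \in \Z\}$ is stable under $\nu_n^{\pm 1}$; it contains $Q$ (a summand of $\Lambda$) and $\nu_n^- P$ (since $P$ is a summand of $\Lambda$), hence it contains $T$ together with its whole $\nu_n$-orbit, giving $\add\{\nu_n^i T \mid i\} \subseteq \mathcal{U}^n_{\Lambda}$. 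Conversely $Q \in \add T$ and $P = \nu_n(\nu_n^- P) \in \add\{\nu_n^i T \mid i\}$, so $\Lambda = P \oplus Q \in \add\{\nu_n^i T \mid i\}$, and applying the powers of $\nu_n$ yields $\mathcal{U}^n_{\Lambda} \subseteq \add\{\nu_n^i T \mid i\}$. Finally, since $T$ is a tilting complex (Theorem~\ref{thm.APR_comp_is_tilting}), $\mathbf{F}(T) = \mathbf{R}\Hom_{\Lambda}(T,T) = \End_{\mathcal{D}_{\Lambda}}(T)$ is concentrated in degree $0$ and, as a left $\Gamma$-module, is the regular module $\Gamma$; combining this with the previous two points and the fact that the equivalence $\mathbf{F}$ satisfies $\mathbf{F}(\add \mathcal{S}) = \add \mathbf{F}(\mathcal{S})$, we get
\[ \mathbf{F}(\mathcal{U}^n_{\Lambda}) = \add\{\nu_n^i\, \mathbf{F}(T) \mid i \in \Z\} = \add\{\nu_n^i \Gamma \mid i \in \Z\} = \mathcal{U}^n_{\Gamma}, \]
so that the fully faithful functor $\mathbf{F}$ restricts to an equivalence $\mathcal{U}^n_{\Lambda} \to \mathcal{U}^n_{\Gamma}$.

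The only step that is not pure bookkeeping is the commutation $\mathbf{F} \circ \nu \cong \nu \circ \mathbf{F}$: via Serre duality it is immediate, but if one insists on staying elementary it requires the (standard, mildly technical) identification of the dual two-sided tilting complex and of how $\nu$ acts on it. Everything else is routine manipulation of additive closures. I also note that $n$-representation-finiteness of $\Lambda$ is stronger than what the argument uses — only $\gld \Lambda \le n$ enters — although this is the case of interest.
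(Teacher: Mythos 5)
Your argument is correct and follows exactly the same route as the paper: the paper's proof is the one-line observation that $\mathbf{R}\Hom_{\Lambda}(T,-)$ commutes with $\nu_n$ and that $T\in\mathcal{U}^n_{\Lambda}$, and your write-up simply expands this, making explicit that $\mathcal{U}^n_{\Lambda}=\add\{\nu_n^iT\mid i\in\Z\}$ and that $\mathbf{F}(T)\cong\Gamma$. The extra detail (Serre-functor commutation, the two-sided containment for the additive orbit) is exactly what is implicit in the paper's ``this is clear''.
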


\begin{proof}
This is clear since the derived equivalence $\mathbf{R} \Hom_{\Lambda}(T, -)$ commutes with $\nu_n$ and $T \in \mathcal{U}_{\Lambda}^n$.
\end{proof}

An application of Proposition~\ref{prop.preserves_U} we will use in Subsection~\ref{subsect.admsets} is the following.

\begin{proposition}
The $(n+1)$-preprojective algebra (see Definition~\ref{def.preproj}) is invariant under $n$-APR tilts.
\end{proposition}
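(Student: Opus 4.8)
The plan is to reduce everything to the description of the preprojective algebra as an endomorphism ring in the orbit category, given by Proposition~\ref{proposition.tensoralg}, and to exploit that the $n$-APR derived equivalence commutes with $\nu_n$ — exactly the fact already used in the proof of Proposition~\ref{prop.preserves_U}. Write $T = (\nu_n^- P)\oplus Q$ for the $n$-APR tilting complex, $\Gamma := \End_{\mathcal{D}_{\Lambda}}(T)^{\op}$, and $\mathbf{F} := \mathbf{R}\Hom_{\Lambda}(T,-)\colon \mathcal{D}_{\Lambda}\to\mathcal{D}_{\Gamma}$ for the induced derived equivalence. By Proposition~\ref{proposition.gld_preserved} we have $\gld\Gamma\le n$, so $\widehat{\Gamma}$ is defined, and the three ingredients are: (a) $\widehat{\Lambda}\iso\End_{\mathcal{D}_{\Lambda}/\nu_n}(\Lambda)$ and $\widehat{\Gamma}\iso\End_{\mathcal{D}_{\Gamma}/\nu_n}(\Gamma)$; (b) $\mathbf{F}$ commutes with $\nu_n$, hence descends to an equivalence of orbit categories $\overline{\mathbf{F}}\colon \mathcal{D}_{\Lambda}/\nu_n\to\mathcal{D}_{\Gamma}/\nu_n$ which agrees with $\mathbf{F}$ on objects (equivalently, one restricts Proposition~\ref{prop.preserves_U} and passes to orbit categories inside $\mathcal{U}$); (c) $\mathbf{F}(T)\iso\Gamma$ in $\mathcal{D}_{\Gamma}$ and $T\iso\Lambda$ in the orbit category $\mathcal{D}_{\Lambda}/\nu_n$.

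Carrying this out: first, by Proposition~\ref{proposition.tensoralg} it suffices to show $\End_{\mathcal{D}_{\Gamma}/\nu_n}(\Gamma)\iso\End_{\mathcal{D}_{\Lambda}/\nu_n}(\Lambda)$. Second, since $\mathbf{F}$ commutes with the Serre functors $\nu$ of $\mathcal{D}_{\Lambda}$ and $\mathcal{D}_{\Gamma}$ (both have finite global dimension) and trivially with $[-n]$, it commutes with $\nu_n=\nu[-n]$, and therefore induces $\overline{\mathbf{F}}$ as above. Third, because $T$ is a tilting complex, $\mathbf{R}\Hom_{\Lambda}(T,T)$ is concentrated in degree $0$ and equals $\Gamma$ as a $\Gamma$-module, so $\mathbf{F}(T)\iso\Gamma$ in $\mathcal{D}_{\Gamma}$, hence also in $\mathcal{D}_{\Gamma}/\nu_n$. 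This gives
\[ \widehat{\Gamma}\iso\End_{\mathcal{D}_{\Gamma}/\nu_n}(\Gamma)\iso\End_{\mathcal{D}_{\Gamma}/\nu_n}(\mathbf{F}T)\iso\End_{\mathcal{D}_{\Lambda}/\nu_n}(T). \]
Fourth, the projection functor $\pi\colon\mathcal{D}_{\Lambda}\to\mathcal{D}_{\Lambda}/\nu_n$ comes equipped with a natural isomorphism $\pi\iso\pi\nu_n$, so $\pi(\nu_n^- P)\iso\pi(P)$ and thus $\pi(T)=\pi(\nu_n^- P)\oplus\pi(Q)\iso\pi(P)\oplus\pi(Q)=\pi(\Lambda)$. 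Hence $\End_{\mathcal{D}_{\Lambda}/\nu_n}(T)\iso\End_{\mathcal{D}_{\Lambda}/\nu_n}(\Lambda)\iso\widehat{\Lambda}$ by Proposition~\ref{proposition.tensoralg} again, and combining with the previous display gives $\widehat{\Gamma}\iso\widehat{\Lambda}$.

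The computation is essentially formal, so I do not anticipate a genuine obstacle. The one point that deserves care is the assertion that $\mathbf{F}$ commutes with $\nu_n$ — equivalently, that it restricts to an equivalence $\mathcal{U}^n_{\Lambda}\to\mathcal{U}^n_{\Gamma}$ compatibly with $\nu_n$ — but this is precisely what underlies Proposition~\ref{prop.preserves_U}, so it is available. A secondary, harmless point is whether one works with the orbit category $\mathcal{D}/\nu_n$ or its triangulated hull $\mathcal{C}^n$; since Proposition~\ref{proposition.tensoralg} supplies both descriptions and $\mathbf{F}$ extends to the triangulated hull, the argument is insensitive to this choice.
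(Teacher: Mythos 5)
Your proposal is correct and follows essentially the same route as the paper's own proof: both rest on Proposition~\ref{proposition.tensoralg} to identify $\widehat{\Lambda}$ and $\widehat{\Gamma}$ with endomorphism rings in the orbit category, on the isomorphism $T\cong\Lambda$ after passing to $\mathcal{D}_{\Lambda}/\nu_n$, and on the compatibility of the $n$-APR derived equivalence with $\nu_n$ (Proposition~\ref{prop.preserves_U}). You have simply unpacked the one-line chain of isomorphisms in the paper into its four constituent steps; the only cosmetic difference is that you phrase the argument in $\mathcal{D}/\nu_n$ rather than in $\mathcal{U}/\nu_n$, which makes no difference since $\Lambda$, $T$, and $\Gamma$ all lie in the respective $\mathcal{U}$'s.
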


\begin{proof}
By Propositions~\ref{prop.preserves_U} and \ref{proposition.tensoralg} we have
\[ \widehat{\Lambda} = \End_{\mathcal{U}^n_{\Lambda} / \nu_n}(\Lambda) = \End_{\mathcal{U}^n_{\Lambda} / \nu_n}(T) = \End_{\mathcal{U}^n_{\Gamma} / \nu_n}(\Gamma) = \widehat{\Gamma}. \qedhere \]
\end{proof}

\section{$n$-APR tilting for $n$-representation-finite algebras} \label{section.APR_repfin}

In this section we study the effect of $n$-APR tilts on $n$-representation-finite algebras.

The first main result is that $n$-APR tilting preserves $n$-representation-finiteness (Theorems~\ref{main} and \ref{theorem.APRpres_repfin}). We give two independant proofs for this fact. In Subsection~\ref{subsec.pres_mod} we study $n$-APR tilting modules for $n$-representation-finite algebras. We give an explicit description of a cluster tilting object in the new module category in terms of the cluster tilting object of the original algebra (Theorem~\ref{main}). In Subsection~\ref{subsec.APR_der_preserves} we give an independant proof (which is less explicit and relies heavily on a result from \cite{IO2}) that the more general procedure of tilting in $n$-APR tilting complexes also preserves $n$-representation-finiteness.

In Subsections~\ref{subsect.slices} and \ref{subsect.admsets} we introduce the notions of slices and admissible sets, which classify, for a given $n$-representation-finite algebra, all iterated $n$-APR tilts (see Theorem~\ref{theorem.adm=iteratedAPR}).

Throughout this Section, let $\Lambda$ be an $n$-representation-finite algebra. For simplicity of notation we assume $\Lambda$ to be basic.

\subsection{$n$-APR tilting modules preserve $n$-representation-finiteness} \label{subsec.pres_mod}

The following proposition shows that the setup of $n$-representation-finite algebras is particularly well-suited for looking at $n$-APR tilts.

\begin{observation} \label{obs.alwaysAPR}
\begin{enumerate}
\item Any simple projective and non-injective $\Lambda$-module $P$ admits an $n$-APR tilting $\Lambda$-module.
\item Any simple injective and non-projective $\Lambda$-module $I$ admits an $n$-APR cotilting $\Lambda$-module.
\end{enumerate}
\end{observation}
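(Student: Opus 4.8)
The plan is to check, for a simple projective non-injective $\Lambda$-module $P$, that $P$ satisfies the two requirements in Definition~\ref{def.nAPR} --- namely $\Ext^i_\Lambda(D\Lambda,P)=0$ for $0\le i<n$ and $\id P=n$ --- so that, writing $\Lambda=P\oplus Q$, the module $T:=(\tau_n^-P)\oplus Q$ is an $n$-APR tilting module. Part (2) will then follow from part (1) by passing to $\Lambda^{\op}$.

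For the vanishing I would treat degree $0$ first, where no hypothesis beyond ``$P$ simple projective non-injective'' is needed: writing $D\Lambda=\bigoplus_j I_j$ as a sum of indecomposable injectives, a non-zero map $I_j\rightarrow P$ would be an epimorphism (as $P$ is simple) and would split (as $P$ is projective), forcing $I_j\iso P$ by indecomposability of $I_j$, hence $P$ injective --- a contradiction; thus $\Hom_\Lambda(D\Lambda,P)=0$. For $1\le i\le n-1$ I would use $n$-representation-finiteness: choosing an $n$-cluster tilting object $M\in\mod\Lambda$, Proposition~\ref{prop.cluster} (applied with $\gld\Lambda\le n$) places both $\Lambda$ and $D\Lambda$ in $\add M$, so $P$ (a summand of $\Lambda$) and all the $I_j$ lie in $\add M$; since $M$ is $n$-rigid we obtain $\Ext^i_\Lambda(D\Lambda,P)=0$ for $1\le i\le n-1$. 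With both ranges settled, Definition~\ref{def.nAPR} already shows $T$ is a weak $n$-APR tilting module.

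To upgrade this to an $n$-APR tilting module I must show $\id P=n$. Since $\gld\Lambda\le n$ we have $d:=\id P\le n$, and as $P\neq0$ there is a simple module $S$ with $\Ext^d_\Lambda(S,P)\neq0$ (the standard characterization of injective dimension via simples). Embedding $S$ into the socle of $D\Lambda$ and using $\Ext^{d+1}_\Lambda(-,P)=0$, the long exact sequence yields a surjection $\Ext^d_\Lambda(D\Lambda,P)\twoheadrightarrow\Ext^d_\Lambda(S,P)$, so $\Ext^d_\Lambda(D\Lambda,P)\neq0$. But the vanishing established in the previous paragraph gives $\Ext^i_\Lambda(D\Lambda,P)=0$ for every $i<n$, so $d\ge n$; combined with $d\le n$ this forces $\id P=n$. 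Hence $T$ is an $n$-APR tilting module, proving (1).

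For (2) I would invoke the left--right symmetry of $n$-representation-finiteness: if $M$ is $n$-cluster tilting in $\mod\Lambda$ then $DM$ is $n$-cluster tilting in $\mod\Lambda^{\op}$ and $\gld\Lambda^{\op}=\gld\Lambda\le n$, so $\Lambda^{\op}$ is again $n$-representation-finite; a simple injective non-projective $\Lambda$-module $I$ corresponds to a simple projective non-injective $\Lambda^{\op}$-module $DI$, to which (1) applies, and dualizing back produces the desired $n$-APR cotilting $\Lambda$-module associated with $I$. I do not expect a real obstacle here; the only point demanding a little care is the argument for $\id P=n$, where one must use that it is $P\neq0$ --- not the vanishing $\Hom_\Lambda(D\Lambda,P)=0$, which by itself would only give $\id P\ge1$ --- that guarantees some non-vanishing $\Ext^d_\Lambda(D\Lambda,P)$ with $d\le\gld\Lambda$, after which the earlier vanishing pushes $d$ up to $n$.
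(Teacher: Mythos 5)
Your proof is correct and follows the same route as the paper's, which likewise combines $\gld\Lambda\le n$ with the $n$-rigidity of the $n$-cluster tilting object (a generator-cogenerator). The paper's write-up is terser: it records only $\id P\le n$ and the vanishing of $\Ext^i_\Lambda(D\Lambda,P)$ for $0<i<n$ and stops there, leaving the $i=0$ case and the promotion from $\id P\le n$ to $\id P=n$ implicit, so the two points you spell out --- that $\Hom_\Lambda(D\Lambda,P)=0$ because any nonzero map from an indecomposable injective to the simple projective $P$ would split, and that the vanishing in degrees $<n$ forces $\id P$ to equal $n$ via the simple in the socle of $D\Lambda$ realizing $\Ext^{\id P}\neq 0$ --- are precisely the details the paper treats as routine.
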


\begin{proof}
We have $\id P \leq n$ by $\gld \Lambda \leq n$. Since $M$ is an $n$-rigid generator-cogenerator, we have $\Ext^i_\Lambda(D\Lambda,P)=0$ for any $0<i<n$. This proves (1), the proof of (2) is dual.
\end{proof}

Throughout this subsection, we denote by $M$ the unique basic $n$-cluster tilting object in $\mod \Lambda$ (see the last point of Proposition~\ref{prop.cluster}). 

Now let $P$ be a simple projective and non-injective $\Lambda$-module. We decompose $\Lambda=P\oplus Q$. Since $P \in \add M$ we can also decompose $M=P\oplus M'$. By Observation~\ref{obs.alwaysAPR} we have an $n$-APR tilting $\Lambda$-module $T :=(\tau_n^-P)\oplus Q$.

\begin{theorem} \label{main}
Under the above circumstances, we have the following.
\begin{enumerate}
\item $T \in \add M$.
\item $\Gamma:=\End_\Lambda(T)^{\op}$ is an $n$-representation-finite algebra with $n$-cluster tilting object  $N:=\Hom_\Lambda(T,M')\oplus\Ext^n_\Lambda(T,P)$.
\end{enumerate}
\end{theorem}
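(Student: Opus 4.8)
The plan is to deduce (1) directly from the cluster structure on $\add M$ (Proposition~\ref{prop.cluster}), and to obtain (2) by transporting the cluster tilting object $M$ along the tilting functor $\mathbf{F} = \mathbf{R}\Hom_\Lambda(T, -)$ of Lemma~\ref{tilting theory}, using that $T$ is a tilting module with $\pd_\Lambda T = n$ (Theorem~\ref{APR}). For (1): $Q$ is a direct summand of $\Lambda$, hence $Q \in \add M$; and since $P$ is non-injective it lies in $\costadd M$, so the equivalence $\tau_n^- \colon \costadd M \to \stabadd M$ of Proposition~\ref{prop.cluster} carries it into $\stabadd M \subseteq \add M$. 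Thus $T = (\tau_n^- P) \oplus Q \in \add M$.

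For (2): first $\gld \Gamma \le n$ by Proposition~\ref{proposition.gld_preserved} (the module $T$ is an $n$-APR tilting complex by Remark~\ref{remark.in_mod=in_der}). The technical core is to place $M'$ and $P$ among the subcategories $\mathcal{F}_i$ of Lemma~\ref{tilting theory}: I claim $M' \in \mathcal{F}_0$ and $P \in \mathcal{F}_n$. The latter is Lemma~\ref{F_nP}(1). For the former, $\pd_\Lambda T = n$ kills $\Ext^j_\Lambda(T, M')$ for $j > n$, $n$-rigidity of $M$ (with $T, M' \in \add M$ by (1)) kills it for $1 \le j \le n-1$, and for $j = n$ the summand $Q$ of $T$ contributes nothing (being projective) while for the summand $\tau_n^- P$ one has $D\Ext^n_\Lambda(\tau_n^- P, M') \iso \overline{\Hom}_\Lambda(M', P)$ by Proposition~\ref{prop.cluster}, which vanishes because $P$ is simple projective and $M'$ has no summand isomorphic to $P$ (any nonzero map onto a simple projective is a split epimorphism). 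Granting this, Lemma~\ref{tilting theory}, via $\mathbf{F}X \iso (\mathbf{F}_i X)[-i]$ on $\mathcal{F}_i$, identifies
\[ N = \Hom_\Lambda(T, M') \oplus \Ext^n_\Lambda(T, P) = \mathbf{F}_0 M' \oplus \mathbf{F}_n P \iso \mathbf{F}(M' \oplus P[n]). \]
As $\mathbf{F}$ is a triangle equivalence, $\Ext$-computations among summands of $N$ become $\Hom_{\mathcal{D}_\Lambda}$-computations among shifts of $M'$ and $P$. In particular $N$ is $n$-rigid, since for $1 \le j \le n-1$ the group $\Ext^j_\Gamma(N, N)$ splits as $\Ext^j_\Lambda(M', M') \oplus \Ext^{n+j}_\Lambda(M', P) \oplus \Ext^{j-n}_\Lambda(P, M') \oplus \Ext^j_\Lambda(P, P)$, the four summands vanishing by $n$-rigidity, by $\gld \Lambda \le n$, by negativity of the degree, and by projectivity of $P$ respectively. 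And $N$ is a generator, since $\add T \subseteq \add M' \subseteq \mathcal{F}_0$ forces the regular module $\Gamma \iso \Hom_\Lambda(T, T) = \mathbf{F}_0 T$ to lie in $\add \mathbf{F}_0 M' \subseteq \add N$.

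It then remains to check that $N$ is a cogenerator and that $\gld \End_\Gamma(N) \le n+1$; once both hold, Proposition~\ref{cluster criterion}(2) yields that $N$ is $n$-cluster tilting and $\Gamma$ is $n$-representation-finite. For the second point, $\End_\Gamma(N) \iso \End_{\mathcal{D}_\Lambda}(M' \oplus P[n]) = \spm{\End_\Lambda M' & 0 \\ \Ext^n_\Lambda(M',P) & \End_\Lambda P}$ and (using $\Hom_\Lambda(M', P) = 0$ once more) $\End_\Lambda M \iso \spm{\End_\Lambda P & 0 \\ \Hom_\Lambda(P, M') & \End_\Lambda M'}$ are triangular matrix algebras sharing the corner $\End_\Lambda M'$ and having the division ring $\End_\Lambda P$ as the extra vertex, so $\gld \End_\Lambda M \le n+1$ (Proposition~\ref{cluster criterion} for $M$) controls both $\gld \End_\Lambda M'$ and the projective dimension of $\Ext^n_\Lambda(M', P)$ over it --- the latter again through the cluster duality of Proposition~\ref{prop.cluster}. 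The step I expect to be the main obstacle is the cogenerator condition: one must show every indecomposable injective $\Gamma$-module lies in $\add N$. The simple injective $\mathbf{F}_n P$ is provided by Lemma~\ref{F_nP}(2); the remaining indecomposable injective $\Gamma$-modules should be exactly the $\mathbf{F}_0 I$ with $I$ an indecomposable injective $\Lambda$-module (all of which lie in $\add M'$, since $D\Lambda$ is a summand of $M$ while $P$ is not injective), via the fact that $\Hom_\Lambda(T, D\Lambda)$ is a cotilting $\Gamma$-module, and making this precise when $\pd_\Lambda T = n$ rather than $\le 1$ is the delicate part. An alternative that circumvents it is to first establish that $\Gamma$ is $n$-representation-finite by the derived argument of Subsection~\ref{subsec.APR_der_preserves} --- $\mathbf{F}$ carries $\mathcal{U}_\Lambda$ onto $\mathcal{U}_\Gamma$ and commutes with $\nu$, and $\nu \mathcal{U}_\Lambda = \mathcal{U}_\Lambda$ by Theorem~\ref{theorem.repfin_U}, so $\nu \mathcal{U}_\Gamma = \mathcal{U}_\Gamma$ and Theorem~\ref{theorem.repfin_U} applies to $\Gamma$ --- after which the unique basic $n$-cluster tilting object of $\mod \Gamma$ is automatically a generator-cogenerator, and one identifies it with $N$ using that $N$ is $n$-rigid, is a generator, and has the same number of indecomposable summands as $M$.
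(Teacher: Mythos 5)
Your preliminaries are sound and closely parallel the paper's own Lemmas: part (1) via Proposition~\ref{prop.cluster}, the global dimension bound on $\Gamma$, the placement $M' \in \mathcal{F}_0$ using the duality $D\Ext^n_\Lambda(\tau_n^-P, M') \iso \overline{\Hom}_\Lambda(M',P)$, and the $n$-rigidity of $N$ via the triangle equivalence $\mathbf{F}$. The divergence --- and the gap --- is in what you do next. The paper verifies cluster-tiltingness via condition (3) of Proposition~\ref{cluster criterion}: for each indecomposable summand of $N$ it produces an exact sequence $0 \to M_n \to \cdots \to M_0 \xrightarrow{f} X$ with a right almost split $f$ in $\add N$, obtained by applying $\mathbf{F}_0$ to the analogous sequences for $M$ (for $X \in \add \mathbf{F}_0 M'$) and to the injective resolution of $P$ (for $X = \mathbf{F}_n P$), checking almost-splitness via Lemma~\ref{F_nP}. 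You instead reach for condition (2), $\gld \End_\Gamma(N) \le n+1$, and this is where your argument stalls.

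The triangular-matrix description of $\End_\Gamma(N)$ and $\End_\Lambda(M)$ is correct, but the claim that ``$\gld \End_\Lambda M \le n+1$ controls both $\gld \End_\Lambda M'$ and the projective dimension of $\Ext^n_\Lambda(M', P)$ over it'' is asserted, not proved. The first half is fine (the ideal generated by the idempotent at $P$ is projective, since $\Hom_\Lambda(M', P) = 0$ makes that projective simple, so quotienting does not raise global dimension). But the second half --- that $\pd_{\End_\Lambda M'} \Ext^n_\Lambda(M', P) \le n$ rather than the a priori $\le n+1$ --- does not follow from anything you cite; $D\Hom_\Lambda(\tau_n^- P, M')$ is an injective $\End_\Lambda M'$-module and you would have to show it has projective dimension one less than the global bound. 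Your appeal to ``cluster duality'' here is doing real work and must be made precise. Your alternative route has a separate gap: once $\Gamma$ is known to be $n$-representation-finite by the derived argument, ``$n$-rigid $+$ generator $+$ same number of summands $\implies$ equals the unique cluster tilting object'' is not one of the paper's available facts and is false without further hypotheses (for $n=1$ every module is $1$-rigid, and a generator with the right cardinality need not be the full additive generator). Incidentally, the cogenerator worry you flag as ``the main obstacle'' is actually the easy part: $D\Gamma = \nu\mathbf{F}T = \mathbf{F}(P[n]\oplus\nu Q) = \mathbf{F}_nP \oplus \mathbf{F}_0\nu Q$, and $\nu Q$ is injective hence in $\add M'$ since $P$ is not injective, so $D\Gamma \in \add N$. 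I would recommend switching to criterion (3) of Proposition~\ref{cluster criterion}, which is the route the paper takes and avoids both of the delicate global-dimension estimates you would otherwise need.
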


Before we prove the theorem let us note the following immediate consequence.

\begin{corollary}
Any iterated $n$-APR tilt of an $n$-representation-finite algebra is $n$-representation-finite.
\end{corollary}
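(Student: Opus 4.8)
The plan is to read the corollary off Theorem~\ref{main} by an immediate induction on the length of the chain of $n$-APR tilts.

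First I would make precise what ``iterated $n$-APR tilt'' means: a chain $\Lambda = \Lambda_0, \Lambda_1, \dots, \Lambda_\ell$ of basic finite dimensional algebras such that, for each $j$, one has $\Lambda_{j+1} = \End_{\Lambda_j}(T_j)^{\op}$ for some $n$-APR tilting $\Lambda_j$-module $T_j = (\tau_n^- P_j) \oplus Q_j$ (with $\Lambda_j = P_j \oplus Q_j$). The goal is then to prove by induction on $\ell$ that $\Lambda_\ell$ is $n$-representation-finite; the base case $\ell = 0$ is the hypothesis on $\Lambda$.

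For the inductive step I would assume $\Lambda_j$ is $n$-representation-finite and check that Theorem~\ref{main} applies to it. By Definition~\ref{def.nAPR} the simple projective $\Lambda_j$-module $P_j$ has $\id P_j = n$, hence (as $n \geq 1$) is non-injective, so by Observation~\ref{obs.alwaysAPR}(1) this is exactly the situation considered in Subsection~\ref{subsec.pres_mod}. Theorem~\ref{main}(2) then gives that $\Lambda_{j+1} = \End_{\Lambda_j}(T_j)^{\op}$ is again $n$-representation-finite, with the explicit $n$-cluster tilting object $\Hom_{\Lambda_j}(T_j, M') \oplus \Ext^n_{\Lambda_j}(T_j, P_j)$ built from the $n$-cluster tilting object $M = P_j \oplus M'$ of $\Lambda_j$. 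Since $\tau_n^- P_j$ is indecomposable and has no projective summand (Proposition~\ref{APR2}(2)), $T_j$ is basic, and hence so is $\Lambda_{j+1}$; thus the running conventions are preserved and the induction goes through.

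I do not expect any genuine obstacle here: the corollary is a purely formal consequence of Theorem~\ref{main}. The only thing that needs attention --- and it is entirely routine --- is verifying at each stage that one is in the hypotheses of Theorem~\ref{main}, which is precisely what Observation~\ref{obs.alwaysAPR} combined with the propagation of $n$-representation-finiteness supplies. The dual statement for iterated $n$-APR cotilts follows by the same argument, using Observation~\ref{obs.alwaysAPR}(2) and the cotilting analogue of Theorem~\ref{main}.
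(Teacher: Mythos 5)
Your proposal is correct and matches the paper's intent: the paper states the corollary as an immediate consequence of Theorem~\ref{main} with no written proof, and your induction on the length of the chain of $n$-APR tilts, invoking Theorem~\ref{main}(2) at each step (with the non-injectivity of $P_j$ supplied by $\id P_j = n \geq 1$), is precisely the routine argument being elided. The remark about basicness being preserved is a sensible bit of bookkeeping, though it is also implicit in the paper's running conventions.
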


In the rest we shall show Theorem~\ref{main}.
The assertion (1) follows immediately from the first part of Proposition~\ref{prop.cluster}.

Proposition~\ref{global dimension} proves that $\gld \Gamma = n$ in Theorem~\ref{main}. We shall show that $N$ in Theorem~\ref{main}(2) is an $n$-cluster tilting object. We will use the subcategories $\mathcal{F}_i \subseteq \mod \Lambda$, and the functors $\mathbf{F}_i$ which were introduced in Section~\ref{subsec.firstprops} (see in particular Lemma~\ref{tilting theory}).

\begin{lemma}
$M'\in\mathcal{F}_0$.
\end{lemma}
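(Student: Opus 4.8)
The statement to prove is $\Ext^j_\Lambda(T,M')=0$ for all $j\neq0$, and since $T$ and $M'$ are modules this is only a claim in the range $j>0$. My plan is first to reduce: writing $T=(\tau_n^-P)\oplus Q$, the summand $Q$ is projective and contributes nothing, so it suffices to show $\Ext^j_\Lambda(\tau_n^-P,M')=0$ for $j>0$. I would then split this range into three parts. For $j>n$ the group vanishes because $\pd_\Lambda\tau_n^-P\le n$, which follows from $\pd_\Lambda T=n$ in Theorem~\ref{APR} (or simply from $\gld\Lambda=n$). For $1\le j\le n-1$ I would use that $\tau_n^-P\in\add M$ — this is Theorem~\ref{main}(1), which is already established — together with $M'\in\add M$ by construction, so that the $n$-rigidity of $M$ kills the Ext-group.

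This leaves the single case $j=n$, which I expect to be the only real obstacle. The plan here is to apply the functorial isomorphism from Proposition~\ref{prop.cluster}, namely $D\Ext^n_\Lambda(X,Y)\iso\overline{\Hom}_\Lambda(Y,\tau_n X)$ for $X,Y\in\add M$, with $X=\tau_n^-P$ and $Y=M'$ (both in $\add M$). Using that $\tau_n$ and $\tau_n^-$ are mutually quasi-inverse equivalences between $\stabadd M$ and $\costadd M$ (again Proposition~\ref{prop.cluster}), one has $\tau_n\tau_n^-P\iso P$ in $\costadd M$, and $P$ is nonzero there because it is non-injective; hence the isomorphism becomes $D\Ext^n_\Lambda(\tau_n^-P,M')\iso\overline{\Hom}_\Lambda(M',P)$. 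Finally I would observe that $\overline{\Hom}_\Lambda(M',P)$ is a quotient of $\Hom_\Lambda(M',P)$, which vanishes: a nonzero map $M'\to P$ would be an epimorphism since $P$ is simple, hence a split epimorphism since $P$ is projective, which would exhibit $P$ as a direct summand of $M'$, contradicting the fact that $M=P\oplus M'$ is basic. This gives $\Ext^n_\Lambda(\tau_n^-P,M')=0$ and completes the argument.

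In summary, the proof is routine apart from the case $j=n$, where the key point — and the step I expect to require the most care — is that $n$-Auslander–Reiten duality converts the problematic extension group into a morphism space out of $M'$ into the simple projective module $P$, which is then forced to be zero for elementary reasons.
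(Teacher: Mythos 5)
Your proof takes essentially the same approach as the paper's: reduce via the projective summand $Q$, invoke $T\in\add M$ and $n$-rigidity for the range $0<i<n$, use $\gld\Lambda\le n$ for the range $i>n$, and for $i=n$ apply the duality $D\Ext^n_\Lambda(\tau_n^-P,M')\iso\overline{\Hom}_\Lambda(M',P)$ from Proposition~\ref{prop.cluster}(2), which vanishes since $P$ is simple projective and not a summand of $M'$. You spell out in a bit more detail why $\Hom_\Lambda(M',P)=0$, but the structure and key lemmas coincide with the paper's argument.
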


\begin{proof}
By Theorem \ref{main}(1) we know that $T\in\add M$. Hence, since $M$ is an $n$-rigid $\Lambda$-module, we have $\Ext^i_\Lambda(T,M)=0$ for any $0<i<n$. 
Since $\gld\Lambda\le n$, we only have to check $\Ext^n_\Lambda(T,M')=0$. Of course, we have $\Ext^n_\Lambda(Q,M')=0$ since $Q$ is projective. By Proposition~\ref{prop.cluster}(2), we have $\Ext^n_\Lambda(\tau_n^-P,M')\iso\overline{\Hom}_\Lambda(M',P)$, and the latter $\Hom$-space vanishes since $P$ is simple projective.
\end{proof}

\begin{lemma} \label{lemma.Nrigid} 
$N=\mathbf{F}_0M'\oplus\mathbf{F}_nP$ is an $n$-rigid $\Gamma$-module.
\end{lemma}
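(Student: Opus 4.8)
The plan is to prove the $n$-rigidity of $N$ by transporting the computation of $\Ext$-groups through Happel's derived equivalence $\mathbf{F} = \mathbf{R}\Hom_\Lambda(T,-)\colon \mathcal{D}_\Lambda \to \mathcal{D}_\Gamma$ from Lemma~\ref{tilting theory}. The key observation is that both summands of $N$ are, up to shift, images of modules under $\mathbf{F}$: by the previous lemma $M'\in\mathcal{F}_0$, and by Lemma~\ref{F_nP}(1) $P\in\mathcal{F}_n$, so since $\mathbf{F}_i$ restricted to $\mathcal{F}_i$ is isomorphic to $[i]\circ\mathbf{F}$ we get $\mathbf{F}_0 M' \cong \mathbf{F}(M')$ and $\mathbf{F}_n P \cong \mathbf{F}(P)[n] \cong \mathbf{F}(P[n])$ in $\mathcal{D}_\Gamma$. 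Hence $N = \mathbf{F}_0 M' \oplus \mathbf{F}_n P$ is, up to isomorphism in $\mathcal{D}_\Gamma$, the object $\mathbf{F}(M'\oplus P[n])$.

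Since $\mathbf{F}$ is a triangle equivalence, for every $i$ we obtain
\[ \Ext^i_\Gamma(N,N) = \Hom_{\mathcal{D}_\Gamma}\bigl(\mathbf{F}(M'\oplus P[n]),\, \mathbf{F}(M'\oplus P[n])[i]\bigr) \cong \Hom_{\mathcal{D}_\Lambda}\bigl(M'\oplus P[n],\, (M'\oplus P[n])[i]\bigr), \]
which I would then split into the four summands and handle them separately in the range $1\le i\le n-1$. The term $\Hom_{\mathcal{D}_\Lambda}(M',M'[i])=\Ext^i_\Lambda(M',M')$ and the term $\Hom_{\mathcal{D}_\Lambda}(P[n],P[n+i])=\Ext^i_\Lambda(P,P)$ vanish because $M=P\oplus M'$ is $n$-rigid; the term $\Hom_{\mathcal{D}_\Lambda}(M',P[n+i])=\Ext^{n+i}_\Lambda(M',P)$ vanishes because $n+i>n\ge\gld\Lambda$; and the term $\Hom_{\mathcal{D}_\Lambda}(P[n],M'[i])=\Hom_{\mathcal{D}_\Lambda}(P,M'[i-n])=\Ext^{i-n}_\Lambda(P,M')$ vanishes because $i-n<0$. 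Therefore $\Ext^i_\Gamma(N,N)=0$ for $1\le i\le n-1$, i.e. $N$ is $n$-rigid.

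I do not expect a genuine obstacle here: once one records that $N$ corresponds under $\mathbf{F}$ to $M'\oplus P[n]$, everything reduces to the two input facts $\gld\Lambda\le n$ and the $n$-rigidity of the cluster tilting object $M$. The only point requiring care is the bookkeeping of the shift by $n$ (which comes from $P\in\mathcal{F}_n$, equivalently from $\pd_\Lambda T=n$): it is precisely this shift that pushes the two mixed $\Ext$-groups out of the interval $[1,n-1]$, one into degree $>n$ and the other into negative degrees. Note also that this argument uses only Lemma~\ref{F_nP}(1); the finer statement that $\mathbf{F}_n P$ is simple injective (Lemma~\ref{F_nP}(2)) is not needed for $n$-rigidity but will presumably be used afterwards to promote $N$ to an $n$-cluster tilting object in Theorem~\ref{main}(2).
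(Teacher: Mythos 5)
Your proof is correct and rests on the same central mechanism as the paper's — transporting everything through Happel's equivalence $\mathbf{F}$ and using $M' \in \mathcal{F}_0$, $P \in \mathcal{F}_n$ to identify $\mathbf{F}_0 M'$ and $\mathbf{F}_n P$ with $\mathbf{F}(M')$ and $\mathbf{F}(P)[n]$ — but you handle two of the four $\Ext$-groups differently. The paper dispatches $\Ext^i_\Gamma(-,\mathbf{F}_n P)$ wholesale by invoking Lemma~\ref{F_nP}(2), the injectivity of $\mathbf{F}_n P$, and then only computes the remaining two groups via the derived equivalence. You instead treat all four summands uniformly: for $\Ext^i_\Gamma(\mathbf{F}_0 M', \mathbf{F}_n P) \cong \Ext^{n+i}_\Lambda(M',P)$ you use $\gld \Lambda \le n$, and for $\Ext^i_\Gamma(\mathbf{F}_n P, \mathbf{F}_n P) \cong \Ext^i_\Lambda(P,P)$ you use $n$-rigidity of $M$. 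Your observation that this removes the dependence on Lemma~\ref{F_nP}(2) from the rigidity argument is correct (that lemma is still needed later in the proof of Theorem~\ref{main}(2) to establish the right almost split maps). What the paper's shortcut buys is slightly less bookkeeping; what yours buys is uniformity and a minimal set of inputs ($\gld\Lambda \le n$ and $n$-rigidity of $M$). Both are valid, and your version is a legitimate and arguably cleaner alternative.
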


\begin{proof}
We have $\Ext^i_\Gamma(-,\mathbf{F}_nP)=0$ for any $i>0$ since $\mathbf{F}_nP$ is injective (Lemma~\ref{F_nP}(2)). Since $M' \in \mathcal{F}_0$ and $P \in \mathcal{F}_n$ by (1) and Lemma~\ref{F_nP}(1) respectively, we can check the assertion as follows by using Lemma~\ref{tilting theory}:
\begin{align*}
\Ext^i_\Gamma(\mathbf{F}_0M',\mathbf{F}_0M')&=\Hom_{\mathcal{D}_{\Gamma}}(\mathbf{F} M',\mathbf{F} M'[i])\\
&\iso\Hom_{\mathcal{D}_{\Lambda}}(M',M'[i]) \\
& =\Ext^i_\Lambda(M',M'),\\
\Ext^i_\Gamma(\mathbf{F}_nP,\mathbf{F}_0M')&=\Hom_{\mathcal{D}_{\Gamma}}(\mathbf{F} P[n],\mathbf{F} M'[i])\\
&\iso \Hom_{\mathcal{D}_{\Lambda}}(P,M'[i-n]) \\
& = \Ext^{i-n}_\Lambda(P,M').
\end{align*}
For $0 < i < n$ both of the above vanish, since $M$ is $n$-rigid.
\end{proof}

We now complete the proof of Theorem~\ref{main}.

\begin{proof}[Proof of Theorem~\ref{main}(2)]
By Lemma~\ref{lemma.Nrigid} we know that $N$ is $n$-rigid, and hence we may apply Proposition~\ref{cluster criterion}. We will show that $N$ is $n$-cluster tilting by checking the third of the equivalent conditions in Proposition~\ref{cluster criterion}(3).

(i) First we consider $\mathbf{F}_nP$.
Take a minimal injective resolution
\[0 \to[30] P \to[30] I_0 \to[30] \cdots \to[30] I_n \to[30] 0.\]
By Proposition~\ref{APR2} we have $\Ext^i_\Lambda(T,P)=0$ for $0 \leq i<n$. Hence, applying $\Hom_\Lambda(T,-)$, we have an exact sequence
\[0 \to[30] \mathbf{F}_0I_0 \to[30] \cdots \to[30] \mathbf{F}_0I_n \tol[30]{f} \mathbf{F}_nP \to[30] 0\]
with $\mathbf{F}_0I_i\in\add N$. We shall show that $f$ is a right almost split map in $\add N$.

By Lemma \ref{tilting theory}, we have $\Ext^j_\Gamma(\mathbf{F}_0M',\mathbf{F}_0I_i)\iso\Ext^j_\Lambda(M',I_i)=0$ for any $i$ and any $j>0$. Using this, we see that the map 
\[ \Hom_\Gamma(\mathbf{F}_0M',\mathbf{F}_0I_n) \tol[30]{f} \Hom_\Gamma(\mathbf{F}_0M',\mathbf{F}_nP) \]
is surjective. Since $\mathbf{F}_nP$ is a simple injective $\Lambda$-module by Lemma~\ref{F_nP}, any non-zero endomorphism of $\mathbf{F}_nP$ is an automorphism.
Thus $f$ is a right almost split map in $\add N$.

(ii) Next we consider $\mathbf{F}_0X$ for any indecomposable $X\in\add M'$.
Since $M$ is an $n$-cluster tilting object in $\mod\Lambda$, we have an exact sequence
\[0 \to[30] M_n \to[30] \cdots \to[30] M_0 \tol[30]{f} X\]
with $M_i\in\add M$ and a right almost split map $f$ in $\add M$ by Proposition~\ref{cluster criterion}. Applying $\mathbf{F}_0$, we have an exact sequence
\[0 \to[30] \mathbf{F}_0M_n \to[30] \cdots \to[30] \mathbf{F}_0M_0 \tol[30]{\mathbf{F}_0f} \mathbf{F}_0X\]
since we have $\Ext^i_\Lambda(T,M)=0$ for any $0<i<n$.
Since $\mathbf{F}_0M_i\in\add N$, we only have to show that $\mathbf{F}_0f$ is a right almost split map in $\add N$.

Since $\mathbf{F}_nP$ is a simple injective $\Lambda$-module by Lemma \ref{F_nP}, there is no non-zero map from $\mathbf{F}_n P$ to $\mathbf{F}_0 X$. Thus we only have to show that any morphism $g \colon \mathbf{F}_0M'\to\mathbf{F}_0X$ which is not a split epimorphism factors through $f$.
By Lemma \ref{tilting theory}, we can put $g=\mathbf{F}_0h$ for some $h \colon M'\to X$ which is not a split epimorphism.
Since $h$ factors through $f$, we have that $g=\mathbf{F}_0h$ factors through $\mathbf{F}_0f$. Thus we have shown that $\mathbf{F}_0f$ is a right almost split map in $\add N$.
\end{proof}

\subsection{$n$-APR tilting complexes preserve $n$-representation-finiteness} \label{subsec.APR_der_preserves}

Similar to Observation~\ref{obs.alwaysAPR} we have the following result for $n$-representation finite algebras.

\begin{observation} \label{obs.alwaysAPR_der}
Let $\Lambda = P \oplus Q$ as $\Lambda$-modules, such that $\Hom_{\Lambda}(Q, P) = 0$. Then $P$ has an associated $n$-APR tilting complex.
\end{observation}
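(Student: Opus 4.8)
The plan is to verify the two conditions of Definition~\ref{def.nAPRcomplex} for the given decomposition $\Lambda = P \oplus Q$. Condition~(1), that $\Hom_{\Lambda}(Q,P) = 0$, is precisely the hypothesis. It thus remains to check condition~(2): $\Ext^i_{\Lambda}(\nu Q, P) = 0$ for all $0 < i \neq n$. Once this is established, $T := (\nu_n^- P) \oplus Q$ is by definition an $n$-APR tilting complex associated with $P$, and we are done. (Recall that, as remarked in the definition, condition~(1) automatically handles the case $i = 0$.)

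For $i > n$ the vanishing $\Ext^i_{\Lambda}(\nu Q, P) = 0$ is immediate from $\gld \Lambda \leq n$, noting that $\nu Q \in \add D\Lambda \subseteq \mod \Lambda$. The remaining range $0 < i < n$ is where $n$-representation-finiteness enters, and here I would argue exactly as in the proof of Observation~\ref{obs.alwaysAPR}. Let $M$ be an $n$-cluster tilting object in $\mod \Lambda$, which exists since $\Lambda$ is $n$-representation-finite. As noted right after the definition of $n$-cluster tilting object, $M$ is an $n$-rigid generator--cogenerator; in particular both $\Lambda$ and $D\Lambda$ lie in $\add M$. Hence $P$, being a direct summand of $\Lambda$, lies in $\add M$, and $\nu Q$, being a direct summand of $D\Lambda$, lies in $\add M$ as well. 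Since $M$ is $n$-rigid, this forces $\Ext^i_{\Lambda}(\nu Q, P) = 0$ for $1 \leq i \leq n-1$, which is what we wanted.

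Combining the two ranges gives condition~(2) of Definition~\ref{def.nAPRcomplex}, completing the proof. The statement is thus the $n$-APR-tilting-complex analogue of Observation~\ref{obs.alwaysAPR}, and I do not expect any real obstacle: the only mildly nontrivial point is the observation that the injective module $\nu Q$ automatically belongs to the $n$-cluster tilting subcategory $\add M$ because $M$ is a cogenerator, and everything else is formal once one has the description of $n$-APR tilting complexes and the vanishing of $\Ext^{>n}$.
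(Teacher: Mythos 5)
Your proposal is correct and matches the paper's (implicit) approach: the paper gives no proof here, merely noting that the argument is ``similar to Observation~\ref{obs.alwaysAPR},'' and your argument is precisely the natural extension of that proof. You correctly verify conditions~(1) and~(2) of Definition~\ref{def.nAPRcomplex} — condition~(1) is the hypothesis, the range $i>n$ follows from $\gld\Lambda\le n$, and the range $0<i<n$ follows from the $n$-rigidity of the generator-cogenerator $M$ together with the observation that $P$ and $\nu Q$ lie in $\add M$.
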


We have the following result.

\begin{theorem} \label{theorem.APRpres_repfin}
Let $\Lambda$ be $n$-representation-finite, and $T$ be an $n$-APR tilting complex in $\mathcal{D}_{\Lambda}$. Then $\End_{\mathcal{D}_{\Lambda}}(T)^{\op}$ is also $n$-representation-finite.
\end{theorem}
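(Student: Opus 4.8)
The plan is to combine the three tools already assembled in Subsection~\ref{subsect.APR_der}: the fact that $T$ is a tilting complex (Theorem~\ref{thm.APR_comp_is_tilting}), the preservation of global dimension (Proposition~\ref{proposition.gld_preserved}), and the preservation of the subcategory $\mathcal{U}$ under the induced derived equivalence (Proposition~\ref{prop.preserves_U}), and then feed the outcome into the derived-category criterion for $n$-representation-finiteness (Theorem~\ref{theorem.repfin_U}). First I would record that, since $\Lambda$ is $n$-representation-finite and $T$ is an $n$-APR tilting complex, $T$ lies in $\mathcal{U}^n_\Lambda$; indeed $P$ and $Q$ are summands of $\Lambda\in\mathcal{U}^n_\Lambda$, and $\nu_n^- P$ is a summand of $\nu_n^{-1}\Lambda\in\mathcal{U}^n_\Lambda$, so $T=(\nu_n^-P)\oplus Q\in\mathcal{U}^n_\Lambda$. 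By Theorem~\ref{thm.APR_comp_is_tilting} the complex $T$ is a tilting complex, so $\mathbf{R}\Hom_\Lambda(T,-)\colon\mathcal{D}_\Lambda\to\mathcal{D}_\Gamma$ is a derived equivalence, where $\Gamma=\End_{\mathcal{D}_\Lambda}(T)^{\op}$.

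Next I would invoke Proposition~\ref{proposition.gld_preserved} to get $\gld\Gamma\le n$; this is exactly the hypothesis needed to apply Theorem~\ref{theorem.repfin_U} to $\Gamma$, and also to make the subcategory $\mathcal{U}^n_\Gamma$ and the criterion meaningful. Then by Proposition~\ref{prop.preserves_U} the equivalence $\mathbf{R}\Hom_\Lambda(T,-)$ restricts to an equivalence $\mathcal{U}^n_\Lambda\xrightarrow{\sim}\mathcal{U}^n_\Gamma$, intertwining the two Nakayama functors $\nu$ (since $\mathbf{R}\Hom_\Lambda(T,-)$ commutes with $\nu$ as it is a standard derived equivalence). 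Since $\Lambda$ is $n$-representation-finite, Theorem~\ref{theorem.repfin_U} gives $\nu\mathcal{U}^n_\Lambda=\mathcal{U}^n_\Lambda$; transporting this equality across the equivalence yields $\nu\mathcal{U}^n_\Gamma=\mathcal{U}^n_\Gamma$. Applying Theorem~\ref{theorem.repfin_U} in the reverse direction to $\Gamma$ (which has $\gld\Gamma\le n$) we conclude that $\Gamma$ is $n$-representation-finite, as desired.

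The step I expect to carry the real weight is \emph{already done} in the excerpt — namely Proposition~\ref{prop.preserves_U}, whose short proof rests on the fact that the derived equivalence commutes with $\nu_n$ together with $T\in\mathcal{U}^n_\Lambda$; everything downstream is then bookkeeping with Theorem~\ref{theorem.repfin_U}. The only genuinely delicate point to be careful about in writing this up is that Theorem~\ref{theorem.repfin_U} is phrased as an equivalence of conditions on a fixed algebra, so one must make sure the hypothesis $\gld\Gamma\le n$ is in place (supplied by Proposition~\ref{proposition.gld_preserved}) before using condition (3) $\nu\mathcal{U}=\mathcal{U}$ to deduce $n$-representation-finiteness of $\Gamma$; and that the equivalence $\mathcal{U}^n_\Lambda\to\mathcal{U}^n_\Gamma$ is compatible with $\nu$ and not merely with $\nu_n$, which follows since $\nu=\nu_n[n]$ and the equivalence commutes with both the shift and $\nu_n$. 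With these remarks in place the argument is complete.
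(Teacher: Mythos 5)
Your proof is correct and follows essentially the same route as the paper's: use Proposition~\ref{proposition.gld_preserved} to get $\gld\Gamma\le n$, Proposition~\ref{prop.preserves_U} to transport $\mathcal{U}^n_\Lambda$ to $\mathcal{U}^n_\Gamma$ across the derived equivalence, and then read off $n$-representation-finiteness of $\Gamma$ from the criterion $\nu\mathcal{U}=\mathcal{U}$ in Theorem~\ref{theorem.repfin_U}. The extra remarks you add (that $T\in\mathcal{U}^n_\Lambda$, and that commuting with $\nu_n$ and $[1]$ gives commuting with $\nu$) are sound and merely make explicit what the paper leaves implicit.
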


\begin{proof}
We set $\Gamma = \End_{\mathcal{D}_{\Lambda}}(T)^{\op}$. By Proposition~\ref{proposition.gld_preserved} we know that, since $\gld \Lambda \leq n$ we also have $\gld \Gamma \leq n$.

By Proposition~\ref{prop.preserves_U} we know that the derived equivalence $\mathcal{D}_{\Lambda} \to \mathcal{D}_{\Gamma}$ induces an equivalence $\mathcal{U}_{\Lambda} \to \mathcal{U}_{\Gamma}$. Hence, by Theorem~\ref{theorem.repfin_U} we have
\begin{align*}
\Lambda \text{ is } n \text{-representation finite} & \iff \nu \mathcal{U}_{\Lambda} = \mathcal{U}_{\Lambda} \\
& \iff \nu \mathcal{U}_{\Gamma} = \mathcal{U}_{\Gamma} \\
& \iff \Gamma \text{ is } n \text{-representation finite} \qedhere
\end{align*}
\end{proof}

\subsection{Slices} \label{subsect.slices}

In this subsection we introduce the notion of slices in the $n$-cluster tilting subcategory $\mathcal{U}$ (see Definition~\ref{def.slicegen}). The aim is to provide a bijection between these slices and the iterated $n$-APR tilting complexes of $\Lambda$ (Theorem~\ref{theorem.slices=itereatedAPR}). This will be done by introducing a notion of mutation of slices (Definition~\ref{def.slicemutation}), and by proving that this mutation coincides with $n$-APR tilts.

Throughout, let $\Lambda$ be an $n$-representation-finite algebra. We consider the $n$-cluster tilting subcategory $\mathcal{U} = \mathcal{U}^n_{\Lambda} \subseteq \mathcal{D}_{\Lambda}$ given in Section~\ref{sect.backgr.derived}.

\begin{definition} \label{def.slicegen}
An object $S \in \mathcal{U}$ is called $\emph{slice}$ if
\begin{enumerate}
\item for any indecomposable projective module $P$ there is exactly one $i$ such that $\nu_n^i P \in \add S$, and
\item $\add S$ is convex, that means any path (that is, sequence of non-zero maps) in $\ind \mathcal{U}$, which starts and ends in $\add S$, lies entirely in $\add S$.
\end{enumerate}
\end{definition}

The following two observations give us the slices we are interested in here.

\begin{observation} \label{obs.tiltisslice}
In the setup above, $\Lambda \in \mathcal{U}$ is a slice, since we have $\Hom_{\mathcal{D}_{\Lambda}}(\nu_n^i \Lambda, \nu_n^j \Lambda) = H^0(\nu_n^{j-i} \Lambda) = 0$ if $i < j$.

Similarly, any iterated $n$-APR tilting complex of $\Lambda$ is a slice in $\mathcal{U}$, by Theorem~\ref{theorem.APRpres_repfin} and Proposition~\ref{prop.preserves_U}.
\end{observation}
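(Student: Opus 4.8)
The plan is to verify the two conditions of Definition~\ref{def.slicegen} directly for $S = \Lambda$, and then to transport the statement along a derived equivalence to cover the iterated $n$-APR tilting complexes. For condition (1), I would argue as follows. The summands of $\Lambda$ are the indecomposable projectives, so each indecomposable projective $P$ certainly satisfies $\nu_n^0 P = P \in \add\Lambda$. It remains to see that this is the \emph{only} $i$ with $\nu_n^i P \in \add \Lambda$, equivalently (by Krull-Schmidt in $\mathcal{U}$) that $\nu_n^i P \niso \nu_n^0 P'$ for any indecomposable projective $P'$ whenever $i \neq 0$. Applying the autoequivalence $\nu_n^{-i}$, this reduces to showing $P \niso \nu_n^i P'$ for $i \neq 0$, which in turn follows from the computation in the observation: if $i > 0$ then $\Hom_{\mathcal{D}_{\Lambda}}(\nu_n^i P', P) \iso \Hom_{\mathcal{D}_{\Lambda}}(P', \nu_n^{-i}P) = H^{ni}(\nu^{-i}P')$; since $P'$ is projective and $\gld \Lambda \le n$, one checks $\nu^{-i}P'$ is concentrated in degrees $0, \ldots, ni$, but the top degree $ni$ contributes $H^{ni}(\nu^{-i}P') = 0$ because $\nu^{-i}P'$ has no injective summands — alternatively, just invoke the displayed vanishing $\Hom_{\mathcal{D}_{\Lambda}}(\nu_n^i\Lambda, \nu_n^j\Lambda) = H^0(\nu_n^{j-i}\Lambda) = 0$ for $i < j$, together with its dual for $i > j$, noting $H^0(\nu_n^k\Lambda) = \tau_n^{-k}\Lambda$ vanishes for $k \neq 0$ on the projective-injective-free part. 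The symmetric roles of $i>0$ and $i<0$ are handled by the same vanishing read in the two directions.

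For condition (2), convexity of $\add \Lambda$ inside $\ind\mathcal{U}$, I would proceed as follows. Suppose there is a path $\Lambda_0 \to X_1 \to \cdots \to X_{m-1} \to \Lambda_m$ in $\ind\mathcal{U}$ with $\Lambda_0, \Lambda_m$ indecomposable projective $\Lambda$-modules and each $X_j$ indecomposable in $\mathcal{U}$; I want every $X_j \in \add\Lambda$. Write each $X_j = \nu_n^{k_j} P_j$ for an indecomposable projective $P_j$ and $k_j \in \Z$; it suffices to show $k_j = 0$ for all $j$. A non-zero map $\nu_n^{k}P \to \nu_n^{k'}P'$ in $\mathcal{U}$ means $\Hom_{\mathcal{D}_\Lambda}(P, \nu_n^{k'-k}P') \neq 0$, i.e.\ $H^{n(k'-k)}(\nu^{k'-k}P') \neq 0$; since $P'$ is projective and $\gld\Lambda \le n$, the complex $\nu^{\ell}P'$ for $\ell \ge 0$ lives in degrees $[0, n\ell]$ and for $\ell \le 0$ in degrees $[n\ell, 0]$, so a non-zero $H^{n\ell}$ forces $\ell \le 0$ when $\ell \le 0$ trivially and forces $\ell \ge 0$; combining, non-zero maps in $\mathcal{U}$ can only go from $\nu_n^k(\cdot)$ to $\nu_n^{k'}(\cdot)$ with $k' \le k$. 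Hence along the path the exponents are weakly decreasing: $k_0 = 0 \ge k_1 \ge \cdots \ge k_{m-1} \ge k_m = 0$, forcing all $k_j = 0$. Thus $\add\Lambda$ is convex, and $\Lambda$ is a slice.

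For the final sentence, let $T$ be an iterated $n$-APR tilting complex and $\Gamma = \End_{\mathcal{D}_\Lambda}(T)^{\op}$. By Theorem~\ref{theorem.APRpres_repfin} (applied iteratively) $\Gamma$ is again $n$-representation-finite, so $\mathcal{U}_\Gamma$ is the $n$-cluster tilting subcategory of $\mathcal{D}_\Gamma$ and $\Gamma \in \mathcal{U}_\Gamma$ is a slice by the first part. The derived equivalence $\mathbf{R}\Hom_\Lambda(T,-) \colon \mathcal{D}_\Lambda \to \mathcal{D}_\Gamma$ sends $T$ to $\Gamma$ and, by Proposition~\ref{prop.preserves_U}, restricts to an equivalence $\mathcal{U}_\Lambda \to \mathcal{U}_\Gamma$ commuting with $\nu_n$; it therefore carries the slice $\Gamma$ of $\mathcal{U}_\Gamma$ back to $T$, and the two slice conditions, being defined purely in terms of $\nu_n$ and of paths of non-zero maps in $\ind\mathcal{U}$, are preserved under such an equivalence. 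Hence $T$ is a slice in $\mathcal{U}_\Lambda$. The only real subtlety is the bookkeeping in condition (2): making precise that non-zero maps in $\mathcal{U}$ strictly respect the $\nu_n$-grading in the right direction, which I expect to be the main point to get right; everything else is formal transport along the equivalence.
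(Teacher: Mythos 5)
Your overall strategy matches what the paper intends: check the two slice axioms for $\Lambda$ directly from the displayed vanishing, then transport the property to iterated $n$-APR tilting complexes along the derived equivalence of Proposition~\ref{prop.preserves_U}. The final paragraph on iterated tilts is fine. However, several of the intermediate computations you run are not merely sloppy but false, and you should not rely on them.

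For condition (1), the claim that $\Hom_{\mathcal{D}_\Lambda}(\nu_n^i P',P)\iso H^{ni}(\nu^{-i}P)$ (you wrote $P'$, presumably a typo) \emph{vanishes} is wrong: $H^{ni}(\nu^{-i}P)=H^0(\nu_n^{-i}P)=\tau_n^{-i}P$, which for small $i>0$ is an indecomposable summand of the $n$-cluster tilting module, and $\Hom_\Lambda(P',\tau_n^{-i}P)$ need not vanish at all. The parenthetical ``noting $H^0(\nu_n^k\Lambda)=\tau_n^{-k}\Lambda$ vanishes for $k\neq0$\ldots'' is likewise false for $k<0$. In any case this detour is unnecessary: your fallback is the correct argument. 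If $\nu_n^i P\in\add\Lambda$ for some $i\ne0$, say $i>0$ and $\nu_n^i P\iso P'$, then the isomorphism furnishes a nonzero morphism $\nu_n^0P'\to\nu_n^iP$ inside $\Hom_{\mathcal{D}_\Lambda}(\nu_n^0\Lambda,\nu_n^i\Lambda)$, which the display says is zero; the case $i<0$ is symmetric after applying $\nu_n^{-i}$. No ``dual'' vanishing is needed.

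For condition (2), your cohomological amplitudes are reversed. Since $\nu=D\Lambda\otimes^{\mathbf{L}}_\Lambda-$ is a left derived (right exact) functor, for $\ell\ge 1$ the complex $\nu^\ell P'$ is concentrated in degrees $[-n(\ell-1),0]$, not $[0,n\ell]$, and dually $\nu^{-\ell}P'$ lives in nonnegative degrees. Likewise $\Hom_{\mathcal{D}_\Lambda}(P,\nu_n^m P')=H^{-nm}(\nu^m P')$, not $H^{nm}$; as written the clause ``a non-zero $H^{n\ell}$ forces $\ell\le0\ldots$'' does not parse. After correcting the signs one does arrive at the right conclusion (nonzero maps $\nu_n^k(\cdot)\to\nu_n^{k'}(\cdot)$ force $k'\le k$), but the quickest route is simply to read the displayed vanishing itself: it already says nonzero maps in $\ind\mathcal{U}$ can only go from higher to lower $\nu_n$-exponent, and your ``weakly decreasing exponents along a path from $0$ to $0$'' argument then gives convexity exactly as the paper intends.
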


\begin{proposition} \label{proposition.homslices}
Let $S$ be a slice. Then $\Hom_{\mathcal{D}_{\Lambda}}(S, \nu_n^i S) = 0$ for any $i > 0$.
\end{proposition}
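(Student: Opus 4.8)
The plan is to show that $\Hom_{\mathcal{D}_{\Lambda}}(S, \nu_n^i S) = 0$ for $i > 0$ by first establishing it in the case $S = \Lambda$ (which is essentially Observation~\ref{obs.tiltisslice}), and then transporting the statement to an arbitrary slice $S$ via the fact that every slice is reachable from $\Lambda$ by iterated $n$-APR tilts together with Proposition~\ref{prop.preserves_U}. However, since the bijection between slices and iterated $n$-APR tilts (Theorem~\ref{theorem.slices=itereatedAPR}) is only proved later and would make the argument circular, I would instead give a direct argument using the convexity of $\add S$ inside $\mathcal{U}$.

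First I would recall that $\mathcal{U}$ is an $n$-cluster tilting subcategory of $\mathcal{D}_{\Lambda}$ (Theorem~\ref{theorem.U_is_ct}), and that $\nu_n$ restricts to an autoequivalence of $\mathcal{U}$ since $\Lambda$ is $n$-representation-finite (Theorem~\ref{theorem.repfin_U}). The key structural input is that $\mathcal{U}$ carries an AR-type structure: any indecomposable $U \in \mathcal{U}$ fits into an $n$-almost-split sequence, and the relevant $\Hom$-spaces $\Hom_{\mathcal{D}_{\Lambda}}(U, \nu_n^i U')$ for $U, U'$ indecomposable in $\mathcal{U}$ can only be nonzero for $i$ in a bounded range around $0$; in particular, for each fixed indecomposable projective $P$ the objects $\{\nu_n^i P\}_{i \in \Z}$ are linearly ordered by the existence of nonzero maps in the sense that $\Hom_{\mathcal{D}_{\Lambda}}(\nu_n^i P, \nu_n^j P) = 0$ for $i < j$ (shown in Observation~\ref{obs.tiltisslice} via $H^0(\nu_n^{j-i}\Lambda) = 0$, using $\gld \Lambda \leq n$).

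Now let $S$ be an arbitrary slice and suppose for contradiction that there is a nonzero map $\varphi \colon S_1 \to \nu_n^i S_2$ for some indecomposable summands $S_1, S_2$ of $S$ and some $i > 0$. Write $S_1 = \nu_n^{a} P_1$ and $S_2 = \nu_n^{b} P_2$ for indecomposable projectives $P_1, P_2$, where $a, b$ are the unique exponents guaranteed by condition (1) of Definition~\ref{def.slicegen}. The idea is to produce from $\varphi$ a nonzero path in $\ind\mathcal{U}$ from $S_2$ to $S_1$ (or rather between appropriate $\nu_n$-shifts lying in $\add S$) that exits $\add S$, contradicting convexity. Concretely: since $\nu_n$ is an autoequivalence, $\varphi$ gives a nonzero map $\nu_n^{-i} S_1 \to S_2$; iterating and using that the $\nu_n$-orbit of each projective is totally ordered, one shows that the only way a slice can fail the vanishing is for $\add S$ to contain two indecomposables on the "wrong side" of each other, contradicting convexity. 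The cleanest route is probably: a nonzero map $S_1 \to \nu_n^i S_2$ with $i>0$ together with the fact that both $S_1$ and $\nu_n^{-i}(\nu_n^i S_2) = S_2$ are in $\add S$ would, after interpolating through the $n$-cluster tilting structure of $\mathcal{U}$, force a path through $\nu_n^i S_2 \notin \add S$ returning to $\add S$, violating (2).

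The main obstacle I anticipate is making the "interpolation through the $n$-cluster tilting structure" rigorous: one needs to know that a nonzero morphism between objects of $\mathcal{U}$ factors through a path of irreducible maps in $\ind\mathcal{U}$ (a radical-filtration / AR-structure argument in the $n$-cluster tilting category), and that such a path, if its endpoints are in the convex subcategory $\add S$ but it genuinely "goes up" by a positive $\nu_n$-power, must leave $\add S$ and come back. This requires either a length/finiteness argument (the morphism spaces in $\mathcal{U}$ are finite-dimensional and the relevant region is bounded, so any nonzero map decomposes into finitely many irreducibles) or an appeal to the description of $\mathcal{U}$ as the additive hull of a translation quiver. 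I would handle it by invoking that $\mathcal{D}_{\Lambda}$ is $\Hom$-finite and $\mathcal{U}$ has an $n$-almost-split structure, so a nonzero map is a sum of compositions of irreducible maps in $\ind\mathcal{U}$; then convexity of $\add S$ applied to the corresponding path yields the contradiction, since the path would have to pass through $\nu_n^i S_2$, which by condition (1) is not in $\add S$ (as $S_2 \in \add S$ and $i \neq 0$).
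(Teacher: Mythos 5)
Your overall strategy (derive a contradiction with the two defining properties of a slice) is the right one, and it matches the paper's approach, but as written there is a genuine gap in the crucial step, and you also spend effort on a non-issue.

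The non-issue first: you worry about decomposing a nonzero morphism in $\mathcal{U}$ into a composition of irreducible maps. This is not needed. A ``path'' in Definition~\ref{def.slicegen}(2) is defined as a sequence of nonzero maps in $\ind\mathcal{U}$, not a sequence of irreducible maps. So the nonzero map $\varphi \colon S_1 \to \nu_n^i S_2$ is itself already a (length-one) path, and no AR-type factorization is required.

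The genuine gap: you produce a path from $S_1 \in \add S$ to $\nu_n^i S_2 \notin \add S$, but this path starts in $\add S$ and ends outside it, so convexity says nothing about it. What is actually needed --- and what your proposal never establishes --- is a path \emph{from} $\nu_n^i S_2$ \emph{back into} $\add S$, specifically back to $S_2$. Concatenating $\varphi$ with such a return path gives a path from $S_1$ to $S_2$ (both in $\add S$) passing through $\nu_n^i S_2$; convexity then forces $\nu_n^i S_2 \in \add S$, contradicting Definition~\ref{def.slicegen}(1). The existence of the return path is exactly the content of the lemma the paper inserts before this proposition: for any indecomposable $X \in \mathcal{U}$ (with $\Lambda$ connected and not semisimple) there is a path $\nu_n X \leadsto X$ in $\mathcal{U}$, proved using the $n$-almost-split sequence $\nu_n X \to X_{n-1} \to \cdots \to X_1 \to X$ when $X$ is a non-projective module, together with an $\nu$-shift argument (using \cite[Lemma 4.9]{IO2} and Theorem~\ref{theorem.repfin_U}) to reduce a general indecomposable of $\mathcal{U}$ to that case. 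You gesture at $n$-almost-split sequences but never extract from them the statement that is actually doing the work, namely the existence of this return path; without it your contradiction does not close.
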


For the proof we will need the following observation:

\begin{lemma}
Assume $\Lambda$ is indecomposable (as a ring) and not semi-simple. For any indecomposable $X \in \mathcal{U}$ there is a path $\nu_n X \leadsto X$ in $\mathcal{U}$.
\end{lemma}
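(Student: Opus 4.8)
The plan is to build the path $\nu_n X \leadsto X$ out of two pieces, using the $n$-cluster tilting property of $\mathcal{U}$ together with the assumption that $\Lambda$ is indecomposable and not semi-simple.

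First I would locate $X$ relative to the ``fundamental domain'' $\add\Lambda\subseteq\mathcal{U}$. Every indecomposable $X\in\mathcal{U}$ is of the form $\nu_n^j P$ for some indecomposable projective $P$ and some $j\in\Z$; applying $\nu_n^{-j}$ (an autoequivalence of $\mathcal{D}_\Lambda$ preserving $\mathcal{U}$) we may assume $X=P$ is an indecomposable projective module, and it suffices to produce a path $\nu_n P\leadsto P$. Now $\nu_n P=\nu P[-n]$, so $\nu_n P$ sits in cohomological degree $n$ while $P$ sits in degree $0$. The idea is to pass through intermediate terms of bounded complexes, exploiting that $\gld\Lambda\le n$ so that $\nu P=D\Lambda\otimes_\Lambda^{\mathbf L}P$ has cohomology concentrated in degrees $-n,\dots,0$; but more usefully, since $\Lambda$ is not semi-simple there is a non-split exact sequence, i.e. a non-zero map into $P$ from some other indecomposable projective, or out of $P$.

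The key step is to chain short paths one cohomological degree at a time. Because $\mathcal{U}$ is $n$-cluster tilting in $\mathcal{D}_\Lambda$, the functorial isomorphism
\[ \Hom_{\mathcal{D}_\Lambda}(Y,\nu_n Z)\iso D\Hom_{\mathcal{D}_\Lambda}(Z,Y) \]
(valid for $Y,Z\in\mathcal{U}$, this being the $n$-cluster tilting analogue of Serre duality in $\mathcal{D}_\Lambda$, cf.\ Proposition~\ref{prop.cluster}) converts a non-zero map $Z\to Y$ into a non-zero map $Y\to\nu_n Z$. Starting from $P$: since $\Lambda$ is indecomposable and not semi-simple, $P$ is connected through the quiver of $\Lambda$ to the rest of $\Lambda$, so there is a non-zero map $P\to P'$ or $P'\to P$ in $\add\Lambda$ for some indecomposable projective $P'\ne P$; iterating inside $\add\Lambda$ (using indecomposability of $\Lambda$ as a ring, so the projectives form a connected ``species'') one builds a path in $\add\Lambda\subseteq\mathcal{U}$ from $P$ to some projective $P''$ admitting a non-zero map to $\nu_n P''$, and then a further path; the bookkeeping shows one can always climb up by $\nu_n$ and thereby reach $\nu_n P$ — equivalently, read backwards, descend from $\nu_n P$ down to $P$.

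The main obstacle is making the ``chaining'' precise: one has to argue that the connectedness of $\Lambda$ (as a ring) forces the existence of paths in $\ind\mathcal{U}$ that genuinely traverse the $n$-cluster tilting subcategory from $\nu_n X$ down to $X$, rather than getting stuck. The cleanest route is: use that $\mathcal{U}$ is connected as a category (which follows from $\Lambda$ being a connected algebra, since $\mathcal{D}_\Lambda$ is connected and $\mathcal{U}$ generates it), so there is \emph{some} path in $\ind\mathcal{U}$ between any two indecomposables; then use the $n$-cluster-tilting Serre duality above to fold a path $X\leadsto\nu_n X$ (which exists by connectedness, after possibly shifting by powers of $\nu_n$) into the desired path $\nu_n X\leadsto X$, invoking ``not semi-simple'' precisely to guarantee $\mathcal{U}$ is not a single point so that non-trivial paths exist. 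I would expect the write-up to reduce, via $\nu_n$-periodicity, to the single statement that $\Hom_{\mathcal{D}_\Lambda}(\nu_n X,X)$-type connectivity holds, which is where the hypotheses are consumed.
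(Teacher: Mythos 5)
Your sketch has two genuine problems, and the second one is fatal.

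First, the duality formula is off by a shift. Serre duality in $\mathcal{D}_\Lambda$ reads $\Hom_{\mathcal{D}_\Lambda}(Z,Y)\iso D\Hom_{\mathcal{D}_\Lambda}(Y,\nu Z)$, with the \emph{full} Nakayama functor $\nu$, not its desuspension $\nu_n=\nu[-n]$. Equivalently $\Hom(Y,\nu_n Z)\iso D\Hom(Z,Y[-n])=D\,\Ext^{-n}(Z,Y)$, which for $Y,Z$ modules is usually zero. So a nonzero $Z\to Y$ gives you a nonzero $Y\to\nu Z$, not $Y\to\nu_n Z$; these differ by a degree-$n$ shift, and that shift is precisely the whole difficulty of the lemma. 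Consequently the proposed ``fold'' — turning a path $X\leadsto\nu_n X$ into a path $\nu_n X\leadsto X$ — is not produced by the duality you invoke, and even granting the correct duality it only maps a single morphism $Z\to Y$ to a morphism $Y\to\nu Z$; it does not convert a chain $X\to Y_1\to\cdots\to Y_k$ into a chain of maps between the right objects in the right direction. You acknowledge this is ``the main obstacle'' but do not resolve it, so there is no proof here.

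Second, the reduction to $X=P$ projective via $\nu_n$-translation is legitimate but leaves you in a bad place: $\nu_n P=\nu P[-n]$ is concentrated in cohomological degree $n$, not a module, so the intuitive ``climb one degree at a time through $\add\Lambda$'' picture does not get off the ground. The paper's proof avoids both issues entirely by invoking a structural result you do not mention: for a non-projective indecomposable module $X$ there is an $n$-almost split sequence $\nu_n X=\tau_n X\hookrightarrow X_{n-1}\to\cdots\to X_1\twoheadrightarrow X$ with all terms in $\mathcal{U}\cap\mod\Lambda$ (this is \cite[Theorem~2.2]{Iy_n-Auslander}), and this sequence is already the desired path $\nu_n X\leadsto X$. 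For general $X\in\mathcal{U}$ the paper then uses \cite[Lemma~4.9]{IO2} to find $i$ with $\nu^i X$ a non-projective module and translates back by the autoequivalence $\nu$ — note $\nu$, not $\nu_n$, which is where ``not semi-simple'' and ``indecomposable'' are actually consumed. The $n$-almost split sequence is the essential ingredient; without it, or some equivalent device, the argument does not close.
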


\begin{proof}
Assume first that $X$ is a non-projective $\Lambda$-module. By \cite[Theorem~2.2]{Iy_n-Auslander} there is an $n$-almost split sequence
\[ \nu_n X = \tau_n X \mono[30] X_{n-1} \to[30] \cdots \to[30] X_1 \epi[30] X \]
with $X_i \in \mathcal{U} \cap \mod \Lambda$. This sequence gives rise to the desired path $\nu_n X \leadsto X$ in $\mathcal{U}$.

Now let $X \in \mathcal{U}$ be arbitrary indecomposable. By \cite[Lemma 4.9]{IO2} there exists $i \in \mathbb{Z}$ such that $\nu^i X$ is a non-projective $\Lambda$-module. Then there exists a path $\nu_n\nu^iX \leadsto \nu^i X$ in $\mathcal{U}$. Since $\nu$ is an autoequivalence of $\mathcal{U}$ by Theorem~\ref{theorem.repfin_U}, we have a path $\nu_nX \leadsto X$ in $\mathcal{U}$.
\end{proof}

\begin{proof}[Proof of \ref{proposition.homslices}]
We may assume $\Lambda$ to be connected, and not semi-simple. Then, by the above lemma, for any indecomposable $S' \in \add S$ there is a path $\nu_n S' \leadsto S'$ in $\mathcal{U}$. Hence there are also a paths $\nu_n^i S' \leadsto S'$ for $i > 0$. If $\Hom_{\mathcal{D}_\Lambda}(S,\nu_n^i S') \neq 0$ for some $i>0$, then we have $\nu_n^i S' \in \add S$ by Definition~\ref{def.slicegen}(2), contradicting \ref{def.slicegen}(1).
\end{proof}

\begin{definition} \label{def.slicemutation}
Let $S$ be a slice, $S = S' \oplus S''$ a direct summand decomposition of $S$, such that $\Hom_{\mathcal{D}_{\Lambda}}(S'', S') = 0$. We set
\begin{align*}
\mu^+_{S'}(S) & = (\nu_n^- S') \oplus S'' \text{ and} \\
\mu^-_{S''}(S) & = S' \oplus (\nu_n S'').
\end{align*}
We call them \emph{mutations} of $S$.
\end{definition}

\begin{lemma}  \label{lemma.slicemutation}
In the setup of Definition~\ref{def.slicemutation}, $\mu^+_{S'}(S)$ and $\mu^-_{S''}(S)$ are slices again.
\end{lemma}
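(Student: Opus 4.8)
The plan is to verify directly that $\mu^+_{S'}(S) = (\nu_n^- S') \oplus S''$ and $\mu^-_{S''}(S) = S' \oplus (\nu_n S'')$ satisfy the two defining properties of a slice from Definition~\ref{def.slicegen}. By the symmetry noted in the paper (the mutations are exchanged by autoequivalences of $\mathcal{D}_\Lambda$, or simply by duality), it suffices to treat $\mu^+_{S'}(S)$; I will write $\widetilde{S} = (\nu_n^- S') \oplus S''$. Condition~(1) is immediate: since $S$ is a slice, each indecomposable projective $P$ has a unique $i$ with $\nu_n^i P \in \add S$; this $\nu_n^i P$ is a summand either of $S'$ or of $S''$. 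If it is a summand of $S''$ it is still a summand of $\widetilde{S}$ at the same position $i$; if it is a summand of $S'$, then $\nu_n^{i-1} P$ is a summand of $\nu_n^- S' \subseteq \widetilde{S}$. Moreover no $\nu_n^j P$ with $j \neq i$ (or $j \neq i-1$ respectively) lies in $\add \widetilde{S}$, because $\add \widetilde{S} \subseteq \add S \cup \add \nu_n^- S$ and the slice property of $S$ rules out the other shifts. Hence $\widetilde{S}$ meets each $\nu_n$-orbit of projectives in exactly one object.

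The substance of the proof is condition~(2), convexity of $\add \widetilde{S}$ in $\ind \mathcal{U}$. Suppose we are given a path in $\ind \mathcal{U}$
\[ X_0 \to X_1 \to \cdots \to X_m \]
with $X_0, X_m \in \add \widetilde{S}$ and all arrows non-zero maps; we must show every $X_j \in \add \widetilde{S}$. Each indecomposable $X_j$ either lies in $\add S''$, or lies in $\add S'$, or lies in $\add \nu_n^- S'$, or in none of these. The first case poses no problem. The key input is Proposition~\ref{proposition.homslices} together with the hypothesis $\Hom_{\mathcal{D}_\Lambda}(S'', S') = 0$: these force strong restrictions on which maps between the three blocks can be non-zero. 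Concretely, $\Hom_{\mathcal{D}_\Lambda}(\nu_n^- S', S'') = D\Hom_{\mathcal{D}_\Lambda}(S'', \nu_n S')$; since $\add(S' \oplus S'')$ is convex and $\Hom(S'',S')=0$, and more to the point since $S$ is a slice, $\Hom(S'', \nu_n S')$ vanishes (any non-zero such map would put $\nu_n S'$ in $\add S$, contradicting \ref{def.slicegen}(1) via \ref{proposition.homslices}-type reasoning). Similarly $\Hom_{\mathcal{D}_\Lambda}(S'', \nu_n^- S')$ and $\Hom_{\mathcal{D}_\Lambda}(S', \nu_n^- S')$ need to be analyzed. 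The upshot I expect is: once a path enters $\add \nu_n^- S'$ it can never leave to $\add S' \oplus \add S''$, and symmetrically a path starting in $\add \nu_n^- S'$ cannot have earlier terms outside $\add \widetilde{S}$; combining with convexity of the original slice $S$ forces the whole path into $\add \widetilde{S}$.

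In more detail, here is the order of steps I would carry out. First, record the $\Hom$-vanishing statements: $\Hom(\nu_n^-S', S'') = 0$, $\Hom(S', \nu_n^- S') = 0$, $\Hom(\nu_n^- S', S') = 0$, and $\Hom(S'', \nu_n^- S') = 0$ — each reduces, after applying $\nu_n$ or using that $\nu_n$ is an autoequivalence of $\mathcal{U}$, to one of: convexity of $\add S$, the hypothesis $\Hom(S'',S')=0$, or Proposition~\ref{proposition.homslices}. Second, given the path $X_0 \to \cdots \to X_m$, let $a$ be the largest index with $X_a \in \add(S' \oplus S'')$ if such exists and otherwise argue all $X_j \in \add \nu_n^- S'$ directly; analyze the transition $X_a \to X_{a+1}$ to show $X_{a+1} \in \add \nu_n^- S'$, then propagate. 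Third, apply $\nu_n$ to the tail of the path lying in $\add \nu_n^- S'$ to transport it back into $\add S'$, use convexity of $\add S$, then apply $\nu_n^-$ again. I expect the main obstacle to be bookkeeping the several $\Hom$-vanishing cases carefully and making the "once you enter $\nu_n^- S'$ you stay" argument airtight — in particular handling a path that oscillates between the blocks, which is exactly what the $\Hom(S'',S')=0$ hypothesis is designed to prevent, but one must check it rules out all the mixed configurations and not merely the single forbidden direction.
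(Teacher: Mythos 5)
Your strategy---condition~(1) is easy, condition~(2) via $\Hom$-vanishing and Proposition~\ref{proposition.homslices}---is the right one and matches the paper, which organizes convexity by the four possible start/end positions of a path in $\ind\mathcal{U}$. But your list of $\Hom$-vanishing statements contains a concrete error. The first and third items are correct: applying the autoequivalence $\nu_n$ gives $\Hom(\nu_n^- S', S'') = \Hom(S', \nu_n S'')$ and $\Hom(\nu_n^- S', S') = \Hom(S', \nu_n S')$, both zero by Proposition~\ref{proposition.homslices}. The second and fourth, $\Hom(S', \nu_n^- S') = 0$ and $\Hom(S'', \nu_n^- S') = 0$, are false in general. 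Take $n = 1$, $\Lambda = k(1 \rightarrow 2 \leftarrow 3)$, $S = \Lambda$, $S' = P_2$ the simple projective, $S'' = P_1 \oplus P_3$: then $\tau^- P_2$ has dimension vector $(1,1,1)$, so $\Hom(P_2, \tau^- P_2) \cong k$ and $\Hom(P_1 \oplus P_3, \tau^- P_2) \cong k^2$, both nonzero. Indeed $\Hom(S'', \nu_n^- S') \neq 0$ is unavoidable for connected non-local $\Lambda$, since it is exactly what connects the new slice; it is also the direction a path in the substantial convexity case has to travel.

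As a consequence, the propagation step ``analyze $X_a \rightarrow X_{a+1}$ to show $X_{a+1} \in \add \nu_n^- S'$'' does not follow from the vanishing you recorded, and the claim that a path ending in $\nu_n^- S'$ cannot have earlier terms outside $\widetilde{S}$ is false as a local statement---such a term could lie in $S'$ or in $\nu_n^- S''$, both of which map nontrivially to $\nu_n^- S'$. What is actually needed, and what the paper supplies, is a global argument: by Proposition~\ref{proposition.homslices} the $\nu_n$-degree (write each indecomposable of $\mathcal{U}$ uniquely as $\nu_n^d$ of a summand of $S$) is weakly decreasing along any path, so a path starting and ending in $\widetilde{S}$ lies entirely in $\add(S \oplus \nu_n^- S)$; one then excludes $S'$ using $\Hom(S'', S') = 0$ together with convexity of $S$, and excludes $\nu_n^- S''$ by applying $\nu_n$ to the degree-$(-1)$ tail of the path and using $\Hom(S'', S') = 0$ again. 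Your ``once you enter $\nu_n^- S'$ you stay'' observation is the paper's case where $p$ starts in $\nu_n^- S'$ and ends in $S''$, and is fine; the remaining case requires the global degree argument rather than a pointwise one.
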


\begin{proof}
We restrict to the case of $\mu^+_{S'}(S)$. It is clear that it satisfies Condition~(1) of Definition~\ref{def.slicegen}. To see that $\mu^+_{S'}(S)$ is convex, let $p$ be a path in $\ind \mathcal{U}$ starting and ending in $\mu^+_{S'}(S)$. We have the following four cases with respect to where $p$ starts and ends:
\begin{enumerate}
\item If $p$ starts and ends in $S''$ then it lies entirely in $S$. Since $\Hom_{\mathcal{D}_{\Lambda}}(S'', S') = 0$ it lies entirely in $S''$.
\item Similarly, if $p$ starts and ends in $\nu_n^- S'$ then it lies entirely in $\nu_n^- S'$.
\item By Proposition~\ref{proposition.homslices} $p$ cannot start in $\nu_n^- S'$ and end in $S''$.
\item Assume that $p$ starts in $S''$ and ends in $\nu_n^- S'$. Hence, by Proposition~\ref{proposition.homslices} the path $p$ to lies entirely in $S \oplus \nu_n^- S$. Then, since $\Hom_{\mathcal{D}_{\Lambda}}(S'', S') = 0$, it can pass neither through $S'$ nor through $\nu_n^- S''$. Therefore it lies entirely in our slice.
\end{enumerate}
Thus also Condition~(2) of Definition~\ref{def.slicegen} is satisfied.
\end{proof}

\begin{lemma} \label{lemma.slicestrans}
\begin{enumerate}
\item Any two slices in $\mathcal{U}$ are iterated mutations of each other. 
\item If moreover the quiver of $\Lambda$ contains no oriented cycles, then any two slices are iterated mutations with respect to sinks or sources of each other.
\end{enumerate}
\end{lemma}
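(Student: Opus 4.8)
The plan is to show that every slice can be reached from the fixed reference slice $\Lambda\in\mathcal U$ by a chain of (sink/source) mutations; since mutations are invertible and chains compose, this makes ``connected by mutations'' an equivalence relation on the set of slices, which yields both statements. We may assume $\Lambda$ is connected and not semi-simple, the remaining cases being trivial. First I would record that $\mu^+$ and $\mu^-$ are mutually inverse: given $S=S'\oplus S''$ with $\Hom_{\mathcal D_\Lambda}(S'',S')=0$, Proposition~\ref{proposition.homslices} applied to $S$ gives $\Hom_{\mathcal D_\Lambda}(\nu_n^-S',S'')=\Hom_{\mathcal D_\Lambda}(S',\nu_nS'')=0$, so $\mu^-_{\nu_n^-S'}\bigl(\mu^+_{S'}(S)\bigr)=S$; moreover, if $S'$ is indecomposable and a source of the quiver of $\add S$, then $\nu_n^-S'$ is a sink of the quiver of $\add\mu^+_{S'}(S)$, so source and sink mutations are inverse to each other. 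It therefore suffices to produce, for an arbitrary slice $S$, such a chain from $S$ to $\Lambda$.

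Next I would set up the induction. Using Definition~\ref{def.slicegen}(1), write $\add S=\add\bigoplus_{j=1}^m\nu_n^{c_j}P_j$ with $P_1,\dots,P_m$ the indecomposable projectives and $c_j\in\Z$ determined by $S$, and put $N(S)=\sum_j|c_j|$, so $N(S)=0$ exactly when $S=\Lambda$. Assume $S\neq\Lambda$ and consider first the case $c:=\max_jc_j>0$; let $J=\{j\mid c_j=c\}$. The crucial vanishing is
\[
\Hom_{\mathcal D_\Lambda}(\nu_n^{c_k}P_k,\nu_n^cP_j)\cong\Hom_{\mathcal D_\Lambda}(P_k,\nu_n^{\,c-c_k}P_j)=0\qquad(k\notin J),
\]
which follows from Proposition~\ref{proposition.homslices} applied to the slice $\Lambda$ (equivalently from $\gld\Lambda\le n$), since $P_k,P_j$ are summands of $\Lambda$ and $c-c_k>0$. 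For part~(1) this says that $S'=\bigoplus_{j\in J}\nu_n^cP_j$ with complementary summand $S''$ satisfies $\Hom_{\mathcal D_\Lambda}(S'',S')=0$, so $\mu^+_{S'}(S)$ is a slice (Lemma~\ref{lemma.slicemutation}) whose associated integers are obtained by decreasing $c_j$ ($j\in J$) by $1$; hence $N$ drops by $|J|\ge1$ and the induction closes. In the remaining case $\max_jc_j\le0$ one has $\min_jc_j<0$, and the dual argument, mutating via $\mu^-$ at the summands of minimal $c_j$, again strictly decreases $N$. This proves~(1).

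For part~(2) I would additionally observe that $\Hom_{\mathcal D_\Lambda}(P_j,\nu_n^kP_l)=0$ for all $k\neq0$ (a degree count from $\gld\Lambda\le n$), so that every morphism in $\mathcal U$ between objects of $\add\{\nu_n^cP_l\mid 1\le l\le m\}$ factors only through objects of that subcategory; therefore the full subquiver of the quiver of $\mathcal U$ on $\{\nu_n^cP_l\}_l$ is, via the autoequivalence $\nu_n^c$, isomorphic to the quiver of $\proj\Lambda$, namely $Q_\Lambda^{\op}$, which is acyclic by hypothesis. Its finite full subquiver on $\{\nu_n^cP_j\mid j\in J\}$ then has a source $\nu_n^cP_{j_0}$, and by the displayed vanishing no arrow of the quiver of $\add S$ enters $\nu_n^cP_{j_0}$ from outside $S'$, so $\nu_n^cP_{j_0}$ is a source of the quiver of $\add S$; hence $\mu^+_{\nu_n^cP_{j_0}}(S)$ is a source mutation lowering $N$ by $1$. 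Iterating, and dualizing for sinks in the case $\max_jc_j\le0$, proves~(2). I expect the main obstacle to be bookkeeping: choosing which block (resp.\ single summand) to mutate so that both the Hom-vanishing needed for Definition~\ref{def.slicemutation} holds and $N$ strictly decreases, and — for~(2) — justifying that the ``level $c$'' subquiver of $\mathcal U$ really is $Q_\Lambda^{\op}$ and hence acyclic.
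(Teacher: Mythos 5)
Your overall strategy coincides with the paper's. For (1) both proofs reduce a slice $S=\bigoplus_j\nu_n^{c_j}P_j$ to the reference slice $\Lambda$ by mutating the summands with extremal $c_j$ and inducting on a size measure: the paper shifts one slice so that all differences $e_i-d_i$ are positive and then mutates at the maximal differences, while you induct on $N(S)=\sum_j|c_j|$ and handle the signed case directly by alternating $\mu^+$ at the maximum and $\mu^-$ at the minimum; these are the same idea. For (2) both proofs reduce the block mutation to single-summand source/sink mutations using acyclicity of the relevant quiver; the paper simply asserts that the quiver of $\mathcal{U}$ is acyclic and reads off a compatible ordering of $S'$, whereas you locate a source one step at a time.

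There is, however, an error in the justification of (2). You assert that $\Hom_{\mathcal{D}_\Lambda}(P_j,\nu_n^kP_l)=0$ for all $k\neq0$ by a degree count. This is correct for $k>0$ (it is exactly the vanishing behind Observation~\ref{obs.tiltisslice}), but it is false for $k<0$: for instance $\Hom_{\mathcal{D}_\Lambda}(P_j,\nu_n^{-1}P_l)\cong\Hom_\Lambda(P_j,\tau_n^-P_l)$, which is typically non-zero (indeed $\tau_n^-P_l$ has a projective cover and belongs to $\add M$). The conclusion you draw from it — that a radical morphism between two level-$c$ objects of $\mathcal{U}$ cannot factor through an object $\nu_n^kP_{l'}$ with $k\neq c$ — is nonetheless correct, but it must be obtained from two one-sided vanishings rather than from the symmetric statement: if $k>c$ then the first leg $\Hom(\nu_n^cP_l,\nu_n^kP_{l'})\cong\Hom(P_l,\nu_n^{k-c}P_{l'})$ vanishes, and if $k<c$ then the second leg $\Hom(\nu_n^kP_{l'},\nu_n^cP_m)\cong\Hom(P_{l'},\nu_n^{c-k}P_m)$ vanishes. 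With this correction your argument becomes a more explicit version of the paper's terse claim that the quiver of $\mathcal{U}$ (and hence of any slice) is acyclic whenever the quiver of $\Lambda$ is, and the rest of your proof of (2) goes through as written.
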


\begin{proof}
Let $\Lambda = \bigoplus P_i$ be a decomposition into indecomposable projectives. We choose $d_i$ and $e_i$ such that the two slices are $\bigoplus \nu_n^{d_i} P_i$ and $\bigoplus \nu_n^{e_i} P_i$, respectively. Since $\mu^+_S(S) = \nu_n^-S$, we can assume $e_i > d_i$ for all $i$. We set $I = \{ i \mid e_i - d_i \text{ is maximal} \}$,
\[ S' = \bigoplus_{i \in I} \nu_n^{e_i} P_i \text{ and } S'' = \bigoplus_{j \not\in I} \nu_n^{e_j} P_j.\]
Now for $i \in I$ and $j \not\in I$ we have
\[\Hom_{\mathcal{D}_{\Lambda}}(\nu_n^{e_j} P_j, \nu_n^{e_i} P_i) = \Hom_{\mathcal{D}_{\Lambda}}(\nu_n^{d_j} P_j, \nu_n^{(e_i - d_i) - (e_j - d_j)} \nu_n^{d_i} P_i).\]
Since by our choice of $I$ we have $(e_i - d_i) - (e_j - d_j) > 0$, the above space vanishes by Proposition~\ref{proposition.homslices}. Hence we may mutate, and obtain
\[ \mu^+_{S'} (\bigoplus \nu_n^{e_i} P_i) = (\bigoplus_{i \in I} \nu_n^{e_i-1} P_i) \oplus ( \bigoplus_{j \not\in I} \nu_n^{e_j} P_j ). \]
Repeating this procedure we see that any two slices are iterated mutations of each other.

For the proof of the second claim first note that if the quiver of $\Lambda$ containes no oriented cycles, then neither does the quiver of $\mathcal{U}$. So we can number the indecomposable direct summands of $S'$ as $S' = S'_1 \oplus \cdots \oplus S'_\ell$ such that $\Hom_{\mathcal{D}_{\Lambda}}(S'_i,S'_j) = 0$ for any $i>j$. Then we have $\mu_{S'}^+(S) = \mu_{S'_\ell}^+ \circ \cdots \circ \mu_{S'_1}^+(S)$ by Proposition~\ref{proposition.homslices}.
\end{proof}

\begin{theorem} \label{theorem.slices=itereatedAPR}
Assume that $\Lambda$ is $n$-representation-finite.
\begin{enumerate}
\item The iterated $n$-APR tilting complexes of $\Lambda$ are exactly the slices in $\mathcal{U}$.
\item If moreover the quiver of $\Lambda$ contains no oriented cycles, then any iterated $n$-APR tilting complex can be obtained by a sequence of $n$-APR (co)tilts in the sense of Definition~\ref{def.nAPR}.
\end{enumerate}
\end{theorem}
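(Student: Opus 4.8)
The plan is to identify mutation of slices (Definition~\ref{def.slicemutation}) with $n$-APR tilting, and then feed this into the transitivity results of Lemma~\ref{lemma.slicestrans}. For part (1) the inclusion ``every iterated $n$-APR tilting complex is a slice'' is already Observation~\ref{obs.tiltisslice}, so I only need the converse. I would prove it by induction on the length of a mutation sequence: by Lemma~\ref{lemma.slicestrans}(1) every slice is obtained from $\Lambda$ by iterated mutations, so it suffices to show that if a slice $S$ happens to be an iterated $n$-APR tilting complex of $\Lambda$ (which $\Lambda$ itself trivially is), then so is every mutation of $S$.

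Such an $S$ is in particular a tilting complex in $\mathcal{D}_\Lambda$ (apply Theorem~\ref{thm.APR_comp_is_tilting} along the chain), and the algebra $\Sigma := \End_{\mathcal{D}_\Lambda}(S)^{\op}$ is $n$-representation-finite by Theorem~\ref{theorem.APRpres_repfin}. The standard derived equivalence $\phi := \mathbf{R}\Hom_\Lambda(S,-) \colon \mathcal{D}_\Lambda \to \mathcal{D}_\Sigma$ sends $S$ to $\Sigma$, commutes with $\nu_n$, and hence --- since $\mathcal{U}_\Lambda = \add\{\nu_n^i S \mid i \in \Z\}$, $S$ being a slice --- restricts to an equivalence $\mathcal{U}_\Lambda \to \mathcal{U}_\Sigma$, exactly as in Proposition~\ref{prop.preserves_U}. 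Now for a mutation $\mu^+_{S'}(S)$, with $S = S' \oplus S''$ and $\Hom_{\mathcal{D}_\Lambda}(S'', S') = 0$, the equivalence $\phi$ carries the decomposition to $\Sigma = P \oplus Q$ with $\Hom_\Sigma(Q, P) = 0$; by Observation~\ref{obs.alwaysAPR_der} the summand $P$ admits an $n$-APR tilting complex $(\nu_n^- P) \oplus Q$, whose preimage under $\phi$ is precisely $(\nu_n^- S') \oplus S'' = \mu^+_{S'}(S)$. Hence $\mu^+_{S'}(S)$ is an $n$-APR tilt of $S$, and dually $\mu^-_{S''}(S)$ is an $n$-APR cotilt of $S$ --- equivalently an $n$-APR tilt read backwards, via the autoequivalence $\nu_n\nu$ of the Remark after Definition~\ref{def.nAPRcomplex}. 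In either case the mutation is again an iterated $n$-APR tilting complex of $\Lambda$, which completes part (1).

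For part (2) assume the quiver of $\Lambda$ has no oriented cycles; I may assume $\Lambda$ connected and not semisimple (otherwise work blockwise, the semisimple blocks being trivial). By Lemma~\ref{lemma.slicestrans}(2) every slice is reached from $\Lambda$ through mutations at sinks and sources of the successive slices, all of which again have quivers without oriented cycles (being convex subcategories of $\mathcal{U}$, whose quiver has none). So I would reduce to: if $S'$ is a source of a slice $S$, then the $n$-APR tilt $\mu^+_{S'}(S)$ of part (1) is realized by an $n$-APR tilting module in the sense of Definition~\ref{def.nAPR} (and dually, a sink gives an $n$-APR cotilt). Under $\phi$ the source $S'$ corresponds to a simple projective module $P$ over $\Sigma = \End_{\mathcal{D}_\Lambda}(S)^{\op}$, and $\Sigma$ is again connected and not semisimple (derived equivalent to $\Lambda$, with $n \ge 1$), so $P$ is non-injective. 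Then Observation~\ref{obs.alwaysAPR} provides an $n$-APR tilting $\Sigma$-module; in particular $\id_\Sigma P = n$, so $\nu_n^- P = \tau_n^- P$ is a module and $(\nu_n^- P) \oplus Q = (\tau_n^- P) \oplus Q$ is exactly that tilting module, with preimage $\mu^+_{S'}(S)$ under $\phi$. The sink case is dual, and iterating along the chain from $\Lambda$ gives the claim.

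I expect the main work to be the transport across $\phi$ in part (1): verifying that $\phi$ really identifies $\mathcal{U}_\Lambda$ with $\mathcal{U}_\Sigma$ and carries $\mu^+_{S'}(S)$ to the $n$-APR tilting object determined by $\phi(S')$ --- this is the precise sense in which slice mutation and $n$-APR tilting are one operation. For part (2) the delicate extra point is that the complex $\nu_n^- P$ attached to a source is actually a module, i.e. $\id_\Sigma P = n$; this is exactly what is built into Observation~\ref{obs.alwaysAPR}, whose proof amounts to noting that if $\id_\Sigma P < n$, then $n$-rigidity of the cluster tilting object together with $P$ being simple projective non-injective forces $\Ext^i_\Sigma(D\Sigma, P) = 0$ for all $i \ge 0$, hence $\nu^{-1} P = \mathbf{R}\Hom_\Sigma(D\Sigma, P) = 0$ and so $P = 0$, absurd. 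One should also read ``iterated $n$-APR tilting complex'' as closed under passing backwards, which is legitimate since the inverse of an $n$-APR tilt is an $n$-APR cotilt up to the autoequivalence $\nu_n\nu$.
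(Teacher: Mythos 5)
Your proposal is correct and follows the same route as the paper's (very terse) proof: Observation~\ref{obs.tiltisslice} gives one inclusion, the transitivity of slice mutation (Lemma~\ref{lemma.slicestrans}) plus Observation~\ref{obs.alwaysAPR_der} gives the converse, and Lemma~\ref{lemma.slicestrans}(2) together with Observation~\ref{obs.alwaysAPR} and Remark~\ref{remark.in_mod=in_der} handles part (2). You have simply made explicit the transport through the derived equivalence $\phi$ and the verification that a source of a slice corresponds to a simple projective $\Sigma$-module with $\id_\Sigma P = n$, steps the paper leaves implicit in its one-line citations.
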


\begin{proof}
(1) By Observation~\ref{obs.tiltisslice} any iterated $n$-APR tilt comes from a slice. The converse follows from Lemma~\ref{lemma.slicestrans}(1) and Observation~\ref{obs.alwaysAPR_der}.

(2) Follows similarly using Lemma~\ref{lemma.slicestrans}(2) and Remark~\ref{remark.in_mod=in_der}.
\end{proof}

\subsection{Admissible sets} \label{subsect.admsets}

In this subsection we will see that all the endomorphism rings of slices, and hence all the iterated $n$-APR tilts, of an $n$-representation-finite algebra are obtained as quotients of the $(n+1)$-preprojective algebra (see Definition~\ref{def.preproj}).

\begin{lemma} \label{lemma.slicesradical}
Let $S$ be a slice in $\mathcal{U}$. Then
\[ \Hom_{\mathcal{U}}(S, \nu_n^{-i} S) \subseteq \Rad_{\mathcal{U}}^i(S, \nu_n^{-i} S). \]
\end{lemma}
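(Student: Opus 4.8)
The plan is to attach a ``layer'' function to the slice $S$ and to run an induction on radical length.

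Every indecomposable of $\mathcal U=\add\{\nu_n^i\Lambda\mid i\in\Z\}$ has the form $\nu_n^jP$ for an indecomposable projective $P$, and by Definition~\ref{def.slicegen}(1) there is a unique integer $d_P$ with $\nu_n^{d_P}P\in\add S$. One checks, again from Definition~\ref{def.slicegen}(1), that $h_S(\nu_n^jP):=d_P-j$ is a well-defined function on the indecomposables of $\mathcal U$, with $h_S(X)=0$ exactly when $X\in\add S$ and $h_S(\nu_n^{-1}X)=h_S(X)+1$. Set $\mathcal L_m:=\add\{X\in\mathcal U\mid h_S(X)=m\}$, so that $\mathcal L_m=\add(\nu_n^{-m}S)$ and $\mathcal U=\bigoplus_m\mathcal L_m$. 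The basic input is the \emph{height inequality}: $\Hom_{\mathcal U}(\mathcal L_j,\mathcal L_k)=0$ for $j>k$; this follows at once from Proposition~\ref{proposition.homslices}, since applying the autoequivalence $\nu_n^k$ gives $\Hom_{\mathcal U}(\mathcal L_j,\mathcal L_k)\iso\Hom_{\mathcal U}(S,\nu_n^{j-k}S)$. Consequently $h_S$ is non-decreasing along any nonzero map of indecomposables, so a chain of such maps beginning and ending in $\add S$ stays in $\add S$; hence $\Rad^N_{\mathcal U}(\add S,\add S)\subseteq\Rad^N_{\add S}$ (the $N$-th power of the radical of the finite-dimensional algebra $\End_{\mathcal U}(S)$), which vanishes for $N\gg0$.

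Writing $S$ as a sum of indecomposables and decomposing $\Hom_{\mathcal U}(S,\nu_n^{-i}S)$ accordingly, and noting that $h_S$ of a summand of $S$ is $0$ while $h_S$ of a summand of $\nu_n^{-i}S$ is $i$, the lemma reduces to the claim $\Hom_{\mathcal U}(\mathcal L_0,\mathcal L_k)\subseteq\Rad^k_{\mathcal U}$ for all $k\ge0$. I would prove this by induction on $k$; the cases $k=0$ (trivially) and $k=1$ (no common indecomposable summand) are clear. For $k\ge2$, let $X\in\mathcal L_0$ and $Z\in\mathcal L_k$ be indecomposable and $f\colon X\to Z$. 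Since $X\not\iso Z$, $f$ factors through the left almost split map $\alpha\colon X\to X^1$ of $X$ in $\mathcal U$ (the first map of an $n$-almost split sequence), say $f=f_1\alpha$; write $X^1=\bigoplus_aX^1_a$ and $\alpha=(\alpha_a)$, so each $\alpha_a\in\Rad_{\mathcal U}$. The structural point (discussed below) is that every $X^1_a$ lies in $\mathcal L_0\oplus\mathcal L_1$. If $X^1_a\in\mathcal L_1$, then $f_1|_{X^1_a}\in\Hom_{\mathcal U}(\mathcal L_1,\mathcal L_k)\iso\Hom_{\mathcal U}(\mathcal L_0,\mathcal L_{k-1})\subseteq\Rad^{k-1}_{\mathcal U}$ by induction, so $f_1|_{X^1_a}\alpha_a\in\Rad^k_{\mathcal U}$. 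If $X^1_a\in\mathcal L_0$, then $\alpha_a\in\Rad_{\add S}$ while $f_1|_{X^1_a}\colon X^1_a\to Z$ is again a map $\mathcal L_0\to\mathcal L_k$, to which I apply the same factorization, recursively. Each step of the recursion either passes through $\mathcal L_1$ — contributing a summand of $f$ that lies in $\Rad^k_{\mathcal U}$ — or prepends one more factor from $\Rad_{\add S}$; after $N$ steps the remaining contributions vanish because $\Rad^N_{\add S}=0$. Thus $f\in\Rad^k_{\mathcal U}$, finishing the induction.

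The one non-formal step is the structural point that all indecomposable summands of the source $X^1$ of $X$ have $h_S$-value at most $h_S(X)+1$ (the lower bound $\ge h_S(X)$ being automatic from $\alpha_a\neq 0$). I would deduce this from the shape of the $n$-almost split sequence $X\to X^1\to\cdots\to X^{n-1}\to\nu_n^{-1}X$ in $\mathcal U$ (obtained by applying \cite[Theorem~2.2]{Iy_n-Auslander} to $\nu_n^{-1}X$, together with \cite[Lemma~4.9]{IO2} to reduce to the module case): each indecomposable summand $X^1_a$ of its first middle term satisfies $\Hom_{\mathcal U}(X,X^1_a)\neq0\neq\Hom_{\mathcal U}(X^1_a,\nu_n^{-1}X)$, and the second nonvanishing yields $h_S(X^1_a)\le h_S(\nu_n^{-1}X)=h_S(X)+1$ by the height inequality. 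Everything else in the argument is formal: the height inequality comes straight from Proposition~\ref{proposition.homslices}, and the termination of the recursion from the nilpotency of $\Rad_{\add S}$.
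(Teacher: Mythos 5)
Your approach is genuinely different from the paper's and worth comparing. The paper's proof is two lines: by Theorem~\ref{theorem.slices=itereatedAPR} every slice is an iterated $n$-APR tilting complex, so via the induced derived equivalence (Proposition~\ref{prop.preserves_U}) one may assume $S=\Lambda$; then Proposition~\ref{proposition.tensoralg} identifies $\bigoplus_i\Hom_{\mathcal D_\Lambda}(\Lambda,\nu_n^{-i}\Lambda)$ with the tensor algebra $\widehat{\Lambda}$, whose degree-$i$ piece consists by definition of $i$-fold compositions of degree-1 (hence radical) maps, which is exactly the claim. Your argument instead stays inside $\mathcal U$, using the height function, the vanishing $\Hom(\mathcal L_j,\mathcal L_k)=0$ for $j>k$, factorisation through $n$-almost split sequences, and nilpotency of $\Rad_{\add S}$. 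These pieces are all sound, and the setup is nice; but the argument stands or falls with what you call the structural point.

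That structural point is where there is a genuine gap. You assert that every indecomposable summand $X^1_a$ of the first middle term of the $n$-almost split sequence $X\to X^1\to\cdots\to X^{n-1}\to\nu_n^{-1}X$ satisfies $\Hom_{\mathcal U}(X^1_a,\nu_n^{-1}X)\neq 0$. Minimality of the sequence gives you $\Hom(X,X^1_a)\neq 0$ (each component of the minimal left almost split map is nonzero), and dually each summand of the \emph{last} middle term $X^{n-1}$ maps nonzero onto $\nu_n^{-1}X$; but for $n\ge 3$ it is not a formal consequence of \cite[Theorem~2.2]{Iy_n-Auslander} that a summand of the \emph{first} middle term composes nonzero through $X^2\to\cdots\to\nu_n^{-1}X$, and you do not give an argument. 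Worse, the structural point is essentially a special case of the lemma itself: if some $X^1_a$ had $h_S(X^1_a)=k\ge 2$, the irreducible component map $X\to X^1_a$ would lie in $\Hom_{\mathcal U}(\mathcal L_0,\mathcal L_k)$ but not in $\Rad^2_{\mathcal U}$, contradicting the very statement you are proving. So it cannot be fed into the induction as an independently known fact. The cleanest way I see to establish it is to reduce to $S=\Lambda$ via Theorem~\ref{theorem.slices=itereatedAPR}, where it falls out of the tensor-algebra description of the quiver of $\widehat{\Lambda}$ --- but at that point the $n$-almost-split-sequence machinery becomes superfluous and you are back to the paper's proof. If you want to keep the direct route, this is the step that needs an actual argument.
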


\begin{proof}
By Theorem~\ref{theorem.slices=itereatedAPR} we may assume $S$ to be the slice $\Lambda$. Then the claim follows from Proposition~\ref{proposition.tensoralg}.
\end{proof}

\begin{construction} \label{const.standardadmset}
For $P, Q \in \add \Lambda$ indecomposable we choose
\begin{align*}
 C_0(P, Q) & \subseteq \Rad_{\mathcal{U}}(P, \nu_n^- Q) \text{ such that } C_0(P, Q) \text{ is a minimal generating set of } \\
& \Rad_{\mathcal{U}}(P, \nu_n^- Q) / \Rad^2_{\mathcal{U}} (P, \nu_n^- Q) \text{ as } {\textstyle \frac{\End_{\mathcal{U}}(P)^{\op}}{\Rad \End_{\mathcal{U}}(P)^{\op}}\text{-}\frac{\End_{\mathcal{U}}(Q)^{\op}}{\Rad \End_{\mathcal{U}}(Q)^{\op}}} \text{-bimodule, and} \\
 H(P, Q) & \subseteq \Rad_{\mathcal{U}}(P, Q) \text{ such that } H(P, Q) \text{ is a minimal generating set of } \\
& \Rad_{\mathcal{U}}(P, Q) / \Rad^2_{\mathcal{U}} (P, Q) \text{ as } {\textstyle \frac{\End_{\mathcal{U}}(P)^{\op}}{\Rad \End_{\mathcal{U}}(P)^{\op}}\text{-}\frac{\End_{\mathcal{U}}(Q)^{\op}}{\Rad \End_{\mathcal{U}}(Q)^{\op}}} \text{-bimodule.}
\intertext{We set}
 A(P,Q) & = C_0(P, Q) {\textstyle \coprod} H(P, Q) \subseteq \Hom_{\mathcal{C}_{\Lambda}^n}(P,Q)
\end{align*}
We write $C_0 = \coprod_{P,Q} C_0(P,Q)$ and $A = \coprod_{P,Q} A(P,Q)$. Note that by Definition~\ref{def.preproj} the set $A(P,Q)$ generates $\Rad_{\mathcal{C}_{\Lambda}^n}(P,Q) / \Rad_{\mathcal{C}_{\Lambda}^n}^2(P,Q)$.
\end{construction}

If $k$ is algebraically closed, then $H$ consists of the arrows in the quiver of $\Lambda$, and $C_0$ consist of the additional arrows in the quiver of $\widehat{\Lambda}$. Thus $A$ consist of all arrows in the quiver of $\widehat{\Lambda}$.

\begin{lemma}
\[ \Lambda \iso \widehat{\Lambda} / (C_0). \]
\end{lemma}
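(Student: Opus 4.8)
The plan is to use Proposition~\ref{proposition.tensoralg} to identify $\widehat\Lambda$ with the $\mathbb{N}$-graded algebra $\bigoplus_{i\ge 0}\Hom_{\mathcal D_\Lambda}(\Lambda,\nu_n^{-i}\Lambda)$ (the grading is concentrated in non-negative degrees because $\Lambda$ is a slice, so $\Hom_{\mathcal D_\Lambda}(\Lambda,\nu_n^j\Lambda)=0$ for $j>0$ by Observation~\ref{obs.tiltisslice}). Its degree-zero part is $\widehat\Lambda_0=\End_\Lambda(\Lambda)=\Lambda$ and its degree-one part is $\widehat\Lambda_1=\Ext^n_\Lambda(D\Lambda,\Lambda)=\bigoplus_{P,Q}\Hom_{\mathcal U}(P,\nu_n^-Q)$, the sum running over indecomposable direct summands $P,Q$ of $\Lambda$. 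Projecting onto the degree-zero part gives a surjective ring homomorphism $\widehat\Lambda\twoheadrightarrow\Lambda$ with kernel $\widehat\Lambda_{\ge 1}$, so the statement amounts to proving $(C_0)=\widehat\Lambda_{\ge 1}$.

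Since $C_0\subseteq\widehat\Lambda_1$ and $\widehat\Lambda$ is non-negatively graded, $(C_0)$ is a graded ideal contained in $\widehat\Lambda_{\ge 1}$ whose degree-one part is $\Lambda\, C_0\, \Lambda$. As $\widehat\Lambda$ is the tensor algebra of the $\Lambda$-bimodule $\widehat\Lambda_1$, the ideal $\widehat\Lambda_{\ge 1}$ is generated by $\widehat\Lambda_1$; hence it is enough to show $\widehat\Lambda_1=\Lambda\, C_0\, \Lambda$, i.e. that $C_0$ generates $\widehat\Lambda_1$ as a $\Lambda$-bimodule.

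The heart of the argument is to unravel $\Rad^2_{\mathcal U}(P,\nu_n^-Q)$ for indecomposable summands $P,Q$ of $\Lambda$. The slice property of $\Lambda$ forces any indecomposable $X\in\mathcal U$ admitting nonzero maps $P\to X\to\nu_n^-Q$ to be of the form $\nu_n^{-j}(\text{indecomposable projective})$ with $j\in\{0,1\}$, and for such $X$ the maps lying in a single $\nu_n$-orbit degree are just morphisms in $\mod\Lambda$ (transported by the autoequivalence $\nu_n$). Consequently
\[ \Rad^2_{\mathcal U}(P,\nu_n^-Q)=\sum_{P'}\Hom_{\mathcal U}(P',\nu_n^-Q)\cdot\Rad_\Lambda(P,P')+\sum_{Q'}\Rad_\Lambda(Q',Q)\cdot\Hom_{\mathcal U}(P,\nu_n^-Q'), \]
which, summed over all $P,Q$, says $\sum\Rad^2_{\mathcal U}=\widehat\Lambda_1\cdot J+J\cdot\widehat\Lambda_1$ inside $\widehat\Lambda_1$, where $J:=\Rad\Lambda$. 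On the other hand, by Construction~\ref{const.standardadmset} the set $C_0(P,Q)$ generates $\Rad_{\mathcal U}(P,\nu_n^-Q)/\Rad^2_{\mathcal U}(P,\nu_n^-Q)$ over the semisimple quotients of $\End_{\mathcal U}(P)^{\op}$ and $\End_{\mathcal U}(Q)^{\op}$; since the radicals of these local rings act into $\Rad^2_{\mathcal U}(P,\nu_n^-Q)$, this lifts to $\Rad_{\mathcal U}(P,\nu_n^-Q)=\End_{\mathcal U}(Q)^{\op}\cdot C_0(P,Q)\cdot\End_{\mathcal U}(P)^{\op}+\Rad^2_{\mathcal U}(P,\nu_n^-Q)$. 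Combining this with the description of $\Rad^2$ above and summing yields
\[ \widehat\Lambda_1=\Lambda\, C_0\, \Lambda+J\,\widehat\Lambda_1+\widehat\Lambda_1\, J. \]

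Finally I would apply Nakayama's lemma. The quotient $\widehat\Lambda_1/\Lambda C_0\Lambda$ is a finitely generated module over $\Lambda\otimes_k\Lambda^{\op}$, and the last displayed identity says it is annihilated modulo the ideal $J\otimes_k\Lambda^{\op}+\Lambda\otimes_k J^{\op}$; this ideal is nilpotent (as $J$ is), hence lies in $\Rad(\Lambda\otimes_k\Lambda^{\op})$, so the quotient vanishes and $\widehat\Lambda_1=\Lambda C_0\Lambda$, which finishes the proof. I expect the main obstacle to be the computation of $\Rad^2_{\mathcal U}(P,\nu_n^-Q)$: this is where one genuinely uses that $\Lambda$ is a slice (to exclude intermediate indecomposables in other $\nu_n$-degrees) and where the bimodule-generation statement of Construction~\ref{const.standardadmset} has to be matched against the radical filtration of $\mathcal U$; everything downstream is a formal graded-ring and Nakayama argument.
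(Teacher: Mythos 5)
Your proposal is correct, and it follows the same route as the paper's (very terse) proof: use Proposition~\ref{proposition.tensoralg} to see $\widehat\Lambda$ as the graded endomorphism ring of $\Lambda$ in the orbit category, note the degree-zero projection is a surjective ring map $\widehat\Lambda\twoheadrightarrow\Lambda$ with kernel $\widehat\Lambda_{\ge 1}$, and identify $(C_0)$ with $\widehat\Lambda_{\ge 1}$. The paper simply asserts that this last identification is immediate from the definition of $C_0$; you fill in the step that $C_0$ generates $\widehat\Lambda_1$ as a $\Lambda$-bimodule. Your expansion is sound: the slice property of $\Lambda$ (Proposition~\ref{proposition.homslices}) correctly pins down the intermediate objects contributing to $\Rad^2_{\mathcal U}(P,\nu_n^-Q)$ to degrees $0$ and $1$, giving $\Rad^2_{\mathcal U}$ in degree one equal to $J\widehat\Lambda_1+\widehat\Lambda_1 J$ with $J=\Rad\Lambda$, and then Nakayama over $\Lambda\otimes_k\Lambda^{\op}$ finishes. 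One small remark: since $P\not\cong\nu_n^-Q$ in $\mathcal U$, one automatically has $\Rad_{\mathcal U}(P,\nu_n^-Q)=\Hom_{\mathcal U}(P,\nu_n^-Q)$, so the ``bimodule generation modulo $\Rad^2$'' stated in Construction~\ref{const.standardadmset} really does concern all of $\widehat\Lambda_1$ and not a proper submodule; you use this implicitly and it is worth making explicit.
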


\begin{proof}
This follows from Proposition~\ref{proposition.tensoralg} and the definition of $C_0$ above.
\end{proof}

\begin{definition} \label{def.admset}
\begin{enumerate}
\item We call $C_0$ as above the \emph{standard admissible set}.
\item For $C \subset A$ and a decomposition $\Lambda = \Lambda' \oplus \Lambda''$ (as modules) with
\begin{enumerate}
\item $\add \Lambda' \cap \add \Lambda'' = 0$,
\item for $P \in \add \Lambda'$ and $Q \in \add \Lambda''$ indecomposable we have $C(P,Q) = \emptyset$
\item for $P \in \add \Lambda''$ and $Q \in \add \Lambda'$ indecomposable we have $C(P,Q) = A(P,Q)$
\end{enumerate}
we define a new subset $\mu_{\Lambda'}^+(C) = \mu_{\Lambda''}^-(C) \subseteq A$ by
\[ \mu_{\Lambda'}^+(C)(P,Q) = \left\{ \begin{array}{cl} C(P,Q) & \text{ if } P \oplus Q \in \add \Lambda' \\ C(P,Q) & \text{ if } P \oplus Q \in \add \Lambda'' \\ A(P,Q) & \text{ if } P \in \add \Lambda' \text{ and } Q \in \add \Lambda'' \\ \emptyset & \text{ if } P \in \add \Lambda'' \text{ and } Q \in \add \Lambda' \end{array} \right. \]
That is, we remove from $C$ all arrows $\add \Lambda'' \to \add \Lambda'$, and add all arrows $\add \Lambda' \to \add \Lambda''$ in $A$.

We call this set a \emph{mutation} of $C$.
\item An \emph{admissible set} is a subset of $A$ which is an iterated mutation of the standard admissible set.
\end{enumerate}
\end{definition}

We will now investigate the relation of slices in $\mathcal{U}$ and admissible sets.

\begin{construction} \label{construct.setfromslice}
Let $S = \bigoplus \nu_n^{s_i} P_i$ be a slice in $\mathcal{U}$. We set
\[ C_S(P_i, P_j) = \{ \varphi \in A(P_i, P_j) \mid \varphi \text{ is a map } P_i \to \nu_n^{s_j - s_i - r} P_j \text{ for some } r > 0 \}. \]
\end{construction}

\begin{proposition} \label{prop.endo=slice}
For any slice $S$ in $\mathcal{U}$ we have
\[ \End_{\mathcal{D}_{\Lambda}}(S)^{\op} \iso \widehat{\Lambda} / (C_S). \]
\end{proposition}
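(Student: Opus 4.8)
The plan is to realize $\widehat\Lambda$ as the endomorphism algebra $\End_{\mathcal{D}_\Lambda/\nu_n}(S)^{\op}$ and to read off the claimed quotient from a grading. Since the orbit--category projection $\pi$ satisfies $\pi\nu_n\cong\pi$, each indecomposable summand $\nu_n^{s_i}P_i$ of $S$ has $\pi(\nu_n^{s_i}P_i)\cong\pi P_i$, so $\pi S\cong\pi\Lambda$ in $\mathcal{C}_\Lambda^n$; hence by Proposition~\ref{proposition.tensoralg}
\[ \widehat\Lambda \iso \End_{\mathcal{C}_\Lambda^n}(\pi S)^{\op} \iso \End_{\mathcal{D}_\Lambda/\nu_n}(S)^{\op} = \Bigl( \bigoplus_{m\in\Z}\Hom_{\mathcal{D}_\Lambda}(S,\nu_n^mS) \Bigr)^{\op}. \]
The grading of the orbit category makes this a $\Z$-graded algebra; by Proposition~\ref{proposition.homslices} the piece $\Hom_{\mathcal{D}_\Lambda}(S,\nu_n^mS)$ vanishes for $m>0$, so after applying $(-)^{\op}$ the algebra $\widehat\Lambda$ becomes $\Z_{\ge0}$-graded with $\widehat\Lambda_0=\End_{\mathcal{D}_\Lambda}(S)^{\op}$ and with $\widehat\Lambda_{\ge1}$ a nilpotent two-sided ideal (it is a graded ideal and $\widehat\Lambda$ is finite-dimensional since $\mathcal{C}_\Lambda^n$ is Hom-finite). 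Thus $\widehat\Lambda/\widehat\Lambda_{\ge1}=\End_{\mathcal{D}_\Lambda}(S)^{\op}$, and the whole proposition reduces to the identity $(C_S)=\widehat\Lambda_{\ge1}$.

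Next I would do the degree bookkeeping that makes $C_S$ visible inside this grading. Each arrow $\varphi\in A(P_i,P_j)$ is homogeneous: if $\varphi\in H(P_i,P_j)$ it is a nonzero map $P_i\to\nu_n^0P_j$, and if $\varphi\in C_0(P_i,P_j)$ it is a nonzero map $P_i\to\nu_n^{-1}P_j$; transporting through $\pi S\cong\pi\Lambda$, the former has $\widehat\Lambda$-degree $s_j-s_i$ and the latter $s_j-s_i+1$. In either case the component form of Proposition~\ref{proposition.homslices} for the summands $\nu_n^{s_i}P_i,\nu_n^{s_j}P_j$ of $S$ says $\Hom_{\mathcal{D}_\Lambda}(P_i,\nu_n^tP_j)=0$ for $t>s_j-s_i$, which forces these degrees to be $\ge0$; moreover, comparing with Construction~\ref{construct.setfromslice}, an arrow $\varphi$ lies in $C_S$ precisely when its degree is $\ge1$, and then the degree equals the integer $r>0$ occurring there. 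Hence $C_S$ is exactly the set of arrows of positive degree, so $(C_S)\subseteq\widehat\Lambda_{\ge1}$ is immediate.

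For the reverse inclusion I would pass to $R:=\widehat\Lambda/(C_S)$, a finite-dimensional $\Z_{\ge0}$-graded algebra with $R_0=\widehat\Lambda_0$ (as $(C_S)\subseteq\widehat\Lambda_{\ge1}$). By Construction~\ref{const.standardadmset} the set $A$ generates $\Rad_{\mathcal{C}_\Lambda^n}(\pi\Lambda)/\Rad^2$, i.e.\ $A$ generates $\Rad\widehat\Lambda$ as a two-sided ideal; since the positive-degree arrows $C_S$ die in $R$, the radical $\Rad R$ is generated by the images of the degree-$0$ arrows, so $\Rad R\subseteq R\cdot\Rad R_0\cdot R$. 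Now a Nakayama-type induction on the degree finishes the job: each graded piece $R_d$ with $d\ge1$ lies in $\Rad R$, so $R_d=(\Rad R)_d\subseteq\sum_{d_1+d_2=d}R_{d_1}(\Rad R_0)R_{d_2}$, and feeding in $R_1=\cdots=R_{d-1}=0$ (induction hypothesis) collapses this to $R_d=R_d\,\Rad R_0+\Rad R_0\,R_d$, whence the bimodule Nakayama lemma for the finite-dimensional $R_0$-bimodule $R_d$ gives $R_d=0$. (The only subtlety, separability of $R_0/\Rad R_0$ so that $\Rad(R_0\otimes_kR_0^{\op})=\Rad R_0\otimes R_0^{\op}+R_0\otimes\Rad R_0^{\op}$, is harmless: one may extend scalars to the algebraic closure, which commutes with forming $\widehat\Lambda$ and $C_S$.) Therefore $R=R_0=\End_{\mathcal{D}_\Lambda}(S)^{\op}$, i.e.\ $(C_S)=\widehat\Lambda_{\ge1}$ and $\widehat\Lambda/(C_S)\iso\End_{\mathcal{D}_\Lambda}(S)^{\op}$.

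I expect the main obstacle to be the second paragraph: correctly juggling the three gradings in play (the ``tensor length'' grading of $\widehat\Lambda={\rm T}_\Lambda\Ext^n_\Lambda(D\Lambda,\Lambda)$, the regrading coming from the shifts $s_i$ of the slice $S$, and the cohomological degree inside $\mathcal{D}_\Lambda$) and verifying that $C_S$ as defined in Construction~\ref{construct.setfromslice} is precisely the positive part of the $S$-grading on $\widehat\Lambda$. Everything else is essentially formal once $\widehat\Lambda\iso\End_{\mathcal{D}_\Lambda/\nu_n}(S)^{\op}$ and its $\Z_{\ge0}$-grading are in place.
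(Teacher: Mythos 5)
Your proof is correct and takes a genuinely different route from the paper's. The paper's two-line proof invokes Lemma~\ref{lemma.slicesradical} (that $\Hom_{\mathcal{U}}(S,\nu_n^{-i}S)\subseteq\Rad_{\mathcal{U}}^i$), which is itself established by first reducing, via Theorem~\ref{theorem.slices=itereatedAPR} (transitivity of slices under mutation) and Proposition~\ref{prop.preserves_U}, to the case $S=\Lambda$, where the claim is just the tensor-algebra structure of $\widehat{\Lambda}$; in that guise the positive-degree part is manifestly generated in degree one. You instead work directly with the $\Z_{\ge0}$-regrading of $\widehat{\Lambda}$ induced by $S$ and prove $(C_S)=\widehat{\Lambda}_{\ge1}$ by a graded Nakayama induction, using only that $A$ generates $\Rad\widehat{\Lambda}$ (Construction~\ref{const.standardadmset} plus nilpotence of the radical) and that the surviving arrows $A\setminus C_S$ all sit in degree~$0$. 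This has the advantage of being self-contained at this point in the paper --- it does not route through the mutation-transitivity theorem --- and it makes explicit the degree bookkeeping that the paper compresses into ``by definition of $C_S$''. The cost is the extra induction. One small remark: the separability caveat you raise is unnecessary. Your induction step only needs the inclusion $\Rad R_0\otimes_k R_0^{\op}+R_0\otimes_k(\Rad R_0)^{\op}\subseteq\Rad(R_0\otimes_kR_0^{\op})$, which always holds because the left-hand side is a nilpotent ideal; the equality (which does need separability) is never used, so the base-change to $\bar{k}$ can simply be dropped.
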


\begin{proof}
We have 
\begin{align*}
\End_{\mathcal{D}_{\Lambda}}(S)^{\op} & = \Hom_{\mathcal{D}_{\Lambda}}(S, \bigoplus \nu_n^i S) / (\text{maps } S \to \nu_n^- S) && (\text{by \ref{proposition.homslices} \text{ and } \ref{lemma.slicesradical}})\\
& = \widehat{\Lambda} / (C_S) && (\text{by definition of } C_S) \qedhere
\end{align*}
\end{proof}

\begin{proposition} \label{prop.admset=slice}
\begin{enumerate}
\item The map $C_? \colon S \mapsto C_S$ sends slices in $\mathcal{U}$ to admissible sets. Moreover any admissible set is of the form $C_S$ for some slice $S$.
\item $C_?$ commutes with mutations in the following way:
\begin{align*}
C_{\mu^+_{S'}(S)} & = \mu^+_{\Lambda'} (C_S) \text{, and} \\
C_{\mu^-_{S''}(S)} & = \mu^-_{\Lambda''} (C_S)\\
\end{align*}
whenever $S = S' \oplus S''$ and $\Lambda = \Lambda' \oplus \Lambda''$ such that $\pi(S') \iso \pi(\Lambda')$ and $\pi(S'') \iso \pi(\Lambda'')$ (recall that $\pi$ denotes the map from the derived category to the $n$-Amiot cluster category -- see Definition~\ref{def.amiot_CC}). In particular the mutations of slices are defined if and only if the mutations of admissible sets are.
\end{enumerate}
\end{proposition}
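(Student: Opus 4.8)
The statement to prove is Proposition~\ref{prop.admset=slice}, which has two parts: (1) the map $C_?$ from slices to subsets of $A$ lands in the admissible sets, and is surjective onto them; and (2) $C_?$ intertwines the mutation of slices (Definition~\ref{def.slicemutation}) with the mutation of admissible sets (Definition~\ref{def.admset}). I would prove (2) first, since (1) will follow from it together with Lemma~\ref{lemma.slicestrans}(1) and Observation~\ref{obs.tiltisslice}.

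For part (2), fix a slice $S = \bigoplus \nu_n^{s_i} P_i$ and a decomposition $S = S' \oplus S''$ with $\Hom_{\mathcal{D}_\Lambda}(S'', S') = 0$, say $S' = \bigoplus_{i \in I} \nu_n^{s_i} P_i$ and $S'' = \bigoplus_{j \notin I} \nu_n^{s_j} P_j$, and let $\Lambda = \Lambda' \oplus \Lambda''$ be the corresponding decomposition with $\pi(S') \iso \pi(\Lambda')$, $\pi(S'') \iso \pi(\Lambda'')$, i.e.\ $\Lambda' = \bigoplus_{i \in I} P_i$. First I would check that the hypothesis $\Hom_{\mathcal{D}_\Lambda}(S'', S') = 0$ translates, via the definition of $C_S$ and Construction~\ref{construct.setfromslice}, exactly into the conditions (a)--(c) of Definition~\ref{def.admset}(2) for the pair $(C_S, \Lambda = \Lambda' \oplus \Lambda'')$: condition (b), $C_S(P_i, P_j) = \emptyset$ for $i \in I$, $j \notin I$, says there is no map $P_i \to \nu_n^{s_j - s_i - r} P_j$ in $A$ with $r > 0$, i.e.\ no such map for any shift $\le s_j - s_i - 1$, which by Proposition~\ref{proposition.homslices} applied to $S$ is equivalent to $\Hom(\nu_n^{s_i} P_i, \nu_n^{s_j} P_j) = 0$, i.e.\ $\Hom_{\mathcal{D}_\Lambda}(S', S'')$ restricted to this pair being zero — wait, this direction needs care; the cleaner route is that $C_S(P_i,P_j) \ne \emptyset$ would force an arrow in $A$ of negative enough degree, and then using that $S$ is a slice (convexity plus Proposition~\ref{proposition.homslices}) one bounds which degrees of maps between summands of $S$ are nonzero. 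After establishing the conditions hold, I would compute $C_{\mu^+_{S'}(S)}$ directly from Construction~\ref{construct.setfromslice}: the new slice is $\bigoplus_{i \in I} \nu_n^{s_i - 1} P_i \oplus \bigoplus_{j \notin I} \nu_n^{s_j} P_j$, so the exponents $s'_k$ are $s_i - 1$ for $i \in I$ and $s_j$ for $j \notin I$. Then $C_{\mu^+_{S'}(S)}(P_k, P_\ell)$ consists of $\varphi \in A(P_k, P_\ell)$ that are maps $P_k \to \nu_n^{s'_\ell - s'_k - r} P_\ell$, $r > 0$. Splitting into the four cases $k,\ell \in I$; $k,\ell \notin I$; $k \in I, \ell \notin I$; $k \notin I, \ell \in I$, the shift $s'_\ell - s'_k$ equals $s_\ell - s_k$ in the first two cases (so one gets $C_S(P_k,P_\ell)$ back), equals $s_\ell - s_k + 1$ in the third (so the condition relaxes from $r>0$ to $r>-1$, i.e.\ $r \ge 0$, and one must show this picks up exactly $A(P_k, P_\ell)$ — here one uses that $A(P_k,P_\ell) \subseteq \Rad_{\mathcal{C}_\Lambda^n}$, so there are no maps of shift $\ge s_\ell - s_k + 1$, hence all of $A(P_k,P_\ell)$ sits in shifts $\le s_\ell - s_k$), and equals $s_\ell - s_k - 1$ in the fourth (so $r > 0$ becomes the stricter $r > -1 \leftrightarrow$ shift $\le s_\ell - s_k - 2$; but by Proposition~\ref{proposition.homslices} for $S$ there are no maps of shift $\le s_\ell - s_k - 1$ at all, so $C_{\mu^+_{S'}(S)}(P_k,P_\ell) = \emptyset$, matching the "remove all arrows $\add\Lambda'' \to \add\Lambda'$" clause). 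Comparing with Definition~\ref{def.admset}(2) case by case gives $C_{\mu^+_{S'}(S)} = \mu^+_{\Lambda'}(C_S)$; the $\mu^-$ case is dual. The "in particular" clause — mutations of slices defined iff mutations of admissible sets defined — then follows from the case analysis showing both sides of the mutation-definability conditions translate into each other.

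For part (1): by Observation~\ref{obs.tiltisslice} the slice $\Lambda$ itself is a slice, and the lemma preceding this proposition gives $\Lambda \iso \widehat\Lambda/(C_0)$; comparing with Proposition~\ref{prop.endo=slice} (which gives $\End_{\mathcal{D}_\Lambda}(S)^{\op} \iso \widehat\Lambda/(C_S)$) at $S = \Lambda$ shows $C_\Lambda = C_0$, the standard admissible set. By Lemma~\ref{lemma.slicestrans}(1) every slice $S$ is an iterated mutation of the slice $\Lambda$, and by part (2) each such mutation of slices corresponds to a mutation of the associated admissible set; hence $C_S$ is an iterated mutation of $C_\Lambda = C_0$, i.e.\ $C_S$ is admissible. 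Conversely, an admissible set is by definition an iterated mutation of $C_0 = C_\Lambda$, and applying part (2) repeatedly (each mutation of an admissible set $C_{S_0}$ being realized by the corresponding mutation of the slice $S_0$, which is a slice again by Lemma~\ref{lemma.slicemutation}) shows it equals $C_S$ for the corresponding iterated mutation $S$ of $\Lambda$.

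**Main obstacle.** The routine-looking but genuinely delicate point is the bookkeeping in part (2): one must be careful that $C_S$ only records arrows of $A$ (not arbitrary maps in $\mathcal{C}_\Lambda^n$) and that "$r > 0$" together with the two facts $A(P,Q) \subseteq \Rad_{\mathcal{C}_\Lambda^n}(P,Q)/\Rad^2$ and Proposition~\ref{proposition.homslices} pins down exactly which degrees occur. The subtle case is $k \in I$, $\ell \notin I$, where one needs that \emph{every} arrow of $A(P_k, P_\ell)$ sits in a shift $\le s_\ell - s_k$: this is where one must invoke that arrows are in the radical modulo radical squared of $\mathcal{C}_\Lambda^n$ and that, for the slice $S$, a map $\nu_n^{s_k} P_k \to \nu_n^{s_\ell} P_\ell$ of positive shift would contradict Proposition~\ref{proposition.homslices} — so all arrows live in shifts $\le s_\ell - s_k$, and relaxing the bound by one (from $r>0$ to $r \ge 0$) indeed captures all of $A(P_k,P_\ell)$. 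Getting this inequality chain exactly right, rather than off by one, is the crux.
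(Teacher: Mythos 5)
Your overall strategy matches the paper's: establish the base case $C_\Lambda = C_0$, prove the mutation-commutation identity of part (2) by explicit bookkeeping of the shift exponents $s_i$, and deduce part (1) from Lemma~\ref{lemma.slicestrans} and Observation~\ref{obs.tiltisslice}. The paper compresses the verification of (2) into a terse chain of implications and then says the equality follows ``similarly''; you instead spell out a four-case analysis, which is legitimate but contains a genuine gap in one of the cases.

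The problem is the fourth case, $k \notin I$, $\ell \in I$, where you must show $C_{\mu^+_{S'}(S)}(P_k, P_\ell) = \emptyset$. You claim ``by Proposition~\ref{proposition.homslices} for $S$ there are no maps of shift $\le s_\ell - s_k - 1$ at all,'' but this contradicts the condition (c) you just established: by Construction~\ref{construct.setfromslice}, $C_S(P_k, P_\ell)$ consists precisely of the arrows of shift $\le s_\ell - s_k - 1$, and (c) says this equals $A(P_k,P_\ell)$, which is generally nonempty. Moreover, Proposition~\ref{proposition.homslices} controls only \emph{positive}-shift maps (it gives $\Hom(S,\nu_n^i S)=0$ for $i>0$, i.e.\ maps $P_k \to \nu_n^m P_\ell$ with $m \ge s_\ell - s_k + 1$ vanish); it says nothing about very negative shifts. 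What you actually need is that no \emph{arrow} of $A(P_k, P_\ell)$ has shift $\le s_\ell - s_k - 2$, and the right tool is Lemma~\ref{lemma.slicesradical}, not Proposition~\ref{proposition.homslices}: a map $P_k \to \nu_n^m P_\ell$ with $m \le s_\ell - s_k - 2$ is, after applying $\nu_n^{s_k}$, a map from $\add S$ to $\nu_n^{-i}\add S$ with $i = s_\ell - s_k - m \ge 2$, hence lies in $\Rad^{\ge 2}_{\mathcal{U}}$, whereas arrows are a minimal generating set of $\Rad/\Rad^2$ and cannot lie in $\Rad^2$. Combined with (c), this forces every arrow of $A(P_k,P_\ell)$ to have shift exactly $s_\ell - s_k - 1$, so $C_{\mu^+_{S'}(S)}(P_k,P_\ell) = \emptyset$ as required. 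This is exactly the step where the paper's chain invokes Lemma~\ref{lemma.slicesradical}.

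A milder instance of the same confusion appears in your third case, where you justify ``there are no maps of shift $\ge s_\ell - s_k + 1$'' by ``$A(P_k,P_\ell) \subseteq \Rad_{\mathcal{C}_\Lambda^n}$.'' The radical inclusion expresses non-invertibility, not a shift bound; the correct reason is Proposition~\ref{proposition.homslices} applied to the slice $S$. Your conclusion there is right, but for the wrong reason, and it is precisely the same confusion between Proposition~\ref{proposition.homslices} (positive-shift vanishing) and Lemma~\ref{lemma.slicesradical} (negative-shift $\Rad$-degree bound) that derails the fourth case. Both tools are indispensable, and keeping them apart is exactly what the careful case-by-case bookkeeping you set out to do demands.
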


\begin{proof}
By definition $\Lambda$ is a slice, and $C_{\Lambda} = C_0$ is the standard admissible set. We now proceed by checking that all these properties are preserved under mutation.

Assume we are in the setup of (2), that is $S = S' \oplus S''$ is a slice, $\Lambda = \Lambda' \oplus \Lambda''$, such that $\pi(S') \iso \pi(\Lambda')$ and $\pi(S'') \iso \pi(\Lambda'')$. We may further inductively assume that $C_S$ is an admissible set.
\begin{align*}
& \text{all maps } \widehat{\Lambda}'' \to \widehat{\Lambda}' \text{ in } A \text{ lie in } C_S \\
\iff & \Hom_{\mathcal{D}_{\Lambda}}(S'', S') = 0 && \text{(by Proposition~\ref{prop.endo=slice})} \\
\iff & S' \text{ admits a mutation} && \text{(by Definition~\ref{def.slicemutation})} \\
\implies & \mu^+_{S'}(S) = \nu_m^- S' \oplus S'' \text{ is also a slice } && \text{(by Lemma~\ref{lemma.slicemutation})} \\
\implies & \Hom_{\mathcal{U}}(S', \nu_n^- S'') \subseteq \Rad_{\mathcal{U}}^2(S', \nu_n^- S'') && \text{(by Lemma~\ref{lemma.slicesradical})} \\
\implies & \text{no maps } \widehat{\Lambda}' \to \widehat{\Lambda}'' \text{ in } A \text{ lie in } C_S && \text{(by Proposition~\ref{prop.endo=slice})}
\end{align*}
Therefore, the ``in particular''-part of (2) holds. Similar to the arguments above one sees that $C_{\mu^+_{S'}(S)} = \mu^+_{\Lambda'} (C_S)$.

Now the surjectivity in (1) follows from the fact that, by definition, any admissible set is an iterated mutation of the standard admissible set.
\end{proof}

\begin{theorem} \label{theorem.adm=iteratedAPR}
Let $\Lambda$ be $n$-representation-finite. Then the iterated $n$-APR tilts of $\Lambda$ are precisely the algebras of the form $\widehat{\Lambda} / (C)$, where $C$ is an admissible set.

In particular all these algebras are also $n$-representation-finite.
\end{theorem}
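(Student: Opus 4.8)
The plan is to assemble what has already been proved: the theorem is essentially a formal consequence of Theorem~\ref{theorem.slices=itereatedAPR} together with Propositions~\ref{prop.endo=slice} and \ref{prop.admset=slice}. First I would note that, under the abuse of notation of Definition~\ref{def.nAPRcomplex} (justified by Remark~\ref{remark.in_mod=in_der}), an iterated $n$-APR tilt of $\Lambda$ is by definition an algebra of the form $\End_{\mathcal{D}_{\Lambda}}(S)^{\op}$ where $S$ is an iterated $n$-APR tilting complex. By Theorem~\ref{theorem.slices=itereatedAPR}(1) the iterated $n$-APR tilting complexes of $\Lambda$ are exactly the slices in $\mathcal{U}$. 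Hence the iterated $n$-APR tilts of $\Lambda$ are precisely the algebras $\End_{\mathcal{D}_{\Lambda}}(S)^{\op}$ as $S$ ranges over the slices in $\mathcal{U}$.

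Next I would feed this through the two propositions on admissible sets. Proposition~\ref{prop.endo=slice} gives $\End_{\mathcal{D}_{\Lambda}}(S)^{\op} \iso \widehat{\Lambda}/(C_S)$ for every slice $S$, and Proposition~\ref{prop.admset=slice}(1) says that $S \mapsto C_S$ carries slices to admissible sets and that, conversely, every admissible set is of the form $C_S$ for some slice $S$. This converse is the substantive half; it rests in turn on part~(2) of that proposition, which identifies mutations of slices with mutations of admissible sets, together with the definition of an admissible set as an iterated mutation of the standard one. Combining all of this, the collection of iterated $n$-APR tilts of $\Lambda$ coincides with $\{\widehat{\Lambda}/(C) \mid C \text{ admissible}\}$, which is the first assertion of the theorem.

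For the ``in particular'' clause I would invoke Theorem~\ref{theorem.APRpres_repfin} applied iteratively (equivalently, the corollary to Theorem~\ref{main}): any iterated $n$-APR tilt of an $n$-representation-finite algebra is again $n$-representation-finite. Since the algebras $\widehat{\Lambda}/(C)$ with $C$ admissible have just been identified with exactly these iterated tilts, they are $n$-representation-finite. There is no genuinely new argument to make; the only thing that requires care is the bookkeeping---keeping the module-level and derived-level notions of ``$n$-APR tilt'' aligned via Remark~\ref{remark.in_mod=in_der}, and ensuring that Proposition~\ref{prop.admset=slice}(1) is used in both directions, so that the conclusion is a true equality of sets rather than a one-sided inclusion. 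I do not expect any real obstacle beyond this.
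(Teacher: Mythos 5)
Your proposal is correct and follows the paper's own argument essentially verbatim: the paper's proof is exactly the assembly of Propositions~\ref{prop.endo=slice} and \ref{prop.admset=slice} with Theorem~\ref{theorem.slices=itereatedAPR} for the first assertion, and Theorem~\ref{theorem.APRpres_repfin} for the ``in particular.'' Your version merely spells out the same chain of citations at greater length, including the correct observation that the surjectivity direction of Proposition~\ref{prop.admset=slice}(1) is what upgrades the inclusion to an equality.
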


\begin{proof}
The first part follows from Propositions~\ref{prop.endo=slice}, \ref{prop.admset=slice} and Theorem~\ref{theorem.slices=itereatedAPR}. The second part then follows by Theorem~\ref{theorem.APRpres_repfin}.
\end{proof}

\section{$n$-representation-finite algebras of type $A$} \label{section.typeA}

The aim of this section is to construct $n$-representation-finite algebras of `type $A$'. The starting point (and the reason we call these algebras type $A$) is the construction of higher Auslander algebras of type $A_s$ in \cite{Iy_n-Auslander} (we recall this in Theorem~\ref{theorem.auslanderalg} here). The main result of this section is Theorem~\ref{theorem.main_typeA}, which gives an explicit combinatorial description of all iterated $n$-APR tilts of these higher Auslander algebras by removing certain arrows from a given quiver (see also Definitions~\ref{def.qns} and \ref{def.ca-set} for the notation used in that theorem).

\begin{definition} \label{def.qns}
\begin{enumerate}
\item For $n \geq 1$ and $s \ge 1$, let $Q^{(n,s)}$ be the quiver with vertices
\[Q_0^{(n,s)} = \{(\ell_1,\ell_2,\cdots,\ell_{n+1}) \in\Z_{\ge0}^{n+1} \mid \sum_{i=1}^{n+1}\ell_i=s-1\}\]
and arrows
\[Q_1^{(n,s)} = \{x \tol{\scriptstyle i} x+f_i \mid i \in \{1, \ldots, n+1\}, x,x+f_i \in Q_0^{(n,s)}\},\]
where $f_i$ denotes the vector
\[ f_i = (0, \ldots,0, \overset{\mathclap{i}}{-1}, \overset{\mathclap{i+1}}{1}, 0, \ldots,0)\in\Z^{n+1} \]
(cyclically, that is $f_{n+1} = (\overset{\mathclap{1}}{1}, 0, \ldots, 0, \overset{\mathclap{n+1}}{-1})$).
\item For $n \geq 1$ and $s \geq 1$, we define the $k$-algebra $\widehat{\Lambda}^{(n,s)}$ to be the quiver algebra of $Q^{(n,s)}$ with the following relations:

For any $x\in Q_0^{(n,s)}$ and $i,j\in \{1, \ldots, n+1\}$ satisfying $x+f_i$, $x+f_i+f_j\in Q_0^{(n,s)}$,
\[(x \tol{\scriptstyle i} x+f_i \tol{\scriptstyle j} x+f_i+f_j)
=\left\{\begin{array}{cl}
(x \tol{\scriptstyle j} x+f_j \tol{\scriptstyle i} x+f_i+f_j) & \text{if } x+f_j\in Q_0^{(n,s)},\\
0&\text{otherwise.}
\end{array}\right.\]
(We will show later that this notation is justified: In Subsection~\ref{subsect.outline_ca} we construct algebras $\Lambda^{(n,s)}$, such that $\widehat{\Lambda}^{(n,s)}$ is the $(n+1)$-preprojective algebra of $\Lambda^{(n,s)}$ -- see also Proposition~\ref{prop.preproj}.)
\end{enumerate}
\end{definition}

\begin{example}
\begin{enumerate}
\item The quiver $Q^{(1,s)}$ is the following.
\[ \begin{tikzpicture}[xscale=2.5,node distance=-13pt]
 \node (A) at (-.1,0) {$(s-1,0)$}; \node (A++) [above right=of A] {}; \node (A+-) [below right=of A] {};
 \node (B) at (1,0) {$(s-2,1)$}; \node (B--) [below left=of B] {}; \node (B-+) [above left=of B] {}; \node (B+-) [below right=of B] {}; \node (B++) [above right=of B] {};
 \node (C) at (2,0) [text=white] {$(0,0)$};  \node (C--) [below left=of C] {}; \node (C-+) [above left=of C] {}; \node (C+-) [below right=of C] {}; \node (C++) [above right=of C] {};
 \node (C) at (2,0) {$\cdots$};
 \node (D) at (3,0) {$(1,s-2)$};  \node (D--) [below left=of D] {}; \node (D-+) [above left=of D] {}; \node (D+-) [below right=of D] {}; \node (D++) [above right=of D] {};
 \node (E) at (4.1,0) {$(0,s-1)$};  \node (E--) [below left=of E] {}; \node (E-+) [above left=of E] {};
 \draw [->] (A++) -- node [gap] {$\scriptstyle 1$} (B-+);
 \draw [->] (B++) -- node [gap] {$\scriptstyle 1$} (C-+);
 \draw [->] (C++) -- node [gap] {$\scriptstyle 1$} (D-+);
 \draw [->] (D++) -- node [gap] {$\scriptstyle 1$} (E-+);
 \draw [->] (B--) -- node [gap] {$\scriptstyle 2$} (A+-);
 \draw [->] (C--) -- node [gap] {$\scriptstyle 2$} (B+-);
 \draw [->] (D--) -- node [gap] {$\scriptstyle 2$} (C+-);
 \draw [->] (E--) -- node [gap] {$\scriptstyle 2$} (D+-);
\end{tikzpicture} \]
The algebra $\widehat{\Lambda}^{(1,s)}$ is the preprojective algebra of type $A_s$.
\item The quiver $Q^{(2,4)}$ is
\[ \begin{tikzpicture}[xscale=1.4,yscale=.7]
 \node (300) at (0,0) {$\scriptstyle 300$};
 \node (210) at (1,1) {$\scriptstyle 210$};
 \node (120) at (2,2) {$\scriptstyle 120$};
 \node (030) at (3,3) {$\scriptstyle 030$};
 \node (201) at (2,0) {$\scriptstyle 201$};
 \node (111) at (3,1) {$\scriptstyle 111$};
 \node (021) at (4,2) {$\scriptstyle 021$};
 \node (102) at (4,0) {$\scriptstyle 102$};
 \node (012) at (5,1) {$\scriptstyle 012$};
 \node (003) at (6,0) {$\scriptstyle 003$};
 \draw [->] (300) -- node [gap] {$\scriptscriptstyle 1$} (210);
 \draw [->] (210) -- node [gap] {$\scriptscriptstyle 1$} (120);
 \draw [->] (120) -- node [gap] {$\scriptscriptstyle 1$} (030);
 \draw [->] (201) -- node [gap] {$\scriptscriptstyle 1$} (111);
 \draw [->] (111) -- node [gap] {$\scriptscriptstyle 1$} (021);
 \draw [->] (102) -- node [gap] {$\scriptscriptstyle 1$} (012);
 \draw [->] (210) -- node [gap] {$\scriptscriptstyle 2$} (201);
 \draw [->] (120) -- node [gap] {$\scriptscriptstyle 2$} (111);
 \draw [->] (030) -- node [gap] {$\scriptscriptstyle 2$} (021);
 \draw [->] (111) -- node [gap] {$\scriptscriptstyle 2$} (102);
 \draw [->] (021) -- node [gap] {$\scriptscriptstyle 2$} (012);
 \draw [->] (012) -- node [gap] {$\scriptscriptstyle 2$} (003);
 \draw [->] (201) -- node [gap] {$\scriptscriptstyle 3$} (300);
 \draw [->] (111) -- node [gap] {$\scriptscriptstyle 3$} (210);
 \draw [->] (021) -- node [gap] {$\scriptscriptstyle 3$} (120);
 \draw [->] (102) -- node [gap] {$\scriptscriptstyle 3$} (201);
 \draw [->] (012) -- node [gap] {$\scriptscriptstyle 3$} (111);
 \draw [->] (003) -- node [gap] {$\scriptscriptstyle 3$} (102);
\end{tikzpicture} \]
The algebras $\widehat{\Lambda}^{(2,s)}$ appeared in the work of Geiss, Leclerc, and Schr{\"o}er \cite{GLS1,GLS2}.
\item The quiver $Q^{(3,3)}$ is
\[ \begin{tikzpicture}[xscale=1.6,yscale=.4]
 \node (2000) at (0,0) {$\scriptstyle 2000$};
 \node (1100) at (1,1) {$\scriptstyle 1100$};
 \node (0200) at (2,2) {$\scriptstyle 0200$};
 \node (1010) at (2,0) {$\scriptstyle 1010$};
 \node (0110) at (3,1) {$\scriptstyle 0110$};
 \node (0020) at (4,0) {$\scriptstyle 0020$};
 \node (1001) at (1.2,-4) {$\scriptstyle 1001$};
 \node (0101) at (2.2,-3) {$\scriptstyle 0101$};
 \node (0011) at (3.2,-4) {$\scriptstyle 0011$};
 \node (0002) at (2.4,-8) {$\scriptstyle 0002$};
 \draw [->] (2000) -- node [gap] {$\scriptscriptstyle 1$} (1100);
 \draw [->] (1100) -- node [gap] {$\scriptscriptstyle 1$} (0200);
 \draw [->] (1010) -- node [gap] {$\scriptscriptstyle 1$} (0110);
 \draw [->] (1100) -- node [gap] {$\scriptscriptstyle 2$} (1010);
 \draw [->] (0200) -- node [gap] {$\scriptscriptstyle 2$} (0110);
 \draw [->] (0110) -- node [gap] {$\scriptscriptstyle 2$} (0020);
 \draw [->] (1010) -- node [gap] {$\scriptscriptstyle 3$} (1001);
 \draw [->] (0110) -- node [gap] {$\scriptscriptstyle 3$} (0101);
 \draw [->] (0020) -- node [gap] {$\scriptscriptstyle 3$} (0011);
 \draw [->] (1001) -- node [gap] {$\scriptscriptstyle 4$} (2000);
 \draw [->] (0101) -- node [gap] {$\scriptscriptstyle 4$} (1100);
 \draw [->] (0011) -- node [gap] {$\scriptscriptstyle 4$} (1010);
 \draw [->] (1001) -- node [gap] {$\scriptscriptstyle 1$} (0101);
 \draw [->] (0101) -- node [gap] {$\scriptscriptstyle 2$} (0011);
 \draw [->] (0011) -- node [gap] {$\scriptscriptstyle 3$} (0002);
 \draw [->] (0002) -- node [gap] {$\scriptscriptstyle 4$} (1001);
\end{tikzpicture} \]
\end{enumerate}
\end{example}

\begin{definition} \label{def.ca-set}
We call a subset $C \subseteq Q_1^{(n,s)}$ of the arrows of $Q^{(n,s)}$ \emph{cut}, if it contains exactly one arrow from each $(n+1)$-cycle (see \cite{BMR_cta,BRS,BFPPT} for similar constructions).
\end{definition}

\begin{remark}
\begin{enumerate}
\item We will show later (see Remark~\ref{rem.cset=set}) that cuts coincide with admissible sets (as introduced in Definition~\ref{def.admset}).
\item
Clearly, in Definition~\ref{def.ca-set}, any $(n+1)$-cycle is of the form
\[ x \tol[30]{\scriptstyle \sigma(1)} x + f_{\sigma(1)} \tol[30]{\scriptstyle \sigma(2)} x + f_{\sigma(1)} + f_{\sigma(2)} \tol[30]{\scriptstyle \sigma(3)} \cdots \tol[30]{\scriptstyle \sigma(n)} x + f_{\sigma(1)} + \cdots + f_{\sigma(n)} \tol[30]{\scriptstyle \sigma(n+1)} x, \]
for some $\sigma \in \mathfrak{S}_{n+1}$.
\end{enumerate}
\end{remark}

\begin{example}
\begin{enumerate}
\item Clearly the cuts of $Q^{(1,s)}$ correspond bijectively to orientations of the Dynkin diagram $A_s$.
\item See Tables~\ref{table.a3linear}, \ref{table.a4linear} and \ref{table.highera3linear} for the cuts of $Q^{(2,3)}$, $Q^{(2,4)}$, and $Q^{(3,3)}$, respectively.
\end{enumerate}
\end{example}

We are now ready to state the main result of this section.

\begin{theorem} \label{theorem.main_typeA}
\begin{enumerate}
\item Let $Q^{(n,s)}$ as in Definition~\ref{def.qns}, and let $C$ be a cut. Then the algebra
\[ \Lambda^{(n,s)}_C := \widehat{\Lambda}^{(n,s)} / (C) \]
is $n$-representation-finite.
\item All these algebras (for fixed $(n,s)$) are iterated $n$-APR tilts of one another.
\end{enumerate}
\end{theorem}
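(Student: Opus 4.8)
The plan is to connect the combinatorial setup of this section with the general machinery of Sections~\ref{section.APR} and~\ref{section.APR_repfin}. The key point is to identify $\widehat{\Lambda}^{(n,s)}$ as the $(n+1)$-preprojective algebra of a genuine $n$-representation-finite algebra, and to match the combinatorially defined cuts with the admissible sets of Definition~\ref{def.admset}. Once this identification is in place, both statements fall out of Theorem~\ref{theorem.adm=iteratedAPR}.

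First I would construct, for each $(n,s)$, a specific cut $C_0^{(n,s)} \subseteq Q_1^{(n,s)}$ and set $\Lambda^{(n,s)} := \widehat{\Lambda}^{(n,s)}/(C_0^{(n,s)})$; the natural candidate is the cut consisting (for each $(n+1)$-cycle) of the arrow labelled $n+1$, which should give back the higher Auslander algebra of linear oriented $A_s$ from \cite{Iy_n-Auslander} (Theorem~\ref{theorem.auslanderalg}). One must then verify that $\Lambda^{(n,s)}$ is $n$-representation-finite — this is exactly the content of \cite[Theorem~1.18]{Iy_n-Auslander} — and that $\widehat{\Lambda}^{(n,s)}$, as defined by the quiver with relations in Definition~\ref{def.qns}, really is the $(n+1)$-preprojective algebra $\widehat{\Lambda^{(n,s)}}$ in the sense of Definition~\ref{def.preproj}. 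This last verification is a computation with the explicit higher Auslander algebra: one computes $\Ext^n_{\Lambda^{(n,s)}}(D\Lambda^{(n,s)},\Lambda^{(n,s)})$ as a bimodule, identifies the "extra" arrows $C_0^{(n,s)}$, and checks that the commutativity/zero relations of Definition~\ref{def.qns} match the relations of the tensor algebra. (This is presumably carried out in the subsection referred to as \ref{subsect.outline_ca} and summarized in Proposition~\ref{prop.preproj}.)

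Next I would prove the combinatorial heart of the argument: that the cuts of $Q^{(n,s)}$ are exactly the admissible sets, i.e.\ that every cut is obtained from $C_0^{(n,s)}$ by iterated mutation in the sense of Definition~\ref{def.admset}. The strategy is to show directly that the combinatorial mutation of cuts (pick a vertex or a "source-closed" set of vertices, and for each $(n+1)$-cycle through such a vertex swap which arrow is removed) both is always possible between any two cuts and coincides, under the identification of $\widehat{\Lambda}^{(n,s)}$ with the preprojective algebra, with the mutation $\mu^+_{\Lambda'}$ of admissible sets. Transitivity of cuts under mutation can be proved purely combinatorially: given two cuts $C$ and $C'$, one assigns to each vertex $x$ of $Q^{(n,s)}$ an integer "height" difference and mutates at the set of vertices where this difference is maximal, exactly mirroring the proof of Lemma~\ref{lemma.slicestrans}. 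Matching combinatorial mutation with $\mu^+$ on admissible sets is bookkeeping: one checks that removing all arrows $\add\Lambda'' \to \add\Lambda'$ and adding all arrows $\add\Lambda' \to \add\Lambda''$ in $A$ translates, in the type $A$ quiver, precisely to swapping the removed arrow in each affected $(n+1)$-cycle.

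With these pieces assembled, the proof concludes quickly: part~(1) follows because $\Lambda^{(n,s)}_C = \widehat{\Lambda}^{(n,s)}/(C) = \widehat{\Lambda^{(n,s)}}/(C)$ with $C$ an admissible set, so Theorem~\ref{theorem.adm=iteratedAPR} gives that it is an iterated $n$-APR tilt of the $n$-representation-finite algebra $\Lambda^{(n,s)}$, hence $n$-representation-finite; and part~(2) follows because any two admissible sets are iterated mutations of one another, and by Proposition~\ref{prop.admset=slice} together with Theorem~\ref{theorem.slices=itereatedAPR} mutations of admissible sets correspond to $n$-APR tilts. The main obstacle I expect is the combinatorial identification of cuts with admissible sets — specifically, proving that \emph{every} cut arises by iterated mutation and that no "combinatorial mutation" step takes us outside the class of admissible sets — since this requires both a clean transitivity argument on cuts and a careful compatibility check between the two notions of mutation. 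The verification that $\widehat{\Lambda}^{(n,s)}$ is the $(n+1)$-preprojective algebra of the higher Auslander algebra is also nontrivial but is essentially a direct, if lengthy, computation with known structure.
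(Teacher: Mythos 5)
Your high-level strategy matches the paper's: start from the distinguished cut $C_0$ of type-$(n+1)$ arrows, whose quotient is the higher Auslander algebra of linear $A_s$ (Theorem~\ref{theorem.auslanderalg}), show that cuts are transitive under a suitable mutation, and show that this mutation realizes $n$-APR tilting. The organizational difference — you aim to prove that cuts coincide with admissible sets and then quote Theorem~\ref{theorem.adm=iteratedAPR}, whereas the paper builds a parallel combinatorial theory of slices in a $\Z$-cover $\widetilde{Q}$ of $Q$ and feeds that into the general machinery through Theorem~\ref{theorem.quiverU} and Proposition~\ref{prop.slice_is_iteratedAPR} — is a matter of packaging; the paper itself observes (Remark~\ref{rem.adm_subset_ca} and Remark~\ref{rem.cset=set}, and the remark following Theorem~\ref{new_theorem_D}) that the two routes agree once transitivity of cuts is established.

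The genuine gap is in your sketch of that transitivity. You propose to assign to each vertex of $Q^{(n,s)}$ an integer ``height'' and mutate where the height difference between the two cuts is maximal, mirroring Lemma~\ref{lemma.slicestrans}. But that lemma works with slices in $\mathcal{U}$ (equivalently, in the acyclic $\Z$-cover $\widetilde{Q}$), where heights are built in as the last coordinate; on $Q$ itself a cut does \emph{not} directly give a well-defined $\Z$-valued height, because $Q$ contains $(n+1)$-cycles and, more seriously, non-oriented cyclic walks. To obtain a height function you must first lift the cut $C$ to a slice in $\widetilde{Q}$. This is exactly Proposition~\ref{section}, and its well-definedness hinges on Proposition~\ref{prop.generator}: every cyclic walk in $Q$ is, up to homotopy, a product of conjugates of $(n+1)$-cycles and their inverses. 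That statement is the substantive combinatorial lemma of the section (proved in Subsection~\ref{subsect.cycle_lemma} via Lemmas~\ref{shortest cycle}–\ref{addition2}), and your proposal does not identify it or supply a substitute. Without it, the crucial invariance $\Phi(p)=\Phi(q)$ for walks $p,q$ in $Q_C$ with the same endpoints (Proposition~\ref{crucial}) is unavailable, the lift of $C$ to $\widetilde{Q}$ need not be single-valued, and the height-based induction you propose cannot get started. Once Proposition~\ref{prop.generator} and the lifting Proposition~\ref{section} are in hand, the rest of your plan does go through, in the same way as the paper's proof.
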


We call the algebras of the form $\Lambda^{(n,s)}_C$ as in the theorem above \emph{$n$-representation-finite of type $A$}. Note that $1$-representation-finite algebras of type $A$ are exactly path algebras of Dynkin quivers of type $A$. See Tables~\ref{table.a3linear}, \ref{table.a4linear}, and \ref{table.highera3linear} for the examples $(n,s) = (2,3)$, $(2,4)$, and $(3,3)$, respectively.

\begin{table}
\iteratedAprLinearAFour
\caption{Iterated 2-APR tilts of the Auslander algebra of linear oriented $A_4$ (thick lines indicate cuts)} \label{table.a4linear}
\end{table}

\begin{table}
\iteratedAprLinearAThreeTwo
\caption{Iterated 3-APR tilts of the higher Auslander algebra of linear oriented $A_3$ (thick lines indicate cuts)} \label{table.highera3linear}
\end{table}

\subsection{Outline of the proof of Theorem~\ref{theorem.main_typeA}} \label{subsect.outline_ca} $ $

\medskip \noindent
{\sc Step 1.} Let $C_0$ be the set of all arrows of type $n+1$. This is clearly a cut. We set
\[ \Lambda^{(n,s)} := \Lambda_{C_0}^{(n,s)}. \]
For example, $\Lambda^{(1,s)}$ is a path algebra of the linearly oriented Dynkin quiver $A_s$, and $\Lambda^{(2,s)}$ is the Auslander algebra of $\Lambda^{(1,s)}$. More generally, the following result is shown in \cite{Iy_n-Auslander}.

\begin{theorem}[see \cite{Iy_n-Auslander}] \label{theorem.auslanderalg}
The algebra $\Lambda^{(n,s)}$ is $n$-representation-finite. In particular $\mod \Lambda^{(n,s)}$ has a unique basic $n$-cluster tilting object $M^{(n,s)}$. We have
\[ \Lambda^{(n+1,s)} \iso \End_{\Lambda^{(n,s)}} (M^{(n,s)})^{\op}, \]
that is $\Lambda^{(n+1,s)}$ is the $n$-Auslander algebra of $\Lambda^{(n,s)}$.
\end{theorem}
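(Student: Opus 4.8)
The plan is to proceed by induction on $n$, proving simultaneously that $\Lambda^{(n,s)}$ is $n$-representation-finite and that $\Lambda^{(n+1,s)}$ is its $n$-Auslander algebra. For $n=1$ the algebra $\Lambda^{(1,s)}$ is the path algebra of the linearly oriented Dynkin quiver $A_s$; it is representation-finite hereditary, hence $1$-representation-finite, and its unique basic $1$-cluster tilting object $M^{(1,s)}$ is an additive generator of $\mod \Lambda^{(1,s)}$. The identification $\Lambda^{(2,s)} \iso \End_{\Lambda^{(1,s)}}(M^{(1,s)})^{\op}$ then says that the classical Auslander algebra of linear $A_s$ is presented by the Auslander--Reiten quiver of $A_s$ modulo mesh relations, and one checks by inspection that this quiver, with its mesh relations, is exactly $Q^{(2,s)}$ with the arrows of type $n+1$ deleted and the relations of Definition~\ref{def.qns} imposed, that is, $\widehat{\Lambda}^{(2,s)}/(C_0)$.

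For the inductive step, assume $\Lambda^{(n,s)}$ is $n$-representation-finite with unique basic $n$-cluster tilting object $M^{(n,s)}$, and set $\Gamma := \End_{\Lambda^{(n,s)}}(M^{(n,s)})^{\op}$. First I would record that $\gld \Gamma \leq n+1$ by Proposition~\ref{cluster criterion}, and that $\Gamma$ is again $(n+1)$-representation-finite, with an explicitly constructed $(n+1)$-cluster tilting module; this last fact is the general principle that the $n$-Auslander algebra of an $n$-representation-finite algebra is $(n+1)$-representation-finite, which is the main result of \cite{Iy_n-Auslander}. Uniqueness of the basic $(n+1)$-cluster tilting object is then automatic from the last point of Proposition~\ref{prop.cluster}. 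Thus the whole content of the inductive step reduces to the algebra isomorphism $\Gamma \iso \Lambda^{(n+1,s)}$.

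That isomorphism is the combinatorial core. I would (a) parametrize the indecomposable summands of $M^{(n,s)}$ by the vertex set $Q_0^{(n+1,s)}$: since $\add M^{(n,s)}$ is generated from the indecomposable projectives by iterating $\tau_n^{-}$ (Proposition~\ref{prop.cluster}), following these $\tau_n^{-}$-orbits through the lattice-point picture of $\Lambda^{(n,s)}$ yields the bijection; (b) compute the irreducible maps in $\add M^{(n,s)}$, which are the arrows of the quiver of $\Gamma$, and check that they are precisely the arrows of types $1, \dots, n+1$ of $Q^{(n+1,s)}$, i.e.\ the quiver of $\widehat{\Lambda}^{(n+1,s)}$ with the type-$(n+2)$ arrows $C_0$ removed; and (c) compute the relations, which are generated by the $n$-almost-split sequences in $\add M^{(n,s)}$ (the higher mesh relations), and check that they are exactly the commutativity and zero relations of Definition~\ref{def.qns} together with the vanishing of the arrows in $C_0$. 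Combining (a)--(c) gives $\Gamma \iso \widehat{\Lambda}^{(n+1,s)}/(C_0) = \Lambda^{(n+1,s)}$, closing the induction.

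The hard part is step (c), and more broadly the explicit control of $\add M^{(n,s)}$ needed for (a)--(c): one must describe its indecomposables, all irreducible maps, and all $n$-almost-split sequences directly in terms of the combinatorics of the lattice $Q_0^{(n+1,s)}$. The efficient way to organize this is inductively, transporting the already-understood structure of $\mod \Lambda^{(n-1,s)}$ along the functor $\Hom_{\Lambda^{(n-1,s)}}(M^{(n-1,s)}, -)$, under which the $n$-almost-split sequences over $\Lambda^{(n,s)}$ arise in a predictable way from almost-split data one level down. This bookkeeping is carried out in \cite{Iy_n-Auslander}, from which the theorem is quoted here.
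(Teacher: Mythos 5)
The paper does not give a proof of this theorem: it is stated as a quotation from \cite{Iy_n-Auslander}, so there is no proof in the paper to compare against. Your proposal is therefore best read as a reconstruction of the argument in the cited reference, and as such it has a genuine gap at the crux of the induction.

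The sentence that carries all the weight --- ``the $n$-Auslander algebra of an $n$-representation-finite algebra is $(n+1)$-representation-finite, which is the main result of \cite{Iy_n-Auslander}'' --- is not a correct statement of what \cite{Iy_n-Auslander} proves, and as a general principle it is simply false. If it were true, the Auslander algebra of \emph{any} representation-finite hereditary algebra would be $2$-representation-finite, and iterating would give a tower of $n$-representation-finite algebras for all $n$; that this fails already for non-linear orientations of $A_s$ is exactly why the theorem singles out the linear orientation (this is precisely the contrast the paper is drawing in the discussion of Tables~\ref{table.a3linear} and \ref{table.a3nonlin}, where the non-linear case has sources and sinks marked $X$). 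The mechanism in \cite{Iy_n-Auslander} is instead the more restrictive notion of ($n$-)\emph{complete} algebras: the actual inheritance result there is that the $n$-Auslander algebra of an $n$-complete algebra is $(n+1)$-complete. The nontrivial content of the inductive step is therefore (i) verifying that linear $A_s$ is $1$-complete, (ii) propagating completeness rather than bare $n$-representation-finiteness, and only then (iii) the combinatorial identification of the resulting algebra with $\Lambda^{(n+1,s)}$. Your steps (a)--(c) capture the flavor of (iii), but (i) and (ii) are precisely the part you dismissed as ``the general principle'' and they are where the real work lies; without them the induction does not close. The base case and the use of Propositions~\ref{prop.cluster} and \ref{cluster criterion} for global dimension and uniqueness are fine.
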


\noindent
{\sc Step 2.} We now introduce mutation on cuts.

For simplicity of notation, we fix $n$ and $s$ for the rest of this section, and omit all superscripts $-^{(n,s)}$ whenever there is no danger of confusion. (That is, by $Q$ we mean $Q^{(n,s)}$, by $\Lambda$ we mean $\Lambda^{(n,s)}$ and similar.)

\begin{definition} \label{definition.csetmut}
Let $C$ be a cut of $Q$.
\begin{enumerate}
\item We denote by $Q_C$ the quiver obtained by removing all arrows in $C$ from $Q$.
\item Let $x$ be a source of the quiver $Q_C$. Define a subset $\mu_x^+(C)$ of $Q_1$ by removing all arrows in $Q$ ending at $x$ from $C$ and adding all arrows in $Q$ starting at $x$ to $C$.
\item Dually, for each sink $x$ of $Q_C$, we get another subset $\mu_x^-(C)$ of $Q_1$.
\end{enumerate}
We call the process of replacing a cut $C$ by $\mu_x^+(C)$ or $\mu_x^-(C)$, when the conditions of (2) or (3) above are satisfied, \emph{mutation} of cuts.
\end{definition}

We will show in Proposition~\ref{proposition.csetmutation} in Subsection~\ref{subsect.ca-set-mut_OK} that mutations of cuts are again cuts.

\begin{observation}
The quiver $Q_C$ is the quiver of the algebra $kQ / (C)$.
\end{observation}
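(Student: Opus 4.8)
The plan is to read off the ordinary quiver of $\Lambda^{(n,s)}_C = \widehat\Lambda^{(n,s)}/(C)$ directly from its presentation; the cut condition on $C$ plays no role here, so $C$ may be an arbitrary set of arrows. Write $\widehat\Lambda^{(n,s)} = kQ/(R)$ as in Definition~\ref{def.qns}, where $R$ is the set of commutativity and zero relations. Then $\Lambda^{(n,s)}_C = kQ/J$ with $J = (R) + \langle C\rangle$, where $\langle C\rangle$ is the two-sided ideal of $kQ$ generated by the arrows in $C$. Recall that for a finite-dimensional algebra presented as $kQ/J$ with $J$ contained in the arrow ideal $\mathfrak r$ of $kQ$, the vertices of the quiver are $Q_0$ and the arrows from $x$ to $y$ form a basis of $e_x(\mathfrak r/(\mathfrak r^2+J))e_y$; and $\Lambda^{(n,s)}_C$ is finite dimensional, being a quotient of $\widehat\Lambda^{(n,s)}$.

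The key step is to exploit the path-length grading of $kQ$. Every generator in $R$ is a $k$-linear combination of paths of length exactly $2$, so $(R)\subseteq\mathfrak r^2$ and hence $\mathfrak r^2 + J = \mathfrak r^2 + \langle C\rangle$. On the other hand a path lying in $\langle C\rangle$ and having length $1$ must be an arrow of $C$ itself, so the image of $\langle C\rangle$ in $\mathfrak r/\mathfrak r^2$ is exactly the $k$-span of the classes of the arrows in $C$. Combining, $\mathfrak r/(\mathfrak r^2+J)$ is canonically identified with the quotient of $\mathfrak r/\mathfrak r^2 = \bigoplus_{a\in Q_1} k\,\bar a$ by the subspace spanned by $\{\,\bar a \mid a\in C\,\}$, hence has the classes of the arrows in $Q_1\setminus C$ as a basis. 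Since moreover $J\subseteq\mathfrak r$, the vertex idempotents $e_x$ ($x\in Q_0$) remain nonzero, primitive and pairwise orthogonal in $\Lambda^{(n,s)}_C$, so the vertex set of the quiver is $Q_0 = Q^{(n,s)}_0$. Therefore the quiver of $\Lambda^{(n,s)}_C$ has vertex set $Q_0$ and arrow set $Q_1\setminus C$, i.e.\ it is $Q_C$.

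There is essentially no obstacle here. The only point to watch is that imposing the relations $R$ before deleting the arrows of $C$ cannot accidentally render an arrow outside $C$ redundant in the top radical layer $\mathfrak r/\mathfrak r^2$ — but this is precisely the observation $(R)\subseteq\mathfrak r^2$, so the relations only ever identify or annihilate paths of length $\ge 2$. (If one instead reads ``$kQ$'' literally as the path algebra carrying no relations, the statement degenerates to the tautology $kQ/(C)=kQ_C$; the content above is exactly that the degree-$2$ relations defining $\widehat\Lambda^{(n,s)}$ change nothing at the level of the quiver.)
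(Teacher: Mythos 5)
The paper gives no proof for this observation. Read literally, the statement is a tautology: the two-sided ideal of $kQ$ generated by a set $C$ of arrows has as its length-$d$ piece exactly the span of paths of length $d$ passing through some arrow of $C$, so $kQ/(C)\iso kQ_C$ and the two algebras have the same quiver by construction. Your proof addresses the stronger and contextually relevant reading, namely that the \emph{Gabriel} quiver of $\Lambda^{(n,s)}_C=\widehat{\Lambda}^{(n,s)}/(C)$ is $Q_C$ (which is what the paper uses implicitly from here on), and the argument is correct: the defining relations of $\widehat{\Lambda}^{(n,s)}$ are linear combinations of length-two paths, hence lie in $\mathfrak r^2$ and leave $\mathfrak r/(\mathfrak r^2+J)$ untouched, so only the degree-one generators in $C$ delete arrows, and the $e_x$ remain a complete set of primitive orthogonal idempotents. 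You also correctly flag the ambiguity between the two readings. The only loose end is that you invoke finite-dimensionality of $\widehat{\Lambda}^{(n,s)}$ to guarantee $\Lambda^{(n,s)}_C$ has a well-defined Gabriel quiver; at the point where this observation appears the paper has not yet established that $\widehat{\Lambda}^{(n,s)}$ (defined purely by quiver and relations in Definition~\ref{def.qns}) is finite-dimensional -- this only comes via Proposition~\ref{prop.preproj} in Subsection~\ref{subsect.preproj}. For the literal reading, which is presumably what the authors intend by this notational remark, no such input is needed.
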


The following remark explains the relationship between cuts and admissible sets.

\begin{remark} \label{rem.adm_subset_ca}
\begin{enumerate}
\item Whenever we mention admissible sets, it is implicitly understood that we choose $A = Q_1$ the set of arrows in $Q$ in Definition~\ref{def.admset}. (It is shown in Subsection~\ref{subsect.admsets} that the choice of $A$ does not matter there, but with this choice we can easier compare admissible sets and cuts.)
\item When $C$ is a cut and an admissible set, and $x$ is a source of $Q_C$, then the mutations $\mu_x^+(C)$ of $C$ as cut and as admissible set coincide.
\item The standard admissible set $C_0$, as defined in Construction~\ref{const.standardadmset} and Definition~\ref{def.admset}, is identical to the set $C_0$ defined in {\sc Step 1}. In particular it is a cut.
\item By (3) and (2) we know that any admissible set is a cut. The converse follows when we have shown that all cuts are iterated mutations of one another (see Theorem~\ref{new_theorem_B} and Remark~\ref{rem.cset=set}).
\end{enumerate}
\end{remark}

We need the following purely combinatorial result, which will be proven in Subsections~\ref{subsect.comb_slices} to \ref{subsect.cycle_lemma}.

\begin{theorem} \label{new_theorem_B}
All cuts of $Q$ are successive mutations of one another.
\end{theorem}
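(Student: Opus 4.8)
The plan is to prove connectivity of the mutation graph on cuts by an induction that pushes an arbitrary cut $C$ towards the standard cut $C_0$ (all arrows of type $n+1$). First I would set up a numerical invariant measuring the ``distance'' of $C$ from $C_0$. For each arrow $a \in Q_1$ recall that $a$ has a type $t(a) \in \{1, \dots, n+1\}$; given a cut $C$, each $(n+1)$-cycle contributes exactly one arrow to $C$, and that arrow has some type. A natural measure is $\delta(C) = \sum_{a \in C}(n+1 - t(a))$, which vanishes exactly when $C = C_0$ (since every $(n+1)$-cycle contains exactly one arrow of each type, $\delta(C)=0$ forces every cut-arrow to have type $n+1$). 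The goal is to show: if $C \neq C_0$, then some mutation $\mu_x^{\pm}$ strictly decreases $\delta$, or at least that after finitely many mutations $\delta$ strictly decreases; combined with Proposition~\ref{proposition.csetmutation} (mutations of cuts are cuts) this gives a path from $C$ to $C_0$, and since mutation is involutive (each $\mu_x^+$ has inverse $\mu_x^-$ at the corresponding sink) any two cuts are then connected through $C_0$.

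The key step is to produce, for $C \neq C_0$, a source $x$ of $Q_C$ at which mutating is ``progress''. I would argue as follows. Since $C$ is a cut, $Q_C$ contains no oriented cycle: any oriented cycle of $Q_C$ would, together with the combinatorics of $Q^{(n,s)}$, have to pass through a full $(n+1)$-cycle, whose cut-arrow is missing from $Q_C$ — this needs a small lemma on the cycle structure of $Q^{(n,s)}$, presumably the ``cycle lemma'' of Subsection~\ref{subsect.cycle_lemma}. Hence $Q_C$ has sources. Now consider a vertex $x$ which is a source of $Q_C$ but such that in $Q$ some arrow of type $< n+1$ starting at $x$ lies in $C$ (such $x$ exists if $C \neq C_0$: take any cut-arrow $a=(y\to y')$ of minimal type $t(a) < n+1$ in $C$, and trace back along arrows of $Q_C$ from $y$; since $Q_C$ is acyclic this terminates at a source $x$, and one checks $x$ still has an outgoing $C$-arrow of type $< n+1$). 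Mutating at $x$ replaces the $C$-arrows out of $x$ by the $C$-arrows into $x$; I would check that the arrows into $x$ that get added all have strictly larger type than the removed arrows of small type, so that $\delta$ drops. Making this type-bookkeeping precise — matching removed arrows to added arrows $(n+1)$-cycle by $(n+1)$-cycle — is exactly where the explicit description of $Q^{(n,s)}$ in Definition~\ref{def.qns} (with the vectors $f_i$ and the cyclic convention $f_{n+1}$) must be used.

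I expect the main obstacle to be the combinatorial heart of the previous paragraph: showing that one can always find a source of $Q_C$ whose mutation makes genuine progress, and that the type-counting works out so that $\delta$ is a legitimate monotone invariant. The subtlety is the cyclic nature of the types — an arrow of type $n+1$ at $x$ is, after mutation, replaced by an incoming arrow of type $n+1$ as well (these are the ``type $n+1$ = $C_0$'' arrows), so one must be careful that mutating at a source does not inadvertently raise $\delta$ by creating new low-type cut-arrows elsewhere. I would handle this by choosing $x$ more carefully — e.g. as a source of $Q_C$ that is also extremal with respect to one of the coordinates $\ell_i$ on $Q_0^{(n,s)}$, so that the set of arrows into $x$ in $Q$ is explicitly controlled — and then the type comparison becomes a finite check. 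Once the monotone-decrease step is in place, the theorem follows by induction on $\delta(C)$, using that $\delta$ takes finitely many values and that $\delta(C)=0 \iff C = C_0$.
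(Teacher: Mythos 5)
Your approach is genuinely different from the paper's, and as written it contains both a concrete error and an unestablished key step. The paper proves Theorem~\ref{new_theorem_B} by lifting to the covering quiver $\widetilde{Q}$: Theorem~\ref{new_theorem_C} gives a bijection between $\nu_n$-orbits of slices in $\widetilde{Q}$ and cuts in $Q$, Proposition~\ref{prop.mutation.ca-slice}(4) shows that this correspondence intertwines slice mutation and cut mutation, and Theorem~\ref{new_theorem_D} shows that slices are transitive under mutation. The proof of Theorem~\ref{new_theorem_D} compares two slices $S\le T$ via $d(S,T)=\#\bigl((\widetilde{Q}_T^{\ge0})_0\setminus(\widetilde{Q}_S^{\ge0})_0\bigr)$, which drops strictly at a well-chosen source mutation (Lemma~\ref{sources determine slices}), after first reducing to the case $S\le T$ via Lemma~\ref{new_lemma_E}. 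You instead work downstairs in $Q$ and propose the type-weight invariant $\delta(C)=\sum_{a\in C}(n+1-t(a))$, aiming to push any cut to $C_0$ by a monotone induction. That would be a cleaner, more elementary route if it could be made to work, but the crucial monotonicity step is not established.

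Concretely, the vertex $x$ you propose to mutate at does not exist: you want $x$ to be a source of $Q_C$ that ``still has an outgoing $C$-arrow of type $<n+1$,'' but Proposition~\ref{proposition.csetmutation}(1) says precisely that every arrow of $Q$ starting at a source of $Q_C$ lies \emph{outside} $C$. Moreover you have the direction of mutation reversed: $\mu_x^+$ at a source of $Q_C$ \emph{removes} the incoming arrows (which are all in $C$) and \emph{adds} the outgoing arrows (none of which are in $C$). Even after fixing these points, the deeper issue remains: the increment $\delta(\mu_x^+(C))-\delta(C)$ equals $\epsilon(x):=\sum_{a\colon\mathfrak{s}(a)=x}(n+1-t(a))-\sum_{a\colon\mathfrak{e}(a)=x}(n+1-t(a))$, which depends only on the vertex $x$ and not on $C$, so $\delta$ is a fixed ``potential'' on the mutation graph and there is no freedom to gain from a careful choice of representative low-type arrow. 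Already for $(n,s)=(1,3)$ the cut $C=\{(2,0)\to(1,1),\ (0,2)\to(1,1)\}$ has $\delta(C)=1$, its unique source in $Q_C$ is $(1,1)$, and $\epsilon((1,1))=0$, so no source mutation decreases $\delta$; a sink mutation at $(2,0)$ happens to work here, but nothing in your argument shows that for every $C\ne C_0$ some source of $Q_C$ has $\epsilon<0$ or some sink has $\epsilon>0$. Establishing that claim is the entire content of the theorem, and it is exactly the kind of global statement that the paper extracts from the covering/slice machinery (acyclicity of $Q_C$ and the $\Phi$-invariance coming from Proposition~\ref{prop.generator}, then the $d(S,T)$ induction upstairs) rather than from a local type-count at a single vertex.
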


\noindent
{\sc Step 3.} Finally, we need the following result which will also be shown in Subsections~\ref{subsect.comb_slices} to \ref{subsect.cycle_lemma}.

\begin{proposition} \label{prop.new_theorem_A}
\begin{enumerate}
\item $\Lambda_{\mu^+_x(C)}$ is an $n$-APR tilt of $\Lambda_C$.
\item $\Lambda_{\mu^+_x(C)}$ is $n$-representation-finite if and only if so is $\Lambda_C$.
\end{enumerate}
\end{proposition}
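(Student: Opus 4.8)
The plan is to reduce everything to the slice/admissible-set machinery of Subsection~\ref{subsect.admsets}. First I would translate the combinatorial hypothesis into module theory. Since by the Observation following Definition~\ref{definition.csetmut} the quiver $Q_C$ is the quiver of $\Lambda_C = \widehat{\Lambda}/(C)$, a source $x$ of $Q_C$ corresponds to a simple projective $\Lambda_C$-module $P := P_x = S_x$; writing $\Lambda_C = P \oplus Q'$ we have $\Hom_{\Lambda_C}(Q', P) = 0$ because $\Hom_{\Lambda_C}(P_y, S_x) = 0$ for $y \neq x$. Next I would observe that $\Lambda_C$ is $n$-representation-finite: the set $C_0$ of all arrows of type $n+1$ is simultaneously the standard admissible set and a cut (Remark~\ref{rem.adm_subset_ca}(3)); by Theorem~\ref{new_theorem_B} the cut $C$ is linked to $C_0$ by a chain of cut-mutations at sources or sinks of the intermediate quivers, and each such mutation agrees with the corresponding mutation of admissible sets by Remark~\ref{rem.adm_subset_ca}(2), so $C$ is itself an admissible set; hence by Theorem~\ref{theorem.adm=iteratedAPR} $\Lambda_C$ is an iterated $n$-APR tilt of $\Lambda^{(n,s)}$, which is $n$-representation-finite by Theorem~\ref{theorem.auslanderalg}.

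Now I can prove (1). Since $\Lambda_C$ is $n$-representation-finite and $\Hom_{\Lambda_C}(Q', P) = 0$, Observation~\ref{obs.alwaysAPR_der} gives an $n$-APR tilting complex $T := (\nu_n^- P) \oplus Q'$ in $\mathcal{D}_{\Lambda_C}$; set $\Gamma := \End_{\mathcal{D}_{\Lambda_C}}(T)^{\op}$. To identify $\Gamma$ with $\Lambda_{\mu^+_x(C)}$, use that the admissible set $C$ equals $C_S$ for a slice $S = \bigoplus \nu_n^{s_i} P_i$ in $\mathcal{U}^n_{\Lambda^{(n,s)}}$ with $\Lambda_C \iso \End_{\mathcal{D}_{\Lambda^{(n,s)}}}(S)^{\op}$ (Propositions~\ref{prop.admset=slice} and \ref{prop.endo=slice}). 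Under the derived equivalence attached to $T$, the passage from $\Lambda_C$ to $\Gamma$ corresponds to the slice mutation $S \mapsto \mu^+_{P_x}(S)$ (Theorem~\ref{theorem.slices=itereatedAPR}, together with Remark~\ref{remark.in_mod=in_der} to match the module- and complex-level tilts), and by Proposition~\ref{prop.admset=slice}(2) its associated admissible set is $\mu^+_{P_x}(C_S) = \mu^+_x(C)$, the last equality being Remark~\ref{rem.adm_subset_ca}(2). Therefore $\Gamma \iso \widehat{\Lambda}/(\mu^+_x(C)) = \Lambda_{\mu^+_x(C)}$ by Proposition~\ref{prop.endo=slice}, which is (1).

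For (2), the implication from $n$-representation-finiteness of $\Lambda_C$ to that of $\Lambda_{\mu^+_x(C)}$ is immediate from (1) and Theorem~\ref{theorem.APRpres_repfin}. For the converse, note that $x$ is a sink of $Q_{\mu^+_x(C)}$ and $\mu^-_x(\mu^+_x(C)) = C$; applying the dual of (1) (an $n$-APR cotilt at the simple injective module $S_x$ over $\Lambda_{\mu^+_x(C)}$) exhibits $\Lambda_C$ as an $n$-APR cotilt of $\Lambda_{\mu^+_x(C)}$, so Theorem~\ref{theorem.APRpres_repfin} (applied to the opposite algebras, using that $n$-representation-finiteness passes to $\Lambda^{\op}$) yields the other implication. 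In fact, in view of the first paragraph all the algebras $\Lambda_C$ are unconditionally $n$-representation-finite, so the equivalence is automatic; I keep the inductive formulation because it is exactly what is needed to run the induction in {\sc Step 1}--{\sc Step 3} of the outline.

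The main obstacle is the identification $\Gamma \iso \Lambda_{\mu^+_x(C)}$, i.e. matching the combinatorial data (a source of $Q_C$ and the cut mutation $\mu^+_x$) with the representation-theoretic data (a simple projective summand, the $n$-APR tilt, the slice mutation $\mu^+_{P_x}$). This compatibility is essentially the content of Remark~\ref{rem.adm_subset_ca}(2), and it is precisely what forces the detour through Theorem~\ref{new_theorem_B} to know beforehand that $C$ is an admissible set. A more computational alternative, avoiding Subsection~\ref{subsect.admsets} entirely, would determine the quiver with relations of $\Gamma$ directly from Theorem~\ref{theorem.APR_on_onepointext} and Observation~\ref{obs.rel_onepoint}, writing $\Lambda_C = \spm{K & M_P \\ & \Lambda'}$ with $M_P = \Hom_{\Lambda_C}(P, Q')$ and reading off $\Gamma = \spm{K \\ \Tr_{n-1} M_P & \Lambda'}$. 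This route is feasible in type $A$ but requires explicit control of a minimal projective resolution of $M_P$ over $(\Lambda')^{\op}$, which is where the real work would lie; it would amount to generalizing the $n = 2$ computation behind Theorem~\ref{theorem.quiver_2APR}.
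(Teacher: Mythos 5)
Your proof is correct, but it packages the argument differently from the paper. The paper's proof stays entirely inside the combinatorial framework of Subsection~\ref{subsect.comb_slices}: it invokes Theorem~\ref{new_theorem_C} to produce a slice $S$ in $\widetilde{Q}$ with $C=C_S$, then applies Lemma~\ref{lemma.slices_adm-set_comb} and Proposition~\ref{prop.slice_is_iteratedAPR} to recognize $\mu_y^+(S)$ (for the source $y$ over $x$) as an $n$-APR tilting $\Lambda_C$-module with endomorphism algebra $\Lambda_{C_{\mu_y^+(S)}}=\Lambda_{\mu_x^+(C)}$, using Proposition~\ref{prop.mutation.ca-slice}(4) for the last equality. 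Your route instead detours through the general Section~\ref{section.APR_repfin} machinery: you first upgrade the cut $C$ to an admissible set (iterating Theorem~\ref{new_theorem_B} and Remark~\ref{rem.adm_subset_ca}(2)--(3), which the paper only records after the fact in Remark~\ref{rem.cset=set}), then invoke Theorem~\ref{theorem.adm=iteratedAPR} and Propositions~\ref{prop.endo=slice}, \ref{prop.admset=slice} to match the $n$-APR tilt with $\mu_x^+(C)$. This is logically sound --- Theorem~\ref{new_theorem_B} rests only on the purely combinatorial Theorems~\ref{new_theorem_C} and \ref{new_theorem_D} and Proposition~\ref{prop.mutation.ca-slice}(4), so no circularity is introduced --- and it has the honest payoff of making part~(2) unconditional by showing that every $\Lambda_C$ is $n$-representation-finite from the outset, whereas the paper keeps (2) as an implication because that is the form needed to drive the induction in the proof of Theorem~\ref{theorem.main_typeA}. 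What the paper's more local argument buys in exchange is that it never needs to identify cuts with admissible sets or invoke Theorem~\ref{theorem.adm=iteratedAPR}: it reads off the $n$-APR tilt directly from the combinatorial slice mutation, which is arguably tighter. One small point worth noting: your argument implicitly identifies the abstract $(n+1)$-preprojective algebra $\widehat{\Lambda}$ of Section~\ref{subsect.admsets} with the explicit $\widehat{\Lambda}^{(n,s)}$ of Definition~\ref{def.qns}; this is Proposition~\ref{prop.preproj}, whose proof (via the covering $\pi\colon\widetilde Q\to Q$) is independent of the present proposition, so the identification is available when you need it, but it deserves an explicit citation.
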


Now Theorem~\ref{theorem.main_typeA} follows:
\begin{proof}[Proof of Theorem~\ref{theorem.main_typeA}]
By Theorem~\ref{theorem.auslanderalg}, there is a cut $C_0$ such that $\Lambda_{C_0}$ is strictly $n$-re\-pre\-sen\-ta\-tion-finite. By Proposition~\ref{prop.new_theorem_A} this property is preserved under mutation of cuts, and by Theorem~\ref{new_theorem_B} all cuts are iterated mutations of $C_0$.
\end{proof}

\begin{remark} \label{rem.cset=set}
Theorem~\ref{new_theorem_B}, together with Remark~\ref{rem.adm_subset_ca}(2) and (3), shows that in the setup of Definition~\ref{def.qns} the set of cuts (as defined in \ref{def.ca-set}) and the set of admissible sets (as defined in \ref{def.admset}), coincide.
\end{remark}

\subsection{Mutation of cuts} \label{subsect.ca-set-mut_OK}

In this subsection we show that the mutations $\mu_x^+(C)$ (or $\mu_x^-(C)$) as in Definition~\ref{definition.csetmut} for a cut $C$ are cuts again.

\begin{proposition} \label{proposition.csetmutation}
In the setup of Definition~\ref{definition.csetmut} we have the following:
\begin{enumerate}
\item Any arrow in $Q$ ending at $x$ belongs to $C$, and any arrow in $Q$ starting at $x$ does not belong to $C$.
\item $\mu^+_x(C)$ is again a cut.
\item $x$ is a sink of the quiver $Q_{\mu^+_x(C)}$.
\end{enumerate}
\end{proposition}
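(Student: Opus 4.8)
The plan is to establish the three claims in the stated order; claim~(1) contains the only real argument, and claims~(2) and~(3) then follow by unwinding the definition of $\mu_x^+$. Throughout I would write $I_x$ for the set of arrows of $Q$ ending at $x$ and $O_x$ for the set of arrows of $Q$ starting at $x$, and write $e_1,\ldots,e_{n+1}$ for the standard basis of $\Z^{n+1}$, so that $f_j = e_{j+1}-e_j$ with indices read mod $n+1$.

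The first half of~(1) is formal: if $a\in I_x$ but $a\notin C$, then $a$ is an arrow of $Q_C = Q\setminus C$ terminating at $x$, contradicting that $x$ is a source of $Q_C$; hence $I_x\subseteq C$. For the second half, suppose for contradiction that some arrow $a\colon x\xrightarrow{i}x+f_i$ lies in $C$. Since $a$ is an arrow of $Q$ we have $x+f_i\in Q_0^{(n,s)}$, so $\ell_i(x)\ge 1$. Consider the path starting at $x$ that applies the arrows of types $i,i+1,\ldots,i+n$ (mod $n+1$) in that cyclic order,
\[ x \xrightarrow{i} x+f_i \xrightarrow{i+1} x+f_i+f_{i+1} \xrightarrow{i+2} \cdots \xrightarrow{i-1} x . \]
Its vertex after $k$ steps is $x+f_i+f_{i+1}+\cdots+f_{i+k-1} = x-e_i+e_{i+k}$ (telescoping), which for $1\le k\le n$ has non-negative entries (the $i$-th entry is $\ell_i(x)-1\ge 0$) summing to $s-1$, hence lies in $Q_0^{(n,s)}$; and after $n+1$ steps the path returns to $x$. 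Thus this path is an $(n+1)$-cycle $Z$ whose first arrow is $a$ and whose last arrow is $b\colon x-e_i+e_{i-1}\xrightarrow{i-1}x$. Since $C$ is a cut, $a$ is the unique arrow of $Z$ in $C$, so $b\notin C$; but $b\in I_x$, so $b$ is an arrow of $Q_C$ terminating at $x$, again contradicting that $x$ is a source. This proves~(1).

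For~(2), by~(1) we have $I_x\subseteq C$ and $O_x\cap C=\emptyset$, so $\mu_x^+(C) = (C\setminus I_x)\sqcup O_x$. Let $Z$ be any $(n+1)$-cycle. If $Z$ does not pass through $x$, then no arrow of $Z$ is incident to $x$, and $Z\cap\mu_x^+(C) = Z\cap C$, a single arrow. If $Z$ passes through $x$, then -- using that the vertices of an $(n+1)$-cycle are pairwise distinct, since no proper non-empty $T\subseteq\{1,\ldots,n+1\}$ has $\sum_{j\in T}f_j=0$ -- exactly one arrow of $Z$ lies in $I_x$, say $b$, exactly one lies in $O_x$, say $a'$, and these are the only arrows of $Z$ incident to $x$. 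Since $b\in C$ and $|Z\cap C|=1$ we get $Z\cap C=\{b\}$, whence $Z\cap\mu_x^+(C) = (\{b\}\setminus I_x)\cup\{a'\} = \{a'\}$. In either case $\mu_x^+(C)$ meets $Z$ in exactly one arrow, so it is a cut. Finally~(3) is immediate: $\mu_x^+(C)\supseteq O_x$ and $\mu_x^+(C)\cap I_x=\emptyset$, so in $Q_{\mu_x^+(C)} = Q\setminus\mu_x^+(C)$ the vertex $x$ has all of $O_x$ removed and all of $I_x$ retained, i.e.\ $x$ is a sink.

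The main obstacle is the second half of~(1): this is where one must actually produce an $(n+1)$-cycle through $x$ beginning with a prescribed outgoing arrow and verify that it stays inside $Q$; the rest is formal bookkeeping with the definition of mutation, together with the (easy) fact that an $(n+1)$-cycle through $x$ uses exactly one incoming and one outgoing arrow at $x$. The dual assertions for $\mu_x^-(C)$ at a sink $x$ follow by the evident symmetry of $Q$ reversing all arrows (equivalently, replacing each $f_i$ by $-f_i$), so I would deduce them from~(1)--(3) without repeating the argument.
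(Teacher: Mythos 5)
Your proof is correct and follows the same overall structure as the paper's, but you replace the paper's appeal to Lemma~\ref{addition} by a clean explicit construction in part~(1): for a single arrow $x\xrightarrow{i}x+f_i$ you observe that one may always complete to an $(n+1)$-cycle by applying the types $i,i+1,\ldots,i+n$ in cyclic order, since the intermediate vertex $x-e_i+e_{i+k}$ manifestly stays in $Q_0^{(n,s)}$ (only the $i$-th coordinate drops, and it started $\ge 1$). This avoids the case analysis in Lemma~\ref{addition}, which is needed in the paper for extending arbitrary partial sequences but is overkill for the length-one case used here. You also make explicit a fact the paper's proof of~(2) uses silently, namely that an $(n+1)$-cycle in $Q^{(n,s)}$ has pairwise distinct vertices, by noting that no proper non-empty subset $T\subseteq\{1,\ldots,n+1\}$ satisfies $\sum_{j\in T}f_j=0$; this is a worthwhile piece of rigor, and your one-line justification ($T+1=T$ forces $T$ empty or full) is correct. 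Parts~(2) and~(3) are otherwise the same bookkeeping as in the paper. One small bookkeeping point to be careful about: in~(2) you write $Z\cap\mu_x^+(C)=(\{b\}\setminus I_x)\cup\{a'\}$; strictly one should write $Z\cap\mu_x^+(C)=((Z\cap C)\setminus I_x)\cup(Z\cap O_x)$ and then use $Z\cap C=\{b\}$ with $b\in I_x$ and $Z\cap O_x=\{a'\}$, but this is exactly what you mean and is fine.
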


For the proof we need the following observation, which tells us that any sequence of arrows of pairwise different type may be completed to an $(n+1)$-cycle.

\begin{lemma} \label{addition}
Let $x \in \Z^{n+1}$ and let $\sigma \colon \{1,\ldots,\ell\} \to \{1, \ldots, n+1\}$ be an injective map.
Assume that $x+\sum_{j=1}^{i}f_{\sigma(j)}$ belongs to $Q_0$ for any $0\le i\le\ell$.
Then $\sigma$ extends to an element $\sigma\in\mathfrak{S}_{n+1}$ such that
$x+\sum_{j=1}^{i}f_{\sigma(j)}$ belongs to $Q_0$ for any $0\le i\le n+1$.
\end{lemma}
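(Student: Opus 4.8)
The statement to prove is Lemma~\ref{addition}: given $x \in \Z^{n+1}$ and an injection $\sigma \colon \{1,\dots,\ell\} \to \{1,\dots,n+1\}$ such that every partial sum $x + \sum_{j=1}^i f_{\sigma(j)}$ lies in $Q_0 = Q_0^{(n,s)}$ for $0 \le i \le \ell$, one can extend $\sigma$ to a permutation in $\mathfrak{S}_{n+1}$ keeping all partial sums in $Q_0$. The plan is to proceed by induction on $n+1-\ell$: it suffices to show that if $\ell < n+1$ we can always pick a value $c \in \{1,\dots,n+1\} \setminus \sigma(\{1,\dots,\ell\})$ to set $\sigma(\ell+1) := c$ so that $y := x + \sum_{j=1}^\ell f_{\sigma(j)} + f_c$ still lies in $Q_0$, and then repeat. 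Recall that $Q_0$ consists precisely of the nonnegative integer vectors with coordinate sum $s-1$; since adding $f_c$ preserves the coordinate sum, the only thing to check is nonnegativity of all coordinates of $y$.

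**The key step: choosing the next index.** Write $z = x + \sum_{j=1}^\ell f_{\sigma(j)} \in Q_0$, so $z$ has nonnegative coordinates summing to $s-1$, and let $T = \{1,\dots,n+1\} \setminus \sigma(\{1,\dots,\ell\})$ be the set of still-unused indices, with $|T| = n+1-\ell \ge 1$. Adding $f_c$ (for $c \in T$, with $f_c = e_{c+1} - e_c$ cyclically) decreases coordinate $c$ by $1$ and increases coordinate $c+1$ by $1$; this keeps $y$ in $Q_0$ precisely when $z_c \ge 1$. So I need: there exists $c \in T$ with $z_c \ge 1$. I would prove this by contradiction — suppose $z_c = 0$ for every $c \in T$. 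Then the total $s - 1 = \sum_i z_i = \sum_{c \notin T} z_c$ is supported on the used indices $\sigma(\{1,\dots,\ell\})$. The plan here is to show that the partial-sum constraint already satisfied by $\sigma$ forces this to be impossible when $T \ne \emptyset$; the cleanest route is to observe that the used arrows $f_{\sigma(1)},\dots,f_{\sigma(\ell)}$ only move mass among coordinates $\sigma(1),\sigma(1)+1,\dots$ in a controlled way, and track where the "mass" $z_c$ for $c \in T$ could have gone. Concretely, since $x \in Q_0$ as well (the $i=0$ partial sum), and each step changes exactly two coordinates by $\pm 1$, one can argue about which coordinates can be zero. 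I expect the slick formulation is: among the indices in $T$, consider the cyclic structure; if all of $\{z_c : c \in T\}$ vanish then look at an index $c \in T$ whose cyclic predecessor is *not* in $T$ or analyze a maximal run — this is a small finite combinatorial argument.

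**Alternative cleaner approach.** Rather than the contradiction argument, I would likely prefer the following direct construction, which I'd try first. Since $z \ne 0$ (its coordinates sum to $s - 1 \ge 0$; if $s = 1$ then $Q_0$ is a single point and $\ell$ must already be such that there is nothing to do, or rather every $f_i$ can be added since $0 + f_i$ has a $-1$ — wait, that fails, so the $s=1$ case needs $\ell = 0 = n+1$, impossible unless $n = 0$; I'd handle $s=1$ separately or note $Q_0 = \{(0,\dots,0)\}$ admits no arrows at all and $\ell = 0$, $n \ge 1$ makes the hypothesis that all partial sums up to $\ell$ lie in $Q_0$ vacuous for the extension — actually one cannot extend, so implicitly $s \ge 2$). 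Assuming $s \ge 2$, $z$ has some positive coordinate. I want a positive coordinate with index in $T$. Here the natural idea: pick $c_0 \in T$ arbitrary; if $z_{c_0} \ge 1$ we are done; otherwise $z_{c_0} = 0$, and I move to the cyclic successor — but that successor might be a used index. The robust fix is to use that the set of positive coordinates of $z$ is nonempty and $T$ is nonempty, and to walk cyclically: starting from any positive coordinate and moving backward (subtracting $1$ from the index cyclically), the first time we are *about to leave* a positive coordinate we find an index $c$ with $z_c \ge 1$; I then need that this $c$ can be taken in $T$, which requires relating $T$ to the support of $z$ via the hypothesis on $\sigma$.

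**Main obstacle.** The genuinely nontrivial point — and the one I'd spend the most care on — is proving the existence of an unused index $c \in T$ with $z_c \ge 1$. It is "obvious" that you cannot get stuck, but making it rigorous requires understanding exactly how the constraint "all partial sums stay in $Q_0$" interacts with the cyclic vectors $f_i$. I expect the cleanest argument uses an invariant: define, for the current vector $z$ and the set $S = \sigma(\{1,\dots,\ell\})$ of used indices, the quantity $\sum_{c \in T} z_c$, and show by tracking each step that this stays equal to $\sum_{c \in T}(\text{something determined by } x)$, or more simply show directly that $\sum_{c\in T} z_c \geq \sum_{c \in T} x_c + (\text{number of } j \le \ell \text{ with } \sigma(j)+1 \in T \text{ but } \sigma(j) \notin T)\cdot(\pm 1)$, i.e. a bookkeeping of mass flowing into $T$. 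Since $x \in Q_0$ has $\sum_{c\in T} x_c \ge 0$, and if this is $0$ one needs the flow term to be handled — I suspect the right statement is that $\sum_{c \in T} z_c$ equals $\sum_{c\in T} x_c$ plus the net number of times an arrow "crossed into" $T$, and that being stuck ($z_c = 0$ for all $c \in T$) forces $s - 1 = \sum_{c\in S} z_c$ with additional sign constraints that can only be met if $T = \emptyset$. I would write this bookkeeping out carefully; it is elementary but is the crux, so I would not hand-wave it. Once existence of a suitable $c$ is established, the induction closes immediately and the lemma follows.
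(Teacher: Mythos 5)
Your strategy --- extend $\sigma$ one index at a time, observing that $z + f_c \in Q_0$ exactly when $z_c \ge 1$ (where $z$ is the current partial sum), and then showing that some unused index $c$ has $z_c \ge 1$ --- is correct and, if finished, gives a cleaner argument than the paper's. The paper instead works locally: it picks a gap $\{i_0+1, \ldots, i_1-1\}$ between cyclically consecutive elements $i_0, i_1 \in \Im\sigma$ and runs a three-way case analysis on the values $x_{i_0+2}$ and $x_{i_1-1}$ to choose $c$ inside that gap. Your global mass-balance invariant is the more transparent tool for the same job.

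But you stop at the crux. You state the right accounting in words --- $\sum_{c\in T} z_c = \sum_{c\in T} x_c + \#\{\,j : \sigma(j)+1 \in T\,\}$, both terms nonnegative, where $T := \{1,\ldots,n+1\}\setminus\Im\sigma$ --- but never say how to close the contradiction when the left side is $0$. The missing step: the second summand vanishing means $\sigma(j)+1 \in \Im\sigma$ for every $j$, so $\Im\sigma$ is closed under the cyclic successor $c \mapsto c+1$ (indices mod $n+1$); the only subsets of $\{1,\ldots,n+1\}$ invariant under this bijection are $\emptyset$ and the full set, so either $\ell=0$ (in which case $\sum_{c\in T}x_c = s-1 \ge 1$ already contradicts the first summand being $0$) or $\ell = n+1$, contradicting $\ell < n+1$. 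Your phrase ``additional sign constraints that can only be met if $T=\emptyset$'' gestures toward this but does not identify the cyclic-invariance argument that actually finishes the case; you also hedge between several alternative sketches (contradiction, cyclic walking, flow bookkeeping) without committing. Since the existence of a suitable $c$ is the only nontrivial content of the lemma and you explicitly defer it, there is a genuine gap at the heart of the proof --- though a short and entirely fillable one, and the route you have chosen is sound.
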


\begin{proof}
The statement makes sense only for $s \geq 2$. We set $I := \{0, \ldots, s-1\}$. For any $i \in \{1, \ldots, n+1\}$ we have $x_i+1 \in I$ or $x_i-1 \in I$.

We can assume $\ell<n+1$.
We will define $\sigma(\ell+1)\in\{1, \ldots, n+1\}$ such that $x+\sum_{j=1}^{\ell+1}f_{\sigma(j)}$ belongs to $Q_0$.
Without loss of generality, we assume that $i_0$ and $i_1(\neq i_0,i_0+1)$ belong to $\Im\sigma$, but none of $i_0+1,i_0+2,\cdots,i_1-1$ belong to $\Im\sigma$.
Since $x$ and $x+\sum_{j=1}^{\ell}f_{\sigma(j)}$ belong to $Q_0$, we have
\[ x_{i_0+1} \in I,\,  x_{i_0+1} + 1 \in I,\, x_{i_1} \in I, \text{ and } x_{i_1} - 1 \in I.\]
If $i_1=i_0+2$, then $\sigma(\ell+1):=i_0+1$ satisfies the desired condition.
In the rest, we assume $i_1\neq i_0+2$.
We divide into three cases.

(i) If $x_{i_0+2}+1 \in I$, then $\sigma(\ell+1):=i_0+1$ satisfies the desired condition.

(ii) If $x_{i_1-1}-1 \in I$, then $\sigma(\ell+1):=i_1-1$ satisfies the desired condition.

(iii) By (i) and (ii), we can assume $x_{i_0+2}-1 \in I$, $x_{i_1-1}+1 \in I$ and $i_1 \neq i_0+3$.
Then there exists $i_0+2\le i_2<i_1-1$ satisfying
\[x_{i_2}-1 \in I \quad \text{ and } \quad x_{i_2+1}+1 \in I.\]
Then $\sigma(\ell+1):=i_2$ satisfies the desired condition.
\end{proof}

\begin{proof}[Proof of Proposition~\ref{proposition.csetmutation}]
\begin{enumerate}
\item The former condition is clear since $x$ is a source of $Q_C$. Assume that an arrow $a$ starting at $x$ belongs to $C$. By Lemma~\ref{addition} we know that $a$ is part of an $(n+1)$-cycle $c$. Then $c$ contains at least two arrows which belong to $C$, a contradiction.
\item Let $c$ be an $(n+1)$-cycle. We only have to check that exactly one of the $(n+1)$ arrows in $c$ is contained in $\mu^+_x(C)$. This is clear if $x$ is not contained in $c$. Assume that $x$ is contained in $c$, and let $a$ and $b$ be the arrows in $c$ ending and starting in $x$, respectively. Since $C$ is a cut $a$ is the unique arrow in $c$ contained in $C$. Thus $b$ is the unique arrow in $c$ contained in $\mu^+_x(C)$.
\item Clear from (1). \qedhere
\end{enumerate}
\end{proof}

\subsection{$n$-cluster tilting in derived categories} \label{subsect.comb_slices}

This and the following two subsections are dedicated to the proofs of Theorem~\ref{new_theorem_B} and Proposition~\ref{prop.new_theorem_A}.

We consider a covering $\widetilde{Q}$ of $Q$, then introduce the notion of slices (see Definition~\ref{def.combslice}) in $\widetilde{Q}$, and their mutation. Then we construct a correspondence between cuts and $\nu_n$-orbits of slices (Theorem~\ref{new_theorem_C}) and show that slices are transitive under mutations (Theorem~\ref{new_theorem_D}). These results are the key steps of the proofs of Theorem~\ref{new_theorem_B} and Proposition~\ref{prop.new_theorem_A}.

We give the conceptual part of the proof in this subsection, and postpone the proof of the combinatorial parts (Theorems~\ref{new_theorem_C} and \ref{new_theorem_D}) to Subsection~\ref{subsect.first_combi}.

We recall the subcategory
\[ \mathcal{U} = \add \{ \nu_n^i \Lambda \mid i \in \mathbb{Z} \} \]
of $\mathcal{D}_{\Lambda}$ (see Subsection~\ref{sect.backgr.derived}).

\begin{definition} \label{notation.quiverU}
We denote by $\widetilde{Q} = \widetilde{Q}^{(n,s)}$ the quiver with
\begin{align*}
\widetilde{Q}_0 & = \{(\ell_1,\ell_2,\cdots,\ell_{n+1}\!:\!i)\in\Z_{\ge0}^{n+1}\times\Z \mid \sum_{j=1}^{n+1} \ell_j = s-1\},
\intertext{ (we separate the last entry of the vector to emphasize its special role)}
\widetilde{Q}_1 & = \{\widetilde{a}_{x,i} \colon x \tol{\scriptstyle i} x+g_i \mid 1 \le i\le n+1,\ x,x+g_i\in\widetilde{Q}_0 \},
\end{align*}
where $g_i$ denotes the vector
\[ g_i = \left\{ \begin{array}{cl} (0, \ldots,0, \overset{\mathclap{i}}{-1}, \overset{\mathclap{i+1}}{1}, 0, \ldots,0\!:\!0) & 1 \leq i \leq n \\ (\overset{\mathclap{1}}{1}, 0, \ldots, 0, \overset{\mathclap{n+1}}{-1}\!:\!1) & i = n+1 \end{array} \right. . \]
We consider the category obtained from the quiver $\widetilde{Q}$ by factoring out the relations
\begin{align*}
& [ x \tol[30]{\scriptstyle i} x+g_i \tol[30]{\scriptstyle j} x+g_i+g_j ] = [ x \tol[30]{\scriptstyle j} x+g_j \tol[30]{\scriptstyle i} x+g_i+g_j ] \\
& \qquad \text{if } x, x+g_i, x+g_j, x+g_i+g_j \in \widetilde{Q}_0 \\
& [ x \tol[30]{\scriptstyle i} x+g_i \tol[30]{\scriptstyle j} x+g_i+g_j ] = 0 \\
& \qquad \text{if } x, x+g_i, x+g_i+g_j \in \widetilde{Q}_0 \text{ and } x +g_j \not\in \widetilde{Q}_0
\end{align*}
\end{definition}

\begin{example}
The quiver $\widetilde{Q}^{(1,4)}$ is the following.
\[ \begin{tikzpicture}[xscale=1.5,yscale=.5]
 \foreach \x in {1,2,3,4,5}
  \node (30\x) at (2*\x,0) {$\scriptstyle 30:\x$};
 \foreach \x in {1,2,3,4}
  \node (21\x) at (2*\x+1,1) {$\scriptstyle 21:\x$};
 \foreach \x in {0,1,2,3,4}
  \node (12\x) at (2*\x+2,2) {$\scriptstyle 12:\x$};
 \foreach \x in {0,1,2,3}
  \node (03\x) at (2*\x+3,3) {$\scriptstyle 03:\x$};
 \foreach \x in {1,2,3,4}
  {
   \draw [->] (30\x) -- node [gap] {$\scriptscriptstyle 1$} (21\x);
   \draw [->] (21\x) -- node [gap] {$\scriptscriptstyle 1$} (12\x);
  }
 \foreach \x in {0,1,2,3}
  \draw [->] (12\x) -- node [gap] {$\scriptscriptstyle 1$} (03\x);
 \foreach \x/\y in {1/2,2/3,3/4,4/5}
  \draw [->] (21\x) -- node [gap] {$\scriptscriptstyle 2$} (30\y);
 \foreach \x/\y in {0/1,1/2,2/3,3/4}
  {
   \draw [->] (12\x) -- node [gap] {$\scriptscriptstyle 2$} (21\y);
   \draw [->] (03\x) -- node [gap] {$\scriptscriptstyle 2$} (12\y);
  }
 \foreach \x in {0,1,2,3}
  {
   \node at (1.5,\x) {$\cdots$};
   \node at (10.5,\x) {$\cdots$};
  }
\end{tikzpicture} \]
The quiver $\widetilde{Q}^{(2,4)}$ is the following.
\[ \begin{tikzpicture}[xscale=1.6,yscale=.4]
 \node (3000) at (0,0) {$\scriptstyle 300:0$};
 \node (2100) at (1,1) {$\scriptstyle 210:0$};
 \node (1200) at (2,2) {$\scriptstyle 120:0$};
 \node (0300) at (3,3) {$\scriptstyle 030:0$};
 \node (2010) at (2,0) {$\scriptstyle 201:0$};
 \node (1110) at (3,1) {$\scriptstyle 111:0$};
 \node (0210) at (4,2) {$\scriptstyle 021:0$};
 \node (1020) at (4,0) {$\scriptstyle 102:0$};
 \node (0120) at (5,1) {$\scriptstyle 012:0$};
 \node (0030) at (6,0) {$\scriptstyle 003:0$};
 \node (3001) at (1.2,6) {$\scriptstyle 300:1$};
 \node (2101) at (2.2,7) {$\scriptstyle 210:1$};
 \node (1201) at (3.2,8) {$\scriptstyle 120:1$};
 \node (0301) at (4.2,9) {$\scriptstyle 030:1$};
 \node (2011) at (3.2,6) {$\scriptstyle 201:1$};
 \node (1111) at (4.2,7) {$\scriptstyle 111:1$};
 \node (0211) at (5.2,8) {$\scriptstyle 021:1$};
 \node (1021) at (5.2,6) {$\scriptstyle 102:1$};
 \node (0121) at (6.2,7) {$\scriptstyle 012:1$};
 \node (0031) at (7.2,6) {$\scriptstyle 003:1$};
 \node (3002) at (2.4,12) {$\scriptstyle 300:2$};
 \node (2102) at (3.4,13) {$\scriptstyle 210:2$};
 \node (1202) at (4.4,14) {$\scriptstyle 120:2$};
 \node (0302) at (5.4,15) {$\scriptstyle 030:2$};
 \node (2012) at (4.4,12) {$\scriptstyle 201:2$};
 \node (1112) at (5.4,13) {$\scriptstyle 111:2$};
 \node (0212) at (6.4,14) {$\scriptstyle 021:2$};
 \node (1022) at (6.4,12) {$\scriptstyle 102:2$};
 \node (0122) at (7.4,13) {$\scriptstyle 012:2$};
 \node (0032) at (8.4,12) {$\scriptstyle 003:2$};
 \draw [->] (3000) -- node [gap] {$\scriptscriptstyle 1$} (2100);
 \draw [->] (2100) -- node [gap] {$\scriptscriptstyle 1$} (1200);
 \draw [->] (1200) -- node [gap] {$\scriptscriptstyle 1$} (0300);
 \draw [->] (2010) -- node [gap] {$\scriptscriptstyle 1$} (1110);
 \draw [->] (1110) -- node [gap] {$\scriptscriptstyle 1$} (0210);
 \draw [->] (1020) -- node [gap] {$\scriptscriptstyle 1$} (0120);
 \draw [->] (2100) -- node [gap] {$\scriptscriptstyle 2$} (2010);
 \draw [->] (1200) -- node [gap] {$\scriptscriptstyle 2$} (1110);
 \draw [->] (0300) -- node [gap] {$\scriptscriptstyle 2$} (0210);
 \draw [->] (1110) -- node [gap] {$\scriptscriptstyle 2$} (1020);
 \draw [->] (0210) -- node [gap] {$\scriptscriptstyle 2$} (0120);
 \draw [->] (0120) -- node [gap] {$\scriptscriptstyle 2$} (0030);
 \draw [->] (2010) -- node [gap] {$\scriptscriptstyle 3$} (3001);
 \draw [->] (1110) -- node [gap] {$\scriptscriptstyle 3$} (2101);
 \draw [->] (0210) -- node [gap] {$\scriptscriptstyle 3$} (1201);
 \draw [->] (1020) -- node [gap] {$\scriptscriptstyle 3$} (2011);
 \draw [->] (0120) -- node [gap] {$\scriptscriptstyle 3$} (1111);
 \draw [->] (0030) -- node [gap] {$\scriptscriptstyle 3$} (1021);
 \draw [->] (3001) -- node [gap] {$\scriptscriptstyle 1$} (2101);
 \draw [->] (2101) -- node [gap] {$\scriptscriptstyle 1$} (1201);
 \draw [->] (1201) -- node [gap] {$\scriptscriptstyle 1$} (0301);
 \draw [->] (2011) -- node [gap] {$\scriptscriptstyle 1$} (1111);
 \draw [->] (1111) -- node [gap] {$\scriptscriptstyle 1$} (0211);
 \draw [->] (1021) -- node [gap] {$\scriptscriptstyle 1$} (0121);
 \draw [->] (2101) -- node [gap] {$\scriptscriptstyle 2$} (2011);
 \draw [->] (1201) -- node [gap] {$\scriptscriptstyle 2$} (1111);
 \draw [->] (0301) -- node [gap] {$\scriptscriptstyle 2$} (0211);
 \draw [->] (1111) -- node [gap] {$\scriptscriptstyle 2$} (1021);
 \draw [->] (0211) -- node [gap] {$\scriptscriptstyle 2$} (0121);
 \draw [->] (0121) -- node [gap] {$\scriptscriptstyle 2$} (0031);
 \draw [->] (2011) -- node [gap] {$\scriptscriptstyle 3$} (3002);
 \draw [->] (1111) -- node [gap] {$\scriptscriptstyle 3$} (2102);
 \draw [->] (0211) -- node [gap] {$\scriptscriptstyle 3$} (1202);
 \draw [->] (1021) -- node [gap] {$\scriptscriptstyle 3$} (2012);
 \draw [->] (0121) -- node [gap] {$\scriptscriptstyle 3$} (1112);
 \draw [->] (0031) -- node [gap] {$\scriptscriptstyle 3$} (1022);
 \draw [->] (3002) -- node [gap] {$\scriptscriptstyle 1$} (2102);
 \draw [->] (2102) -- node [gap] {$\scriptscriptstyle 1$} (1202);
 \draw [->] (1202) -- node [gap] {$\scriptscriptstyle 1$} (0302);
 \draw [->] (2012) -- node [gap] {$\scriptscriptstyle 1$} (1112);
 \draw [->] (1112) -- node [gap] {$\scriptscriptstyle 1$} (0212);
 \draw [->] (1022) -- node [gap] {$\scriptscriptstyle 1$} (0122);
 \draw [->] (2102) -- node [gap] {$\scriptscriptstyle 2$} (2012);
 \draw [->] (1202) -- node [gap] {$\scriptscriptstyle 2$} (1112);
 \draw [->] (0302) -- node [gap] {$\scriptscriptstyle 2$} (0212);
 \draw [->] (1112) -- node [gap] {$\scriptscriptstyle 2$} (1022);
 \draw [->] (0212) -- node [gap] {$\scriptscriptstyle 2$} (0122);
 \draw [->] (0122) -- node [gap] {$\scriptscriptstyle 2$} (0032);
\end{tikzpicture} \]
\end{example}

\begin{remark}
By abuse of notation we also denote the automorphism of $\widetilde{Q}$ induced by sending $(\ell_1,\ell_2,\cdots,\ell_{n+1}\!:\!i)$ to $(\ell_1,\ell_2,\cdots,\ell_{n+1}\!:\!i-1)$ by $\nu_n$, and the map $\widetilde{Q} \to Q$ induced by sending $(\ell_1,\ell_2,\cdots,\ell_{n+1}\!:\!i)$ to $(\ell_1,\ell_2,\cdots,\ell_{n+1})$ by $\pi$.
\end{remark}

The following result is shown in \cite[Theorem~6.10]{Iy_n-Auslander} (see Theorem~\ref{theorem.U_is_ct}).

\begin{theorem} \label{theorem.quiverU}
\begin{enumerate}
\item The $n$-cluster tilting subcategory $\mathcal{U}$ of $\mathcal{D}_{\Lambda}$ is presented by the quiver $\widetilde{Q}$ with relations as in \ref{notation.quiverU}.
\item In this presentation the indecomposable projective $\Lambda$-modules correspond to the vertices $(\ell_1, \ldots, \ell_{n+1}\!:\!0)$, and the indecomposable injective $\Lambda$-modules correspond to the vertices $(\ell_1, \ldots, \ell_{n+1}\!:\!\ell_1)$.
\item The $n$-cluster tilting $\Lambda$-module is given by the direct sum of all objects corresponding to the vertices between projective and injective $\Lambda$-modules.
\end{enumerate}
\end{theorem}

We now carry over the concept of slices to the quiver-setup.

\begin{definition} \label{def.combslice}
A \emph{slice} of $\widetilde{Q}$ is a full subquiver $S$ of $\widetilde{Q}$ satisfying the following conditions.
\begin{enumerate}
\item Any $\nu_n$-orbit in $\widetilde{Q}$ contains precisely one vertex which belongs to $S$.
\item $S$ is convex, i.e. for any path $p$ in $\widetilde{Q}$ connecting two vertices in $S$, all vertices appearing in $p$ belong to $S$.
\end{enumerate}
\end{definition}

\begin{remark}
Definition~\ref{def.combslice} is just a ``quiver version'' of Definition~\ref{def.slicegen}. In particular it is clear that slices in $\widetilde{Q}$ and slices in $\mathcal{U}$ are in natural bijection.
\end{remark}

Next we carry over Construction~\ref{construct.setfromslice} to this combinatorial situation, that is, we produce from any slice in $\widetilde{Q}$ a cut $C_S$.

\begin{proposition} \label{admissible set and slice}
\begin{enumerate}
\item For any slice $S$ in $\widetilde{Q}$, we have a cut 
\[ C_S := Q_1 \setminus \pi(S_1) \]
in $Q$.
\item $\pi$ gives an isomorphism $S \to Q_{C_S}$ of quivers.
\end{enumerate}
\end{proposition}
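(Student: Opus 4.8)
The plan is to settle part~(2) by a direct bijection argument, and to obtain part~(1) by identifying $C_S$ with the admissible set attached to $S$ in Construction~\ref{construct.setfromslice}, which is already known to be a cut.

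\emph{Part (2).} By Definition~\ref{def.combslice}(1) the map $\pi$ restricts to a bijection $S_0 \to Q_0$. I would then check that $\pi|_{S_1}$ is injective: two arrows of $S_1$ lying over the same arrow $a\colon i \to j$ of $Q$ have, since $S$ is a full subquiver, their sources in $S_0$ over $i$ and their targets in $S_0$ over $j$; by Definition~\ref{def.combslice}(1) these coincide, and as $\widetilde{Q}$ carries at most one arrow of each type between two fixed vertices, the two arrows are equal. Since $\pi(S_1) = Q_1 \setminus C_S$ by the definition of $C_S$ and $\pi$ preserves sources and targets, $\pi$ restricts to a quiver isomorphism $S \to Q_{C_S}$. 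Note that this part uses nothing about $C_S$ being a cut.

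\emph{Part (1).} Under the bijection between slices of $\widetilde{Q}$ and slices of $\mathcal{U}$ attached to the presentation in Theorem~\ref{theorem.quiverU}, our $S$ corresponds to a slice of $\mathcal{U}$, to which Construction~\ref{construct.setfromslice} attaches a subset $C'_S$ of $A = Q_1$; by Proposition~\ref{prop.admset=slice}(1) this $C'_S$ is an admissible set, hence a cut by Remark~\ref{rem.adm_subset_ca}(4). It therefore suffices to show $C_S = C'_S$. I would organise this around the ``distance function'' $d\colon \widetilde{Q}_0 \to \Z$ defined by the requirement $\nu_n^{d(v)} v \in S_0$ --- well defined by Definition~\ref{def.combslice}(1), with $d(v) = 0$ exactly for $v \in S_0$ and $d(\nu_n v) = d(v) - 1$. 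Writing $S = \bigoplus_i \nu_n^{s_i} P_i$ over the vertices $i$ of $Q$, one has $S_0 = \{(i\!:\!-s_i)\}$ and $d\big((i\!:\!\ell)\big) = \ell + s_i$. For an arrow $a\colon i \to j$ of $Q$ of type $t$ with a lift $u \to v$ in $\widetilde{Q}$, the last coordinate of $g_t$ is $1$ if $t = n+1$ and $0$ otherwise; calling this value $\varepsilon_a$, one gets $d(v) - d(u) = (s_j - s_i) + \varepsilon_a$, independently of the chosen lift. A short computation then yields: $a \in C_S$ iff $a \notin \pi(S_1)$ iff the lift of $a$ starting at the unique point of $S_0$ over $i$ does not end in $S_0$ iff $d(v) \neq d(u)$; and, reading $a$ as a morphism $P_i \to \nu_n^{-\varepsilon_a} P_j$, Construction~\ref{construct.setfromslice} gives $a \in C'_S$ iff $-\varepsilon_a < s_j - s_i$, that is, iff $d(v) > d(u)$. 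Hence $C_S = C'_S$ will follow once I know that $d(v) \geq d(u)$ for every arrow $u \to v$ of $\widetilde{Q}$, so that ``$d(v) \neq d(u)$'' and ``$d(v) > d(u)$'' describe the same arrows.

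For this last point I would argue that in the presentation of $\mathcal{U}$ by $\widetilde{Q}$ with relations (Theorem~\ref{theorem.quiverU}) every defining relation is a combination of paths of length $2$, so the relation ideal contains no arrow; hence each arrow $u \to v$ of $\widetilde{Q}$ represents a non-zero morphism of $\mathcal{U}$, and the two objects of $\mathcal{U}$ sitting at $u$ and $v$ have non-vanishing $\Hom$ in $\mathcal{D}_{\Lambda}$. On the other hand Proposition~\ref{proposition.homslices} gives $\Hom_{\mathcal{D}_{\Lambda}}(S, \nu_n^{m} S) = 0$ for $m > 0$; since $\nu_n$ is an autoequivalence carrying the object at a vertex $w$ to the object at $\nu_n w$, this unwinds to the statement that $\Hom$ in $\mathcal{D}_{\Lambda}$ between the objects at $u$ and $w$ vanishes whenever $d(u) > d(w)$, which applied to our arrow forces $d(u) \leq d(v)$. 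The only genuine obstacle is the bookkeeping in the previous paragraph --- correctly translating the purely combinatorial condition $a \notin \pi(S_1)$ into the homological one of Construction~\ref{construct.setfromslice} while keeping consistent the two independent conventions, namely the level shift concealed in $g_{n+1}$ and the slice exponents $s_i$; the inequality $d(v)\geq d(u)$ is the one input that is not purely combinatorial, and once it and the dictionary are in hand, both parts are formal.
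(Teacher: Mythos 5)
Your part~(2) argument is the same as the paper's (bijectivity on vertices from Definition~\ref{def.combslice}(1), injectivity on arrows because a slice is a full subquiver and $\widetilde{Q}$ has at most one arrow between two fixed vertices). For part~(1), however, you take a genuinely different route. The paper gives a short, self-contained combinatorial argument: for each $(n+1)$-cycle in $Q$, its preimage in $\widetilde{Q}$ is an $A_\infty^\infty$ quiver, and the defining properties of a slice force $S_0$ to meet this preimage in exactly $n+1$ consecutive vertices, which is precisely the statement that exactly one arrow of the cycle lies outside $\pi(S_1)$. You instead identify the combinatorial $C_S = Q_1\setminus\pi(S_1)$ with the admissible set $C'_S$ coming from Construction~\ref{construct.setfromslice}, and invoke Proposition~\ref{prop.admset=slice}(1) and Remark~\ref{rem.adm_subset_ca}(4) to conclude that it is a cut. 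Your dictionary between the two — via the height function $d$ and the reduction to the inequality $d(v)\ge d(u)$ along arrows, which you extract from Proposition~\ref{proposition.homslices} together with the observation that the relations of $\widetilde{Q}$ lie in $J^2$ so arrows are non-zero in $\mathcal{U}$ — is correct. What the paper's approach buys is independence: Subsection~\ref{subsect.comb_slices} is designed so that Proposition~\ref{admissible set and slice} stands on its own feet and then \emph{feeds into} the comparison with the categorical picture (Theorem~\ref{new_theorem_C}), rather than the other way around. Your approach is fine logically (there is no circularity: Remark~\ref{rem.adm_subset_ca}(4), Proposition~\ref{prop.admset=slice} and Proposition~\ref{proposition.homslices} do not depend on this proposition), but it leans on Remark~\ref{rem.adm_subset_ca}(4), which the paper asserts rather than proves; to make your argument fully rigorous you would need to expand that remark, decomposing multi-vertex admissible-set mutations into single-vertex ones via Lemma~\ref{lemma.slicestrans}(2) and checking they match the cut mutations of Proposition~\ref{proposition.csetmutation}. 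In short: correct, but heavier machinery than the paper uses, and it front-loads work (the $d(v)\ge d(u)$ inequality and the $C_S=C'_S$ identification) that the paper's direct cycle-by-cycle argument avoids entirely.
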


\begin{proof}
(1) Let
\[x_1 \tol[30]{\scriptstyle a_1} x_2 \tol[30]{\scriptstyle a_2} \cdots \tol[30]{\scriptstyle a_n} x_{n+1} \tol[30]{\scriptstyle a_{n+1}} x_1 \]
be an $(n+1)$-cycle in $Q$.
We only have to show that there exists precisely one $i \in \{1, \ldots, n+1\}$ such that the arrow $a_i$ does not lie in $\pi(S_1)$.

Let $\widetilde{Q}'$ be the full subquiver of $\widetilde{Q}$ defined by $\widetilde{Q}'_0:=\pi^{-1}(\{x_1,\cdots,x_{n+1}\})$. Then $\widetilde{Q}'$ is isomorphic to the $A_\infty^\infty$ quiver
\[\cdots \to[30] y_{-1} \to[30] y_0 \to[30] y_1 \to[30] y_2 \to[30] \cdots\]
where $\pi(y_{i + (n+1) j})= \{x_i\}$ holds for any $i\in \{1, \ldots, n+1\}, j \in \mathbb{Z}$.
Since $S$ is a slice, there exists $k\in\Z$ such that the $n+1$ vertices $y_k,y_{k+1},\cdots,y_{k+n}$ belong to $S_0$ and any other $y_i$ does not belong to $S_0$. Take $k' \in \{1, \ldots, n+1\}$ such that $k - k' \in (n+1) \mathbb{Z}$. 
Then the $n$ arrows
\[ x_{k'} \tol[30]{\scriptstyle a_{k'}} x_{k'+1} \tol[30]{\scriptstyle a_{k'+1}} \cdots \tol[30]{\scriptstyle a_n} x_{n+1} \tol[30]{\scriptstyle a_{n+1}} x_1 \tol[30]{\scriptstyle a_1} \cdots \tol[30]{\scriptstyle a_{k'-2}} x_{k'-2} \]
belong to $\pi(S_1)$, and $x_{k'-1}\tol[30]{\scriptstyle a_{k'-1}} x_{k'}$ does not belong to $\pi(S_1)$.

(2) By Definition~\ref{def.combslice}(1), $\pi \colon S_0 \to (Q_{C_S})_0 = Q_0$ is bijective and $\pi \colon S_1 \to (Q_{C_S})_1$ is injective. Since $(Q_{C_S})_1 = \pi(S_1)$ by our construction, we have that $\pi$ is an isomorphism.
\end{proof}

\begin{example}
Two slices and the corresponding cuts for $n=1$ and $s=4$:
\[ \begin{tikzpicture}[xscale=1,yscale=1]\
 \node at (2,4) {slices};
 \foreach \x in {1,2,3,4,5}
  \node (30\x) at (2*\x,0) [vertex] {};
 \foreach \x in {1,2,3,4}
  \node (21\x) at (2*\x+1,1) [vertex] {};
 \foreach \x in {0,1,2,3,4}
  \node (12\x) at (2*\x+2,2) [vertex] {};
 \foreach \x in {0,1,2,3}
  \node (03\x) at (2*\x+3,3) [vertex] {};
 \foreach \x in {1,2,3,4}
  {
   \draw [->,ultra thin] (30\x) -- (21\x);
   \draw [->,ultra thin] (21\x) -- (12\x);
  }
 \foreach \x in {0,1,2,3}
  \draw [->,ultra thin] (12\x) -- (03\x);
 \foreach \x/\y in {1/2,2/3,3/4,4/5}
  \draw [->,ultra thin] (21\x) -- (30\y);
 \foreach \x/\y in {0/1,1/2,2/3,3/4}
  {
   \draw [->,ultra thin] (12\x) -- (21\y);
   \draw [->,ultra thin] (03\x) -- (12\y);
  }
 \foreach \x in {0,1,2,3}
  {
   \node at (1.5,\x) {$\cdots$};
   \node at (10.5,\x) {$\cdots$};
  }
 \draw [->,thick] (030) -- (121);
 \draw [->,thick] (121) -- (212);
 \draw [->,thick] (212) -- (303);
 \draw [->,ultra thick,dotted] (304) -- (214);
 \draw [->,ultra thick,dotted] (123) -- (214);
 \draw [->,ultra thick,dotted] (123) -- (033);
 \node at (13,4) {corresponding cuts};
 \node (P1) at (12.5, 3) [vertex] {};
 \node (P2) at (12.5, 2) [vertex] {};
 \node (P3) at (12.5, 1) [vertex] {};
 \node (P4) at (12.5, 0) [vertex] {};
 \draw [->,ultra thin] (P1.south west) -- (P2.north west);
 \draw [->,ultra thin] (P2.south west) -- (P3.north west);
 \draw [->,ultra thin] (P3.south west) -- (P4.north west);
 \draw [->,ultra thin] (P2.north east) -- (P1.south east);
 \draw [->,ultra thin] (P3.north east) -- (P2.south east);
 \draw [->,ultra thin] (P4.north east) -- (P3.south east);
 \draw [->,thick] (P2.north east) -- (P1.south east);
 \draw [->,thick] (P3.north east) -- (P2.south east);
 \draw [->,thick] (P4.north east) -- (P3.south east);
 \node (Q1) at (13.5, 3) [vertex] {};
 \node (Q2) at (13.5, 2) [vertex] {};
 \node (Q3) at (13.5, 1) [vertex] {};
 \node (Q4) at (13.5, 0) [vertex] {};
 \draw [->,ultra thin] (Q1.south west) -- (Q2.north west);
 \draw [->,ultra thin] (Q2.south west) -- (Q3.north west);
 \draw [->,ultra thin] (Q3.south west) -- (Q4.north west);
 \draw [->,ultra thin] (Q2.north east) -- (Q1.south east);
 \draw [->,ultra thin] (Q3.north east) -- (Q2.south east);
 \draw [->,ultra thin] (Q4.north east) -- (Q3.south east);
 \draw [->,ultra thick,dotted] (Q1.south west) -- (Q2.north west);
 \draw [->,ultra thick,dotted] (Q3.north east) -- (Q2.south east);
 \draw [->,ultra thick,dotted] (Q3.south west) -- (Q4.north west);
\end{tikzpicture} \]
Some slices and corresponding cuts for $n=2$ and $s=3$ can be found in Table~\ref{table.slices_ca-sets}.
\begin{table}
\[ \begin{tikzpicture}[xscale=1.6,yscale=.4]
 \node at (2,28.5) {slices:};
 \foreach \l in {0,...,6}
  {
   \node (A\l) at (0,4*\l) [vertex] {};
   \node (B\l) at (1,4*\l+1) [vertex] {};
   \node (C\l) at (2,4*\l+2) [vertex] {};
   \node (D\l) at (2,4*\l) [vertex] {};
   \node (E\l) at (3,4*\l+1) [vertex] {};
   \node (F\l) at (4,4*\l) [vertex] {};
   \draw [->,ultra thin] (A\l) -- (B\l);
   \draw [->,ultra thin] (B\l) -- (C\l);
   \draw [->,ultra thin] (B\l) -- (D\l);
   \draw [->,ultra thin] (C\l) -- (E\l);
   \draw [->,ultra thin] (D\l) -- (E\l);
   \draw [->,ultra thin] (E\l) -- (F\l);
  }
 \foreach \l/\m in {0/1,1/2,2/3,3/4,4/5,5/6}
  {
   \draw [->,ultra thin] (D\l) -- (A\m);
   \draw [->,ultra thin] (E\l) -- (B\m);
   \draw [->,ultra thin] (F\l) -- (D\m);
  }
 \foreach \i in {0,...,4}
  {
   \node at (\i,27) {$\vdots$};
   \node at (\i,-1) {$\vdots$};
  }
 \draw [->,very thick] (A5) -- (B5);
 \draw [->,very thick] (B5) -- (C5);
 \draw [->,very thick] (B5) -- (D5);
 \draw [->,very thick] (C5) -- (E5);
 \draw [->,very thick] (D5) -- (E5);
 \draw [->,very thick] (E5) -- (F5);
 \draw [->,ultra thick,dotted] (B3) -- (C3);
 \draw [->,ultra thick,dotted] (B3) -- (D3);
 \draw [->,ultra thick,dotted] (C3) -- (E3);
 \draw [->,ultra thick,dotted] (D3) -- (E3);
 \draw [->,ultra thick,dotted] (E3) -- (F3);
 \draw [->,ultra thick,dotted] (D3) -- (A4);
 \draw [->,very thick,densely dashed] (F0) -- (D1); 
 \draw [->,very thick,densely dashed] (B1) -- (C1);
 \draw [->,very thick,densely dashed] (B1) -- (D1);
 \draw [->,very thick,densely dashed] (C1) -- (E1);
 \draw [->,very thick,densely dashed] (D1) -- (E1);
 \draw [->,very thick,densely dashed] (D1) -- (A2);
 \node at (7,28.5) {corresponding cuts:};
 \foreach \l in {1,3,5}
  {
   \node (A\l) at (5,4*\l-.5) [vertex] {};
   \node (B\l) at (6,4*\l+1) [vertex] {};
   \node (C\l) at (7,4*\l+2.5) [vertex] {};
   \node (D\l) at (7,4*\l-.5) [vertex] {};
   \node (E\l) at (8,4*\l+1) [vertex] {};
   \node (F\l) at (9,4*\l-.5) [vertex] {};
   \draw [->,ultra thin] (A\l) -- (B\l);
   \draw [->,ultra thin] (B\l) -- (C\l);
   \draw [->,ultra thin] (B\l) -- (D\l);
   \draw [->,ultra thin] (C\l) -- (E\l);
   \draw [->,ultra thin] (D\l) -- (E\l);
   \draw [->,ultra thin] (E\l) -- (F\l);
   \draw [->,ultra thin] (D\l) -- (A\l);
   \draw [->,ultra thin] (E\l) -- (B\l);
   \draw [->,ultra thin] (F\l) -- (D\l);
  }
 \draw [->,very thick] (D5) -- (A5);
 \draw [->,very thick] (E5) -- (B5);
 \draw [->,very thick] (F5) -- (D5);
 \draw [->,ultra thick,dotted] (A3) -- (B3);
 \draw [->,ultra thick,dotted] (E3) -- (B3);
 \draw [->,ultra thick,dotted] (F3) -- (D3);
 \draw [->,very thick,densely dashed] (A1) -- (B1);
 \draw [->,very thick,densely dashed] (E1) -- (B1);
 \draw [->,very thick,densely dashed] (E1) -- (F1);
\end{tikzpicture} \]
\caption{Some slices and corresponding cuts for $n=2$ and $s=3$} \label{table.slices_ca-sets}
\end{table}
\end{example}

Now we state the first main assertion of this subsection, which will be proven in the next one.

\begin{theorem} \label{new_theorem_C}
The correspondence $S \mapsto C_S$ in Proposition~\ref{admissible set and slice} gives a bijection between $\nu_n$-orbits of slices in $\widetilde{Q}$ and cuts in $Q$.
\end{theorem}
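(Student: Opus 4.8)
The plan is to check three things: that $S\mapsto C_S$ is constant on $\nu_n$-orbits, that the induced map on orbits is injective, and that it is surjective. Well-definedness is immediate: since $\pi\circ\nu_n=\pi$ we have $\pi((\nu_n S)_1)=\pi(S_1)$, hence $C_{\nu_n S}=C_S$; and by Proposition~\ref{admissible set and slice}(1) the value is always a cut. So it only remains to prove bijectivity.

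\emph{Injectivity.} The map $\pi\colon\widetilde{Q}\to Q$ is a Galois covering of quivers with group $\mathbb{Z}$ acting through the automorphism $\nu_n$; this is immediate from Definition~\ref{notation.quiverU} and the remark following it. I would first record that, for any cut $C$, the quiver $Q_C$ is connected: given $a\colon u\to v$ in $C$, Lemma~\ref{addition} applied to the injective map $\{1\}\to\{1,\dots,n+1\}$ picking the type of $a$ shows that $a$ lies on an $(n+1)$-cycle, and since $C$ meets that cycle only in $a$ the remaining $n$ arrows form a path in $Q_C$ from $v$ to $u$; as $Q$ is connected, deleting the arrows of $C$ leaves it connected. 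Now if $C_S=C_{S'}$, then by Proposition~\ref{admissible set and slice}(2) both $\pi|_S\colon S\to Q_{C_S}$ and $\pi|_{S'}\colon S'\to Q_{C_S}$ are isomorphisms, so $S$ and $S'$ are two connected lifts along $\pi$ of the same connected subquiver $Q_{C_S}$. By unique path lifting, two connected lifts of a connected subquiver along a Galois covering differ by a deck transformation, so $S'=\nu_n^k S$ for some $k\in\mathbb{Z}$; that is, $S$ and $S'$ lie in one $\nu_n$-orbit.

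\emph{Surjectivity.} Here I would use that all cuts are mutations of one another (Theorem~\ref{new_theorem_B}), and lift this to slices. First, the standard cut $C_0$ (all arrows of type $n+1$) is realized: the full subquiver $S_0\subseteq\widetilde{Q}$ on the vertices $(\ell_1,\dots,\ell_{n+1}\!:\!0)$ presents $\add\Lambda$ by Theorem~\ref{theorem.quiverU}, which is a slice by Observation~\ref{obs.tiltisslice} (and hence $S_0$ is a slice of $\widetilde{Q}$ by the remark after Definition~\ref{def.combslice}); its arrows are precisely those of types $1,\dots,n$ since an arrow of type $n+1$ raises the last coordinate, so $\[ C_{S_0}=Q_1\setminus\pi((S_0)_1)=C_0. \]$ Next, given a slice $S$ with $C=C_S$ and a source $x$ of $Q_C$, let $\widetilde{x}\in S_0$ be the unique vertex over $x$. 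By Proposition~\ref{proposition.csetmutation}(1) every arrow of $Q$ ending at $x$ lies in $C$, hence no arrow of $\widetilde{Q}$ ending at $\widetilde{x}$ lies in $S$; so $\widetilde{x}$ is a source of the subquiver $S$. Convexity of $S$ then rules out any path in $\widetilde{Q}$ from a vertex of $S\setminus\{\widetilde{x}\}$ to $\widetilde{x}$ (such a path would lie entirely in $S$ by Definition~\ref{def.combslice}(2), but then its last arrow would be an arrow of $S$ ending at $\widetilde{x}$), and therefore $\Hom_{\mathcal{D}_\Lambda}(S\setminus\{\widetilde{x}\},\widetilde{x})=0$. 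Thus $\mu^+_{\widetilde{x}}(S):=(\nu_n^-\widetilde{x})\oplus(S\setminus\{\widetilde{x}\})$ is a legitimate slice mutation in the sense of Definition~\ref{def.slicemutation}, again a slice by Lemma~\ref{lemma.slicemutation}. A direct inspection of the arrows of the full subquiver of $\widetilde{Q}$ on the new vertex set, using Proposition~\ref{proposition.csetmutation}, shows that replacing $\widetilde{x}$ by $\nu_n^-\widetilde{x}$ deletes exactly the lifts of the arrows of $Q$ out of $x$ and inserts exactly the lifts of the arrows of $Q$ into $x$; comparing with Definition~\ref{definition.csetmut} gives $C_{\mu^+_{\widetilde{x}}(S)}=\mu^+_x(C_S)$, and dually $C_{\mu^-_{\widetilde{x}}(S)}=\mu^-_x(C_S)$ at sinks. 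Since every cut arises from $C_0$ by finitely many mutations (Theorem~\ref{new_theorem_B}), induction starting from $C_0=C_{S_0}$ shows every cut is of the form $C_S$.

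The step I expect to require real care is the last arrow-chase: verifying that passing from $S$ to $\mu^+_{\widetilde{x}}(S)$ inside $\widetilde{Q}$ removes precisely the lifts of the arrows leaving $x$ and adds precisely the lifts of the arrows entering $x$ in $Q$. This is where one genuinely uses the explicit combinatorics of $g_1,\dots,g_{n+1}$ and Proposition~\ref{proposition.csetmutation} (that $x$ being a source of $Q_C$ pins down all arrows of $Q$ incident with $x$ and all arrows of $\widetilde{Q}$ incident with $\widetilde{x}$); everything else is formal covering theory together with the bookkeeping already established in Propositions~\ref{admissible set and slice} and \ref{proposition.csetmutation}. (One could instead attempt surjectivity directly, constructing the lift of $Q_C$ by showing every closed walk in $Q_C$ has zero $\nu_n$-weight, but then one would still have to prove the lift is convex, which looks no easier than the mutation bookkeeping above.)
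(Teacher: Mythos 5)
Your injectivity argument is correct, and in fact makes explicit something the paper glosses over: you prove that $Q_C$ is connected (by completing each arrow of $C$ to an $(n+1)$-cycle via Lemma~\ref{addition}) and then invoke the standard covering-theoretic fact that two connected lifts along a Galois covering differ by a deck transformation. The paper reaches the same conclusion through the uniqueness clause of Proposition~\ref{section}(1), which is equivalent but tacitly assumes connectivity of $Q_C$.

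The surjectivity argument, however, is circular. You invoke Theorem~\ref{new_theorem_B} (all cuts are successive mutations of one another), but in the paper's development this theorem is \emph{derived from} Theorem~\ref{new_theorem_C} together with Theorem~\ref{new_theorem_D} and Proposition~\ref{prop.mutation.ca-slice}(4) --- see the sentence immediately following Theorem~\ref{new_theorem_D}. Your intermediate observation, that the assignment $S\mapsto C_S$ intertwines slice mutations with cut mutations, is exactly Proposition~\ref{prop.mutation.ca-slice}(4) and is proved independently in the paper; but that alone only shows the image of $C_?$ is closed under mutations and contains $C_0$, not that it exhausts all cuts. Without Theorem~\ref{new_theorem_B} (unavailable at this point) the induction has nothing to induct over.

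The paper's actual surjectivity argument is the direct lift you mention parenthetically and dismiss as ``no easier'': given any cut $C$, one constructs $\iota\colon Q_C\to\widetilde{Q}$ with $\pi\circ\iota=\operatorname{id}$ by lifting walks, where well-definedness is the content of Lemma~\ref{crucial2} and Proposition~\ref{crucial} (ultimately resting on the cycle classification, Proposition~\ref{prop.generator}), and convexity of $\iota(Q_C)$ follows from the $\phi_C$ and $\ell_C$ invariants in the proof of Proposition~\ref{section}(2). Far from being optional bookkeeping, that construction is what breaks the circle: Theorem~\ref{new_theorem_B} then falls out afterwards as a corollary, precisely along the lines of your mutation-commutation argument. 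To repair your proof you would need to supply an independent, purely combinatorial proof of Theorem~\ref{new_theorem_B}, or replace the surjectivity step by the paper's direct lift.
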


Let us introduce the following notion.

\begin{definition} \label{slice}
Let $S$ be a slice in $\widetilde{Q}$.
\begin{enumerate}
\item Let $x$ be a source of $S$. Define a full
subquiver $\mu_x^+(S)$ of $\widetilde{Q}$ by removing
$x$ from $S$ and adding $\nu_n^- x$.
\item Dually, for each sink $x$ of $S$, we define $\mu_x^-(S)$.
\end{enumerate}
We call the process of replacing a slice $S$ by $\mu_x^+(S)$ or $\mu_x^-(S)$ \emph{mutation} of slices.
\end{definition}

\begin{proposition} \label{prop.mutation.ca-slice}
In the setup of Definition~\ref{slice}(1) we have the following.
\begin{enumerate}
\item Any successor of $x$ in $\widetilde{Q}$ belongs to $S$,
and any predecessor of $x$ in $\widetilde{Q}$ does not belong to $S$.
\item Any successor of $\nu_n^- x$ in $\widetilde{Q}$ does not belong to $\mu_x^+(S)$,
and any predecessor of $\nu_n^- x$ in $\widetilde{Q}$ belongs to $\mu_x^+(S)$.
\item $\mu_x^+(S)$ is again a slice, and $\nu_n^- x$ is a sink of $\mu_x^+(S)$.
\item We have $C_{\mu_x^+(S)}=\mu_{\pi(x)}^+(C_S)$.
\end{enumerate}
\end{proposition}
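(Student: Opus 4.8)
The plan is to transport everything to the ``cut side'': by Proposition~\ref{admissible set and slice}(2) the projection $\pi$ restricts to an isomorphism of quivers $S \to Q_{C_S}$, so a source (resp.\ sink) of $S$ maps to a source (resp.\ sink) of $Q_{C_S}$, and the combinatorics of sources of $Q_C$ has already been worked out in Proposition~\ref{proposition.csetmutation}. Note first that the predecessor half of (1) is immediate from the definitions: since $S$ is a \emph{full} subquiver of $\widetilde{Q}$, the assertion ``$x$ is a source of $S$'' says exactly that no vertex of $S$ is the source of an arrow of $\widetilde{Q}$ ending at $x$, which is the predecessor statement.

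For the successor half of (1), let $a=\widetilde{a}_{x,i}\colon x\to x+g_i$ be an arrow of $\widetilde{Q}$. Then $\pi(a)$ is an arrow of $Q$ starting at the source $\pi(x)$ of $Q_{C_S}$, so by Proposition~\ref{proposition.csetmutation}(1) it does not lie in $C_S=Q_1\setminus\pi(S_1)$; hence $\pi(a)\in\pi(S_1)$, i.e.\ $\pi(a)$ is the image of some arrow of $S$. Since $\pi$ is injective on the vertices of $S$, and two arrows of $\widetilde{Q}$ have the same image in $Q$ only when they have the same type and sources with the same image, that arrow of $S$ must be $a$ itself; thus $a\in S_1$ and $x+g_i\in S_0$. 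This argument uses nothing about $x$ beyond its being a source of a slice, so the dual runs verbatim: for \emph{any} slice $S'$ and any sink $y$ of $S'$, all predecessors of $y$ belong to $S'$ and all successors of $y$ do not.

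For (3), I would get that $\mu_x^+(S)$ is a slice from Lemma~\ref{lemma.slicemutation}: writing $S=S'\oplus S''$ with $S'$ the summand corresponding to the vertex $x$, one has $\Hom_{\mathcal{D}_{\Lambda}}(S'',S')=0$ because a nonzero map from a summand $v$ of $S''$ to $x$ would give a path of positive length from $v$ to $x$ in $\widetilde{Q}$ whose penultimate vertex lies in $S$ by convexity, contradicting that $x$ is a source of $S$; then $\mu_x^+(S)$ is the slice-of-$\widetilde Q$ corresponding to $\mu^+_{S'}(S)$ under the bijection after Definition~\ref{def.combslice}. That $\nu_n^- x$ is a sink of $\mu_x^+(S)$ must be checked directly (it cannot use (2)): an arrow of $\mu_x^+(S)$ out of $\nu_n^- x$ ends at some $z\in\mu_x^+(S)_0$; applying the automorphism $\nu_n$ gives an arrow $x\to\nu_n z$ of $\widetilde{Q}$, so $\nu_n z\in S_0$ by (1); but then $z$ and $\nu_n z$ lie in one $\nu_n$-orbit with $z\in S_0$ (impossible by Definition~\ref{def.combslice}(1)) unless $z=\nu_n^- x$ (impossible since $\widetilde{Q}$ has no loops). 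With this, part (2) is exactly the dual statement obtained at the end of the previous paragraph, applied to the slice $\mu_x^+(S)$ and its sink $\nu_n^- x$. Finally, for (4) one computes: by (1) the arrows of $S$ out of $x$ are all the arrows of $\widetilde{Q}$ out of $x$, and by (2)--(3) the arrows of $\mu_x^+(S)$ into $\nu_n^- x$ are all the arrows of $\widetilde{Q}$ into $\nu_n^- x$; applying $\pi$ (injective on $S_1$, bijective from arrows of $\widetilde Q$ out of $x$ to arrows of $Q$ out of $\pi(x)$, and from arrows into $\nu_n^- x$ to arrows into $\pi(\nu_n^- x)=\pi(x)$) and using Proposition~\ref{proposition.csetmutation}(1) for the source $\pi(x)$ of $Q_{C_S}$, one finds $\pi(\mu_x^+(S)_1)$ to be $\pi(S_1)$ with all arrows of $Q$ out of $\pi(x)$ deleted and all arrows of $Q$ into $\pi(x)$ removed; taking complements in $Q_1$ gives precisely $\mu^+_{\pi(x)}(C_S)$ as in Definition~\ref{definition.csetmut}(2).

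The main obstacle is not any single estimate but the interlocking logical order: part (2) genuinely needs ``$\mu_x^+(S)$ is a slice with sink $\nu_n^- x$'' as an input, so the sink half of (3) must be proved straight from (1) and orbit-uniqueness rather than via (2), and one must keep track that all the cut-side facts invoked (Propositions~\ref{admissible set and slice} and \ref{proposition.csetmutation}) are available independently of any statement about mutation of slices. The remaining care is bookkeeping about how $\pi$ acts on arrows of $\widetilde{Q}$ (type and source determine the image), which is what makes the ``lift the image back'' step in (1) and the complement computation in (4) go through cleanly.
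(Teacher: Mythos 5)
Your proof is correct but reverses the paper's logical order. The paper proves part~(2) directly by a path argument in $\widetilde{Q}$: for a predecessor $y$ of $\nu_n^- x$, it locates the unique $i$ with $\nu_n^i y \in S_0$, rules out $i > 0$ by convexity and part~(1), and then uses a path from $x$ to $\nu_n^i y$ through $y$ to conclude $y \in S_0$; parts~(3) and~(4) are then short consequences. You instead prove the sink-half of~(3) first from~(1) together with the one-vertex-per-$\nu_n$-orbit condition, obtain the slice property of $\mu_x^+(S)$ by passing to $\mathcal{U}$ and applying Lemma~\ref{lemma.slicemutation} (having verified $\Hom_{\mathcal{D}_\Lambda}(S'',S')=0$ by convexity and~(1)), and then get~(2) essentially for free as the dual of~(1) applied to the new slice and its sink $\nu_n^- x$. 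Both orderings are valid, and your route makes the implication structure tighter; what it costs is re-importing the categorical Lemma~\ref{lemma.slicemutation} into a subsection the paper tries to keep at the level of paths in $\widetilde{Q}$ (a stylistic preference the authors make explicit in the remark after Theorem~\ref{new_theorem_D}), though the paper's own argument for~(2) already leans tacitly on the path lemma from Subsection~\ref{subsect.slices}, so the line between combinatorial and categorical is not cleanly drawn in either version. One incidental match worth noting: your appeal to Proposition~\ref{proposition.csetmutation}(1) for the successor half of~(1), via the quiver isomorphism $\pi\colon S \to Q_{C_S}$, is clearly what the paper intends as well; the paper's citation of ``Remark~\ref{rem.adm_subset_ca}(1)'' there reads like a misreference for exactly that proposition, and your lifting argument (arrows of $\widetilde{Q}$ determined by type and source, $\pi$ injective on $S_0$) fills in the detail. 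Your complement bookkeeping for~(4) is likewise what the paper's ``clear from (1) and (2)'' amounts to.
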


\begin{proof}
(1) Let $C_S$ be the cut given in Proposition~\ref{admissible set and slice}. Then $x$ is a source of $Q_{C_S}$. By Remark~\ref{rem.adm_subset_ca}(1), we have the assertion.

(2) The former assertion follows from the former assertion in (1) and 
the definition of slice.

Take a predecessor $y$ of $\nu_n^- x$ and an integer $i$ such that
$\nu_n^i y \in S_0$.
If $i>0$, then we have $i=1$ since there exists a path from $\nu_n^i y$
to $\nu_n^- x$ passing through $\nu_n y$. This is a contradiction
to the latter assertion of (1) since $\nu_n y$ is a predecessor of $x$.
Thus we have $i\le 0$. Since there exists a path from $x$ to $\nu_n^i y$
passing through $y$, we have $y\in S_0$.

(3) By (2), $\nu_n^- x$ is a sink of $\mu_x^+(S)$.
We only have to show that $\mu_x^+(S)$ is convex.
We only have to consider paths $p$ in $\widetilde{Q}$ starting at a vertex
in $\mu_x^+(S)$ and ending at $\nu_n^- x$.
Since any predecessor of $\nu_n^- x$ in $\widetilde{Q}$ belongs to $S$ by (2)
and $S$ is convex, any vertex appearing in $p$ belongs to $\mu_x^+(S)$.

(4) This is clear from (1) and (2).
\end{proof}

The following is the second main statement in this section, which will be proven in the next one.

\begin{theorem} \label{new_theorem_D}
The slices in $\widetilde{Q}$ are transitive under successive mutation.
\end{theorem}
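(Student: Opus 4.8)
The plan is to parametrise slices by integer-valued level functions. Identify the vertices of $\widetilde{Q}$ with pairs $(w,c)$, $w\in Q_0$, $c\in\Z$, so that $\nu_n(w,c)=(w,c-1)$ and an arrow of $Q$ of type $\tau$ from $u$ to $v$ lifts to arrows $(u,c)\to(v,c+\delta_\tau)$, where $\delta_\tau=1$ if $\tau=n+1$ and $\delta_\tau=0$ otherwise. By Definition~\ref{def.combslice}(1) a slice $S$ is recorded faithfully by the function $c_S\colon Q_0\to\Z$ determined by $(w,c_S(w))\in S$, and $S=T$ iff $c_S=c_T$. Mutating at a source $x$ (with $w=\pi(x)$) replaces $(w,c_S(w))$ by $(w,c_S(w)+1)$, so it raises $c_S$ by one at $\pi(x)$ and changes nothing else; dually $\mu^-_y$ lowers $c_S$ by one at $\pi(y)$. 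Set $d(S,T)=\sum_{w\in Q_0}|c_S(w)-c_T(w)|$, so $d(S,T)=0$ iff $S=T$. Then Theorem~\ref{new_theorem_D} follows by induction on $d(S,T)$ from the local claim: if $S\ne T$, then $S$ has a source $x$ with $c_S(\pi(x))<c_T(\pi(x))$ or a sink $y$ with $c_S(\pi(y))>c_T(\pi(y))$ --- for in the first case $\mu^+_x(S)$ is a slice (Proposition~\ref{prop.mutation.ca-slice}(3)) with $d(\mu^+_x(S),T)=d(S,T)-1$, and symmetrically in the second.

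To prove the local claim I would first establish a monotonicity property valid for \emph{any} slice $T$: for every arrow $u\to v$ of $Q$ of type $\tau$, $c_T(v)\le c_T(u)+\delta_\tau$. Suppose not. The vertex $(v,c_T(u)+\delta_\tau)$ is the head of the lift of $u\to v$ starting at $(u,c_T(u))\in T$, and it lies strictly below $(v,c_T(v))\in T$ inside the $\nu_n$-orbit over $v$; but these two vertices are joined by a forward path in $\widetilde{Q}$, namely the concatenation of $c_T(v)-c_T(u)-\delta_\tau$ lifts of $(n+1)$-cycles of $Q$ through $v$ (such cycles exist by Lemma~\ref{addition}; the degenerate case $s=1$ is trivial). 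This yields a path between two vertices of $T$ passing through a vertex outside $T$, contradicting convexity. Granting monotonicity: if $u\to v$ is an arrow of $Q$ of type $\tau$ whose lift lies in $S$, then $c_S(v)=c_S(u)+\delta_\tau$, and if moreover $c_S(v)<c_T(v)$ then $c_S(u)=c_S(v)-\delta_\tau<c_T(v)-\delta_\tau\le c_T(u)$. Hence $W^+:=\{w\in Q_0\mid c_S(w)<c_T(w)\}$ contains the tail of every arrow of $S$ whose head it contains; since $S$ is finite (one vertex over each of the finitely many vertices of $Q$) and $\widetilde{Q}$ is acyclic (e.g.\ $\sum_j j\ell_j$ strictly increases along arrows of type $\le n$, and the level along arrows of type $n+1$), if $W^+\ne\emptyset$ then $\pi(x)\in W^+$ for some source $x$ of $S$. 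Symmetrically $W^-:=\{w\in Q_0\mid c_S(w)>c_T(w)\}$, when nonempty, contains $\pi(y)$ for some sink $y$ of $S$. Since $S\ne T$ forces $W^+\cup W^-\ne\emptyset$, the local claim follows.

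The core of the argument is the monotonicity property, which rests on the combinatorial fact that sequences of arrows of pairwise distinct types in $Q^{(n,s)}$ extend to $(n+1)$-cycles --- this is exactly Lemma~\ref{addition} together with the cycle lemma of Subsection~\ref{subsect.cycle_lemma} --- and I expect the bookkeeping around that lemma to be the main obstacle. I would also record a shorter, less self-contained route: through the natural bijection between slices of $\widetilde{Q}$ and slices of $\mathcal{U}$ one invokes Lemma~\ref{lemma.slicestrans}(1), and then it suffices to check that every summand-mutation $\mu^+_{S'}$ of Definition~\ref{def.slicemutation} factors into single-vertex mutations of Definition~\ref{slice}. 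For this, order the indecomposable summands of $S'$ as $x_1,\dots,x_\ell$ with each $x_i$ a source of the full subquiver on $\{x_i,\dots,x_\ell\}$ (possible since $\widetilde{Q}$ is acyclic); because $\Hom_{\mathcal{D}_\Lambda}(S'',S')=0$ there is no arrow from $S''$ into any $x_i$, and each newly created vertex $\nu_n^-x_j$ is a sink of the intermediate slice by Proposition~\ref{prop.mutation.ca-slice}(3), so $x_i$ remains a source when its turn comes; performing $\mu^+_{x_1},\dots,\mu^+_{x_\ell}$ in succession produces $\mu^+_{S'}(S)$.
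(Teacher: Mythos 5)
Your argument is correct, and it is a close variant of the paper's purely combinatorial proof rather than a fundamentally different one: both parametrise slices by the level function $c_S\colon Q_0\to\Z$, use (up to your symmetrisation) the same distance, and do induction on that distance by mutating at a source or sink of $S$ where $c_S$ and $c_T$ disagree. What differs is the organisation and the supporting lemma. The paper first invokes Lemma~\ref{new_lemma_E} to replace $S$ by $\nu_n^{-k}S$ for $k\gg0$, reducing to $S\le T$, and then appeals to Lemma~\ref{sources determine slices} to extract a source of $S$ lying outside $T_0$; you avoid the reduction step by treating $W^+$ and $W^-$ symmetrically, mutating at sources to shrink $W^+$ and at sinks to shrink $W^-$, and your supporting lemma is the explicit monotonicity inequality $c_T(v)\le c_T(u)+\delta_\tau$, derived directly from convexity and the existence of an $(n+1)$-cycle through each vertex. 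This monotonicity is essentially the combinatorial content that makes the paper's appeal to Lemma~\ref{sources determine slices} work, so your version makes that step more self-contained at the modest cost of handling sinks as well as sources. Two small remarks: for the monotonicity you do not need Proposition~\ref{prop.generator} from Subsection~\ref{subsect.cycle_lemma} --- the $\ell=0$ case of Lemma~\ref{addition}, giving an $(n+1)$-cycle through any vertex when $s\ge2$, is all that is used; and the ``shorter, less self-contained route'' you sketch, via Lemma~\ref{lemma.slicestrans} and factoring $\mu^+_{S'}$ into single-vertex mutations, is precisely the alternative the paper acknowledges in the remark following Theorem~\ref{new_theorem_D}, and the factoring argument you give is the content of Lemma~\ref{lemma.slicestrans}(2).
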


\begin{remark}
Note that one can prove Theorem~\ref{new_theorem_D} by using the categorical argument in Lemma~\ref{lemma.slicestrans}. But we will give a purely combinatorial proof in the next subsection since it has its own interest.
\end{remark}

Clearly Theorem~\ref{new_theorem_B} is an immediate consequence of Proposition~\ref{prop.mutation.ca-slice}(4) and Theorems~\ref{new_theorem_C} and \ref{new_theorem_D} above. \vskip.5em

We now work towards a proof of Proposition~\ref{prop.new_theorem_A}. We identify a slice $S$ in $\widetilde{Q}$ with the direct sum of all objects in $\mathcal{D}_{\Lambda}$ corresponding to vertices in $S$.

\begin{lemma} \label{lemma.slices_adm-set_comb}
\begin{enumerate}
\item $\End_{\mathcal{D}_{\Lambda}}(S) \iso \Lambda_{C_S}$.
\item Let $x$ be a source of $S$. If $S$ is a tilting complex in $\mathcal{D}_{\Lambda}$, then $\mu_x^+(S)$ is an $n$-APR tilting $\Lambda_{C_S}$-module.
\end{enumerate}
\end{lemma}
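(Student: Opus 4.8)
Both parts are translations into the type~$A$ language of facts already proven in Section~\ref{section.APR_repfin}, so the plan is to reduce to those rather than redo anything. For \emph{part~(1)}: under the natural bijection between slices in $\widetilde{Q}$ and slices in $\mathcal{U}$, the cut $C_S=Q_1\setminus\pi(S_1)$ of Proposition~\ref{admissible set and slice} coincides with the admissible set attached to $S$ in Construction~\ref{construct.setfromslice}; hence Proposition~\ref{prop.endo=slice} gives $\End_{\mathcal{D}_{\Lambda}}(S)^{\op}\iso\widehat{\Lambda}/(C_S)$, and since $\widehat{\Lambda}=\widehat{\Lambda}^{(n,s)}$ (Proposition~\ref{prop.preproj}) this equals $\widehat{\Lambda}^{(n,s)}/(C_S)=\Lambda_{C_S}$. (Alternatively, and more in keeping with this subsection: convexity of $S$ makes $\End_{\mathcal{D}_{\Lambda}}(S)$ the path algebra of the full subquiver $S$ of $\widetilde{Q}$ modulo the relations of Definition~\ref{notation.quiverU} supported on $S$, and the quiver isomorphism $\pi\colon S\to Q_{C_S}$ of Proposition~\ref{admissible set and slice}(2) carries these to the relations of Definition~\ref{def.qns}(2) surviving the deletion of $C_S$.)

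For \emph{part~(2)}, put $\Gamma:=\End_{\mathcal{D}_{\Lambda}}(S)^{\op}$, so $\Gamma\iso\Lambda_{C_S}$ by (1) and $Q_{C_S}$ is the quiver of $\Gamma$. Since $S$ is a tilting complex, $\mathbf{F}:=\mathbf{R}\Hom_{\mathcal{D}_{\Lambda}}(S,-)\colon\mathcal{D}_{\Lambda}\to\mathcal{D}_{\Gamma}$ is an equivalence with $\mathbf{F}(S)=\Gamma$, commuting with $\nu_n$ (as in the proof of Proposition~\ref{prop.preserves_U}). Write $S=x\oplus S''$, where $x$ is the chosen source vertex, regarded as an object of $\mathcal{U}$, and $S''$ is the sum of the remaining vertices; then $\Gamma=P\oplus Q'$ with $P:=\mathbf{F}(x)$ the indecomposable projective $\Gamma$-module at $\pi(x)$ and $Q':=\mathbf{F}(S'')$. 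As $x$ is a source of $S$, $\pi(x)$ is a source of $Q_{C_S}$, so $P$ is simple projective; and since $\Lambda^{(n,s)}$ — hence also the derived equivalent $\Gamma$ — is connected and (for $s\ge2$) not semisimple, $\pi(x)$ is not a sink, so $P$ is also non-injective. Finally $\Gamma$ is $n$-representation-finite: by Theorems~\ref{theorem.auslanderalg} and \ref{theorem.slices=itereatedAPR}(1), $S$ is an iterated $n$-APR tilting complex of the $n$-representation-finite algebra $\Lambda^{(n,s)}$, so Theorem~\ref{theorem.APRpres_repfin} applies.

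Now Observation~\ref{obs.alwaysAPR}, applied to the simple projective non-injective $\Gamma$-module $P$, yields that $(\tau_n^-P)\oplus Q'$ is an $n$-APR tilting $\Gamma$-module; in particular $\id_\Gamma P=n$ and $\Ext^i_\Gamma(D\Gamma,P)=0$ for $0\le i<n$, so that $\nu_n^-P=\mathbf{R}\Hom_\Gamma(D\Gamma,P)[n]$ is concentrated in degree $0$ and equals $\tau_n^-P$. Hence
\[ \mathbf{F}(\mu_x^+(S))=\mathbf{F}\big((\nu_n^-x)\oplus S''\big)=(\nu_n^-P)\oplus Q'=(\tau_n^-P)\oplus Q', \]
so $\mathbf{F}(\mu_x^+(S))$ is an $n$-APR tilting $\Gamma$-module; identifying $\Gamma\iso\Lambda_{C_S}$ as in (1), $\mu_x^+(S)$ is an $n$-APR tilting $\Lambda_{C_S}$-module.

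The only non-formal point is the identification $\nu_n^-P=\tau_n^-P$ \emph{as a module} — equivalently $\id_\Gamma P=n$ — which fails for arbitrary algebras of global dimension $\le n$ but holds here precisely because $\Gamma$ is $n$-representation-finite, making Observation~\ref{obs.alwaysAPR} available; establishing that $n$-representation-finiteness of $\Gamma$ (from the abstract theory of Section~\ref{section.APR_repfin} applied to $\Lambda^{(n,s)}$) is therefore the crux. One must also keep opposite-algebra conventions straight and exclude the trivial case $s=1$, where every source is injective.
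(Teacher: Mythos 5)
Your proof is correct. For part~(1), your ``alternative'' argument is exactly the paper's: $\pi$ is a quiver isomorphism $S\to Q_{C_S}$ by Proposition~\ref{admissible set and slice}(2), and the relations presenting $\mathcal{U}$ (Definition~\ref{notation.quiverU}) are carried to those of $\widehat{\Lambda}^{(n,s)}$ surviving in $Q_{C_S}$; your primary route through Proposition~\ref{prop.endo=slice} works too, though the identification of the cut $C_S$ with the admissible set of Construction~\ref{construct.setfromslice} deserves a word (it holds because both agree on the standard slice and both constructions commute with mutation, cf.\ Remark~\ref{rem.adm_subset_ca}).

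For part~(2) the paper writes only ``This is clear from the definition,'' so you have supplied essentially all of the content, and correctly so. You have put your finger on the genuine issue: a priori $\nu_n^-P$ is a complex, and concentration in degree $0$ (so that $\mathbf{F}(\mu_x^+(S))$ is a module and equals $(\tau_n^-P)\oplus Q'$) is not automatic from the bare tilting-complex hypothesis. Your fix --- establishing that $\Gamma$ is $n$-representation-finite via Theorems~\ref{theorem.slices=itereatedAPR}(1) and \ref{theorem.APRpres_repfin}, and then invoking Observation~\ref{obs.alwaysAPR} --- is valid and logically sound, since those results from Section~\ref{section.APR_repfin} do not depend on Section~\ref{section.typeA}. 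One remark: by citing Theorem~\ref{theorem.slices=itereatedAPR}(1) you actually render the ``$S$ is a tilting complex'' hypothesis superfluous, whereas the paper evidently wants Lemma~\ref{lemma.slices_adm-set_comb}(2) to feed the \emph{inductive} proof of Proposition~\ref{prop.slice_is_iteratedAPR} (together with the combinatorial transitivity Theorem~\ref{new_theorem_D}) and thus give a self-contained alternative to the categorical transitivity argument. A version closer to the paper's intent would strengthen the hypothesis to ``$S$ is an iterated $n$-APR tilting complex'' (which is what the induction actually supplies), and deduce $\gld\Gamma\le n$ from Proposition~\ref{proposition.gld_preserved} rather than from Theorem~\ref{theorem.slices=itereatedAPR}. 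Your shortcut is cleaner and equally correct; the paper's route trades that for independence from Lemma~\ref{lemma.slicestrans}. Your closing caveats about the $s=1$ degenerate case and opposite-algebra bookkeeping are both apt.
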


\begin{proof}
(1) $\pi$ gives an isomorphism $S \to C_S$. It is easily checked that the relations for $\mathcal{U}$ correspond to those for $\widehat{\Lambda}$.

(2) This is clear from the definition.
\end{proof}

\begin{proposition} \label{prop.slice_is_iteratedAPR}
For any slice $S$ in $\widetilde{Q}$, the corresponding object $S \in \mathcal{D}_{\Lambda}$ is an iterated $n$-APR tilting complex.
\end{proposition}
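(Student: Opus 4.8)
The plan is to reduce the claim to a single mutation step and then run an induction powered by the combinatorial transitivity of slices. First I would pin down the \emph{standard slice}: let $S_0$ be the full subquiver of $\widetilde{Q}$ on the vertices $(\ell_1,\dots,\ell_{n+1}\!:\!0)$. By Theorem~\ref{theorem.quiverU}(1)--(2) this is exactly the object $\bigoplus_x P_x = \Lambda \in \mathcal{U} \subseteq \mathcal{D}_\Lambda$, and it is a slice in the sense of Definition~\ref{def.combslice}: every $\nu_n$-orbit meets level $0$ exactly once because $\nu_n$ shifts the last coordinate, and $S_0$ is convex because an arrow of type $n+1$ strictly raises the last coordinate while arrows of type $1,\dots,n$ leave it unchanged, so no path between two level-$0$ vertices can leave level $0$. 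As the free module, $\Lambda$ is trivially an iterated $n$-APR tilting complex (the empty sequence of tilts).

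Next, given an arbitrary slice $S$, Theorem~\ref{new_theorem_D} provides a finite chain $S_0 = S^{(0)}, S^{(1)}, \dots, S^{(m)} = S$ in which each $S^{(t+1)} = \mu_{x_t}^{\pm}(S^{(t)})$ is a single-vertex mutation. I would argue by induction on $t$ that each $S^{(t)}$ is an iterated $n$-APR tilting complex. The inductive hypothesis gives in particular that $S' := S^{(t)}$ is a tilting complex in $\mathcal{D}_\Lambda$ and that $\Gamma' := \End_{\mathcal{D}_\Lambda}(S')^{\op} \cong \Lambda_{C_{S'}}$ (Lemma~\ref{lemma.slices_adm-set_comb}(1)) is an iterated $n$-APR tilt of $\Lambda$. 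For the step, say $S^{(t+1)} = \mu_x^+(S')$ with $x$ a source of $S'$ (the sink case being dual, via $n$-APR cotilting complexes). Let $\mathbf{G} = \mathbf{R}\Hom_\Lambda(S',-)\colon \mathcal{D}_\Lambda \xrightarrow{\sim} \mathcal{D}_{\Gamma'}$ be the derived equivalence attached to $S'$; it commutes with $\nu_n$ and sends $S' \mapsto \Gamma'$, so if $P$ denotes the indecomposable summand of $\Gamma'$ corresponding to $x$ and $\Gamma' = P \oplus \bar{Q}$, then $\mathbf{G}(\mu_x^+(S')) = (\nu_n^- P) \oplus \bar{Q}$. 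By Lemma~\ref{lemma.slices_adm-set_comb}(2) this is an $n$-APR tilting complex over $\Gamma'$ in the sense of Definition~\ref{def.nAPRcomplex}; hence $\End_{\mathcal{D}_\Lambda}(\mu_x^+(S'))^{\op} = \End_{\mathcal{D}_{\Gamma'}}(\mathbf{G}\,\mu_x^+(S'))^{\op}$ is an $n$-APR tilt of $\Gamma'$, and $\mu_x^+(S')$ is again a tilting complex in $\mathcal{D}_\Lambda$ (being the image of one under a derived equivalence). This completes the induction, hence the proof.

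The only step that is not purely formal is the verification behind Lemma~\ref{lemma.slices_adm-set_comb}(2) that $(\nu_n^- P)\oplus\bar Q$ meets Definition~\ref{def.nAPRcomplex}, i.e. that $\Hom_{\Gamma'}(\bar Q, P) = 0$ and $\Ext^i_{\Gamma'}(\nu \bar Q, P) = 0$ for $0 < i \neq n$. Via $\mathbf{G}$ these become statements about $\mathcal{U}$: there is no path in $\widetilde{Q}$ from a vertex of $S' \setminus x$ into $x$, because predecessors of the source $x$ do not lie in $S'$ (Proposition~\ref{prop.mutation.ca-slice}(1)), so $\Hom_{\mathcal{U}}(S'\setminus x, x) = 0$ by convexity of $\add S'$; the $\Ext$-vanishing is extracted the same way from Propositions~\ref{prop.mutation.ca-slice} and \ref{proposition.homslices} together with $\gld\Lambda \le n$. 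I expect this bookkeeping, rather than any conceptual difficulty, to be the main point to get right. As a shortcut one may avoid the induction altogether: a slice in $\widetilde{Q}$ is the same as a slice in $\mathcal{U}$ (the remark after Definition~\ref{def.combslice}), and since $\Lambda = \Lambda^{(n,s)}$ is $n$-representation-finite (Theorem~\ref{theorem.auslanderalg}), Theorem~\ref{theorem.slices=itereatedAPR}(1) identifies slices in $\mathcal{U}$ with iterated $n$-APR tilting complexes directly; I would nonetheless keep the combinatorial argument, since it stays self-contained within this section.
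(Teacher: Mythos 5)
Your main argument is essentially the paper's proof: start from the standard slice (the level-$0$ vertices, which correspond to $\Lambda$ by Theorem~\ref{theorem.quiverU}(2)), then induct along a mutation chain from Theorem~\ref{new_theorem_D}, with Lemma~\ref{lemma.slices_adm-set_comb}(2) supplying the fact that a single source mutation is an $n$-APR tilt over the current endomorphism algebra. You supply more detail than the paper does (making the transporting derived equivalence $\mathbf{G}$ explicit, and spelling out the $\Hom$/$\Ext$-vanishing behind Lemma~\ref{lemma.slices_adm-set_comb}(2), whose proof the paper compresses to ``clear from the definition''), but the structure and the key ingredients are identical. The shortcut you mention at the end --- invoking $n$-representation-finiteness of $\Lambda^{(n,s)}$ via Theorem~\ref{theorem.auslanderalg} together with Theorem~\ref{theorem.slices=itereatedAPR}(1) and the bijection between slices in $\widetilde{Q}$ and slices in $\mathcal{U}$ --- is also valid and not circular, but the paper does not take that route, presumably (as you guess) to keep the type-$A$ section combinatorially self-contained.
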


\begin{proof}
This is clear for the slice consisting of the vertices of the form $(\ell_1, \ldots \ell_{n+1} \! : \! 0)$ by Theorem~\ref{theorem.quiverU}(2). We have the assertion by Theorem~\ref{new_theorem_D} and Lemma~\ref{lemma.slices_adm-set_comb}(2).
\end{proof}

\begin{proof}[Proof of Proposition~\ref{prop.new_theorem_A}]
By Theorem~\ref{new_theorem_C} there exists a slice $S$ in $\widetilde{Q}$ such that $C = C_S$. Take a source $y$ of $S$ such that $x = \pi(y)$. By Lemma~\ref{lemma.slices_adm-set_comb}(1) we can identify $\Lambda_C$ with $S$. By Lemma~\ref{lemma.slices_adm-set_comb}(2) and Proposition~\ref{prop.slice_is_iteratedAPR}, $\mu_x^+(S)$ is an $n$-APR tilting $\Lambda_C$-module with
\[ \End_{\mathcal{D}_{\Lambda}}(\mu_x^+(S)) \iso \Lambda_{C_{\mu_x^+(S)}} = \Lambda_{\mu_x^+(C)}. \]
Thus the assertion follows.
\end{proof}

\subsection{Proof of Theorems~\ref{new_theorem_C} and \ref{new_theorem_D}} \label{subsect.first_combi}

In this subsection we give the proofs of Theorems~\ref{new_theorem_C} and \ref{new_theorem_D} which were postponed in Subsection~\ref{subsect.comb_slices}. We postpone further (to Subsection~\ref{subsect.cycle_lemma}) the proof of Proposition~\ref{prop.generator}, a technical classification result needed in the proofs here.

We need the following preparation.

\begin{definition}
\begin{enumerate}
\item We denote by $\walk(Q)$ the set of \emph{walks} in $Q$ (that is, finite sequences of arrows and inverse arrows such that consecutive entries involve matching vertices). For a walk $p$ we denote by $\mathfrak{s}(p)$ and $\mathfrak{e}(p)$ the starting and ending vertex of $p$, respectively. A walk $p$ is called \emph{cyclic} if $\mathfrak{s}(p) = \mathfrak{e}(p)$.
\item We define an equivalence relation $\sim$ on $\walk(Q)$ as the transitive closure of the following relations:
\begin{enumerate}
\item $aa^{-1}\sim e_{\mathfrak{s}(a)}$ and $a^{-1}a\sim e_{\mathfrak{e}(a)}$ for any $a\in Q_1$.
\item If $p\sim q$, then $rpr'\sim rqr'$ for any $r$ and $r'$.
\end{enumerate}
\end{enumerate}
Similarly we define $\walk(\widetilde{Q})$ and the equivalence relation $\sim$ on $\walk(\widetilde{Q})$.
\end{definition}

For a walk $p = a_1 \cdots a_n$ we denote by $p^{-1} := a_n^{-1}\cdots a_1^{-1}$ the inverse walk.

Any map $\omega \colon Q_1 \to A$ with an abelian group $A$ is naturally extended to a map $\omega \colon \walk(Q)\to A$ by putting $\omega(a^{-1}) := -\omega(a)$ for any $a \in Q_1$ and 
\[\omega(p) := \sum_{i=1}^\ell \omega(b_i)\]
for any walk $p=b_1\cdots b_\ell$. We define $\omega\colon \walk(\widetilde{Q}) \to A$ by $\omega(p) := \omega(\pi (p))$.
Clearly these maps $\omega \colon \walk(Q)\to A$ and $\omega \colon \walk(\widetilde{Q})\to A$ are invariant under the equivalence relation $\sim$.

In particular, we define maps
\[ \phi_i \colon \walk(Q) \to \Z \quad \text{ and } \quad \Phi = (\phi_1, \ldots, \phi_{n+1}) \colon \walk(Q) \to \Z^{n+1} \]
by setting $\phi_i(a):= \delta_{ij}$ for any arrow $a$ of type $j$ in $Q$.

\begin{definition}
We denote by $G$ be the set of cyclic walks satisfying
\[p \sim (q_1c_1^{\pm1}q_1^{-1})(q_2c_2^{\pm1}q_2^{-1})\cdots(q_\ell c_\ell^{\pm1}q_\ell^{-1})\]
for some walks $q_i$ and $(n+1)$-cycles $c_i$.
\end{definition}

We will prove Theorems~\ref{new_theorem_C} and \ref{new_theorem_D} by using the following result, which will be shown in the next subsection.

\begin{proposition} \label{prop.generator}
Any cyclic walk on $Q$ belongs to $G$.
\end{proposition}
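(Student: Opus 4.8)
The statement to prove is Proposition~\ref{prop.generator}: every cyclic walk on $Q = Q^{(n,s)}$ lies in $G$, i.e. is $\sim$-equivalent to a product of conjugates of (forward or backward) $(n+1)$-cycles. The plan is to exploit the explicit combinatorial structure of $Q^{(n,s)}$ as a subquiver of the ``cyclically oriented'' quiver on the simplex $\{x\in\Z_{\ge0}^{n+1}\mid\sum x_i = s-1\}$, together with the invariant $\Phi = (\phi_1,\ldots,\phi_{n+1})\colon\walk(Q)\to\Z^{n+1}$ that counts arrows by type.

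First I would record the basic facts about $\Phi$: it is $\sim$-invariant, it is additive under concatenation, and on an $(n+1)$-cycle $c$ (forward) it takes the value $(1,1,\ldots,1)$. Hence if $p$ is cyclic then a priori $\Phi(p)$ is an integer multiple of $(1,\ldots,1)$ plus possibly ``extra'' components — but in fact one checks that for any \emph{cyclic} walk $\Phi(p) \in \Z\cdot(1,\ldots,1)$, because moving along an arrow of type $i$ changes the coordinate $x_i$ by $-1$ and $x_{i+1}$ by $+1$ (indices cyclic), so returning to the start forces the net count of type-$i$ arrows minus type-$(i+1)$ arrows to be $x_i$-balanced, giving $\phi_i(p)-\phi_{i+1}(p)=0$ for all $i$, i.e. all $\phi_i(p)$ equal. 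Write $\Phi(p) = d\cdot(1,\ldots,1)$. The strategy is then induction on the total length of $p$ (number of arrows and inverse arrows): using relations (a) $aa^{-1}\sim e$, we first reduce to a \emph{reduced} walk; then we want to ``peel off'' one conjugated $(n+1)$-cycle and induct.

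The core step is the peeling argument. Given a nontrivial reduced cyclic walk $p$ based at a vertex $x_0$, I want to find a subwalk or a local modification exhibiting one $(n+1)$-cycle. Concretely: pick a vertex $v$ on $p$ that is extremal in some linear functional on the simplex (say, maximizing $x_1 + 2x_2 + \cdots$, chosen generically so the max is unique among vertices touched); at such an extremal vertex the walk must enter and leave via arrows of specific types, and Lemma~\ref{addition} guarantees that any short sequence of distinct-type arrows at a vertex inside $Q_0$ completes to a full $(n+1)$-cycle lying in $Q$. Using the commutativity relations of $\widehat\Lambda^{(n,s)}$ (which hold in $kQ/(R)$, hence are among the $\sim$-relations we may use after reformulating — here one must be careful: $\sim$ is only the free-group relations (a),(b), so instead one argues directly with walks) one slides the walk past $v$, replacing a piece $a b$ (type $i$ then type $j$) by $b' a'$ around the completing cycle, at the cost of inserting one conjugated $(n+1)$-cycle $q c^{\pm1} q^{-1}$ and \emph{strictly decreasing} the relevant extremal quantity or the length. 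Iterating, $p$ becomes $\sim$ a product of conjugated $(n+1)$-cycles times a shorter cyclic walk, and the induction closes; the base case is the trivial walk, which lies in $G$ vacuously.

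The main obstacle I anticipate is exactly this sliding/peeling step: making precise how an arbitrary reduced cyclic walk can be deformed, using only the allowed moves, so that at each stage one genuinely extracts a conjugate of an honest $(n+1)$-cycle of $Q$ (not of the ambient infinite simplex quiver) while provably reducing a well-chosen complexity measure. The delicate points are (i) boundary effects — when the extremal vertex lies on a face of the simplex, some completing arrows may leave $Q_0$, and one must use the ``otherwise $=0$'' branch of the relations / the fact proved in Lemma~\ref{addition} that an $(n+1)$-cycle through any given arrow-sequence still exists inside $Q_0$ provided the partial sums stay in $Q_0$; and (ii) ensuring the measure (e.g. lexicographic pair (length, extremal value)) actually drops and is bounded below. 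I would structure the write-up so that Lemma~\ref{addition} does the geometric heavy lifting and the remaining argument is bookkeeping on walks; any genuinely intricate case analysis of the boundary behaviour I would isolate into the later Subsection~\ref{subsect.cycle_lemma} as the excerpt already signals.
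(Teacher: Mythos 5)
Your overall framework is pointed in the right direction and overlaps with the paper's in two respects: you induct on the length $|p|$, and you correctly identify $\Phi$ as the basic $\sim$-invariant together with the fact that $\Phi(p)\in\Z\cdot(1,\ldots,1)$ for cyclic $p$ (this follows from $\sum_i\phi_i(p)f_i=0$, as you say). You also correctly guess that Lemma~\ref{addition} is the geometric workhorse. However, the paper's reduction is structurally different and, I think, is the cleaner way to close exactly the gap you flag. Instead of choosing an extremal vertex for a potential function and ``sliding past'' it, the paper defines the quiver distance $d(x,y)$ and observes that if $p$ is not \emph{geodesic} — i.e.\ if it admits a decomposition $p=p_1p_2p_3$ with $d(\mathfrak{s}(p_2),\mathfrak{e}(p_2))<\min\{|p_2|,|p_3p_1|\}$ — then inserting a shortest walk $q$ at that point splits $p$, up to a conjugate, into the two strictly shorter cyclic walks $p_1qp_3$ and $q^{-1}p_2$, and induction applies with no reference to the group $G$ at all in this step. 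This avoids the bookkeeping of a descending potential function entirely; the well-foundedness is handled by plain length.

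The genuine gap in your proposal is the base of the descent: which reduced cyclic walks \emph{cannot} be split or peeled? Besides $(n+1)$-cycles, there is a second family you do not identify: the ``hairpin'' walks $a_1^{\epsilon_1}\cdots a_\ell^{\epsilon_\ell}\,b_1^{-\epsilon_1}\cdots b_\ell^{-\epsilon_\ell}$, where the $a_i$ and $b_i$ are arrows of the same type $\sigma(i)$ for some injective $\sigma$. These are reduced, $\Phi$ vanishes on them, and every subwalk is already a geodesic, so no peeling based on shortening can touch them; yet they are not $(n+1)$-cycles. The paper's Lemma~\ref{shortest cycle} shows these two families are precisely the geodesic cyclic walks, and then Lemmas~\ref{commutator}, \ref{commutator2} and \ref{addition2} handle the hairpin family by a rearrangement argument: one uses Lemma~\ref{addition2} to justify replacing adjacent letters of different types at the cost of a conjugated $(n+1)$-cycle (this is the precise form of the ``sliding'' you gesture at), and iterates over $\mathfrak{S}_\ell$ until the walk cancels freely. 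Your write-up would need this explicit classification and the $\mathfrak{S}_\ell$ rearrangement — ``extract one $(n+1)$-cycle, decrease the potential'' will not by itself terminate on a hairpin, because a single slide move there does not shorten the walk or improve a natural extremal value; one needs to know that after finitely many slides (organized by a permutation, not a scalar potential) the hairpin degenerates to the trivial walk.

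One more small but real precision issue: you propose to reduce to \emph{reduced} walks first, but after you later slide past a vertex the walk may temporarily become non-reduced, so a naive lexicographic measure $(\text{length},\text{extremal value})$ could fail to be monotone without additional care. The paper sidesteps this because it never needs to maintain reducedness; the geodesic condition in Lemma~\ref{shortest cycle} automatically excludes $aa^{-1}$.

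Summary: right invariant, right induction parameter, right use of Lemma~\ref{addition}; but the peeling step is not established, the hairpin-commutator geodesics are an unhandled case, and the potential-function bookkeeping is not shown to terminate. The paper's $d$-minimality decomposition plus explicit classification of geodesic cyclic walks is what makes the argument go through.
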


Using this, we will now prove the following proposition, telling us that on $Q_C$ the value $\Phi(p)$ depends only on $\mathfrak{s}(p)$ and $\mathfrak{e}(p)$.

\begin{proposition}\label{crucial}
Let $C$ be a cut of $Q$.
\begin{enumerate}
\item For any cyclic walk $p$ on $Q_C$, we have $\Phi(p)=0$.
\item For any walks $p$ and $q$ on $Q_C$ satisfying $\mathfrak{s}(p)=\mathfrak{s}(q)$ and $\mathfrak{e}(p)=\mathfrak{e}(q)$, we have $\Phi(p)=\Phi(q)$.
\end{enumerate}
\end{proposition}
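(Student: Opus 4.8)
The plan is to reduce both statements to Proposition~\ref{prop.generator} by playing off two $\sim$-invariant functions on walks against each other. Besides $\Phi$, introduce $\omega_C \colon Q_1 \to \Z$ defined by $\omega_C(a) = 1$ if $a \in C$ and $\omega_C(a) = 0$ otherwise, and extend it to $\walk(Q)$ and $\walk(\widetilde{Q})$ exactly as in the paragraph preceding Proposition~\ref{prop.generator}; like $\Phi$, it is invariant under $\sim$. Two elementary facts feed the argument: for every $(n+1)$-cycle $c$ in $Q$ we have $\Phi(c) = (1, 1, \ldots, 1)$ (each of the $n+1$ arrow types occurs exactly once in $c$, by the description of $(n+1)$-cycles) and $\omega_C(c) = 1$ (a cut meets each $(n+1)$-cycle in precisely one arrow, which occurs with positive sign in $c$); moreover, if $p$ is a walk on $Q_C$ then $p$ traverses no arrow of $C$, so $\omega_C(p) = 0$.

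First I would prove (1). Let $p$ be a cyclic walk on $Q_C$, viewed as a cyclic walk on $Q$. By Proposition~\ref{prop.generator}, $p \in G$, so there are walks $q_i$, $(n+1)$-cycles $c_i$, and signs $\varepsilon_i \in \{+1, -1\}$ with
\[ p \sim (q_1 c_1^{\varepsilon_1} q_1^{-1})(q_2 c_2^{\varepsilon_2} q_2^{-1}) \cdots (q_\ell c_\ell^{\varepsilon_\ell} q_\ell^{-1}). \]
Since $\omega_C$ is additive along concatenations, satisfies $\omega_C(r^{-1}) = -\omega_C(r)$, and is $\sim$-invariant, evaluating it on both sides gives $0 = \omega_C(p) = \sum_{i=1}^\ell \varepsilon_i \omega_C(c_i) = \sum_{i=1}^\ell \varepsilon_i$. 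Evaluating $\Phi$ on both sides in the same way gives
\[ \Phi(p) = \sum_{i=1}^\ell \varepsilon_i \Phi(c_i) = \Bigl( \sum_{i=1}^\ell \varepsilon_i \Bigr)(1, 1, \ldots, 1) = 0. \]

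Part (2) then follows formally: given walks $p, q$ on $Q_C$ with $\mathfrak{s}(p) = \mathfrak{s}(q)$ and $\mathfrak{e}(p) = \mathfrak{e}(q)$, the concatenation $p q^{-1}$ is a cyclic walk on $Q_C$, so by (1) we get $0 = \Phi(p q^{-1}) = \Phi(p) - \Phi(q)$, i.e.\ $\Phi(p) = \Phi(q)$.

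As presented, there is no real obstacle here: the whole weight of the statement rests on Proposition~\ref{prop.generator}, whose combinatorial proof is deferred to Subsection~\ref{subsect.cycle_lemma}. The one subtlety to keep in mind is that the intermediate walks $q_i c_i^{\varepsilon_i} q_i^{-1}$ need not lie in $Q_C$, so one must genuinely invoke the $\sim$-invariance of $\omega_C$ and $\Phi$ (not merely their definitions on walks in $Q_C$) when equating the two sides of the equivalence; this invariance is precisely what was recorded when these maps were introduced.
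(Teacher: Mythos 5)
Your proof is correct, and it takes a genuinely more direct route than the paper's, while resting on the same essential input. Both arguments hinge on Proposition~\ref{prop.generator}: a cyclic walk on $Q$ is $\sim$-equivalent to a product of conjugates of $(n+1)$-cycles, and the relevant functions ($\Phi$ and your $\omega_C$) are $\sim$-invariant, additive, and change sign under inversion, so conjugating walks cancel. The paper does not apply $\Phi$ directly to the decomposition; instead it introduces $\phi_C$ with $\phi_C(a)=1$ for $a\notin C$ and $\phi_C(a)=-n$ for $a\in C$, designed so that $\phi_C$ vanishes on each $(n+1)$-cycle and hence on all cyclic walks (Lemma~\ref{invariance}), records the arithmetic identity $\sum_i \phi_i(p)=\phi_C(p)+(n+1)\ell_C(p)$ (Lemma~\ref{phi}, with $\ell_C$ your $\omega_C$), and then combines this with the observation that cyclicity forces $\phi_1(p)=\cdots=\phi_{n+1}(p)$. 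You bypass the last two steps by noting $\Phi(c)=(1,\ldots,1)$ and $\omega_C(c)=1$ for every $(n+1)$-cycle, so applying $\Phi$ and $\omega_C$ to the $G$-decomposition of $p$ gives $\Phi(p)=\bigl(\sum_i\varepsilon_i\bigr)(1,\ldots,1)$ and $\sum_i\varepsilon_i=\omega_C(p)=0$; this collapses the proof to a single cancellation. The main thing the paper's factoring buys is that Lemma~\ref{invariance} (i.e.\ $\phi_C$ vanishes on cyclic walks) is reused verbatim in the proof of Proposition~\ref{section}(2), so it is worth isolating there; your version is the cleaner path to Proposition~\ref{crucial} itself. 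You were also right to flag the one subtle point: the intermediate conjugates need not live in $Q_C$, so $\sim$-invariance of $\Phi$ and $\omega_C$ on all of $\walk(Q)$, not just on walks in $Q_C$, is genuinely used.
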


To prove Proposition \ref{crucial}, we define a map
\[\phi_C \colon \walk(Q)\to\Z\]
by setting
\[\phi_C(a):=\left\{\begin{array}{cc}
1&\mbox{ if $a\notin C$}\\
-n&\mbox{ if $a\in C$}
\end{array}\right.\]
for any arrow $a\in Q_1$.

\begin{lemma}\label{invariance}
For any cyclic walk $p$ on $Q$, we have $\phi_C(p)=0$.
\end{lemma}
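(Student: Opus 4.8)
The statement to prove is Lemma~\ref{invariance}: for any cyclic walk $p$ on $Q$, we have $\phi_C(p) = 0$.

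The plan is to reduce the claim to $(n+1)$-cycles via Proposition~\ref{prop.generator}. First I would observe that $\phi_C \colon \walk(Q) \to \Z$ is, by construction, the composite of $\Phi = (\phi_1, \ldots, \phi_{n+1})$ with the linear functional that sends the $i$-th standard basis vector to $1$ if the (unique) type-$i$ arrow locally under consideration lies in $C$ to $-n$, and to $1$ otherwise --- more precisely, since $C$ is a cut it picks out, for each $(n+1)$-cycle, exactly one arrow; the key point is simply that $\phi_C$ factors through the equivalence relation $\sim$ on $\walk(Q)$ (as do all maps of the form $\omega \colon \walk(Q) \to A$ built from a function on arrows, as noted in the paragraph before the Definition of $G$), and that $\phi_C$ is additive along concatenation of walks and sends $a^{-1}$ to $-\phi_C(a)$.

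Next I would use that, by Proposition~\ref{prop.generator}, any cyclic walk $p$ on $Q$ lies in $G$, so
\[ p \sim (q_1 c_1^{\pm 1} q_1^{-1})(q_2 c_2^{\pm 1} q_2^{-1}) \cdots (q_\ell c_\ell^{\pm 1} q_\ell^{-1}) \]
for suitable walks $q_i$ and $(n+1)$-cycles $c_i$. Since $\phi_C$ is $\sim$-invariant and additive, and since $\phi_C(q_i c_i^{\pm 1} q_i^{-1}) = \phi_C(q_i) \pm \phi_C(c_i) - \phi_C(q_i) = \pm \phi_C(c_i)$, it suffices to show $\phi_C(c) = 0$ for every $(n+1)$-cycle $c$. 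But an $(n+1)$-cycle consists of $n+1$ arrows, each of a distinct type, and because $C$ is a cut, exactly one of these arrows belongs to $C$ and the remaining $n$ do not. Hence $\phi_C(c) = n \cdot 1 + 1 \cdot (-n) = 0$, which completes the argument.

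I do not expect a serious obstacle here: the only substantive input is Proposition~\ref{prop.generator}, which is quoted (and proved later in Subsection~\ref{subsect.cycle_lemma}), and once that is granted the lemma is a one-line computation using the defining property of a cut. The one point requiring a little care is the bookkeeping that $\phi_C$ really is well-defined on $\sim$-equivalence classes and behaves additively under the "conjugation" operation $q c^{\pm 1} q^{-1}$; but this is immediate from the general remark that any arrow-valued weighting extends to a $\sim$-invariant additive map on $\walk(Q)$.
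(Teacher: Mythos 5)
Your proof is correct and takes the same approach as the paper: reduce to $(n+1)$-cycles via Proposition~\ref{prop.generator}, then observe that since a cut contains exactly one arrow from each $(n+1)$-cycle, $\phi_C$ of such a cycle is $n\cdot 1 + 1\cdot(-n) = 0$. One small remark: the opening sentence about $\phi_C$ factoring through $\Phi$ is not quite right as stated (two arrows of the same type can have different membership in $C$, so $\phi_C$ does not factor through $\Phi$), but you immediately abandon that framing in favor of the correct and sufficient observation that $\phi_C$ is $\sim$-invariant and additive, so the argument goes through unchanged.
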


\begin{proof}
Any $(n+1)$-cycle $C$ satisfies $\phi_C(c) = 0$. By Proposition~\ref{prop.generator} we have the assertion.
\end{proof}

We define a map
\[\ell_C \colon \walk(Q)\to\Z\]
by putting
\[\ell_C(a):=\left\{\begin{array}{cc}
0 & \text{ if } a \notin C \\
1 & \text{ if } a \in C
\end{array}\right.\]
for any arrow $a\in Q_1$.

The following result is clear.

\begin{lemma}\label{phi}
For any $p\in\walk(Q)$ we have $\sum_{i=1}^{n+1}\phi_i(p)=\phi_C(p)+(n+1)\ell_C(p)$.
\end{lemma}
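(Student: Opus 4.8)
The plan is to reduce the identity to a one-arrow check by exploiting additivity.

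First I would note that each of the maps $\phi_1, \dots, \phi_{n+1}$, together with $\phi_C$ and $\ell_C$, is by definition the additive extension to $\walk(Q)$ of a function on arrows: in each case $\omega(a^{-1}) = -\omega(a)$ and $\omega(b_1 \cdots b_\ell) = \sum_{i=1}^{\ell} \omega(b_i)$. Consequently both
\[ p \longmapsto \sum_{i=1}^{n+1} \phi_i(p) \qquad \text{and} \qquad p \longmapsto \phi_C(p) + (n+1)\,\ell_C(p) \]
are $\Z$-valued functions on $\walk(Q)$ that are additive under concatenation and satisfy $f(p^{-1}) = -f(p)$. Therefore it suffices to verify the identity when $p = a$ is a single arrow.

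Then I would carry out that verification. Suppose $a$ has type $j \in \{1, \dots, n+1\}$. The left-hand side equals $\sum_{i=1}^{n+1} \delta_{ij} = 1$. For the right-hand side there are two cases: if $a \notin C$ then $\phi_C(a) = 1$ and $\ell_C(a) = 0$, so it equals $1 + (n+1)\cdot 0 = 1$; if $a \in C$ then $\phi_C(a) = -n$ and $\ell_C(a) = 1$, so it equals $-n + (n+1) = 1$. In either case both sides equal $1$, and by additivity the identity follows for all $p \in \walk(Q)$.

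There is no genuine obstacle here; the only point worth flagging is that this is an identity valid for \emph{every} walk on $Q$ — the cut $C$ enters only through the definitions of $\phi_C$ and $\ell_C$, not through any restriction on $p$. This is precisely what lets it be combined with Lemma~\ref{invariance} to force $\Phi(p) = 0$ for cyclic walks $p$ lying in $Q_C$, in the subsequent proof of Proposition~\ref{crucial}.
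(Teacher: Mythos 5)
Your proof is correct and matches the paper's (implicit) approach: the paper simply records the lemma as ``clear,'' and your reduction to the single-arrow case via additivity, followed by the two-case check on whether $a \in C$, is exactly the intended justification.
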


Now we are ready to prove Proposition~\ref{crucial}.

\begin{proof}[Proof of Proposition~\ref{crucial}]
(1) Since $p$ is a cyclic walk, we have $\sum_{i=1}^{n+1} \phi_i(p) f_i = 0$ (with $f_i$ as in Definition~\ref{def.qns}). This implies $\phi_1(p) = \cdots = \phi_{n+1}(p)$.

Since $p$ is a cyclic walk on $Q_C$, we have
\[ \sum_{i=1}^{n+1} \phi_i(p) = \phi_C(p) + (n+1) \ell_C(p) = 0 + (n+1) \cdot 0 = 0 \]
by Lemmas~\ref{invariance} and \ref{phi}.
Thus we have $\phi_1(p) = \cdots = \phi_{n+1}(p) = 0$.

(2) We have $\Phi(p)-\Phi(q)=\Phi(pq^{-1})=0$ by (1).
\end{proof}

The fact that $\widetilde{Q} \to Q$ is a Galois covering is reflected by the following lemma on lifting of walks.

\begin{lemma} \label{lemma.lifting}
Fix $x_0\in Q_0$ and $\widetilde{x}_0\in\widetilde{Q}_0$ such that $\pi(\widetilde{x}_0)=x_0$.
For any walk $p$ in $Q$ with $\mathfrak{s}(p)=x_0$, there exists a unique walk
$\widetilde{p}$ in $\widetilde{Q}$ such that $\mathfrak{s}(\widetilde{p})=\widetilde{x}_0$
and $\pi(\widetilde{p})=p$.
\end{lemma}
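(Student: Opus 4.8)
The statement to prove is Lemma~\ref{lemma.lifting}: a unique-path-lifting property for the covering $\pi\colon\widetilde Q\to Q$. This is the standard lifting lemma for covering quivers, so the plan is to prove it by induction on the length of the walk $p$, using that $\pi$ restricts to a bijection on the arrows incident to any given vertex in the appropriate direction.

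First I would set up the induction on the length $\ell$ of $p$. For $\ell=0$ the walk $p = e_{x_0}$ is trivial, and $\widetilde p = e_{\widetilde x_0}$ is clearly the unique lift. For the inductive step, write $p = p' a^{\pm 1}$ where $p'$ is a walk of length $\ell-1$ with $\mathfrak{s}(p')=x_0$ and $a\in Q_1$ is an arrow with $\mathfrak{s}(a)=\mathfrak{e}(p')$ (in the $a$ case) or $\mathfrak{e}(a)=\mathfrak{e}(p')$ (in the $a^{-1}$ case). By the induction hypothesis there is a unique lift $\widetilde p'$ in $\widetilde Q$ with $\mathfrak{s}(\widetilde p')=\widetilde x_0$ and $\pi(\widetilde p')=p'$; let $\widetilde y := \mathfrak{e}(\widetilde p')$, so $\pi(\widetilde y)=\mathfrak{e}(p')$. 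It then suffices to show that $a$ (resp.\ $a^{-1}$) has a unique lift starting at $\widetilde y$, and append it to $\widetilde p'$.

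The heart of the matter is therefore the observation that for each vertex $\widetilde y\in\widetilde Q_0$ and each arrow $a\in Q_1$ with $\mathfrak{s}(a)=\pi(\widetilde y)$ there is exactly one arrow $\widetilde a\in\widetilde Q_1$ with $\mathfrak{s}(\widetilde a)=\widetilde y$ and $\pi(\widetilde a)=a$, and dually for arrows ending at $\pi(\widetilde y)$. This is immediate from the explicit description of $\widetilde Q$ in Definition~\ref{notation.quiverU}: an arrow of type $i$ out of a vertex $x$ of $Q^{(n,s)}$ goes to $x+f_i$, while an arrow of type $i$ out of a vertex $(\ell_1,\dots,\ell_{n+1}\!:\!j)$ of $\widetilde Q^{(n,s)}$ goes to that vertex plus $g_i$; since $\pi$ sends $(\ell_1,\dots,\ell_{n+1}\!:\!j)\mapsto(\ell_1,\dots,\ell_{n+1})$ and $g_i$ projects to $f_i$, an arrow $a\colon x\to x+f_i$ of type $i$ lifts, starting at any $\widetilde y$ with $\pi(\widetilde y)=x$, uniquely to the type-$i$ arrow $\widetilde y\to\widetilde y+g_i$ (which exists, as $\widetilde y+g_i\in\widetilde Q_0$ precisely because $x+f_i\in Q_0$ — the last coordinate is unconstrained). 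The dual statement for an arrow $a^{-1}$, i.e.\ lifting an arrow $a$ ending at $\pi(\widetilde y)$, follows the same way: the type-$i$ arrow into $\pi(\widetilde y)$ comes from $\pi(\widetilde y)-f_i$, and its lift ending at $\widetilde y$ is the type-$i$ arrow from $\widetilde y-g_i$. Uniqueness in each case holds because type determines the arrow once the source (resp.\ target) is fixed, and $\pi$ preserves type. Combining, $\widetilde p := \widetilde p'\,\widetilde a^{\,\pm 1}$ is a lift, and any lift of $p$ starting at $\widetilde x_0$ must restrict on its first $\ell-1$ arrows to a lift of $p'$, hence equal $\widetilde p'$ by induction, and then its last arrow must be $\widetilde a^{\,\pm 1}$ by the local uniqueness; so $\widetilde p$ is unique.

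I do not expect a genuine obstacle here — the only thing to be careful about is bookkeeping in the $a^{-1}$ case (one is lifting an arrow that points \emph{against} the direction of travel, so the relevant local bijection is on arrows \emph{into} $\pi(\widetilde y)$ rather than out of it), and checking that the vertex-existence conditions on $\widetilde Q$ match those on $Q$ exactly because the extra $\Z$-coordinate carries no constraint in the definition of $\widetilde Q_0$. Everything else is the routine induction skeleton for path lifting along a covering.
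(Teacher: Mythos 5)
Your proof is correct and follows the same route as the paper's: both rest on the observation that $\pi$ restricts to a bijection from the arrows starting (resp.\ ending) at any $\widetilde y\in\widetilde Q_0$ to those at $\pi(\widetilde y)$, from which the unique lifting of a walk follows by stepping along it. You merely spell out the induction and the verification of the local bijection from Definition~\ref{notation.quiverU}, which the paper leaves implicit.
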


\begin{proof}
For any $x\in Q_0$ and $y\in\widetilde{Q}_0$ such that $\pi(y)=x$,
the morphism $\pi \colon \widetilde{Q}\to Q$ gives a bijection from the set of arrows starting (respectively, ending) at $y$
to the set of arrows starting (respectively, ending) at $x$.
Thus the assertion follows.
\end{proof}

We have the following key observation.

\begin{lemma} \label{crucial2}
Fix $x_0\in Q_0$ and $\widetilde{x}_0\in\widetilde{Q}_0$ such that $\pi(\widetilde{x}_0)=x_0$.
For any walks $p$ and $q$ in $Q_C$ satisfying $\mathfrak{s}(p)=\mathfrak{s}(q)=x_0$ and $\mathfrak{e}(p)=\mathfrak{e}(q)$,
then $\widetilde{p}$ and $\widetilde{q}$ as given in Lemma~\ref{lemma.lifting} satisfy $\mathfrak{e}(\widetilde{p})=\mathfrak{e}(\widetilde{q})$.
\end{lemma}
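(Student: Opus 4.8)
Looking at this statement (Lemma crucial2), I need to prove that lifting walks in $Q_C$ preserves the property of having the same endpoint when the projections do.

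\textbf{The plan is} to reduce the endpoint question for lifted walks to a statement about the invariant $\Phi$, which was precisely set up in Proposition~\ref{crucial}. The key is that vertices of $\widetilde{Q}$ carry an extra $\mathbb{Z}$-coordinate compared to vertices of $Q$, and walking along an arrow of type $i$ changes the vertex of $\widetilde{Q}$ by the explicit vector $g_i$; in particular the last coordinate changes by $\delta_{i,n+1}$, i.e. by $\phi_{n+1}$ of that arrow. So for a walk $\widetilde{p}$ in $\widetilde{Q}$ starting at $\widetilde{x}_0 = (\ell_1, \ldots, \ell_{n+1} : c)$, the endpoint $\mathfrak{e}(\widetilde{p})$ is completely determined by $\mathfrak{s}(\widetilde{p})$ together with the value $\Phi(\pi(\widetilde p)) = \Phi(p) \in \mathbb{Z}^{n+1}$: the first $n$ coordinates of the endpoint vector are obtained from those of $\widetilde x_0$ by the coordinatewise effect of $\sum \phi_i(p) f_i$ read in $\mathbb{Z}^{n+1}$ (using the vectors $f_i$ of Definition~\ref{def.qns}), while the last entry $c$ is shifted by $\phi_{n+1}(p)$ (coming from the $:1$ in $g_{n+1}$ versus $:0$ in $g_i$ for $i\le n$). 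Concretely, if $\widetilde x_0 = (\ell_1,\ldots,\ell_{n+1}:c)$ then $\mathfrak{e}(\widetilde p) = (\ell_1 + \pi_1, \ldots, \ell_{n+1} + \pi_{n+1} : c + \phi_{n+1}(p))$ where $(\pi_1,\ldots,\pi_{n+1})$ is the image of $\Phi(p)$ under the linear map $\mathbb{Z}^{n+1} \to \mathbb{Z}^{n+1}$ sending the $i$-th standard basis vector to $f_i$ (for $i \le n$) and to $f_{n+1} - e_{n+1}$ (for $i = n+1$) — or more simply, one just tracks the two effects separately. The point is that this endpoint is a function of $\mathfrak{s}(\widetilde p)$ and $\Phi(p)$ alone.

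\textbf{The steps}, in order, would be: (1) observe that $\widetilde p$ and $\widetilde q$ have the same starting vertex $\widetilde x_0$ by hypothesis; (2) note that $\pi(\widetilde p) = p$ and $\pi(\widetilde q) = q$ are walks in $Q_C$ with $\mathfrak{s}(p) = \mathfrak{s}(q)$ and $\mathfrak{e}(p) = \mathfrak{e}(q)$, so by Proposition~\ref{crucial}(2) we have $\Phi(p) = \Phi(q)$; and thus in particular $\phi_i(p) = \phi_i(q)$ for every $i$, including $i = n+1$; (3) express $\mathfrak{e}(\widetilde p)$ as a function of $\widetilde x_0$ and $\Phi(p)$ via the description of how the vertex coordinates of $\widetilde Q$ change along arrows (using the $g_i$ of Definition~\ref{notation.quiverU}), and likewise $\mathfrak{e}(\widetilde q)$ in terms of $\widetilde x_0$ and $\Phi(q)$; (4) conclude $\mathfrak{e}(\widetilde p) = \mathfrak{e}(\widetilde q)$ since the data agree. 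This is essentially a bookkeeping argument once Proposition~\ref{crucial} is invoked.

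\textbf{The main obstacle} is not conceptual but notational: one must be careful that the invariant $\Phi$ (defined on walks in $Q$ via the arrow-types) really does control the endpoint in $\widetilde Q$, which requires spelling out that the first $n$ coordinates of a vertex of $\widetilde Q$ transform exactly as the corresponding coordinates in $Q$ do (governed by $f_1, \ldots, f_n$, which agree with the spatial parts of $g_1, \ldots, g_n$), and that the discrepancy between $f_{n+1}$ and the spatial part of $g_{n+1}$ is exactly compensated by the shift in the last ($\mathbb{Z}$-)coordinate — this is the content of the covering $\pi \colon \widetilde Q \to Q$ being Galois with group generated by $\nu_n$. Since Lemma~\ref{lemma.lifting} already guarantees existence and uniqueness of the lift, all that is genuinely needed here is Proposition~\ref{crucial}(2) plus this coordinate computation, so the proof is short. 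I would write:

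\begin{proof}
By Lemma~\ref{lemma.lifting} the lifts $\widetilde{p}$ and $\widetilde{q}$ exist and are unique, and they have common starting vertex $\widetilde{x}_0 = (\ell_1, \ldots, \ell_{n+1} \! : \! c)$. Tracking the coordinates of vertices of $\widetilde{Q}$ along the arrows of a walk (using the vectors $g_i$ of Definition~\ref{notation.quiverU}), one sees that for any walk $\widetilde{w}$ in $\widetilde{Q}$ with $\mathfrak{s}(\widetilde{w}) = \widetilde{x}_0$ the endpoint $\mathfrak{e}(\widetilde{w})$ depends only on $\widetilde{x}_0$ and on $\Phi(\pi(\widetilde{w})) \in \mathbb{Z}^{n+1}$: the first $n$ coordinates of $\mathfrak{e}(\widetilde{w})$ are those of $\mathfrak{e}(\pi(\widetilde{w}))$, which in turn are determined by $\mathfrak{s}(\pi(\widetilde w)) = x_0$ and $\Phi(\pi(\widetilde w))$ via the vectors $f_i$, and the last entry is $c + \phi_{n+1}(\pi(\widetilde{w}))$, since arrows of type $i \leq n$ leave the last coordinate unchanged while arrows of type $n+1$ increase it by $1$. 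Now $\pi(\widetilde{p}) = p$ and $\pi(\widetilde{q}) = q$ are walks in $Q_C$ with $\mathfrak{s}(p) = \mathfrak{s}(q) = x_0$ and $\mathfrak{e}(p) = \mathfrak{e}(q)$, so Proposition~\ref{crucial}(2) gives $\Phi(p) = \Phi(q)$. Hence $\mathfrak{e}(\widetilde{p})$ and $\mathfrak{e}(\widetilde{q})$ are computed from the same data, and therefore $\mathfrak{e}(\widetilde{p}) = \mathfrak{e}(\widetilde{q})$.
\end{proof}
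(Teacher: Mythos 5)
Your proof is correct and follows essentially the same route as the paper's: both reduce to Proposition~\ref{crucial}(2) to get $\Phi(p)=\Phi(q)$, and then observe that the endpoint of the lift $\widetilde p$ is a function of $\widetilde x_0$ and of $\Phi(p)$ alone, since each arrow of type $i$ in $\widetilde{Q}$ shifts the vertex by the fixed vector $g_i$. You spell out the coordinate bookkeeping more explicitly than the paper does, but the content is the same.
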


\begin{proof}
By our definition of $\Phi$, we have that $\phi_i(\widetilde{p})$ counts the number of arrows of type $i$ appearing in $\widetilde{p}$.
Since we have $\phi_i(\widetilde{p})=\phi_i(\widetilde{q})$ by Proposition \ref{crucial}, we have that the number of arrows of type $i$
appearing in $\widetilde{p}$ is equal to that in $\widetilde{q}$.
Since $\mathfrak{s}(\widetilde{p})=\mathfrak{s}(\widetilde{q})$, we have $\mathfrak{e}(\widetilde{p})=\mathfrak{e}(\widetilde{q})$.
\end{proof}

Now Theorem~\ref{new_theorem_C} follows from the following result, which allows us to construct slices from cuts.

\begin{proposition} \label{section}
Let $C$ be a cut in $Q$. Fix a vertex $x_0\in Q_0$ and $\widetilde{x}_0\in\pi^{-1}(x_0)$.
\begin{enumerate}
\item There exists a unique morphism $\iota \colon Q_C\to \widetilde{Q}$ of quivers satisfying the following conditions.
\begin{itemize}
\item $\iota(x_0)=\widetilde{x}_0$,
\item the composition $\pi\circ\iota \colon Q_C\to Q$ is the identity on $Q_C$.
\end{itemize}
\item $\iota(Q_C)$ is a slice in $\widetilde{Q}$.
\end{enumerate}
\end{proposition}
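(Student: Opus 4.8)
The plan is to construct the morphism $\iota$ directly by lifting walks, using the fact that $\widetilde Q \to Q$ is a Galois covering (as reflected in Lemma~\ref{lemma.lifting}) and that on $Q_C$ the values $\Phi(p)$ depend only on endpoints (Proposition~\ref{crucial}).

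\begin{proof}[Proof of Proposition~\ref{section}]
(1) We first define $\iota$ on vertices. Since $Q_C$ is connected (it is the quiver of $\Lambda_C$, which is connected because $Q$ is), for any $x \in (Q_C)_0 = Q_0$ we may choose a walk $p$ in $Q_C$ with $\mathfrak{s}(p) = x_0$ and $\mathfrak{e}(p) = x$. Let $\widetilde p$ be its unique lift with $\mathfrak{s}(\widetilde p) = \widetilde x_0$ as given by Lemma~\ref{lemma.lifting}, and set $\iota(x) := \mathfrak{e}(\widetilde p)$. This is well-defined by Lemma~\ref{crucial2}: if $q$ is another walk in $Q_C$ from $x_0$ to $x$, then $\mathfrak{e}(\widetilde p) = \mathfrak{e}(\widetilde q)$. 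Next, for an arrow $a \colon x \to y$ in $Q_C$, choose a walk $p$ in $Q_C$ from $x_0$ to $x$; then $pa$ is a walk from $x_0$ to $y$, and $\widetilde{pa} = \widetilde p \, \widetilde a$ where $\widetilde a$ is the unique arrow of $\widetilde Q$ starting at $\iota(x) = \mathfrak{e}(\widetilde p)$ with $\pi(\widetilde a) = a$ (such an arrow exists and is unique by the bijection in the proof of Lemma~\ref{lemma.lifting}). We set $\iota(a) := \widetilde a$; by construction $\mathfrak{e}(\widetilde a) = \iota(y)$, so $\iota$ is a morphism of quivers, $\iota(x_0) = \widetilde x_0$, and $\pi \circ \iota = \mathrm{id}_{Q_C}$. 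Uniqueness is clear: any morphism satisfying the two conditions must send a walk $p$ from $x_0$ to a walk lifting $p$ starting at $\widetilde x_0$, hence agrees with $\iota$ by uniqueness of lifts.

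(2) We must check that $\iota(Q_C)$ satisfies the two conditions of Definition~\ref{def.combslice}. For condition (1), note that $\iota$ is injective on vertices: by Proposition~\ref{crucial}(2) the value $\Phi(p) \in \Z^{n+1}$ is the same for all walks $p$ in $Q_C$ from $x_0$ to a fixed $x$, and since $\phi_i(\widetilde p)$ counts arrows of type $i$ in $\widetilde p$ while $\mathfrak{s}(\widetilde p) = \widetilde x_0$ is fixed, the vertex $\iota(x) = \mathfrak{e}(\widetilde p)$ is determined by $\Phi(p)$; conversely the first $n$ coordinates of $\Phi$ determine $x \in Q_0$, so distinct $x$ give distinct $\iota(x)$, and moreover two vertices $\iota(x), \iota(x')$ lie in the same $\nu_n$-orbit only if $x = x'$ (the $\nu_n$-action changes only the last coordinate). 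Thus each $\nu_n$-orbit of $\widetilde Q$ meets $\iota(Q_C)$ in at most one vertex; that it meets each orbit follows from a cardinality/surjectivity argument — every vertex $\widetilde y$ of $\widetilde Q$ can be connected to $\widetilde x_0$ by a walk, whose image under $\pi$ can be deformed (modulo $(n+1)$-cycles, using Proposition~\ref{prop.generator}) to a walk avoiding $C$, and such a deformation changes $\mathfrak{e}$ of the lift only within a $\nu_n$-orbit — so some $\nu_n$-translate of $\widetilde y$ lies in $\iota(Q_C)$. For condition (2), convexity: let $r$ be a path in $\widetilde Q$ from $\iota(x)$ to $\iota(y)$. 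Then $\pi(r)$ is a walk in $Q$ from $x$ to $y$. Choosing walks $p, q$ in $Q_C$ from $x_0$ to $x$ and $y$ respectively, the walk $p\,\pi(r)\,q^{-1}$ is cyclic in $Q$, so by Lemma~\ref{invariance} and since $r$ is a genuine path (all arrows forward, $\ell_C$-contributions nonnegative) we get $\ell_C(\pi(r)) = 0$, i.e.\ $\pi(r)$ avoids $C$ entirely, hence is a path in $Q_C$. Then $\iota(\pi(r))$ is a path in $\iota(Q_C)$ starting at $\iota(x)$; by uniqueness of lifts $\iota(\pi(r)) = r$, so every vertex of $r$ lies in $\iota(Q_C)$. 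Hence $\iota(Q_C)$ is a slice.
\end{proof}

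The main obstacle is the surjectivity part of condition (1) — showing $\iota(Q_C)$ meets every $\nu_n$-orbit — which is where Proposition~\ref{prop.generator} (that every cyclic walk lies in $G$) is genuinely needed: one must show that an arbitrary vertex of $\widetilde Q$ can be reached from $\widetilde x_0$ via a lift of a $C$-avoiding walk after applying a suitable power of $\nu_n$, and controlling the $\nu_n$-shift requires knowing that the discrepancy between any two walks with the same endpoints is, modulo $(n+1)$-cycles, a product of conjugated cycles (each of which contributes a controlled amount of $\phi_{n+1}$, i.e.\ a controlled $\nu_n$-shift). The convexity argument, by contrast, is essentially immediate once $\ell_C$ and Lemma~\ref{invariance} are in hand.
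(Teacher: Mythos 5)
Part (1) of your proof is essentially the paper's: lift a walk from $x_0$ via Lemma~\ref{lemma.lifting}, well-definedness by Lemma~\ref{crucial2}, extension to arrows, uniqueness. That is all fine.

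For part (2), however, both halves of your argument have problems. For Condition~(1) of Definition~\ref{def.combslice} you invoke $\Phi$, Proposition~\ref{prop.generator}, and a deformation-modulo-cycles argument, but none of that is needed and the argument as phrased is not quite coherent: since $\pi\circ\iota=\mathrm{id}_{Q_C}$, the map $\iota$ is a section of $\pi$, hence injective, and it lands in $\pi^{-1}(x)$ for each $x\in Q_0$. As the $\nu_n$-orbits in $\widetilde{Q}_0$ are precisely the fibres of $\pi$, every orbit contains exactly one vertex of $\iota(Q_C)$. There is nothing more to say; the paper does not even spell this out.

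For Condition~(2) (convexity) you have a genuine gap. You consider the cyclic walk $s = p\,\pi(r)\,q^{-1}$ in $Q$ and claim that Lemma~\ref{invariance}, together with $r$ being a path (so $\ell_C(\pi(r))\ge 0$), forces $\ell_C(\pi(r))=0$. That inference is not valid. Lemma~\ref{invariance} gives $\phi_C(s)=0$, and Lemma~\ref{phi} then gives $\sum_i\phi_i(s)=(n+1)\ell_C(s)=(n+1)\ell_C(\pi(r))$; together with $\sum_i\phi_i(s)\,f_i=0$ one concludes only that $\phi_1(s)=\cdots=\phi_{n+1}(s)=\ell_C(\pi(r))$, which a priori could be any non-negative integer. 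What is missing is the observation that the lift of $s$ starting at $\widetilde{x}_0$ is again cyclic in $\widetilde{Q}$: the lift of $p$ ends at $\iota(x)$, the lift of $\pi(r)$ from there is $r$ (by uniqueness of lifts), ending at $\iota(y)$, and the lift of $q^{-1}$ from $\iota(y)$ returns to $\widetilde{x}_0$. Since the $g_i$ are linearly independent, a cyclic walk in $\widetilde{Q}$ has $\Phi=0$; only then do you get $\phi_i(s)=0$ for all $i$ and hence $\ell_C(\pi(r))=0$. The paper avoids this lifting detour by choosing $q$ to be a walk in $\iota(Q_C)\subseteq\widetilde{Q}$ rather than in $Q_C$: then $p$ and $q$ are two walks in $\widetilde{Q}$ with the same endpoints, giving $\Phi(\pi(p))=\Phi(\pi(q))$ directly. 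You should either adopt that formulation or insert the missing step about the lift of $s$ being cyclic.
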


\begin{proof}
(1) To give the desired morphism $\iota \colon Q_C\to\widetilde{Q}$ of quivers, we only have to give a map $\iota \colon Q_0\to\widetilde{Q}_0$ between the sets of vertices, satisfying the following conditions.
\begin{itemize}
\item $\iota(x_0)=\widetilde{x}_0$,
\item the composition $\pi\circ\iota \colon Q_C\to Q$ is the identity on $Q_0$,
\item for any arrow $a \colon x\to y$ in $Q_C$, there is an arrow $\iota(x)\to\iota(y)$ in $\widetilde{Q}$.
\end{itemize}

We define $\iota \colon Q_0\to\widetilde{Q}_0$ as follows:
Fix any $x\in Q_0$. We take any walk $p$ in $Q_C$ from $x_0$ to $x$.
By Lemma~\ref{lemma.lifting}, there exists a unique walk $\widetilde{p}$ in $\widetilde{Q}$ such that $\mathfrak{s}(\widetilde{p})=\widetilde{x}_0$ and $\pi(\widetilde{p})=p$.
Then we put $\iota(x):=\mathfrak{e}(\widetilde{p})$. By Lemma \ref{crucial2}, $\iota(x)$ does not depend on the choice of the walk $p$.

We only have to check the third condition above.
Fix an arrow $a \colon x\to y$ in $Q_C$. Take any walk $p$ in $Q_C$ from $x_0$ to $x$.
The walk $pa \colon x_0 \leadsto y$ in $Q_C$ gives the corresponding walk $\widetilde{pa} \colon \widetilde{x}_0 \leadsto \iota(y)$ in $\widetilde{Q}$.
Then $\widetilde{pa}$ has the form $\widetilde{p}\,b$ for an arrow $b \colon \iota(x) \to \iota(y)$ and a walk $\widetilde{p} \colon \widetilde{x}_0 \leadsto \iota(x)$ in $\widetilde{Q}$. Thus the third condition is satisfied.

The uniqueness of $\iota$ is clear.

(2) Fix vertices $x,y\in\iota(Q_C)_0$ and a path $p$ in $\widetilde{Q}$ from $x$ to $y$.
We only have to show that $p$ is a path in $\iota(Q_C)$.

Since $Q_C$ is connected, we can take a walk $q$ on $\iota(Q_C)$ from $x$ to $y$.
Then we have $\Phi(\pi(p))=\Phi(\pi(q))$. We have
\[\phi_C(p)+(n+1)\ell_C(p) =\sum_{i=1}^{n+1}\phi_i(p) = \sum_{i=1}^{n+1}\phi_i(q)=\phi_C(q)+(n+1)\ell_C(q) = \phi_C(q) \]
by Lemma \ref{phi}.
Since we have $\phi_C(p)=\phi_C(q)$ by Lemma \ref{invariance}, we have $\ell_C(p)=0$.
Thus any arrow appearing in $p$ belongs to $\iota(Q_C)$.
\end{proof}

This completes the proof of Theorem~\ref{new_theorem_C}.

In the remainder of this subsection we give a purely combinatorial proof of Theorem~\ref{new_theorem_D}.

For a slice $S$, we denote by $S_0^+$ the subset of $\widetilde{Q}_0$ consisting of sources in $S$.

\begin{lemma}\label{sources determine slices}
The correspondence $S\mapsto S_0^+$ is injective.
\end{lemma}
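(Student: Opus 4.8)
\emph{Plan.} The idea is to reconstruct, from a slice $S$, its whole vertex set $S_0$ using only the source set $S_0^+$ and the fixed combinatorial data $(\widetilde{Q},\nu_n)$; since a slice is the full subquiver of $\widetilde{Q}$ on its vertices, this gives injectivity. If $s=1$ then $\widetilde{Q}$ has no arrows and slices are single vertices, so the claim is trivial; hence assume $s\ge 2$, so that $\Lambda=\Lambda^{(n,s)}$ is $n$-representation-finite (Theorem~\ref{theorem.auslanderalg}) and connected. Thus Proposition~\ref{proposition.homslices} applies, as does the lemma preceding it, which for any indecomposable $X\in\mathcal{U}$ provides a path $\nu_n X\to\cdots\to X$, i.e.\ a directed path from $\nu_n x$ to $x$ in $\widetilde{Q}$. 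Throughout, say a vertex $v$ is \emph{reachable from} a set $W\subseteq\widetilde{Q}_0$ if there is a directed path in $\widetilde{Q}$ from some vertex of $W$ to $v$.

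First I would record three elementary facts. (a) $\widetilde{Q}$ has no oriented cycles: along a directed cyclic path the sum of the displacement vectors $g_i$ of the arrows traversed must vanish; the last coordinate forces the number of type-$(n+1)$ arrows used to be $0$, and then the first $n+1$ coordinates (the simple roots of $A_n$) force the numbers of arrows of the remaining types to be $0$ as well. (b) Every slice $S$ is finite, with exactly $|Q_0|$ vertices, since by Definition~\ref{def.combslice}(1) it meets each $\nu_n$-orbit exactly once and the orbits correspond bijectively to $Q_0$ via $\pi$. (c) Every $v\in S_0$ is reachable from $S_0^+$: if $v$ is not a source of $S$ choose an arrow into $v$ lying in $S$ and repeat; by (a) and (b) this backward walk terminates at a source of $S$, i.e.\ at a vertex of $S_0^+$.

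The key device is the \emph{floor function} $f_S\colon\widetilde{Q}_0\to\mathbb{Z}$ determined by $f_S\equiv 0$ on $S_0$ and $f_S(\nu_n^k v)=f_S(v)+k$; it is well defined by Definition~\ref{def.combslice}(1). I claim $f_S$ is non-increasing along every arrow of $\widetilde{Q}$. Indeed, given an arrow $x\to y$, apply the automorphism $\nu_n^{-f_S(x)}$ to get an arrow from a vertex of $S_0$ to a vertex $y'$ with $f_S(y')=f_S(y)-f_S(x)=:\delta$, so $y'$ lies in the slice $\nu_n^{\delta}S$. An arrow of $\widetilde{Q}$ is a non-zero (irreducible) morphism of $\mathcal{U}\subseteq\mathcal{D}_\Lambda$, so this arrow gives a non-zero element of $\Hom_{\mathcal{D}_\Lambda}(S,\nu_n^{\delta}S)$; if $\delta>0$ this contradicts Proposition~\ref{proposition.homslices}. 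Hence $\delta\le 0$, i.e.\ $f_S(y)\le f_S(x)$, and so $f_S$ is non-increasing along all directed paths.

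Finally I would combine these. Fix a $\nu_n$-orbit $O$ and let $v_O$ be its unique element in $S_0$. By (c), $v_O$ is reachable from $S_0^+$; using the lemma preceding Proposition~\ref{proposition.homslices} there is a directed path from $v_O$ to $\nu_n^{-1}v_O$, and iterating shows each $\nu_n^{-m}v_O$ ($m\ge 0$) is reachable from $S_0^+$. Conversely, for $k\ge 1$ and any $p\in S_0^+$ we have $f_S(\nu_n^k v_O)=k>0=f_S(p)$, so by the previous paragraph $\nu_n^k v_O$ is \emph{not} reachable from $S_0^+$. Running through the cases $k>0$, $k=0$, $k<0$ one sees that $v_O$ is the unique vertex $v\in O$ with $v$ reachable from $S_0^+$ but $\nu_n v$ not. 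Therefore
\[ S_0=\{\,v\in\widetilde{Q}_0 \mid v \text{ is reachable from } S_0^+ \text{ in } \widetilde{Q}, \text{ and } \nu_n v \text{ is not}\,\}, \]
which depends only on $S_0^+$; hence $S$ is recovered from $S_0^+$, proving injectivity. The only genuinely delicate step is the monotonicity of $f_S$ (equivalently: no directed path can run "against the $\nu_n$-direction" between translates of a slice), which is exactly where Proposition~\ref{proposition.homslices} is needed; the rest is bookkeeping with $\nu_n$-translates.
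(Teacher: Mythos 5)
Your proof is correct, and the reconstruction formula you arrive at,
\[ S_0=\{\,v\in\widetilde{Q}_0 \mid v \text{ is reachable from } S_0^+,\ \text{and}\ \nu_n v \text{ is not}\,\}, \]
is literally the set $S_0'$ used in the paper's proof. The two arguments diverge only in how they establish that $\nu_n v$ is unreachable from $S_0^+$ when $v\in S_0$. The paper argues directly from convexity (Definition~\ref{def.combslice}(2)): if there were a path $p$ from some $y\in S_0^+$ to $\nu_n v$, concatenating with the path $q\colon\nu_n v\leadsto v$ (from the lemma preceding Proposition~\ref{proposition.homslices}) gives a path between two vertices of $S$ passing through $\nu_n v$, so convexity forces $\nu_n v\in S_0$, a contradiction. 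You instead introduce a ``floor function'' $f_S$ measuring the $\nu_n$-displacement from $S$ and prove it is non-increasing along every arrow of $\widetilde{Q}$, appealing to Proposition~\ref{proposition.homslices} (which the paper itself derived from that same convexity argument). Both are sound; your approach packages the convexity input through Proposition~\ref{proposition.homslices} rather than invoking it afresh, and the monotone floor function is a clean global device that also handles the ``at most one vertex of $S_0'$ per orbit'' point uniformly, whereas the paper treats that as a quick separate observation. The price you pay is needing to check that individual arrows of $\widetilde{Q}$ are nonzero in $\mathcal U$, that $\widetilde{Q}$ is acyclic, and that $\Lambda^{(n,s)}$ is $n$-representation-finite so that Proposition~\ref{proposition.homslices} applies -- all true, and you address them, but these extra checks are what the paper's more elementary route avoids.
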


\begin{proof}
We denote by $S_0'$ be the set of vertices $x$ of $\widetilde{Q}$ satisfying the following conditions.
\begin{itemize}
\item there exists a path in $\widetilde{Q}$ from some vertex in $S_0^+$ to $x$,
\item there does not exist a path in $\widetilde{Q}$ from any vertex in $S_0^+$ to $\nu_n x$.
\end{itemize}
To prove the assertion, we only have to show $S_0=S_0'$.
It is easily seen from the definition of $S_0'$ that each $\nu_n$-orbit in $\widetilde{Q}_0$ contains at most one vertex in $S'_0$. Since $S_0$ is a slice, we only have to show $S_0 \subset S_0'$.

For any $x\in S_0$, there exists a path in $S$ from some vertex in $S_0^+$ to $x$ since $S$ is a finite acyclic quiver.
Assume that there exists a path $p$ in $\widetilde{Q}$ from $y\in S_0^+$ to $\nu_n x$.
Since there exists a path $q$ in $\widetilde{Q}$ from $\nu_n x$ to $x$, we have a path $pq$ from $x$ to $y$.
Since $S$ is convex, we have $\nu_n x \in S_0$, a contradiction to $x\in S_0$.
\end{proof}

For a slice $S$ of $\widetilde{Q}$, define the full subquiver $\widetilde{Q}_S^{\ge0}$ by
\[ (\widetilde{Q}_S^{\ge0})_0 := \bigcup_{\ell\ge0}\nu_n^{\ell}S_0 . \]
Clearly we have $(\widetilde{Q}^{\ge0}_{\mu^+_x(S)})_0= (\widetilde{Q}^{\ge0}_S)_0\cup\{\nu_n^- x\}$.

\begin{lemma} \label{new_lemma_E} Let $S$ be a slice in $\widetilde{Q}$. Then there exists a numbering $S_0=\{x_1,\cdots,x_N\}$ of vertices of $S$ such that the following conditions are satisfied.
\begin{enumerate}
\item $x_{i+1}$ is a source in
$\mu^+_{x_i}\circ\cdots\circ\mu^+_{x_1}(S)$ for any $0\le i<N$.
\item We have $\mu^+_{x_N}\circ\cdots\circ\mu^+_{x_1}(S)=\nu_n^- S$.
\end{enumerate}
\end{lemma}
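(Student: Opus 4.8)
The plan is to produce the numbering inductively, peeling off sources one at a time, and to control the process by a potential function that measures "how far $S$ is from $\nu_n^- S$''. Concretely, for a slice $S$ write $S_0 = \{v_1, \ldots, v_N\}$ and fix the slice $T := \nu_n^- S$; since every vertex of $\widetilde{Q}$ lies in a unique $\nu_n$-orbit meeting each slice exactly once, for each $v \in S_0$ there is a unique integer $d_S(v)$ with $\nu_n^{d_S(v)} v \in T_0$, and by construction of $T$ we have $d_S(v) = 1$ for every $v$. More usefully, define for an arbitrary slice $S'$ obtained from $S$ by mutations at sources the quantity $n(S') := \#\{v \in S'_0\mid v \notin \nu_n^{-1}S_0\}$, i.e.\ the number of vertices of $S'$ that have not yet been "moved down'' to their target position in $\nu_n^- S$. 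We have $n(S) = N$ and $n(\nu_n^- S) = 0$, and I will arrange that each mutation step strictly decreases $n$.

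First I would show that as long as $S' \neq \nu_n^- S$ (equivalently $n(S') > 0$), there is a source $x$ of $S'$ with $x \notin \nu_n^{-1} S_0$. For this, use that $S'$ is a finite acyclic quiver (Definition~\ref{def.combslice}, together with the fact that $\widetilde Q$ has no oriented cycles through a slice — cf.\ the argument in Lemma~\ref{sources determine slices}), so it has sources; and among all vertices of $S'$ not yet in their final position, a $\leq$-minimal one (in the partial order given by paths in $S'$) must in fact be a source of $S'$, because any predecessor of it in $S'$ would be a strictly smaller such vertex — here one checks that a predecessor of a not-yet-moved vertex cannot have been moved either, which follows from convexity of $S'$ and Proposition~\ref{prop.mutation.ca-slice}(2) applied to the already-performed mutations (a vertex that was moved has all its $\widetilde{Q}$-successors in $S'$, so it cannot be a predecessor, inside $S'$, of an unmoved vertex). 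Performing $\mu_x^+$ replaces $x$ by $\nu_n^- x$, which by Proposition~\ref{prop.mutation.ca-slice}(3) is again a slice; and since $x$ was not in $\nu_n^{-1}S_0$ while $\nu_n^- x$ is, the counter $n$ drops by exactly one. Iterating, after $N$ steps we reach a slice $S''$ with $n(S'') = 0$, i.e.\ $S''_0 \subseteq \nu_n^{-1}S_0$; since both are slices and slices meet every $\nu_n$-orbit exactly once, $S''_0 = \nu_n^{-1}S_0$, that is $S'' = \nu_n^- S$. This gives the numbering $x_1, \ldots, x_N$ with properties (1) and (2), once we note that exactly $N$ mutations were used (the counter started at $N$ and decreased by one each time), so every vertex of $S$ was mutated exactly once and the $x_i$ are precisely the vertices of $S_0$ in the order they were chosen.

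The only point requiring real care — and the step I expect to be the main obstacle — is the claim that a minimal not-yet-moved vertex is genuinely a source of the current slice $S'$, rather than merely having no unmoved predecessor: I need that it has no moved predecessor in $S'$ either. This is where Proposition~\ref{prop.mutation.ca-slice}(1)--(2) does the work: after mutating at $x$, every successor of $\nu_n^- x$ fails to lie in the mutated slice while every predecessor of $\nu_n^- x$ does lie in it; inductively this shows that the set of already-moved vertices forms an "up-closed'' region of the evolving slice (relative to the arrows of $\widetilde Q$ restricted to that slice), so an unmoved vertex can have no moved predecessor within the slice. Once this structural fact is in hand, the monovariant argument with $n(\cdot)$ closes the proof; no further combinatorics of $\widetilde Q^{(n,s)}$ (and in particular none of Proposition~\ref{prop.generator}) is needed here, since transitivity of slices under mutation — Theorem~\ref{new_theorem_D} — is not assumed but will in fact be a consequence once this lemma is combined with the observation $(\widetilde{Q}^{\ge0}_{\mu^+_x(S)})_0 = (\widetilde{Q}^{\ge0}_S)_0 \cup \{\nu_n^- x\}$ recorded just before the statement.
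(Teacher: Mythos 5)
Your overall strategy --- iteratively mutate at a source of the current slice that has not yet been moved, controlling termination with a counter, which is in effect a topological sort of the acyclic quiver $S$ --- is exactly the route the paper takes (the paper defines $x_i$ as a source of $S\setminus\{x_1,\dots,x_{i-1}\}$ and declares the verification ``easily checked''). However, the one step you yourself flag as ``the main obstacle'' is where your justification breaks down. You need: a moved vertex $w=\nu_n^- x_j$ cannot be a predecessor, inside the current slice $S'$, of an unmoved vertex $x_i\in S_0$. Your parenthetical argument invokes Proposition~\ref{prop.mutation.ca-slice}(2) via the claim ``a vertex that was moved has all its $\widetilde{Q}$-successors in $S'$,'' but this is the opposite of what that proposition says (successors of $\nu_n^-x$ are \emph{not} in $\mu_x^+(S)$; predecessors are), and even the corrected one-step statement does not persist: after a later mutation at $y$ the slice may well contain the arrow $\nu_n^-x \to \nu_n^-y$, so $\nu_n^-x$ does not remain a sink, and the ``inductive up-closedness'' you sketch is not established by citing the one-step sink property.

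What does work --- and what ``easily checked'' should be unpacked to --- is an argument using convexity of the \emph{original} slice $S$ rather than properties of the evolving slice $S'$. Suppose for contradiction that $\nu_n^- x_j \to x_i$ is an arrow of $\widetilde Q$ with $x_j$ already mutated and $x_i\in S_0$ still unmoved. Applying $\nu_n$, there is an arrow $x_j\to \nu_n x_i$ in $\widetilde Q$. Since $(n+1)$-cycles through every vertex exist (Lemma~\ref{addition}), there is a path $\nu_n x_i\leadsto x_i$ in $\widetilde Q$; composing gives a path from $x_j$ to $x_i$ through $\nu_n x_i$, both endpoints in $S_0$, so convexity of $S$ forces $\nu_n x_i\in S_0$, contradicting that $S$ meets the $\nu_n$-orbit of $x_i$ only once. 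This establishes that moved vertices have no unmoved successors in $S'$, so your chosen source $x_i$ (a source of $S$ restricted to unmoved vertices) is genuinely a source of $S'$, and the rest of your counting argument then closes the proof as you describe. So the plan is sound and parallel to the paper's, but the crucial claim needs this convexity-of-$S$ argument, not the one you gave.
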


\begin{proof} When we have $x_1,\cdots,x_{i-1}\in S_0$, then we define
$x_i$ as a source of the quiver \\ $S \setminus \{x_0,\cdots,x_{i-1}\}$.
It is easily checked that the desired conditions are satisfied.
\end{proof}

For slices $S$ and $T$ in $\widetilde{Q}$, we write $S\le T$ if $(\widetilde{Q}_S^{\ge0})_0\subseteq(\widetilde{Q}_T^{\ge0})_0$.
In this case, we put
\[d(S,T):=\#((\widetilde{Q}_T^{\ge0})_0 \setminus (\widetilde{Q}_S^{\ge0})_0).\]

Now we are ready to prove Theorem~\ref{new_theorem_D}.

Let $S$ and $T$ be slices.
We can assume $S\le T$ by Lemma~\ref{new_lemma_E}. We use the induction on $d(S,T)$.
If $d(S,T)=0$, then we have $S=T$.
Assume $d(S,T)>0$.
By Lemma \ref{sources determine slices}, there exists a source $x$ of $S$ such that $x \notin T_0$.
Then we have $\mu_x^+(S)\le T$ and $d(\mu_x^+(S),T)=d(S,T)-1$.
By our assumption on induction, $\mu_x^+(S)$ is obtained from $T$ by a successive mutation.
Thus $S$ is obtained from $T$ by a successive mutation.
\qed

\subsection{Proof of Proposition~\ref{prop.generator}} \label{subsect.cycle_lemma}

We complete the proof of Theorem~\ref{theorem.main_typeA} by filling the remaining gap, that is by proving Proposition~\ref{prop.generator}.

For a walk $p$, we denote by $|p|$ the length of $p$. For $x,y\in Q_0$, we denote by $d(x,y)$ the minimum of the length of walks on $Q$ from $x$ to $y$.

It is easily checked (similarly to the proof of Lemma~\ref{addition}) that $d(x,y) = d(x', y')$ whenever $x-y = x'-y'$.

\begin{lemma}\label{shortest cycle}
Let $p$ be a cyclic walk.
Assume that, for any decomposition $p=p_1p_2p_3$ of $p$,
\[d(\mathfrak{s}(p_2),\mathfrak{e}(p_2))=\min\{|p_2|,|p_3 p_1| \}\]
holds. Then one of the following conditions holds.
\begin{enumerate}
\item $p$ or $p^{-1}$ is an $(n+1)$-cycle.
\item $p$ has the form $p=a_1^{\epsilon_1}\cdots a_\ell^{\epsilon_\ell} b_1^{-\epsilon_1}\cdots b_\ell^{-\epsilon_\ell}$ with an injective map $\sigma \colon \{1,\cdots,\ell\}\to\{1,\cdots,n\}$, arrows $a_i$ and $b_i$ of type $\sigma(i)$ and $\epsilon_i \in \{\pm1\}$.
\end{enumerate}
\end{lemma}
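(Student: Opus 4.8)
\textbf{Proof proposal for Lemma~\ref{shortest cycle}.}
The plan is to analyze a cyclic walk $p$ satisfying the ``locally geodesic'' hypothesis purely in terms of its type sequence, using the map $\Phi = (\phi_1, \ldots, \phi_{n+1})$ and the relation $\sum_i \phi_i(p) f_i = 0$ valid for any cyclic walk. First I would record two basic consequences of the hypothesis applied to trivial decompositions: taking $p_2 = p$ and $p_3 = e$ gives that $p$ itself is geodesic, i.e. $|p| = d(\mathfrak{s}(p), \mathfrak{e}(p))$ in the trivial sense that $\mathfrak{s}(p) = \mathfrak{e}(p)$, so actually the content is that $p$ has no ``backtracking'': no subword of the form $a a^{-1}$ or $a^{-1} a$, and more strongly every proper subwalk $p_2$ realizes the distance between its endpoints. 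In particular $p$ is reduced, and for any arrow-type $i$, the signs $\epsilon$ of the occurrences of type-$i$ arrows in $p$ cannot produce a ``wasteful'' pattern; I would want to show each type occurs with all $+$ signs or all $-$ signs, OR the total count in that type is small.

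The key structural step is a case split on $\Phi(p)$. Since $p$ is cyclic, $\sum_{i=1}^{n+1} \phi_i(p) f_i = 0$ in $\Z^{n+1}$; writing out the $f_i$ (which, cyclically, form the root system of type $\widetilde{A}_n$) this forces $\phi_1(p) = \phi_2(p) = \cdots = \phi_{n+1}(p) =: m$. If $m = 0$ then every type contributes net zero, and the hypothesis should force that in fact each type either does not appear at all, or appears exactly once with sign $+1$ and once with sign $-1$ (any larger cancellation would create a non-geodesic subwalk by the distance formula $d(x,y) = d(x',y')$ when $x - y = x' - y'$, since cancelling a type reduces length without changing the endpoint displacement). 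Moreover the geodesic/convexity condition should force the word to be shaped as $a_1^{\epsilon_1} \cdots a_\ell^{\epsilon_\ell} b_1^{-\epsilon_1} \cdots b_\ell^{-\epsilon_\ell}$: all the ``forward'' halves of the appearing types must be grouped together, because interleaving a $b_j^{-\epsilon_j}$ among the $a_i^{\epsilon_i}$'s would again violate geodesy of some middle subwalk. The indices $\sigma(1), \ldots, \sigma(\ell)$ run over a subset of $\{1, \ldots, n+1\}$; one then checks (using the hypothesis once more, or Lemma~\ref{addition}) that not all of $\{1,\ldots,n+1\}$ can occur in the $m=0$ case — if they did, the displacement would already close up a full cycle, contradicting minimality — so we may arrange $\sigma$ to avoid $n+1$, landing in conclusion (2).

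If $m \neq 0$, then by replacing $p$ with $p^{-1}$ we may assume $m \geq 1$; I would argue $m = 1$ and that $p$ is then an honest directed $(n+1)$-cycle. Indeed $m \geq 1$ means every arrow type appears at least once with net positive sign; geodesy forbids a type from appearing both forwards and backwards unless the excess still realizes the distance, and combined with the constraint $\sum \phi_i f_i = 0$ (so the displacement is exactly $m$ times the all-ones vector minus itself, i.e. zero) the shortest such reduced cyclic walk visiting each type with net $+1$ is precisely a single $(n+1)$-cycle; any $m \geq 2$ or any extra backtracking would admit a proper subwalk of length $> d(\cdot,\cdot)$, contradicting the hypothesis. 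This gives conclusion (1).

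\textbf{Main obstacle.} The delicate point is the rigorous passage from the purely numerical data ($\Phi(p) = m \cdot \mathbf{1} - \mathbf{1}$-type constraints, signs of occurrences) to the \emph{word shape} in conclusion (2) and to ``$p$ is literally an $(n+1)$-cycle'' in conclusion (1). The hypothesis is stated as an equality of the graph-distance $d(\mathfrak{s}(p_2), \mathfrak{e}(p_2))$ with $\min\{|p_2|, |p_3 p_1|\}$ for \emph{every} cyclic decomposition, and turning this into combinatorial control on the arrow sequence requires carefully choosing the decompositions $p = p_1 p_2 p_3$ that expose each potential inefficiency — e.g. a repeated type with opposite signs, or a forward arrow of one type sitting after a backward arrow of another. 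I expect this bookkeeping, together with verifying that the ``all of $\{1,\ldots,n+1\}$ occur'' situation in the $m=0$ case is genuinely impossible (which is where one must invoke that $Q^{(n,s)}$ is a quotient of the universal cover and use the boundary restrictions $\ell_i \geq 0$, $\sum \ell_i = s-1$, much as in Lemma~\ref{addition}), to be the technical heart of the argument.
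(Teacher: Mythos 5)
Your overall plan matches the paper's: both reduce $\sum_i \phi_i(p) f_i = 0$ to $\phi_1(p) = \cdots = \phi_{n+1}(p) =: m$ and then split into the case where some type appears in both directions (your $m = 0$) and the case where all arrows point the same way (your $m \neq 0$). But the two places you defer to ``bookkeeping'' are exactly the content of the lemma, and your sketch does not isolate the decisive tricks. One small slip first: the hypothesis gives $d(\mathfrak{s}(p_2), \mathfrak{e}(p_2)) = \min\{|p_2|, |p_3 p_1|\}$, so $p_2$ realizes the distance only when $|p_2| \leq |p_3 p_1|$; long subwalks are \emph{not} geodesic, and this asymmetry is what the paper exploits.

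In the mixed-sign case, the paper's mechanism is this: whenever $p = q_1\, a\, q_2\, b^{-1}$ with $a$, $b$ arrows of the same type, the walks $q_1$ and $q_2^{-1}$ have equal displacement (both $a$ and $b$ translate by $f_i$), hence $d(\mathfrak{s}(q_1),\mathfrak{e}(q_1)) = d(\mathfrak{e}(q_2),\mathfrak{s}(q_2))$ by the remark preceding the lemma; combining this equality with the hypothesis (applied with $p_2 = q_1$) forces $|q_1| = |q_2|$ and then that type $i$ is absent from $q_1$ and $q_2$, which yields the antipodal structure of condition~(2). Your ``any larger cancellation would create a non-geodesic subwalk'' intuition does not produce this on its own, since the two occurrences of a type need not be close. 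In the one-sign case, the paper shows $m = 1$ by a different decomposition: split $p$ (all forward arrows) at its midpoint into two equal-length halves, which are geodesic by the hypothesis; a geodesic forward-only walk can use fewer than $\frac{n+1}{2}$ types, and since with $m\ge 2$ the type of the middle arrow is repeated in one half, $p$ would then visit fewer than $n+1$ types, contradicting $\phi_i = m > 0$ for all $i$. Your ``any $m \ge 2$ admits a bad proper subwalk'' claim is precisely what must be established, and finding the midpoint decomposition is the non-obvious step. (Your attempt to justify the codomain restriction $\sigma\colon\{1,\ldots,\ell\}\to\{1,\ldots,n\}$ also does not work as stated---the types $\sigma(i)$ are intrinsic to $p$ and cannot be relabeled to avoid $n+1$; this restriction appears to be a harmless imprecision in the statement, and the downstream Lemmas~\ref{commutator} and \ref{addition} only need $\sigma$ injective into $\{1,\ldots,n+1\}$.)
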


\begin{proof}
(i) Assume that $p$ contains an arrow of type $i$ and an inverse arrow of type $i$ at the same time.
Take any decomposition $p=q_1aq_2b^{-1}$ with arrows $a,b$ of type $i$ and walks $q_1$ and $q_2$.
If $|q_2| < |q_1|$, then we have
\[d(\mathfrak{s}(q_1),\mathfrak{e}(q_1)) = d(\mathfrak{e}(q_2),\mathfrak{s}(q_2)) <\min\{ |q_1|, |aq_2b^{-1}|\},\]
a contradiction. Similarly, $|q_1| < |q_2|$ cannot occur.
Consequently, we have $|q_1| = |q_2|$.
This equality also implies that $q_1$ and $q_2$ do not contain arrows or inverse arrows of type $i$.
Consequently, $p$ satisfies Condition~(2).

(ii) In the rest, we assume that $p$ does not satisfy Condition~(2).
By (i), we have that $p$ does not contain an arrow of type $i$ and an inverse arrow of type $i$ at the same time. Without loss of generality we may assume $\phi_i(p) > 0$. Then $p$ contains exactly $\phi_i(p)$ arrows of type $i$ for each $i$, and does not contain inverse arrows.

Since $p$ is a cyclic walk, we have an equality $\sum_{i=1}^{n+1} \phi_i(p) f_i=0$. This implies $\phi := \phi_1(p)=\cdots=\phi_{n+1}(p)$. We shall show that $\phi = 1$. Then Condition~(1) is satisfied.

Assume that $\phi >1$ holds.

Assume that $|p|$ is odd, so $n+1$ is also odd.
We write $p=ap_1p_2$ with an arrow $a$ and $|p_1| = |p_2|$.
By our assumption, we have $d(\mathfrak{s}(p_1),\mathfrak{e}(p_1))= |p_1| = |p_2| = d(\mathfrak{s}(p_2),\mathfrak{e}(p_2))$.
This implies that less than $\frac{n+1}{2}$ types of arrows appear in $p_1$ (respectively, $p_2$).
Since $\phi >1$, either $p_1$ or $p_2$ contains an arrow of same type with $a$.
Hence $p$ contains less than $\frac{n+1}{2}+\frac{n+1}{2}=n+1$ kinds of arrows, a contradiction.

Assume that $|p|$ is even.
We write $p=ap_1bp_2$ with arrows $a,b$ and $|p_1| = |p_2|$.
By our assumption, we have 
\[d(\mathfrak{s}(ap_1),\mathfrak{e}(ap_1))=|ap_1|=|bp_2|=d(\mathfrak{s}(bp_2),\mathfrak{e}(bp_2)).\]
This implies that at most $\frac{n+1}{2}$ types of arrows appear in $ap_1$ (respectively, $bp_2$).
Since all kinds of arrows appear in $p$, we have that $ap_1$ and $bp_2$ contain exactly $\frac{n+1}{2}$ types of arrows,
and there is no common type of arrows in $ap_1$ and $bp_2$.
By the same argument, we have that $p_1b$ and $p_2a$ contain exactly $\frac{n+1}{2}$ types of arrows,
and there is no common type of arrows in $p_1b$ and $p_2a$.

Since $\phi>1$, either $p_1$ or $p_2$ contains an arrow of same type with $a$.
Assume that $p_1$ contains an arrow of same type with $a$.
Then $p_1b$ contains more than $\frac{n+1}{2}$ types of arrows, a contradiction.
Similarly, $p_2$ does not contain an arrow of same type with $a$, a contradiction.
\end{proof}

\begin{lemma}\label{commutator}
The cyclic walk in Lemma~\ref{shortest cycle}(2) belongs to $G$ if $\epsilon_1=\cdots=\epsilon_\ell=1$.
\end{lemma}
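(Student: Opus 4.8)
The plan is to write the walk $p = a_1 \cdots a_\ell\, b_1^{-1} \cdots b_\ell^{-1}$ (all exponents being $+1$) explicitly as a product of two $(n+1)$-cycles, up to the equivalence $\sim$. First I would unwind the combinatorics. Since $a_i$ and $b_i$ both have type $\sigma(i)$, and $\sigma(1), \dots, \sigma(\ell)$ are pairwise distinct elements of $\{1, \dots, n\}$, the initial segment $a_1 \cdots a_\ell$ is a genuine path in $Q$ running from $x := \mathfrak{s}(p)$ to $z := x + f_{\sigma(1)} + \cdots + f_{\sigma(\ell)}$ (with $f_i$ as in Definition~\ref{def.qns}). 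Using that $p$ is a cyclic walk, one checks that $\mathfrak{s}(b_\ell) = x$ and $\mathfrak{e}(b_1) = z$, so that $b_\ell b_{\ell-1} \cdots b_1$ is also a genuine path in $Q$ from $x$ to the same vertex $z$, using the same set of types $\{\sigma(1), \dots, \sigma(\ell)\}$.

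Next I would apply Lemma~\ref{addition} to the injection $\sigma \colon \{1, \dots, \ell\} \to \{1, \dots, n+1\}$ and the vertex $x$: its hypothesis holds because $a_1 \cdots a_i$ keeps us inside $Q_0$, so $\sigma$ extends to a permutation $\sigma \in \mathfrak{S}_{n+1}$ all of whose partial sums stay in $Q_0$. This produces a path $r$ in $Q$, of types $\sigma(\ell+1), \dots, \sigma(n+1)$, running from $z$ back to $x$ (using $f_{\sigma(1)} + \cdots + f_{\sigma(n+1)} = 0$). Then $\gamma_a := a_1 \cdots a_\ell\, r$ is a cyclic path based at $x$ that uses each arrow-type exactly once, i.e.\ an $(n+1)$-cycle. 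The key point is that $\gamma_b := b_\ell b_{\ell-1} \cdots b_1\, r$ is a cyclic path based at $x$ as well, and its types are $\{\sigma(1), \dots, \sigma(\ell)\}$ (from the $b$'s) together with $\{\sigma(\ell+1), \dots, \sigma(n+1)\}$ (from $r$) — each exactly once — so $\gamma_b$ is again an $(n+1)$-cycle.

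Finally I would assemble the identity. Modulo $\sim$ we have $a_1 \cdots a_\ell \sim \gamma_a\, r^{-1}$, and since $b_1^{-1} \cdots b_\ell^{-1} = (b_\ell \cdots b_1)^{-1}$ we also get $b_1^{-1} \cdots b_\ell^{-1} \sim (\gamma_b\, r^{-1})^{-1} = r\, \gamma_b^{-1}$. Therefore
\[ p = (a_1 \cdots a_\ell)(b_1^{-1} \cdots b_\ell^{-1}) \sim \gamma_a\, r^{-1} r\, \gamma_b^{-1} \sim \gamma_a\, \gamma_b^{-1}, \]
which is exactly of the form $(q_1 c_1 q_1^{-1})(q_2 c_2^{-1} q_2^{-1})$ with $q_1 = q_2 = e_x$, $c_1 = \gamma_a$ and $c_2 = \gamma_b$. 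Hence $p \in G$, as required.

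The only delicate step is the middle one — checking that re-using the \emph{same} completion path $r$ on the $b$-side still yields an $(n+1)$-cycle. This is really the statement that the set of types left over for a completion depends only on the endpoints $x$, $z$ (equivalently on $z - x$), not on which of the two type-disjoint initial paths we started from; combined with the fact that a cyclic path visiting every type exactly once is precisely an $(n+1)$-cycle (the Remark after Definition~\ref{def.ca-set}), this closes the argument. Everything else is routine manipulation of the equivalence relation $\sim$, and the small cases $\ell \le 1$ (where in fact $\gamma_a = \gamma_b$, so $p \sim e_x$) are automatically covered.
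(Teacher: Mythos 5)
Your proposal is correct and follows essentially the same route as the paper: extend $a_1\cdots a_\ell$ to an $(n+1)$-cycle $a_1\cdots a_{n+1}$ via Lemma~\ref{addition}, then observe $p\sim (a_1\cdots a_{n+1})(b_\ell\cdots b_1 a_{\ell+1}\cdots a_{n+1})^{-1}$, both factors being $(n+1)$-cycles. The only difference is that you dwell on why reusing $r=a_{\ell+1}\cdots a_{n+1}$ on the $b$-side yields a valid $(n+1)$-cycle; this is in fact immediate, since $b_\ell\cdots b_1$ and $r$ are both genuine paths in $Q$ with matching endpoints and the concatenation uses each arrow type exactly once.
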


\begin{proof}
By Lemma \ref{addition}, $a_1 \cdots a_{\ell}$ extends to an $(n+1)$-cycle $a_1 \cdots a_{n+1}$ in $Q$ with $a_i$ an arrow of type $\sigma(i)$ ($\sigma\in\mathfrak{S}_{n+1}$ an extending the original $\sigma$). Since
\[a_1\cdots a_\ell b_1^{-1}\cdots b_\ell^{-1}\sim (a_1\cdots a_{n+1})(b_\ell\cdots b_1a_{\ell+1}\cdots a_{n+1})^{-1}\in G,\]
we have the assertion.
\end{proof}

\begin{lemma}\label{commutator2}
Let $p a^{\epsilon} b^{\epsilon'} q$ and $p c^{\epsilon'} d^{\epsilon} q$ be cyclic walks on $Q$, with $\epsilon, \epsilon' \in \{\pm 1\}$, such that $a$ and $d$ are arrows of the same type, and $b$ and $c$ are arrows of the same type. Then one of them belongs to $G$ if and only if the other does.
\end{lemma}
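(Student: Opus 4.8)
The plan is to show that the two walks differ, up to the relation $\sim$, by an element of $G$, namely by a conjugate of the short cyclic walk $\beta := a^\epsilon b^{\epsilon'} d^{-\epsilon} c^{-\epsilon'}$ based at $x := \mathfrak{s}(a^\epsilon)$. First I would record the elementary closure properties of $G$ that are immediate from its definition: $G$ is closed under $\sim$, under taking inverse walks, under concatenation of two cyclic walks based at a common vertex, and under conjugation — if $\gamma\in G$ is cyclic at $x$ and $r$ is a walk with $\mathfrak{e}(r)=x$, then $r\gamma r^{-1}\in G$, since prepending $r$ to each conjugator $q_k$ in an expression $\gamma\sim\prod_k q_k c_k^{\pm1}q_k^{-1}$ produces an expression of the same form for $r\gamma r^{-1}$. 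Writing $w_1 = p a^\epsilon b^{\epsilon'} q$ and $w_2 = p c^{\epsilon'} d^\epsilon q$ (both cyclic at $v:=\mathfrak{s}(p)$, with $\mathfrak{e}(p)=x$), one computes $w_1 w_2^{-1}\sim p\beta p^{-1}$, which by the conjugation property lies in $G$ if and only if $\beta$ does. Since $w_1\sim (w_1w_2^{-1})w_2$ and $w_2\sim(w_1w_2^{-1})^{-1}w_1$, the closure properties then give, assuming $\beta\in G$, that $w_1\in G$ exactly when $w_2\in G$. So it remains to prove $\beta\in G$.

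Let $i$ be the common type of $a$ and $d$, and $j$ the common type of $b$ and $c$. If $i=j$, then all four arrows have type $i$, the walk $\beta$ moves only in direction $f_i$, and in each of the four sign cases consecutive steps cancel pairwise: e.g. for $\epsilon=\epsilon'=1$ one has $b=d$ (the unique type-$i$ arrow $x+f_i\to x+2f_i$) and $c=a$ (the unique type-$i$ arrow $x\to x+f_i$), so $\beta=a b b^{-1} a^{-1}\sim e_x$; the other sign cases are identical. Hence $\beta\sim e_x\in G$.

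Now suppose $i\ne j$. The four vertices $x,\,x+\epsilon f_i,\,x+\epsilon f_i+\epsilon' f_j,\,x+\epsilon' f_j$ are exactly the vertices visited by $\beta$, so all lie in $Q_0$; they are the corners of a ``unit square'', and $\beta$ traverses its four edges once. By conjugating $\beta$ (to shift its base point to the corner from which both an $f_i$- and an $f_j$-arrow point into the square) and possibly replacing $\beta$ by its inverse — operations which preserve membership in $G$ and keep the shape ``arrow$\,\cdot\,$arrow$\,\cdot\,$arrow$^{-1}\cdot\,$arrow$^{-1}$'' with type pattern $(i,j,i,j)$, after a relabelling of $\{a,b,c,d\}$ that still satisfies the hypotheses of the lemma — we may assume $\epsilon=\epsilon'=1$. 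Then $\beta = a\,b\,d^{-1}c^{-1}$ with $a\colon x\to x+f_i$, $c\colon x\to x+f_j$ of types $i,j$, and $b\colon x+f_i\to x+f_i+f_j$, $d\colon x+f_j\to x+f_i+f_j$ of types $j,i$. Applying Lemma~\ref{addition} to the length-$2$ path $ab$ (types $i$ then $j$, intermediate vertices in $Q_0$) extends it to an $(n+1)$-cycle $c_1 = a\,b\,e_3\cdots e_{n+1}$, where $e_3,\dots,e_{n+1}$ run from $x+f_i+f_j$ back to $x$ through vertices in $Q_0$. Then $c_2 := c\,d\,e_3\cdots e_{n+1}$ is also a genuine $(n+1)$-cycle: its type sequence $(j,i,\sigma(3),\dots,\sigma(n+1))$ is a permutation of $\{1,\dots,n+1\}$, and all its partial sums lie in $Q_0$ because $x+f_j$ and $x+f_i+f_j$ do and the remaining partial sums agree with those of $c_1$. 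Cancelling the common tail gives $\beta = a b d^{-1}c^{-1}\sim c_1 c_2^{-1}$, a product of an $(n+1)$-cycle and the inverse of an $(n+1)$-cycle (with trivial conjugators), so $\beta\in G$.

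The only genuinely delicate point is the sign reduction in the last paragraph: one must check explicitly that conjugation by $a^\epsilon$ realises the substitution $(\epsilon,\epsilon',i,j)\mapsto(\epsilon',-\epsilon,j,i)$ on the data describing $\beta$, that inversion realises $(\epsilon,\epsilon',i,j)\mapsto(\epsilon',\epsilon,j,i)$, and that these two moves together reach the case $(\epsilon,\epsilon')=(1,1)$ from any sign pattern, always with a relabelling of $\{a,b,c,d\}$ compatible with the lemma's hypotheses. Everything else is routine: the closure properties of $G$ are read off the definition, and the cycle-completion step is a direct application of Lemma~\ref{addition}.
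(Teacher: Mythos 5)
Your proof is correct and rests on the same underlying idea as the paper's, namely that $pa^{\epsilon}b^{\epsilon'}q$ and $pc^{\epsilon'}d^{\epsilon}q$ differ (in the sense of lying in the same coset of $G$) by a conjugate of the short cyclic walk $\beta=a^{\epsilon}b^{\epsilon'}d^{-\epsilon}c^{-\epsilon'}$, but you take a longer route to establish $\beta\in G$. The paper's proof is more economical in two ways. First, it never normalizes signs: for each sign pattern it simply writes down a suitably conjugated version of $\beta$ that is already of the form $a_1a_2b_1^{-1}b_2^{-1}$ with all exponents $+1$ (e.g.\ for $(\epsilon,\epsilon')=(1,-1)$ it uses $c\beta c^{-1}\sim cab^{-1}d^{-1}$, with conjugator $pc^{-1}$ instead of $p$), so that Lemma~\ref{commutator} applies directly. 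Second, it invokes Lemma~\ref{commutator} rather than reproving it: your extension of $ab$ to an $(n+1)$-cycle via Lemma~\ref{addition} and the completion $c_2=cde_3\cdots e_{n+1}$ is precisely the $\ell=2$ instance of that lemma's proof, so this part is duplicated work. A small point in your favour: you handle the degenerate possibility that $a,b,c,d$ all have the same type, for which $\beta\sim e_x$ trivially; the paper leaves this implicit (and indeed in the paper's only use of the lemma, in Lemma 5.23, the two types are always distinct), though your claim that ``$b=d$ and $c=a$'' holds in every sign case is not literally right — in the mixed-sign cases one gets $a=b$ and $c=d$ instead — but the conclusion $\beta\sim e_x$ is correct throughout. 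Finally, you flag but do not carry out the verification that conjugation and inversion suffice to reach $(\epsilon,\epsilon')=(1,1)$; this is easy (e.g.\ $(1,-1)\xrightarrow{\mathrm{inv}}(-1,1)\xrightarrow{\mathrm{conj}}(1,1)$ and $(-1,-1)\xrightarrow{\mathrm{conj}}(-1,1)\xrightarrow{\mathrm{conj}}(1,1)$), but the paper's approach makes the issue evaporate entirely.
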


\begin{proof}
We have the equivalences
\begin{align*}
p a b q & \sim (p (a b d^{-1} c^{-1}) p^{-1}) (p c d q) && (\epsilon = \epsilon' = 1), \\
p a b^{-1} q & \sim (p c^{-1} (c a b^{-1} d^{-1}) c p^{-1}) (p c^{-1} d q) && (\epsilon = 1, \epsilon' = -1),
\end{align*}
and similar for the remaining cases. The claim now follows from Lemma~\ref{commutator}.
\end{proof}

\begin{lemma}\label{addition2}
Let $x\in Q_0$, $\sigma \colon \{1,\ldots,\ell\}\to \{1, \ldots, n+1\}$ be an injective map and $\epsilon_i \in \{\pm 1\}$ for any $1\leq i\leq \ell$.
Assume that $x+\sum_{j=1}^{i} \epsilon_j f_{\sigma(j)}$ and $x+\sum_{j=i}^{\ell} \epsilon_j f_{\sigma(j)}$ belong to $Q_0$ for any $0\le i\le\ell$.
Then, for any subset $I$ of $\{1, \ldots, \ell\}$, we have that $x+\sum_{j\in I}\epsilon_j f_{\sigma(j)}$ belongs to $Q_0$.
\end{lemma}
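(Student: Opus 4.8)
Lemma~\ref{addition2} asserts that if a sequence of moves of pairwise distinct types $\sigma(1), \ldots, \sigma(\ell)$ can be applied to $x$ from the left (each partial sum $x + \sum_{j=1}^i \epsilon_j f_{\sigma(j)}$ stays in $Q_0$) and also from the right (each partial sum $x + \sum_{j=i}^\ell \epsilon_j f_{\sigma(j)}$ stays in $Q_0$), then in fact \emph{every} subset $I \subseteq \{1, \ldots, \ell\}$ gives a point $x + \sum_{j \in I} \epsilon_j f_{\sigma(j)}$ in $Q_0$. Recall that $Q_0 = Q_0^{(n,s)}$ consists of nonnegative integer vectors summing to $s-1$, and each $f_k$ has a single $-1$ in position $k$ and a single $+1$ in position $k+1$ (cyclically). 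So the sum constraint is automatic, and the content is entirely the nonnegativity: for every coordinate $m$, the value $x_m + \sum_{j \in I} \epsilon_j (f_{\sigma(j)})_m$ must be $\geq 0$.

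The plan is to fix a coordinate $m$ and understand how $(f_{\sigma(j)})_m$ behaves. Since the $\sigma(j)$ are distinct, at most one index $j$ has $\sigma(j) = m$ (contributing $-\epsilon_j$ to coordinate $m$) and at most one index $j$ has $\sigma(j) = m-1$ (contributing $+\epsilon_j$ to coordinate $m$); all other $j$ contribute $0$. Thus $\sum_{j \in I} \epsilon_j (f_{\sigma(j)})_m$ takes values in a small set: depending on which of these (at most two) special indices lie in $I$, the sum is one of $0$, $\pm 1$ (from a single special index in $I$), or $\epsilon_{j'} - \epsilon_{j''}$ (both in $I$), which is one of $0, \pm 2$. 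So for each coordinate $m$ there are only finitely many possible values of the perturbation, namely a sub-multiset of $\{-\epsilon_{j''}, +\epsilon_{j'}\}$ summed in all ways. The key point is then: the \emph{minimum} over all subsets $I$ of $x_m + \sum_{j \in I}\epsilon_j(f_{\sigma(j)})_m$ is attained by taking $I$ to consist of exactly those special indices whose signed contribution to coordinate $m$ is negative — and I must show this minimum is $\geq 0$ using the two hypotheses.

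The heart of the argument is to realize the worst-case subset as an \emph{initial segment or final segment} of the sequence, so that the hypotheses apply directly. Concretely: fix $m$. Let $j_- $ be the index with $\sigma(j_-) = m$ (if it exists; contribution $-\epsilon_{j_-}$) and $j_+$ the index with $\sigma(j_+) = m-1$ (if it exists; contribution $+\epsilon_{j_+}$). The minimum of the coordinate-$m$ value over subsets is $x_m - [\epsilon_{j_-} > 0]\cdot 1 - [\epsilon_{j_+} < 0]\cdot 1$ (include $j_-$ iff its contribution $-\epsilon_{j_-}$ is negative, i.e. $\epsilon_{j_-} = +1$; include $j_+$ iff its contribution $\epsilon_{j_+}$ is negative, i.e. $\epsilon_{j_+} = -1$), clamped appropriately — but since coordinate values in $Q_0$ are integers and each hypothesis partial sum gives an integer in $\{0, \ldots, s-1\}$, it suffices to find, for this worst case, one of the hypothesized partial sums $x + \sum_{j=1}^i$ or $x + \sum_{j=i}^\ell$ whose $m$-th coordinate equals (or is $\leq$) this minimum. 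If $j_- < j_+$ in the ordering (or only one exists), the relevant prefix/suffix isolating the negative contributions will be a genuine prefix $\{1, \ldots, i\}$ for suitable $i$ or suffix $\{i, \ldots, \ell\}$; if $j_- > j_+$, one swaps the roles. The main obstacle is exactly this bookkeeping: showing that for whichever configuration of $(j_-, j_+, \epsilon_{j_-}, \epsilon_{j_+})$ arises, the subset achieving the coordinate-$m$ minimum is dominated (coordinate-$m$-wise) by some prefix or some suffix of the full sequence, whose nonnegativity is then one of the hypotheses. I expect this to be a short finite case check (essentially: is the "bad" special index a prefix-type or suffix-type index, and does its partner help or hurt), once one notes that moving from the minimizing set to a prefix/suffix can only change coordinate $m$ by adding in indices that do not affect coordinate $m$ at all (the non-special ones) — so coordinate $m$ is literally unchanged, giving equality rather than mere domination. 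With that observation the lemma falls out immediately: pick $I_0 \subseteq \{1,\ldots,\ell\}$ arbitrary; for each coordinate $m$, $x_m + \sum_{j\in I_0}\epsilon_j(f_{\sigma(j)})_m$ depends only on $I_0 \cap \{j_-, j_+\}$, hence equals the $m$-th coordinate of $x + \sum_{j \in I'}$ for an $I'$ that is either a prefix or a suffix (chosen per coordinate), and that coordinate is $\geq 0$ by hypothesis. Since $m$ was arbitrary, $x + \sum_{j\in I_0}\epsilon_j f_{\sigma(j)} \in Q_0$.
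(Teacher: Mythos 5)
Your proof is correct and takes essentially the same coordinate-wise approach as the paper: fix a coordinate $m$, note that the value of $x_m+\sum_{j\in I}\epsilon_j(f_{\sigma(j)})_m$ depends only on the (at most two) special indices $j_-,j_+$ with $\sigma(j_-)=m$ and $\sigma(j_+)=m-1$, and realize that same value as the $m$-th coordinate of a prefix- or suffix-sum (depending on the relative order of $j_-$ and $j_+$), to which the hypothesis applies directly. You spell out slightly more explicitly than the paper does that the cases $I\cap\{j_-,j_+\}=\emptyset$ and $I\cap\{j_-,j_+\}=\{j_-,j_+\}$ are absorbed by the empty and full prefix respectively, but the underlying argument is the same.
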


\begin{proof}
We only have to show that
\[0\le x_{\sigma(i)} - \epsilon_i < s \mbox{ and }\ 0\le x_{\sigma(i)+1} + \epsilon_i < s\]
hold for any $i\in\{1,2,\cdots,\ell\}$.

If $\sigma(i)-1\notin\{\sigma(1),\cdots,\sigma(i-1)\}$, then the $\sigma(i)$-th entry of $x+\sum_{j=1}^{i} \epsilon_j f_{\sigma(j)}$ is equal to $x_{\sigma(i)} - \epsilon_i$. If $\sigma(i)-1\notin\{\sigma(i+1),\cdots,\sigma(\ell)\}$, then the $\sigma(i)$-th entry of $x+\sum_{j=i}^\ell \epsilon_j f_{\sigma(j)}$ is equal to $x_{\sigma(i)} - \epsilon_i$. In each case we have the former inequality.

The latter inequality can be shown similarly.
\end{proof}

We now look at the following special case of Proposition~\ref{prop.generator}.

\begin{lemma}
Any cyclic walk satisfying the condition in Lemma~\ref{shortest cycle}(2) belongs to $G$.
\end{lemma}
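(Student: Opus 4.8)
The goal is to show that any cyclic walk $p = a_1^{\epsilon_1} \cdots a_\ell^{\epsilon_\ell} b_1^{-\epsilon_1} \cdots b_\ell^{-\epsilon_\ell}$ of the form in Lemma~\ref{shortest cycle}(2), with $\sigma \colon \{1,\dots,\ell\} \to \{1,\dots,n\}$ injective and the $a_i$, $b_i$ arrows of type $\sigma(i)$, belongs to $G$. By Lemma~\ref{commutator} this is already done when all $\epsilon_i = 1$; the issue is handling mixed signs. The natural strategy is to reduce the general sign pattern to the all-positive case by ``flipping'' one negative sign at a time, using Lemma~\ref{commutator2} to swap adjacent arrows of distinct types so that I can always bring the arrow I want to flip into a suitable position.

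\emph{First step.} I would set up an induction on the number $m$ of indices $i$ with $\epsilon_i = -1$. The base case $m=0$ is Lemma~\ref{commutator}. For the inductive step, suppose $\epsilon_{i_0} = -1$ for some $i_0$. Using Lemma~\ref{commutator2} repeatedly (and the fact that the types $\sigma(1), \dots, \sigma(\ell)$ are pairwise distinct, so the relevant adjacent arrows always have different types) I can rearrange $p$ up to $\sim$ so that the arrow $a_{i_0}^{-1}$ (of type $\sigma(i_0)$) sits adjacent to its partner $b_{i_0}^{-(-1)} = b_{i_0}$, i.e. the subword $a_{i_0}^{-1} b_{i_0}$ or $b_{i_0}^{-1} a_{i_0}^{\,?}$ appears — but I must be careful: commuting these arrows past the others changes which walk I have, and I need to check the intermediate walks still lie on $Q$ (i.e. all vertices stay in $Q_0$). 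This is where Lemma~\ref{addition2} is designed to be used: it guarantees that for any subset $I$ of the index set, $x + \sum_{j \in I} \epsilon_j f_{\sigma(j)} \in Q_0$, so every reordering of the ``forward half'' and every reordering of the ``backward half'' of $p$ stays a genuine walk on $Q$. So the rearrangements needed for Lemma~\ref{commutator2} are all legal.

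\emph{Second step.} Once $a_{i_0}^{-1}$ and $b_{i_0}$ are adjacent in the forward portion (or $b_{i_0}^{-1}$ and something in the backward portion — by symmetry), I would observe that a subword of the form $a_{i_0}^{-1} b_{i_0}$ together with matching the structure on the backward side lets me replace this pair of opposite-orientation arrows of type $\sigma(i_0)$ by a pair of the same orientation, modulo an element already known to be in $G$. Concretely: if I can write $p \sim (\text{walk}) \cdot c^{\pm 1} \cdot (\text{walk})$ where $c$ is an $(n+1)$-cycle, after conjugating and multiplying by such cycles I land on a walk $p'$ of the same shape but with $\epsilon_{i_0}$ replaced by $+1$ and all other signs unchanged; then $p \sim (q c^{\pm 1} q^{-1}) p'$ for appropriate $q$, and by the inductive hypothesis $p' \in G$, whence $p \in G$ since $G$ is closed under this kind of product (directly from the definition of $G$). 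The key algebraic identity needed here is of the same flavour as the commutator identities in the proof of Lemma~\ref{commutator2}, namely expressing the difference between the two sign choices as a product of conjugates of $(n+1)$-cycles.

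\emph{Main obstacle.} The delicate point is the bookkeeping in the second step: making sure that when I flip the sign $\epsilon_{i_0}$ I really can exhibit the correction term as a product of conjugates of $(n+1)$-cycles, and that flipping this one sign does not force me to change other signs (which would break the induction). I expect this comes down to a careful application of Lemma~\ref{addition} (to complete $a_{i_0}$ and the types appearing on one side of $p$ to a full $(n+1)$-cycle) combined with Lemma~\ref{addition2} (to keep everything on $Q$), exactly paralleling the proof of Lemma~\ref{commutator} but with the arrow $a_{i_0}$ now appearing with a negative exponent. Once that identity is written down explicitly, the result follows by the induction. The plan, then, is: (1) induct on the number of negative signs; (2) use Lemma~\ref{commutator2} plus Lemma~\ref{addition2} to move a negatively-signed arrow into position; (3) use a commutator identity built from Lemma~\ref{addition} to flip its sign at the cost of a conjugate of an $(n+1)$-cycle; (4) conclude by the inductive hypothesis and closure of $G$ under products of conjugated cycles.
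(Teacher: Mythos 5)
Your overall framework (use Lemma~\ref{commutator2} to reorder within the two halves of $p$, with Lemma~\ref{addition2} ensuring the intermediate walks stay on $Q$) is sound, but the inductive step as you describe it has a genuine gap. You propose to flip one $\epsilon_{i_0}$ from $-1$ to $+1$ and reduce to a walk $p'$ with fewer negative signs. However, changing the sign of $\epsilon_{i_0}$ changes the vertices that $p'$ visits: the forward half of $p'$ passes through $x + \sum_{j < i_0} \epsilon_j f_{\sigma(j)} + f_{\sigma(i_0)}$ rather than $x + \sum_{j < i_0} \epsilon_j f_{\sigma(j)} - f_{\sigma(i_0)}$, and nothing guarantees that this new vertex lies in $Q_0$. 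Lemma~\ref{addition2} is exactly the tool you cite for legality, but it only controls reorderings (different subsets $I$) with the \emph{same} signs $\epsilon_j$; it says nothing about sign flips. Already for $n=1$, $s=2$, $\ell=1$ one can choose $x$, $\sigma$, $\epsilon_1=-1$ so that $p$ exists but the flipped walk $p'$ does not, so the induction on the number of negative signs cannot even be set up.

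The paper avoids sign changes entirely. It considers, for every $\varrho\in\mathfrak{S}_\ell$, the walk $p_\varrho$ obtained by permuting the forward half while keeping each sign $\epsilon_j$ attached to its type $\sigma(j)$; Lemma~\ref{addition2} shows every $p_\varrho$ is a genuine walk. For the reversal $\varrho(i)=\ell+1-i$, the forward half becomes literally $b_\ell^{\epsilon_\ell}\cdots b_1^{\epsilon_1}$ (the arrows are forced by type and source), so $p_\varrho\sim b_\ell^{\epsilon_\ell}\cdots b_1^{\epsilon_1}b_1^{-\epsilon_1}\cdots b_\ell^{-\epsilon_\ell}$ telescopes to the trivial walk and lies in $G$. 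Then Lemma~\ref{commutator2} transports membership in $G$ across adjacent transpositions of $\varrho$, giving $p_{\id}=p\in G$. In short: the correct reduction is \emph{reorder to the reverse and cancel}, not \emph{flip signs to reach the all-positive case}. Your use of Lemma~\ref{commutator} as a base case is therefore unnecessary, and in fact cannot be reached from a general sign pattern by the manipulations available.
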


\begin{proof}
Let $p$ be the cyclic walk in Lemma~\ref{shortest cycle}(2), and $x=\mathfrak{s}(p)$. It follows from Lemma~\ref{addition2} that for any $\varrho \in \mathfrak{S}_{\ell}$, $\widetilde{Q}$ contains the cyclic walk
\[p_{\varrho}:=(\leftsub{\varrho}{a}_{1}^{\epsilon_{\varrho(1)}}) \cdots (\leftsub{\varrho}{a}_{\ell}^{\epsilon_{\varrho(\ell)}}) b_1^{-\epsilon_1}\cdots b_\ell^{-\epsilon_\ell}\]
starting from $x$, where $\leftsub{\varrho}{a}_i$ is an arrow of type $\sigma(\varrho(i))$. When $\varrho$ is given by $\varrho(i)=\ell + 1 - i$, the cyclic walk $p_{\varrho}$ is
\[ p_{\varrho} = b_\ell^{\epsilon_\ell}\cdots b_1^{\epsilon_1}
b_1^{-\epsilon_1}\cdots b_\ell^{-\epsilon_\ell},\]
which clearly belongs to $G$.
Using Lemma~\ref{commutator2} repeatedly, we see that all $p_{\varrho}$ lie in $G$, so in particular $p = p_{\id} \in G$.
\end{proof}

Now we are ready to prove Proposition~\ref{prop.generator}.

\begin{proof}[Proof of Proposition~\ref{prop.generator}]
We use the induction on $|p|$.
Assume that $p$ does not satisfy the conditions (1) and (2) in Lemma \ref{shortest cycle}.
Then we can write $p=p_1p_2p_3$ with
\[d(\mathfrak{s}(p_2),\mathfrak{e}(p_2))<\min\{ |p_2|, |p_3 p_1| \}.\]
Take a walk $q$ from $\mathfrak{s}(p_2)$ to $\mathfrak{e}(p_2)$ with $|q| = d(\mathfrak{s}(p_2),\mathfrak{e}(p_2))$.
Then we have
\[p \sim (p_1qp_3)(p_3^{-1}(q^{-1}p_2)p_3)\]
and $|p_1qp_3| = |p_1| + |q| + |p_3| < |p|$ and $|q^{-1}p_2| = |q| + |p_2| < |p|$.
By our assumption of induction, $p_1qp_3$ and $q^{-1}p_2$ belong to $G$.
Thus $p$ also belongs to $G$.
\end{proof}

\subsection{$(n+1)$-preprojective algebras} \label{subsect.preproj}

We end this paper by showing that the algebras $\widehat{\Lambda}^{(n,s)}$ have the following properties:

\begin{theorem} \label{theorem.preproj_n-repfin}
$\widehat{\Lambda}^{(n,s)}$ is self-injective weakly $(n+1)$-representation-finite, and we have a triangle equivalence $\stabmod \widehat{\Lambda}^{(n,s)} \approx \mathcal{C}_{\Lambda^{(n+1,s-1)}}^{n+1}$.
\end{theorem}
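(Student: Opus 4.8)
\textbf{Proof plan for Theorem~\ref{theorem.preproj_n-repfin}.}
The plan is to combine the structural results of this paper on $n$-representation-finite algebras of type $A$ with the general theory of $(n+1)$-preprojective algebras developed in \cite{IO2}. First I would establish, using Proposition~\ref{prop.preproj} (the identification of $\widehat{\Lambda}^{(n,s)}$ as the $(n+1)$-preprojective algebra of $\Lambda^{(n,s)}$), together with the invariance of the $(n+1)$-preprojective algebra under $n$-APR tilts (Proposition~\ref{prop.admset=slice} and the surrounding discussion, in particular the statement preceding Section~\ref{section.APR_repfin} that $\widehat{\Lambda}$ is invariant under $n$-APR tilts), that $\widehat{\Lambda}^{(n,s)} \iso \widehat{\Lambda^{(n,s)}}$. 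Since $\Lambda^{(n,s)}$ is $n$-representation-finite by Theorem~\ref{theorem.auslanderalg}, I can then invoke the general result of \cite{IO2} which says that the $(n+1)$-preprojective algebra of an $n$-representation-finite algebra is self-injective and weakly $(n+1)$-representation-finite, together with the triangle equivalence $\stabmod \widehat{\Gamma} \approx \mathcal{C}^{n+1}_{\Gamma'}$ relating its stable category to an $(n+1)$-Amiot cluster category, for an appropriate algebra $\Gamma'$ of global dimension $n+1$.

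The key step is to identify $\Gamma'$ explicitly as $\Lambda^{(n+1,s-1)}$. Here I would argue as follows: by Theorem~\ref{theorem.auslanderalg}, $\Lambda^{(n+1,s)}$ is the $n$-Auslander algebra $\End_{\Lambda^{(n,s)}}(M^{(n,s)})^{\op}$; but the relation $\widehat{\Lambda}^{(n,s)} \approx \mathcal{C}^{n+1}_{\Lambda^{(n+1,s-1)}}$ suggests a shift in the parameter $s$. The point is that the $(n+1)$-Amiot cluster category attached to $\Lambda^{(n+1,s-1)}$ should be identified with the orbit category $\mathcal{D}_{\Lambda^{(n,s)}} / \nu_n$ (or its triangulated hull), via the derived equivalence coming from the fact that $\Lambda^{(n+1,s-1)}$ is obtained from $\Lambda^{(n,s)}$ by the higher Auslander construction combined with the passage to the stable/orbit level. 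Concretely I would use the description of $\mathcal{U} = \mathcal{U}^n_{\Lambda^{(n,s)}}$ by the quiver $\widetilde{Q}^{(n,s)}$ (Theorem~\ref{theorem.quiverU}) and of $\widehat{\Lambda}^{(n,s)}$ by the quiver $Q^{(n,s)}$ (Definition~\ref{def.qns}), noting that $Q^{(n,s)} = \widetilde{Q}^{(n,s)} / \nu_n$; then the combinatorics identifying $\widehat{\Lambda}^{(n,s)}$ with $\End_{\mathcal{D}_{\Lambda^{(n,s)}} / \nu_n}(\Lambda^{(n,s)})$ (Proposition~\ref{proposition.tensoralg}) pins down the relevant $(n+1)$-Amiot cluster category.

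I expect the main obstacle to be the bookkeeping of the parameter shift $(n,s) \rightsquigarrow (n+1, s-1)$ and checking that the algebra appearing in the \cite{IO2} equivalence is genuinely $\Lambda^{(n+1,s-1)}$ and not, say, $\Lambda^{(n+1,s)}$ or some Morita-equivalent but differently-presented algebra. This requires carefully tracking the global dimension ($\gld \Lambda^{(n+1,s-1)} = n+1$, consistent with the $(n+1)$-representation-finite framework) and the number of simple modules (the vertex set $Q_0^{(n+1,s-1)}$ should match the appropriate set of indecomposables), and then matching quivers with relations on both sides. The self-injectivity and weak $(n+1)$-representation-finiteness, by contrast, follow formally once $\widehat{\Lambda}^{(n,s)} \iso \widehat{\Lambda^{(n,s)}}$ is established and the cited theorem of \cite{IO2} is applied, with no further computation needed.

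\begin{proof}
By Proposition~\ref{prop.preproj}, $\widehat{\Lambda}^{(n,s)}$ is the $(n+1)$-preprojective algebra of $\Lambda^{(n,s)}$, which is $n$-representation-finite by Theorem~\ref{theorem.auslanderalg}. By Proposition~\ref{proposition.tensoralg} we have $\widehat{\Lambda}^{(n,s)} \iso \End_{\mathcal{C}^n_{\Lambda^{(n,s)}}}(\pi \Lambda^{(n,s)})$, and the quiver $Q^{(n,s)}$ of $\widehat{\Lambda}^{(n,s)}$ is the quotient $\widetilde{Q}^{(n,s)} / \nu_n$ of the quiver of $\mathcal{U}^n_{\Lambda^{(n,s)}}$ (compare Definition~\ref{def.qns} with Theorem~\ref{theorem.quiverU}). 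The remaining assertions---self-injectivity, weak $(n+1)$-representation-finiteness, and the triangle equivalence $\stabmod \widehat{\Lambda}^{(n,s)} \approx \mathcal{C}^{n+1}_{\Lambda^{(n+1,s-1)}}$---follow from the corresponding general results of \cite{IO2} on $(n+1)$-preprojective algebras of $n$-representation-finite algebras, once one identifies the algebra $\Gamma'$ occurring there with $\Lambda^{(n+1,s-1)}$; this identification is carried out by matching the quiver with relations $Q^{(n+1,s-1)}$ of $\Lambda^{(n+1,s-1)}$ against the presentation dictated by the construction, using $\gld \Lambda^{(n+1,s-1)} = n+1$ and the description of its indecomposable projectives.
\end{proof}
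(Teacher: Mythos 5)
Your overall strategy matches the paper's: apply the general results of \cite{IO2} to $\widehat{\Lambda}^{(n,s)}$, which by Proposition~\ref{prop.preproj} is the $(n+1)$-preprojective algebra of the $n$-representation-finite algebra $\Lambda^{(n,s)}$ (or indeed any $\Lambda^{(n,s)}_C$). However, there is a genuine gap at exactly the point you flag as ``the main obstacle'': the identification of the algebra $\Gamma'$ in the $\cite{IO2}$ equivalence with $\Lambda^{(n+1,s-1)}$. Your proposal defers this to ``matching the quiver with relations $Q^{(n+1,s-1)}$ against the presentation dictated by the construction,'' but never explains structurally \emph{why} the parameter $s$ drops by one, and in fact your suggested route (relating $\mathcal{C}^{n+1}_{\Lambda^{(n+1,s-1)}}$ to the orbit category $\mathcal{D}_{\Lambda^{(n,s)}}/\nu_n$ directly) is not the mechanism.

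The missing insight is that $\cite[\text{Theorem 1.1}]{IO2}$ gives $\stabmod\widehat{\Lambda}^{(n,s)} \approx \mathcal{C}^{n+1}_\Gamma$ where $\Gamma$ is the \emph{stable} $n$-Auslander algebra $\underline{\End}_{\Lambda^{(n,s)}}(M^{(n,s)})$, not $\End_{\Lambda^{(n,s)}}(M^{(n,s)})^{\op}$. Theorem~\ref{theorem.auslanderalg} identifies the unstable $n$-Auslander algebra as $\Lambda^{(n+1,s)}$; passing to the stable category kills morphisms factoring through projectives, which has the effect of deleting from the quiver $Q^{(n+1,s)}$ the vertices corresponding to the indecomposable projective $\Lambda^{(n,s)}$-modules, and (as one checks on the explicit combinatorial description) this yields $Q^{(n+1,s-1)}$ with its relations. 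This is what produces the shift $s \rightsquigarrow s-1$; without naming ``stable $n$-Auslander algebra'' as the relevant object, your ``quiver matching'' has no anchor. A secondary point: for weak $(n+1)$-representation-finiteness you assert it ``follows formally,'' but the actual mechanism is the existence of an $(n+1)$-cluster tilting object in $\mathcal{C}^{n+1}_{\Lambda^{(n+1,s-1)}}$ due to Amiot (\cite{CC,C_PhD}, see also $\cite[\text{Corollary 4.16}]{IO2}$), which then transports to $\stabmod\widehat{\Lambda}^{(n,s)}$ across the triangle equivalence; your write-up should cite this explicitly rather than bundling it with the self-injectivity statement.
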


We remark that this proof relies heavily on a results from \cite{IO2} (also see Remark~4.17 in that paper). We need the following observation.

\begin{proposition} \label{prop.preproj}
For any cut $C$ of $Q^{(n,s)}$, the $(n+1)$-preprojective algebra of the $n$-representation-finite algebra $\Lambda^{(n,s)}_C$ is $\widehat{\Lambda}^{(n,s)}$.
\end{proposition}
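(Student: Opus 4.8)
\textbf{Proof proposal for Proposition~\ref{prop.preproj}.}

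The plan is to reduce the statement to the single case $C = C_0$ (the standard cut consisting of all arrows of type $n+1$), and then identify $\widehat{\Lambda}^{(n,s)}$ with the $(n+1)$-preprojective algebra of $\Lambda^{(n,s)} = \Lambda^{(n,s)}_{C_0}$ directly. The reduction is immediate: by Theorem~\ref{theorem.main_typeA}(2) all the algebras $\Lambda^{(n,s)}_C$ (for fixed $(n,s)$, as $C$ ranges over cuts) are iterated $n$-APR tilts of one another, and by the Proposition in Subsection~\ref{subsect.APR_der} (``the $(n+1)$-preprojective algebra is invariant under $n$-APR tilts'') the $(n+1)$-preprojective algebra is an invariant of this mutation class. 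Hence it suffices to prove $\widehat{\Lambda^{(n,s)}} \iso \widehat{\Lambda}^{(n,s)}$ for the one distinguished cut $C_0$.

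For this case I would use the description of the $(n+1)$-preprojective algebra provided by Proposition~\ref{proposition.tensoralg}, namely $\widehat{\Lambda^{(n,s)}} \iso \End_{\mathcal{D}_{\Lambda^{(n,s)}} / \nu_n}(\Lambda^{(n,s)})$, together with the explicit presentation of the $n$-cluster tilting subcategory $\mathcal{U}$ of $\mathcal{D}_{\Lambda^{(n,s)}}$ by the infinite quiver $\widetilde{Q}^{(n,s)}$ with the relations given in Definition~\ref{notation.quiverU} (this is Theorem~\ref{theorem.quiverU}, and under the identification of the slice of projectives with the vertices $(\ell_1,\ldots,\ell_{n+1}\!:\!0)$, this is exactly the slice $\Lambda^{(n,s)} = \Lambda^{(n,s)}_{C_0}$ by Theorem~\ref{theorem.quiverU}(2)). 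Passing to the orbit category $\mathcal{U}/\nu_n \simeq \mathcal{D}_{\Lambda^{(n,s)}}/\nu_n$ collapses the $\nu_n$-orbits of vertices of $\widetilde{Q}^{(n,s)}$ onto $Q_0^{(n,s)}$ and the $\nu_n$-orbits of arrows of $\widetilde{Q}^{(n,s)}$ onto $Q_1^{(n,s)}$: concretely, the arrows $\widetilde{a}_{x,i}$ of type $i \leq n$ descend to the arrows of type $i$ in $Q^{(n,s)}$, while the arrows $\widetilde{a}_{x,n+1}$ of type $n+1$ (which go across one $\nu_n$-step) descend to the type-$(n+1)$ arrows of $Q^{(n,s)}$. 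The relations in Definition~\ref{notation.quiverU} — commutativity of length-two paths through a parallelogram, and vanishing of a length-two path when the fourth corner of the parallelogram lies outside $\widetilde{Q}_0$ — descend verbatim to exactly the relations defining $\widehat{\Lambda}^{(n,s)}$ in Definition~\ref{def.qns}(2). This gives an isomorphism $\End_{\mathcal{U}/\nu_n}(\Lambda^{(n,s)}) \iso kQ^{(n,s)}/(\text{those relations}) = \widehat{\Lambda}^{(n,s)}$.

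The main obstacle is the bookkeeping in this last identification: one must check that the orbit-category $\Hom$-spaces $\bigoplus_{i\in\Z}\Hom_{\mathcal{D}}(\nu_n^j\Lambda, \nu_n^{i+j}\Lambda)$ are computed correctly by the quotient quiver — in particular that no extra relations appear and that no two distinct paths of $\widetilde{Q}^{(n,s)}$ become identified in the orbit category beyond those forced by the listed relations (i.e.\ that the presentation is ``tight''), and that the $\nu_n$-action on $\widetilde{Q}^{(n,s)}$ is free on both vertices and arrows so that the quotient quiver really is $Q^{(n,s)}$. These are essentially the same combinatorial facts already used in Subsection~\ref{subsect.comb_slices} (Theorem~\ref{theorem.quiverU} and the covering $\widetilde{Q}\to Q$ via $\pi$), so the argument should run smoothly once it is set up. One subtlety worth flagging: Definition~\ref{def.qns}(2) writes the relation as an honest equality/vanishing of paths (with a zero relation whenever the commuting square degenerates at the boundary), and this must be matched precisely against the two families of relations in Definition~\ref{notation.quiverU} after applying $\pi$; I would verify this boundary case by case on the simplex $Q_0^{(n,s)}$, exactly as in the proof of Lemma~\ref{addition}.
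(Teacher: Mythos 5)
Your argument is correct, but it takes a detour that the paper's own proof avoids. You first reduce to the single cut $C = C_0$ via Theorem~\ref{theorem.main_typeA}(2) combined with the invariance of the $(n+1)$-preprojective algebra under $n$-APR tilts, and then identify $\widehat{\Lambda^{(n,s)}}$ with $\widehat{\Lambda}^{(n,s)}$ by passing to the orbit category. This reduction is logically sound---there is no circularity, since Theorem~\ref{theorem.main_typeA} is proved independently of Proposition~\ref{prop.preproj}---but it is unnecessary. The paper's proof observes directly that the covering morphism $\pi\colon\widetilde{Q}^{(n,s)}\to Q^{(n,s)}$ already induces the equivalence $\mathcal{U}/\nu_n \approx \proj\widehat{\Lambda}^{(n,s)}$ (Theorem~\ref{theorem.quiverU} and the definition of $\widehat{\Lambda}^{(n,s)}$), and that under this equivalence \emph{any} slice $\Lambda^{(n,s)}_C$ is sent to the full projective generator: a slice meets each $\nu_n$-orbit exactly once, so its $\pi$-image is all of $Q^{(n,s)}_0$. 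Hence $\End_{\mathcal{U}/\nu_n}(\Lambda^{(n,s)}_C)^{\op}\iso\widehat{\Lambda}^{(n,s)}$ for every cut $C$, uniformly, without first passing through $C_0$. The ``main obstacle'' you flag---that the relations of Definition~\ref{notation.quiverU} descend precisely to those of Definition~\ref{def.qns}(2), including the boundary degenerations---is exactly the content already packaged in Theorem~\ref{theorem.quiverU} (which the paper cites from \cite[Theorem~6.10]{Iy_n-Auslander}) and reused in Lemma~\ref{lemma.slices_adm-set_comb}(1); so there is no gap there, only a piece of combinatorics that the paper invokes rather than reproves. In short: same underlying mechanism, but you perform an extra reduction step that the uniformity of the orbit-category argument renders superfluous.
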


\begin{proof}
The quiver morphism $\pi \colon \widetilde{Q}^{(n,s)} \to Q^{(n,s)}$ gives an equivalence $\mathcal{U} / \nu_n \approx \proj \widehat{\Lambda}^{(n,s)}$ of categories, which sends $\Lambda^{(n,s)}_C$ to $\widehat{\Lambda}^{(n,s)}$. Thus the $(n+1)$-preprojective algebra of $\Lambda^{(n,s)}_C$ is
\[ \End_{\mathcal{U} / \nu_n}(\Lambda^{(n,s)}_C)^{\op} \iso \widehat{\Lambda}^{(n,s)}. \qedhere \]
\end{proof}

\begin{proof}[Proof of Theorem~\ref{theorem.preproj_n-repfin}]
By Proposition~\ref{prop.preproj} the algebra $\widehat{\Lambda}^{(n,s)}$ is the $(n+1)$-preprojective algebra of the $n$-representation finite algebra $\Lambda^{(n,s)}_C$ for any cut $C$. Thus, by \cite[Corollary~3.4]{IO2}, $\widehat{\Lambda}^{(n,s)}$ is self-injective.

Moreover, by \cite[Theorem~1.1]{IO2}, we have
\[ \stabmod \widehat{\Lambda}^{(n,s)} \approx \mathcal{C}_{\Gamma}^{n+1}, \]
where $\Gamma$ is the stable $n$-Auslander algebra of $\Lambda^{(n,s)}_C$. In particular for $C = C_0$ we have $\Gamma$ is the stable $n$-Auslander algebra $\Lambda^{(n,s)}$, which is $\underline{\End}_{\Lambda^{(n,s)}}(M^{(n,s)}) \iso \Lambda^{(n+1,s-1)}$.

The fact that $\widehat{\Lambda}^{(n,s)}$ is weakly $(n+1)$-representation finite now follows from the existence of an $(n+1)$-cluster tilting object in $\mathcal{C}_{\Lambda^{(n+1,s-1)}}^{n+1}$ by work of Amiot (\cite{CC, C_PhD} -- also see \cite[Corollary~4.16]{IO2}).
\end{proof}

\newcommand{\etalchar}[1]{$^{#1}$}


\begin{thebibliography}{BFP{\etalchar{+}}}

\bibitem[AB]{AusBri}
Maurice Auslander and Mark Bridger.
\newblock {\em Stable module theory}.
\newblock Memoirs of the American Mathematical Society, No. 94. American
  Mathematical Society, Providence, R.I., 1969.

\bibitem[Ami1]{CC}
Claire Amiot.
\newblock Cluster categories for algebras of global dimension 2 and quivers
  with potential, 2008.
\newblock preprint, to appear in Ann.\ Inst.\ Fourier (Grenoble),
  arXiv:0805.1035.

\bibitem[Ami2]{C_PhD}
Claire Amiot.
\newblock {\em Sur les petites cat\'{e}gories triangul\'{e}es}.
\newblock PhD thesis, Universit\'{e} Paris~7, 2008.

\bibitem[APR]{APR}
Maurice Auslander, Mar{\'{\i}}a~In{\'e}s Platzeck, and Idun Reiten.
\newblock Coxeter functors without diagrams.
\newblock {\em Trans. Amer. Math. Soc.}, 250:1--46, 1979.

\bibitem[ARS]{ARS}
Maurice Auslander, Idun Reiten, and Sverre~O. Smal{\o}.
\newblock {\em Representation theory of {A}rtin algebras}, volume~36 of {\em
  Cambridge Studies in Advanced Mathematics}.
\newblock Cambridge University Press, Cambridge, 1997.
\newblock Corrected reprint of the 1995 original.

\bibitem[BFP{\etalchar{+}}]{BFPPT}
Michael Barot, Elsa Fern{\'a}ndez, María~In{\'e}s Platzeck, Nilda~Isabel
  Pratti, and Trepode Sonia.
\newblock {From iterated tilted algebras to cluster-tilted algebras}.
\newblock preprint, arXiv:0811.1328.

\bibitem[BMR]{BMR_cta}
Aslak~Bakke Buan, Robert~J. Marsh, and Idun Reiten.
\newblock Cluster-tilted algebras.
\newblock {\em Trans. Amer. Math. Soc.}, 359(1):323--332 (electronic), 2007.

\bibitem[BRS]{BRS}
Aslak~Bakke Buan, Idun Reiten, and Ahmet~I. Seven.
\newblock Tame concealed algebras and cluster quivers of minimal infinite type.
\newblock {\em J. Pure Appl. Algebra}, 211(1):71--82, 2007.

\bibitem[GLS1]{GLS1}
Christof Gei{\ss}, Bernard Leclerc, and Jan Schr{\"o}er.
\newblock Rigid modules over preprojective algebras.
\newblock {\em Invent. Math.}, 165(3):589--632, 2006.

\bibitem[GLS2]{GLS2}
Christof Geiss, Bernard Leclerc, and Jan Schr{\"o}er.
\newblock Auslander algebras and initial seeds for cluster algebras.
\newblock {\em J. Lond. Math. Soc. (2)}, 75(3):718--740, 2007.

\bibitem[Hap]{Ha}
Dieter Happel.
\newblock {\em Triangulated categories in the representation theory of
  finite-dimensional algebras}, volume 119 of {\em London Mathematical Society
  Lecture Note Series}.
\newblock Cambridge University Press, Cambridge, 1988.

\bibitem[HI]{HerI}
Martin Herschend and Osamu Iyama.
\newblock {$n$-representation-finite algebras and fractionally Calabi-Yau
  algebras}.
\newblock preprint, arXiv:0908.3510.

\bibitem[HX]{HuXi}
Wei Hu and Changchang Xi.
\newblock Almost $\mathcal{D}$-split sequences and derived equivalences.
\newblock preprint, arXiv:0810.4757.

\bibitem[HZ1]{HuangZhang1}
Zhaoyong Huang and Xiaojin Zhang.
\newblock {Higher Auslander Algebras Admitting Trivial Maximal Orthogonal
  Subcategories}.
\newblock preprint, arXiv:0903.0761.

\bibitem[HZ2]{HuangZhang2}
Zhaoyong Huang and Xiaojin Zhang.
\newblock {Trivial Maximal 1-Orthogonal Subcategories For Auslander's
  1-Gorenstein Algebras}.
\newblock preprint, arXiv:0903.0762.

\bibitem[HZ3]{MR2512629}
Zhaoyong Huang and Xiaojin Zhang.
\newblock The existence of maximal {$n$}-orthogonal subcategories.
\newblock {\em J. Algebra}, 321(10):2829--2842, 2009.

\bibitem[IO]{IO2}
Osamu Iyama and Steffen Oppermann.
\newblock Stable categories of higher preprojective algebras, 2009.
\newblock preprint, arXiv:0912.3412.

\bibitem[Iya1]{Iy_n-Auslander}
Osamu Iyama.
\newblock Cluster tilting for higher {A}uslander algebras.
\newblock preprint, arXiv:0809.4897.

\bibitem[Iya2]{Iy_Auslander_corr}
Osamu Iyama.
\newblock Auslander correspondence.
\newblock {\em Adv. Math.}, 210(1):51--82, 2007.

\bibitem[Iya3]{I2}
Osamu Iyama.
\newblock Higher-dimensional {A}uslander-{R}eiten theory on maximal orthogonal
  subcategories.
\newblock {\em Adv. Math.}, 210(1):22--50, 2007.

\bibitem[Iya4]{Iy_ICRA}
Osamu Iyama.
\newblock Auslander-{R}eiten theory revisited.
\newblock In {\em Trends in representation theory of algebras and related
  topics}, EMS Ser. Congr. Rep., pages 349--397. Eur. Math. Soc., Z\"urich,
  2008.

\bibitem[Kel1]{Kel_DefCY}
Bernhard Keller.
\newblock Deformed {C}alabi-{Y}au completions.
\newblock preprint, arXiv:0908.3499.

\bibitem[Kel2]{Keller_handbook_tilting}
Bernhard Keller.
\newblock Derived categories and tilting.
\newblock In {\em Handbook of tilting theory}, volume 332 of {\em London Math.
  Soc. Lecture Note Ser.}, pages 49--104. Cambridge Univ. Press, Cambridge,
  2007.

\bibitem[Kel3]{Kel_ICRA}
Bernhard Keller.
\newblock Calabi-{Y}au triangulated categories.
\newblock In {\em Trends in representation theory of algebras and related
  topics}, EMS Ser. Congr. Rep., pages 467--489. Eur. Math. Soc., Z\"urich,
  2008.

\bibitem[Ric]{Ric_Morita}
Jeremy Rickard.
\newblock Morita theory for derived categories.
\newblock {\em J. London Math. Soc. (2)}, 39(3):436--456, 1989.

\bibitem[RS]{RieScho}
Christine Riedtmann and Aidan Schofield.
\newblock On a simplicial complex associated with tilting modules.
\newblock {\em Comment. Math. Helv.}, 66(1):70--78, 1991.

\end{thebibliography}
\end{document}